\theoremstyle{plain} 
\newtheorem{proposition}{Proposition}[section] 
\newtheorem{theorem}[proposition]{Theorem}
\newtheorem{lemma}[proposition]{Lemma}
\newtheorem{corollary}[proposition]{Corollary}
\newtheorem{remark}[proposition]{Remark}
\theoremstyle{definition}
\newtheorem{definition}[proposition]{Definition}
\newtheorem{ques}[proposition]{Question}
\newcommand{\mc}{\mathcal{}}
\newcommand{\tl}{\tilde}
\newcommand{\z}{\mathbb{Z}}
\newcommand{\al}{\alpha}
\newcommand{\ens}[1]{ \left\{#1\right\} }
\newcommand{\ip}[2]{\left<{#1},{#2}\right>}
\newcommand{\norm}[1]{\left\|#1\right\|}
\newcommand{\cplx}{\mathbb{C}}
\newcommand{\rea}{\mathbb{R}}
\newcommand{\pa} {\partial}
\newcommand{\N}{\mathbb{N}}
\newcommand{\rd}{{\rm d}}
\newcommand{\Aalb}{\boldsymbol{\mathcal{A}}_\alpha}
\newcommand{\AalAb}{\boldsymbol{\mathcal{A}}_{\alpha,A}}
\numberwithin{equation}{section}
\newcommand{\dv}{\textnormal{div}}
\def\downparenfill{$\m@th\braceld\leaders\vrule\hfill\bracerd$}
\def\overparen#1{\mathop{\vbox{\ialign{##\crcr\crcr
				\noalign{\kern0.4ex}
				\downparenfill\crcr\noalign{\kern0.4ex\nointerlineskip}
				$\hfil\displaystyle{#1}\hfil$\crcr}}}\limits}
\begin{document}

\title{\LARGE \bf%
	Boundary null controllability of a class of 2-$d$ degenerate parabolic PDEs\thanks{This work has received support from UNAM-DGAPA-PAPIIT grant IN117525 (Mexico).}
}

\author{
Víctor Hernández-Santamaría\thanks{V. Hern\'andez-Santamar\'ia is supported by the program ``Estancias Posdoctorales por México para la Formación y Consolidación de las y los Investigadores por México'' of CONAHCYT (Mexico). He also received support from Project CBF2023-2024-116 of CONAHCYT and by UNAM-DGAPA-PAPIIT grants IA100324 and IN102925 (Mexico). } 
\and 
Subrata Majumdar\thanks{Subrata Majumdar is supported by the UNAM Postdoctoral Program (POSDOC).}
\and 
Luz de Teresa}

\maketitle
	\begin{abstract}
		This article deals with the boundary null controllability of some degenerate parabolic equations posed on a square domain, presenting the first study of boundary controllability for such equations in multidimensional settings. The proof combines two classical techniques: the method of moments and the Lebeau-Robbiano strategy. A key novelty of this work lies in the analysis of boundary control localized on a subset of the boundary where degeneracy occurs.
		Furthermore, we establish a Kalman rank condition as a full characterization of boundary controllability for coupled degenerate systems. The results are extended to $N$-dimensional domains, and some other potential extensions, along with open problems, are discussed to motivate further research in this area.
		\end{abstract}
\textbf{Keywords:} Boundary control, degenerate coupled PDEs, observability inequality, spectral analysis, biorthogonal family, Bessel functions.

\noindent
     \textbf{2020 MSC:} {93B05, 93B07, 93C20, 35K65, 30E05, 93B60}
		
	\tableofcontents

\section{Introduction and main results}
\subsection{System under study}\label{system}
Let us consider the following degenerate parabolic equation in a rectangular domain $\Omega=(0,1)\times(0,1)$
\begin{equation}\label{DCP_sc}
	\begin{cases}
		\partial_t u=\text{div}\left(D\nabla u\right) &  \text{ in } (0,T)\times \Omega,\\
	u(0)=u_0, & \text{ in } \Omega,
	\end{cases}
\end{equation}
with the \text{ boundary conditions } \eqref{bd3} defined below. 
 The matrix function $D: \overline\Omega\mapsto \mc M_{2\times 2}(\mathbb{R})$ is given by
\begin{equation*}
	D(x,y)=\begin{pmatrix}
		x^{\alpha_1}& 0\\
		0& y^{\alpha_2}
	\end{pmatrix},
\end{equation*}
where $\alpha=(\alpha_1, \alpha_2) \in [0,2]\times[0,2]$, and $u_0$ is the initial data that lies in the functional space $H^{-1}_{\alpha}(\Omega)$ (see \Cref{sec:wp} below for details).

Let us denote $\Gamma_1=\{0\}\times[0,1],  \Gamma_2=[0,1]\times\{0\}, \Gamma_3=\{1\}\times[0,1],   \Gamma_4=[0,1]\times\{1\}$, so that $\partial \Omega=\Gamma_1\cup\Gamma_2\cup\Gamma_3\cup\Gamma_4.$ We also denote ${\Sigma}_i=(0,T)\times\Gamma_i$, $i=1,2,3,4,$ $\Sigma=\cup_{i=1}^{4}{\Sigma}_i$, $\Sigma_{ij}={\Sigma}_i\cup {\Sigma}_j$, ${\Sigma}_{ijk}={\Sigma}_i\cup{\Sigma}_j\cup {\Sigma}_k.$
The boundary conditions depending on the degenerate parameters $\alpha_1, \alpha_2$ are the following
\begin{equation}\label{bd3}
	\begin{cases}
	u(t)=\mathbf{1}_{\gamma} q(t) \text{ on } \Sigma \quad \text{if } \alpha_1, \alpha_2 \in [0,1),\\
 (D\nabla u(t))\nu=\mathbf{1}_{\gamma} q(t) \text{ on }  \Sigma_1, \text{ and }	u(t)=0 \text{ on } \Sigma_{34} \text{ and }(D\nabla u(t))\nu=0 \text{ on } \Sigma_2 \quad \text{if } \alpha_1,\alpha_2\in [1,2],\\
	u(t)=\mathbf{1}_{\gamma} q(t) \text{ on } \Sigma_1, u(t)=0 \text{ on } \Sigma_{34} \text{ and } (D\nabla u(t))\nu=0 \text{ on } \Sigma_2 \quad \text{if } (\alpha_1,\alpha_2) \in [0,1)\times [1,2],\\
(D\nabla u(t))\nu=\mathbf{1}_{\gamma} q(t) \text{ on }  \Sigma_1, \text{ and } 	u(t)=0 \text{ on } \Sigma_{234} \quad \text{if } (\alpha_1,\alpha_2)\in [1,2]\times [0,1),
	\end{cases}
\end{equation}
 where $\gamma=\{0\}\times \omega,$ $\omega\subset (0,1)$ is a nonempty open set, $q\in  L^2(0,T;L^2(\pa\Omega))$ is a control function (to be determined) which acts on the system through the boundary supported at $\gamma,$ $\nu=\nu(x)$ is the outward unit normal to $\Omega$ at the points $x\in \pa \Omega$ (see \Cref{fig:region}). 
 

In this paper, we analyze the problem of controllability for \eqref{DCP_sc}--\eqref{bd3}, the goal being to steer the state to a null final target by a suitable choice of the control function $q$. More precisely,
\begin{definition}
System \eqref{DCP_sc}--\eqref{bd3} is said to be null controllable at time $T$ if for any $u_0\in H^{-1}_{\alpha}(\Omega)$, there is a control $q\in  L^2(0,T;L^2(\pa\Omega))$ such that the associated state $u$ satisfies $u(T,\cdot)=0$ in $\Omega$. 
\end{definition}

There is extensive literature on the boundary null controllability of parabolic equations, initiated by the seminal works of Fattorini and Russell employing the moment method (see \cite{FR1,FR2}). Over the years, these foundational contributions have been significantly extended, leading to numerous important results for a wide variety of problems and approaches; see, for instance, \cite{FCGBdeT10,JMPA11,AKBGBdT16,BFM20}. It is important to highlight that these developments have predominantly focused on the one-dimensional setting, as the original formulation of the moment method in \cite{FR1} was restricted to this case. However, by combining the moment method with the classical approach introduced by Lebeau and Robbiano \cite{LR}, this framework has been successfully extended in \cite{AB2014} to address certain classes of multidimensional problems.

In parallel, a considerable amount of work has also been devoted to the boundary controllability of degenerate parabolic equations in the one-dimensional setting (see \Cref{sec:soa} for a comprehensive list). However, the question of boundary controllability in higher dimensions remains largely unexplored. The aim of this paper is to contribute toward addressing this challenge.

 \begin{figure}[htbp!]
    \centering
    \begin{tikzpicture}[scale=3] 
        \draw[thin, -] (-0.2,0) -- (0,0); 
        \draw[thin, ->] (1,0) -- (1.2,0) node[below] {$x$}; 

        \draw[thin, -] (0,-0.2) -- (0,0); 
        \draw[thin, ->] (0,1) -- (0,1.2) node[left] {$y$}; 

        \draw[densely dashed, very thick] (0,0) -- (1,0) node[midway, below] {$\Gamma_2$};
        \draw[thick] (1,0) -- (1,1) node[midway, right] {$\Gamma_3$};
        \draw[thick] (1,1) -- (0,1) node[midway, above] {$\Gamma_4$};
        \draw[densely dashed, very thick] (0,1) -- (0,0) node[midway, left] {$\Gamma_1$};

        \draw[ultra thick, red] (0,0.4) -- (0,0.75); 
        \node[left] at (0.2,0.575) {$\omega$}; 

        \node[below left] at (0,0) {0};
        \node[below] at (1,0) {1};
        \node[left] at (0,1) {1};
        \node at (0.6, 0.7) {\Large$\Omega$}; 
    \end{tikzpicture}
\caption{The domain $\Omega$ for equation \ref{DCP_sc}, with the operator degenerating along the dashed lines $\Gamma_1$ and $\Gamma_2$. The red region, denoted by $\omega$, represents the control set, which is active at the boundary where the system degenerates.}
    \label{fig:region}
\end{figure}
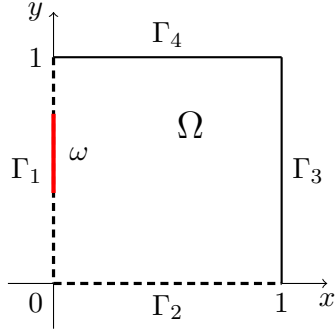

\subsection{State of the art}\label{sec:soa}
The study of the controllability of degenerate parabolic problems has received significant attention throughout recent years, resulting in a vast and rich body of literature. 
 We provide an overview of the most relevant areas and highlight key works related to these topics.
\begin{itemize}
\item Boundary control of scalar 1-$d$ degenerate problems: \cite{gueye2014exact,MRR16,moyano2016flatness,cannarsa2017cost,CMV20,BLR23,galo2024boundary,galo2023boundary}.
\item Internal control, Carleman estimates and related problems: \cite{cannarsa2008carleman,cannarsa10null,cannarsa2012unique,ABHELFM19,FdeT20,araujo2022boundary}.
\item Higher dimensions (internal control): \cite{cannarsa2016global,FA19}.
\item Higher-order degenerate problems: \cite{Gal24}. 
\item Systems with physical applications: \cite{MRV03,Fra18,Flo20,FY21}.
\item Bilinear controls for degenerate problems: \cite{CF11,Flo14,CMU23}.
\item Degenerate hyperbolic models: \cite{GL16,CF24}.
\end{itemize}
Among these, we focus on the works most closely related to our study, as they provide essential context and motivation for our contributions. We begin our bibliographic review by discussing works that examine the boundary controllability properties of one-dimensional degenerate parabolic equations. Our initial focus will be on the weakly degenerate case, that is,
\begin{equation}\label{intro_oned}
	\begin{cases}
		\pa_t u =\pa_{x}(x^{\alpha_1}\pa_{x}u) & \text{ in }   (0,T) \times (0,1),\\
		u(t,0)=h_1(t), \quad u(t,1)=0 &\text{ in }   (0,T),\\
		u(0,x)=u_0(x)  & \text{ in }   (0,1),
	\end{cases}
\end{equation}
where $0\leq \alpha_1<1$. In the paper \cite{gueye2014exact}, using the transmutation technique, the author investigated the null controllability of equation \eqref{intro_oned} with a control $h_1\in L^2(0,T)$ exerted on the left end of the Dirichlet boundary, where degeneracy occurs. Later, the authors in \cite{cannarsa2017cost} considered the same equation \eqref{intro_oned} and using the moment method they explored an explicit control cost with respect to $\alpha_1$ and $T$ when the control $h_1\in H^1(0,T)$. Recently, the work \cite{galo2023boundary} extended the aforementioned results in the following sense: the authors considered a more general degenerate/singular parabolic equation with a drift term and studied the boundary null controllability, providing an estimate on the control cost using an \( L^2(0,T) \)-control acting at either endpoint of the spatial domain.
 Boundary approximate controllability of \eqref{intro_oned} has been established using Carleman estimates in \cite{cannarsa2012unique}.

Next, let us consider the following strongly degenerate parabolic equation
\begin{equation}\label{intro_oned1}
	\begin{cases}
		\pa_t u =\pa_{x}(x^{\alpha_1}\pa_{x}u) & \text{ in }   (0,T) \times (0,1),\\
		(x^{\alpha_1}\pa_x u)(t,0)=0, \quad 
		u(t,1)=h(t) &\text{ in }   (0,T),\\
		u(0,x)=u_0(x)  & \text{ in }   (0,1),
	\end{cases}
\end{equation}
where $1\leq\alpha_1<2.$ The work \cite{moyano2016flatness} addressed the boundary null controllability of \eqref{intro_oned1} using the flatness strategy. Later, \cite{CMV20} extended this result by analyzing the explicit control cost using a control $h$ in the space \( H^1(0,T) \) acting at the right endpoint. More recently, \cite{galo2024boundary} established that the null controllability for \eqref{intro_oned1} can be achieved using an \( L^2(0,T) \)-control when \( 1 < \alpha_1 < 2 \). They also considered the control problem
\begin{equation*}
	\begin{cases}
		\pa_t u =\pa_{x}(x^{\alpha_1}\pa_{x}u) & \text{ in }   (0,T) \times (0,1),\\
		(x^{\alpha_1}\pa_x u)(t,0)=h(t), \quad 
		u(t,1)=0 &\text{ in }   (0,T),\\
		u(0,x)=u_0(x)  & \text{ in }   (0,1),
	\end{cases}
\end{equation*}
and proved the boundary null controllability issue with an \( L^2(0,T) \)-control acting on the degenerate point.

For the full range $0 \leq \alpha_1 < 2$, \cite{cannarsa2008carleman,cannarsa10null} investigated the null controllability of a degenerate parabolic equations with distributed controls and boundary controls acting at the Dirichlet boundary point $\{x=1\}$. Similarly, \cite{araujo2022boundary} addressed this question and established the null controllability of a degenerate heat equation via boundary control at $\{x=1\}$, based on an asymptotic result regarding the interior control problem for both weakly and strongly degenerate cases.

Moving on to two-dimensional models, we now focus on the relevant works that address internal control for degenerate parabolic systems, as boundary controllability in higher dimensions has not, to our knowledge, been studied, with our work being the first to tackle this problem. In \cite{FA19}, the authors investigated the null controllability of the scalar degenerate parabolic equation 
\begin{equation*}
	\begin{cases}
		\partial_t u-\text{div}\left(D\nabla u\right)+bu=g\chi_{\omega}, &  \text{ in } (0,T)\times \Omega,\\
		u(0)=u_0, & \text{ in } \Omega,
	\end{cases}
\end{equation*}
with the homogeneous boundary conditions \eqref{bd3} (with $q=0$) by means of a localized interior control $g$. In this context, we refer to the book \cite{cannarsa2016global} which provides a more general framework regarding the geometry of the domains and the class of degenerate operators, offering many useful tools and results. 

Due to its geometry, the boundary conditions, and the functional framework, the work \cite{FA19} is the closest to ours, as we are addressing the same problem but in a setting with boundary controls.

\subsection{Statement of the main results}\label{sec_main_result}
The main results of this paper are the following.
\begin{theorem}\label{main theorem_scalar}
	Let $T>0$ and $\alpha_i \in [0,2)\setminus\{1\}$ for $i = 1, 2$. For any $u_0\in H^{-1}_{\alpha}(\Omega)$, there exists a control $q\in L^2(0,T;L^2(\partial \Omega))$ such that system \eqref{DCP_sc}--\eqref{bd3} satisfies $u(T)=0$.
\end{theorem}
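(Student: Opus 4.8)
\emph{Proof strategy.} The plan is to combine a separation-of-variables reduction, the moment method in the degenerate direction, and the Lebeau--Robbiano strategy in the transverse direction, in the spirit of \cite{AB2014}. I would first use the well-posedness and transposition framework of \Cref{sec:wp} to recast null controllability of \eqref{DCP_sc}--\eqref{bd3} as an observability estimate for the adjoint system, whose generator is $\mathcal{A}=-\dv(D\nabla\cdot)$ with the homogeneous counterpart of \eqref{bd3}. Since $D=\mathrm{diag}(x^{\alpha_1},y^{\alpha_2})$ and $\Omega=(0,1)^2$ is a product, $\mathcal{A}$ tensorises: writing $A_1=-\pa_x(x^{\alpha_1}\pa_x\cdot)$ and $A_2=-\pa_y(y^{\alpha_2}\pa_y\cdot)$ on $(0,1)$ with the boundary conditions dictated by \eqref{bd3}, the eigenpairs of $\mathcal{A}$ are $\lambda_{m,n}=\mu_m+\nu_n$ and $\Phi_{m,n}(x,y)=\phi_m(x)\psi_n(y)$, where $(\mu_m,\phi_m)$ and $(\nu_n,\psi_n)$ are the eigenpairs of $A_1,A_2$. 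For these one-dimensional degenerate Sturm--Liouville operators I would collect from the one-dimensional theory (e.g.\ \cite{cannarsa2017cost,CMV20,galo2024boundary}) the quantitative information needed downstream: after the change of variable turning the eigenvalue equation into Bessel's equation of order $\kappa_i=\abs{1-\alpha_i}/(2-\alpha_i)$, one has $\mu_m=\bigl(\tfrac{2-\alpha_1}{2}\bigr)^2 j_{\kappa_1,m}^2\sim c_1m^2$ and $\nu_n\sim c_2 n^2$, hence a uniform spectral gap $\sqrt{\mu_{m+1}}-\sqrt{\mu_m}\ge\rho>0$ (likewise for $\nu_n$) and $\sum_m\mu_m^{-1}<\infty$; moreover the boundary quantity $b_m$ that the control ``sees'' through the duality pairing --- the weighted co-normal derivative $\lim_{x\to0^+}x^{\alpha_1}\phi_m'(x)$ in the Dirichlet (weakly degenerate) regime, or the trace $\phi_m(0)$ in the Neumann-type (strongly degenerate) regime --- is nonzero with two-sided polynomial bounds $c\,m^{-\sigma}\le\abs{b_m}\le C\,m^{\sigma}$.

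With this at hand, duality turns the requirement $u(T)=0$ into the moment problem: find $q\in L^2(0,T;L^2(\pa\Omega))$ supported on $\gamma=\{0\}\times\omega$ with
\[
\int_0^T\!\!\int_\omega q(t,y)\,b_m\,\psi_n(y)\,e^{-\lambda_{m,n}(T-t)}\,\ud y\,\ud t=-e^{-\lambda_{m,n}T}\ip{u_0}{\Phi_{m,n}}\qquad\text{for all }m,n,
\]
the right-hand side being $\le C\,e^{-\lambda_{m,n}T}\lambda_{m,n}^{1/2}\norm{u_0}_{H^{-1}_\alpha}$. The obstruction to solving this directly is the condensation of the two-parameter family $\{e^{-\lambda_{m,n}t}\}_{m,n}$ (many pairs give nearly equal $\mu_m+\nu_n$), which precludes a biorthogonal family in $L^2(0,T)$ --- this is where the product structure must be exploited. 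I would fix a sequence of cutoffs $\Lambda_k\uparrow\infty$ and a geometric partition of $(0,T)$ into intervals $I_k$; on the first half of each $I_k$ the control annihilates the projection of the state onto $\mathrm{span}\{\Phi_{m,n}:\nu_n\le\Lambda_k\}$, and on the second half no control acts, so that the surviving components, all with $\nu_n>\Lambda_k$, dissipate at rate $\ge\Lambda_k$; a standard bookkeeping of constants then shows the concatenated control lies in $L^2$ and drives the solution to rest. Building the ``low-frequency'' control on an interval $(0,\tau)$ is again a moment problem, but over the \emph{finite} set $\{(m,n):\nu_n\le\Lambda\}$: for each fixed such $n$ the exponentials $\{e^{-\lambda_{m,n}(\tau-t)}\}_m$ have uniform gap and $\sum_m\mu_m^{-1}<\infty$, so a Fattorini--Russell biorthogonal family $\{\theta_{m,n}\}_m\subset L^2(0,\tau)$ exists with $\norm{\theta_{m,n}}_{L^2(0,\tau)}\le C\,e^{C/\tau}e^{-\nu_n\tau/2}$; and in the $y$ variable, since $\omega\subset(0,1)$ has positive measure and the $\psi_n$ are analytic, the finite family $\{\psi_n|_\omega:\nu_n\le\Lambda\}$ admits a biorthogonal family $\{g_n\}\subset L^2(\omega)$ whose norm is controlled by the spectral (Lebeau--Robbiano) inequality for $A_2$, namely $\sum_{\nu_n\le\Lambda}\abs{a_n}^2\le C\,e^{C\sqrt{\Lambda}}\int_\omega|\sum_n a_n\psi_n|^2$. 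Tensorising, $q(t,y)=\sum_{\nu_n\le\Lambda}\sum_m d_{m,n}\,\overline{\theta_{m,n}(t)}\,g_n(y)$, with $d_{m,n}$ the data of the reduced moment problem, solves it, and since the decay factors $e^{-\nu_n\tau/2}$ and $e^{-\lambda_{m,n}T}$ overwhelm the growths $e^{C\sqrt\Lambda}$ and $m^\sigma$, every series converges in $L^2((0,T)\times\gamma)$, hence in $L^2(0,T;L^2(\pa\Omega))$.

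I expect the main obstacle to be twofold, and I would isolate it in a dedicated section. First, the fine spectral analysis of the degenerate Sturm--Liouville operators at the degenerate endpoint: one needs the two-sided bounds on $b_m$ and the eigenvalue asymptotics \emph{with explicit constants}, since convergence of the final series is a competition between these bounds and the exponential cost $e^{C\sqrt\Lambda}$ of the transverse spectral inequality. Second --- and this is the genuinely new, multidimensional difficulty --- the quantitative spectral inequality for $A_2$ with observation localized on the strict subinterval $\omega$ lying on the degenerate face: in the one-dimensional problems the control acts at a single endpoint, whereas here it acts on a set that is both lower-dimensional and interior to $\Gamma_1$, so one must derive (e.g.\ via a degenerate Carleman estimate) a sum estimate of the above type, uniform in $\Lambda$. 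The remaining ingredients --- the Lebeau--Robbiano iteration and the verification that $q\in L^2(0,T;L^2(\pa\Omega))$ --- are then routine, and the four regimes of $(\alpha_1,\alpha_2)$ in \eqref{bd3} are treated uniformly, the only change being the identity of $b_m$ and the corresponding form of the duality pairing.
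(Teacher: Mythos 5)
Your proposal is correct and follows essentially the same route as the paper: a one-dimensional moment problem in the degenerate $x$-direction solved with biorthogonal families built on the Bessel-function spectral data (the paper's \Cref{null control 1}, via the biorthogonal theorem of \cite{AB2014}), combined with the Lebeau--Robbiano spectral inequality for the degenerate operator in $y$ observed on $\omega$, and a time-splitting iteration alternating low-frequency control with free dissipation. The only substantive differences are presentational: the paper formulates the low-frequency step as a partial observability inequality (\Cref{par obs}) obtained by summing the 1-$d$ observability estimates and applying the spectral inequality, which is the dual of your explicit tensor-product biorthogonal construction, and the spectral inequality you single out as the main new difficulty is not re-derived via Carleman estimates but imported directly from the literature as \Cref{thm:spec_ineq_degen} (see \cite{RB24}).
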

\begin{remark}\label{remark_extensions}
	The following remarks are in order.
	\begin{itemize}
		 \item  We establish the controllability result in \Cref{main theorem_scalar} using a boundary control applied at the degenerate side, $\gamma = \{0\} \times \omega$, where $\omega \subset (0,1)$. A similar controllability result can be demonstrated when the control is exerted on the other sides of the square. In \Cref{non_deg}, we briefly discuss the case where the control is applied on the opposite side, that is, $\gamma = \{1\} \times \omega$, with $\omega \subset (0,1)$.
		
		\item \Cref{main theorem_scalar} has been stated in a unit square. However, our approach can be generalized to the domain 
		$
		\Omega = \underbrace{(0,1)\times (0,1)\times \ldots \times (0,1)}_{N \text{ times}} \subset \mathbb{R}^N.
		$
		We provide a detailed discussion in \Cref{higher_d}.	
		\item Note that the case $\alpha_1 = 1$ is excluded from the statement of \Cref{main theorem_scalar}. This exclusion arises because the functional framework for well-posedness and control requires slight modifications. Nevertheless, the result remains valid in this case, as detailed in \Cref{alphaeq1}.
			\end{itemize}
\end{remark}
The result in \Cref{main theorem_scalar} can be extended to a system of coupled degenerate parabolic equations. To do so,
let  $m,n\in\mathbb N$, $A\in \mathcal{L}(\cplx^n)$ and $B\in\mathcal{L}(\cplx^m;\cplx^n)$ be given matrices. We denote $u:= \left(\begin{array}{cccc}u_1 & u_2 & \cdots & u_n\end{array}\right)^\top$ as the state and define the spatial operator 
 \begin{equation*}\mathbf{D}u:=\left(\begin{array}{ccc}\text{div}\left(D\nabla u_1\right) & \cdots & \text{div}\left(D\nabla u_n\right)\end{array}\right)^\top.
 	\end{equation*} Let us consider the degenerate coupled system
\begin{equation}\label{DCP}
	\begin{cases}
		\partial_t u={\mathcal{I}_n \mathbf{D} u+A u} &  \text{ in } (0,T)\times \Omega,\\ 
		u(0)=u_0, & \text{ in } \Omega,
	\end{cases}
\end{equation}
with the following boundary conditions
\begin{equation}\label{bd}
	\begin{cases}
		u(t)=\mathbf{1}_{\gamma} Bq(t) \text{ on } \Sigma,\,\, \text{ if } \alpha_1, \alpha_2 \in [0,1)\\
		 \mathbf{P} u(t)=\mathbf{1}_{\gamma} Bq(t) \text{ on }  \Sigma_1, \text{ and }	u(t)=0 \text{ on } \Sigma_{34} \text{ and } \mathbf{P} u(t)=0 \text{ on } \Sigma_2,\,\, \text{ if } \alpha_1,\alpha_2\in [1,2]\\
		u(t)=\mathbf{1}_{\gamma} Bq(t) \text{ on } \Sigma_1, u(t)=0 \text{ on } \Sigma_{34} \text{ and }  \mathbf{P} u(t)=0 \text{ on } \Sigma_2,\,\, \text{ if } (\alpha_1,\alpha_2) \in [0,1)\times [1,2]\\
		 \mathbf{P} u(t)=\mathbf{1}_{\gamma} Bq(t) \text{ on }  \Sigma_1, \text{ and } 	u(t)=0 \text{ on } \Sigma_{234}, \,\, \text{ if } (\alpha_1,\alpha_2)\in [1,2]\times [0,1),
	\end{cases}
\end{equation}
where $\mathbf{P}(u):=\left(\begin{array}{ccc}D\nabla u_1\cdot \nu & \cdots & D\nabla u_n\cdot \nu\end{array}\right)^\top$, $D$, $\nu$ are defined in \Cref{system}, $q\in L^2(0,T;L^2(\partial \Omega)^m)$ is the control, and $\mathcal I_n$ is the $n\times n$ identity matrix. 
\begin{remark}
	To ensure clarity in exposition, we have used the same notation for both the vector and scalar forms of the state 
	$u$, initial data $u_0$, and control $q$. Their meaning can be understood from the context or position in the formulation.
\end{remark}
The problem amounts to apply $m$ controls to a system of $n$ variables exerted on the boundary of the spatial domain of the equations. The most interesting case occurs when $m \ll n$, as it represents controlling many equations with a minimal number of controls.
 
To state the controllability result for \eqref{DCP}--\eqref{bd}, let us fix some notations. Let $(\lambda_{\alpha_1,k}, \phi_{\alpha_1,k})_{k\in \N}$ be the eigenelement of the following eigenvalue problem
\begin{equation}\label{eigen eqn1}
	\begin{cases}
		-(x^{\alpha_1}\phi')'(x)=\lambda \phi(x) \qquad  x\in (0,1),\\
		\begin{cases}
			\phi(0)=0 &\text{ if } 0\leq \alpha_1<1\\
			(x^{\alpha_1}\phi')(0)=0 &\text{ if } 1\leq\alpha_1<2
		\end{cases},\\
		\phi(1)=0.
	\end{cases}
\end{equation}
For $k\geq 1$, we denote the following matrices
\begin{equation}\label{matrix}
	\mathbf{B}_k:=\begin{pmatrix}
		B\\.\\.\\.\\B
	\end{pmatrix}\in \mathcal{L}(\cplx^m;\cplx^{nk}), \,\, \mathbf{L}_k:=\begin{pmatrix}
		L_1& 0&\cdot \cdot \cdot&0\\0& L_2&\cdot \cdot \cdot&0\\
		\vdots&\cdot \cdot \cdot&\ddots&\vdots\\
		0&\cdot \cdot \cdot&0&L_k
	\end{pmatrix}\in \mathcal{L}(\cplx^{nk}),
\end{equation}
where $	L_k:=-\lambda_{\alpha_1,k}\mathcal I_n+A\in \mathcal{L}(\cplx^n).$
We introduce the Kalman matrix associated with the pair $(\mathbf{L}_k, \mathbf{B}_k)$
\begin{equation*}
	\boldsymbol{{\mathcal{{K}}}}_k:=[\mathbf{L}_k | \mathbf{B}_k]=\left[\mathbf{B}_k,\, \mathbf{L}_k\mathbf{B}_k,\,  \mathbf{L}^{2}_k \mathbf{B}_k,...,\mathbf{L}^{nk-2}_k\mathbf{B}_k,\, \mathbf{L}^{nk-1}_k \mathbf{B}_k\right] \in\mathcal{L}(\cplx^{mnk};\cplx^{nk}).
\end{equation*}
Let us state the following characterization for the controllability of the coupled degenerate system \eqref{DCP}--\eqref{bd} with $\alpha_i \in [0,2)\setminus\{1\}$ for $i = 1, 2$.
\begin{theorem}\label{main theorem}
	Let $T>0$ and $u_0\in H^{-1}_{\alpha}(\Omega)^n$. For any given $A\in \mathcal{L}(\cplx^n)$ and $B\in\mathcal{L}(\cplx^m;\cplx^n)$, there exists a control $q\in L^2(0,T;L^2(\partial\Omega)^m)$ such that the system \eqref{DCP}--\eqref{bd} satisfies $u(T)=0$ if and only if 
	\begin{equation}\label{kalman}
		\text{rank } 	\boldsymbol{\mathbf{\mathcal{K}}}_k=\text{rank } [\mathbf{L}_k | \mathbf{B}_k]=nk, \, \forall k\geq 1.
	\end{equation}
\end{theorem}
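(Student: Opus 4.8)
The plan is to reduce the $N$-dimensional control problem to a sequence of one-dimensional moment problems by separating variables, and then to characterize solvability of each of these problems via a Kalman rank condition. I would proceed as follows.

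\textbf{Step 1: Spectral decomposition and separation of variables.}
First I would expand the solution $u$ of the adjoint (or free) system in the basis $\{\phi_{\alpha_1,k}(x)\,\psi_{\alpha_2,j}(y)\}_{k,j\ge 1}$, where $(\lambda_{\alpha_1,k},\phi_{\alpha_1,k})$ solves \eqref{eigen eqn1} and $(\mu_{\alpha_2,j},\psi_{\alpha_2,j})$ solves the analogous one-dimensional eigenvalue problem in the $y$-variable. Since the control $\mathbf 1_\gamma Bq$ is supported on $\gamma=\{0\}\times\omega$, testing the adjoint equation against each $\psi_{\alpha_2,j}$ and integrating over $\omega$ decouples the problem: for each fixed $j$ one is left with a one-dimensional boundary control problem on $(0,1)$ for the operator $-\partial_x(x^{\alpha_1}\partial_x\cdot)-\mu_{\alpha_2,j}$ (a shift by the $y$-eigenvalue), coupled across the $n$ components through $A$ and $B$. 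The relevant ``boundary trace'' at $x=0$ is $\phi_{\alpha_1,k}'(0)$ (Dirichlet-type case) or $\phi_{\alpha_1,k}(0)$ (Neumann-type case), which by the earlier spectral analysis (Bessel function asymptotics) is nonzero and has a known growth rate in $k$. Here one must be careful that $\omega\subsetneq(0,1)$: the coefficients $\int_\omega \psi_{\alpha_2,j}(y)\,\mathrm dy$ need not be all nonzero, but the unique continuation / linear-independence properties of the traces $\psi_{\alpha_2,j}|_\omega$ (which follow from the 1-d analysis and the fact that $\omega$ is a nonempty open set) ensure the moment data still determine everything.

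\textbf{Step 2: Block moment problem and biorthogonal families.}
For each pair $(k,j)$ the free-evolution coefficient decays like $e^{-(\lambda_{\alpha_1,k}+\mu_{\alpha_2,j})(T-t)}$ acted on by $e^{(T-t)A^*}$, so null controllability is equivalent to solving a vector-valued moment problem: find $q\in L^2(0,T;\cplx^m)$ such that, for all $k,j$,
\begin{equation*}
\int_0^T \langle q(t), B^* e^{(T-t)L_k^*}\,(\text{trace vector})\rangle\,\mathrm dt = -\,(\text{data from }u_0).
\end{equation*}
Freezing $k$ and letting $j$ vary produces, for each $k$, a finite-dimensional block (size $nk$ when one also couples the indices $1,\dots,k$ as in the definition of $\mathbf L_k,\mathbf B_k$), and the solvability of the block moment problem is governed precisely by invertibility of the Gramian built from $\mathbf B_k^* e^{sL_k^*}$, i.e.\ by the controllability of the pair $(\mathbf L_k,\mathbf B_k)$. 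Standard finite-dimensional control theory (Kalman's theorem) then gives: the block at level $k$ is controllable iff $\operatorname{rank}[\mathbf L_k\,|\,\mathbf B_k]=nk$. Necessity of \eqref{kalman} is immediate — if the rank drops for some $k$, project onto the corresponding left-null vector to produce a nonzero adjoint solution of the (separated) system that cannot be observed through the control region, contradicting controllability via the observability characterization.

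\textbf{Step 3: Uniform bounds and summing the blocks (Lebeau--Robbiano).}
The remaining — and hardest — point is to pass from controllability of each individual block to null controllability of the full (infinite-dimensional) system with a single $L^2$ control, i.e.\ to sum the block solutions with control cost under control. This is exactly where the Lebeau--Robbiano strategy and the construction of a biorthogonal family to the exponentials $\{e^{-(\lambda_{\alpha_1,k}+\mu_{\alpha_2,j})t}\}$ (with the requisite quantitative $L^2$ bounds in $k,j$, coming from the gap condition $\sum 1/\sqrt{\lambda_{\alpha_1,k}}<\infty$ valid for $\alpha_1\in[0,2)$ and similarly in $j$) enter; one also needs the cost of the finite-dimensional control of block $k$ to grow at most polynomially (or sub-exponentially) in $k$ — this follows from quantitative Kalman/Hautus estimates applied to $(\mathbf L_k,\mathbf B_k)$ together with the known growth of the eigenvalues $\lambda_{\alpha_1,k}\sim k^2$ and of the traces. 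Assembling: fix a large parameter, control all low-frequency blocks $k\le \Lambda$ on a subinterval, let the dissipation of the parabolic semigroup absorb the tail on the complementary subinterval, and iterate; the geometric decay of the high-frequency tail against the polynomial growth of the low-frequency control cost closes the argument, exactly as in \Cref{main theorem_scalar} (the case $n=m=1$, $A=0$, $B=1$), which we may now invoke as the scalar prototype. The main obstacle is thus the \emph{uniform-in-$k$ cost estimate} for the block problems: one must show that assuming \eqref{kalman} the Gramian of $(\mathbf L_k,\mathbf B_k)$ is invertible with inverse bounded polynomially in $k$, which requires tracking how the spectral gaps of $\mathbf L_k$ (governed by $\lambda_{\alpha_1,k}$ and the matrix $A$, including possible non-semisimple eigenvalues and near-resonances $\lambda_{\alpha_1,k}-\lambda_{\alpha_1,k'}$) degrade the conditioning, and then compensating via the parabolic smoothing in the Lebeau--Robbiano iteration.
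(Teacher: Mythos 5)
Your overall architecture (moment method plus Lebeau--Robbiano iteration) is the paper's, but two points in your write-up would fail as stated. First, in Step 3 you propose constructing a biorthogonal family to the two-parameter exponentials $\{e^{-(\lambda_{\alpha_1,k}+\mu_{\alpha_2,j})t}\}_{k,j}$ with quantitative $L^2$ bounds ``in $k,j$''. No such family exists in $L^2(0,T)$: the combined spectrum $\{\lambda_{\alpha_1,k}+\mu_{\alpha_2,j}\}$ has counting function growing like $r$ (not $\sqrt r$), so $\sum_{k,j}(\lambda_{\alpha_1,k}+\mu_{\alpha_2,j})^{-1}=\infty$ and the M\"untz/gap hypotheses of \Cref{d1} are violated. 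This obstruction is exactly why a direct moment method in $2$-$d$ is impossible and why the paper builds biorthogonals only for the one-parameter family coming from the $x$-operator coupled with $A$ (\Cref{null control 1}), reserving a completely different tool, the spectral inequality \eqref{lr} of \Cref{thm:spec_ineq_degen}, for the $y$-direction.

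Second, the roles of the two directions are muddled in your Steps 2--3. The Kalman blocks $(\mathbf L_k,\mathbf B_k)$ involve only the $x$-eigenvalues and the matrices $A,B$; the $y$-index $j$ never enters them, so ``freezing $k$ and letting $j$ vary'' does not produce a block of size $nk$. More importantly, the Lebeau--Robbiano truncation must be taken in the $y$-frequencies $j$ (where the observation is restricted to $\omega\subsetneq(0,1)$ and the cost $e^{C\sqrt{\lambda_{\alpha_2,J}}}$ from \eqref{lr} must be beaten by the dissipation $e^{-\lambda_{\alpha_2,J+1}T_k}$), whereas the $x$-direction is resolved \emph{globally in $k$} by \Cref{null control 1} with the uniform cost $Ce^{C/T}$; there is no truncation in $k$ and no need for a polynomial-in-$k$ Gramian bound. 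Your iteration ``control all blocks $k\le\Lambda$'' therefore targets the wrong variable. The necessity direction and the identification of the Kalman condition as the right finite-dimensional criterion are correct and match the paper (via projection onto the $y$-modes and \cite[Corollary 3.3]{JMPA11}), but as written the sufficiency argument does not close.
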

We provide a detailed proof of \Cref{main theorem} in \Cref{lr sc}, from which \Cref{main theorem_scalar} directly follows.

\subsection{Strategy of the proof of \Cref{main theorem}}\label{sec:strat}
The approach transforms the 2-$d$ problem into an infinite set of 1-$d$ problems by decomposing the solution of the associated adjoint system into eigenfunction expansions along one direction. The main idea is to combine the boundary control result in 1-$d$ with the Lebeau-Robbiano strategy \cite{LR} to establish the null controllability of the 2-$d$ problem. This strategy was developed for the first time in \cite{AB2014}.

Below, we outline the structure of the proof.
\begin{itemize}
	\item We begin by employing the moment method to establish a boundary null controllability result of the one-dimensional coupled degenerate parabolic system
	\begin{equation}\label{oned}
		\begin{cases}
			\pa_t w =\pa_{x}(x^{\alpha_1}\pa_{x}w)+A w & \text{ in }   (0,T) \times (0,1),\\
			\begin{cases}w(t,0)=Bh(t) & \text{ if } 0\leq \alpha_1<1\\
				(x^{\alpha_1}\pa_x w)(t,0)=Bh(t) & \text{ if } 1 < \alpha_1<2
			\end{cases} &\text{ in }   (0,T),\\
			w(t,1)=0 &\text{ in }   (0,T)\\
			w(0,x)=w_0(x)  & \text{ in }   (0,1),
		\end{cases}
	\end{equation}
in the space $H^{-1 }_{ \alpha_1}(0,1)^n$ (see \Cref{sec_fn} for details)	with a control satisfying the cost $C_T=Ce^{C/T},$ where $C>0$ is a constant independent of $T.$
	\begin{theorem}\label{null control 1}
		Let $T>0,$ $A\in \mathcal{L}(\cplx^n)$ and $B\in\mathcal{L}(\cplx^m;\cplx^n)$ be given. Then for every $w_0\in H^{-1 }_{ \alpha_1}(0,1)^n$,  there exists a control $h\in L^2(0,T)^m$ such that the system \eqref{oned} satisfies $w(T)=0$ if and only if the Kalman rank condition \eqref{kalman} holds. Moreover, the control satisfies the following estimate
		\begin{equation}\label{cost_cyl}
			\norm{h}_{L^2(0,T)^m}\leq Ce^{\frac{C}{T}}\norm{w_0}_{H^{-1 }_{ \alpha_1}(0,1)^n},
		\end{equation} 
		for some positive constant $C$ which is independent of $T.$
	\end{theorem}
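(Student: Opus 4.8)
The plan is to carry out the moment method in the spirit of Fattorini–Russell and its subsequent refinements for degenerate operators (Gueye, Cannarsa–Martinez–Vancostenoble, etc.), adapted to the coupled setting. First I would reduce the controllability question to a moment problem. Writing the adjoint system associated with \eqref{oned}, one expands its solution on the eigenbasis $(\lambda_{\alpha_1,k},\phi_{\alpha_1,k})_{k\in\N}$ of \eqref{eigen eqn1}. The eigenfunctions of the scalar degenerate operator are explicit in terms of Bessel functions $J_{\nu}$ with $\nu=\frac{1-\alpha_1}{2-\alpha_1}$, and the eigenvalues satisfy the gap/counting estimate $\lambda_{\alpha_1,k}\sim c\,k^2$ with $\sqrt{\lambda_{\alpha_1,k+1}}-\sqrt{\lambda_{\alpha_1,k}}\geq \delta>0$; these facts I would quote from the one-dimensional literature cited in \Cref{sec:soa}. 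Testing the state equation against the adjoint eigen-solutions, the null-controllability requirement $u(T)=0$ translates into a (vector) moment problem: find $h\in L^2(0,T)^m$ such that, for every $k\geq 1$ and every generalized eigenvector of $L_k^\top$, a prescribed integral $\int_0^T h(t)\cdot(\cdot)\,e^{-\lambda_{\alpha_1,k}(T-t)}\,dt$ equals the corresponding Fourier coefficient of $w_0$, with a weight coming from the trace (or conormal trace) of $\phi_{\alpha_1,k}$ at $x=0$. The relevant boundary quantity is $\phi_{\alpha_1,k}'(0)$ in the weakly degenerate case and $\lim_{x\to0^+}x^{\alpha_1}\phi_{\alpha_1,k}'(x)$ — equivalently $\phi_{\alpha_1,k}(0)$ up to a constant — in the strongly degenerate case; in both regimes this quantity is nonzero and behaves polynomially in $k$ (again from the Bessel asymptotics), which is exactly what is needed for the moment problem to be solvable.

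The second step is to solve this moment problem via a biorthogonal family. Because the eigenvalues of $L_k$ are $-\lambda_{\alpha_1,k}$ shifted by the spectrum of $A$ and may collide (different $k$'s, or Jordan blocks), one cannot directly use a biorthogonal family to the pure exponentials; instead I would follow the block-moment strategy. The Kalman condition \eqref{kalman} is precisely what guarantees that, for each fixed $k$, the finite-dimensional block $(\mathbf L_k,\mathbf B_k)$ is controllable, hence one can split the vector moment at level $k$ into scalar moments against $e^{-\lambda_{\alpha_1,k}(T-t)}t^j$ for $0\le j\le n k-1$, with a quantitative bound on the "cost" of this algebraic inversion polynomial in $k$ (this is the standard estimate $\||\mathbf L_k|\mathbf B_k]^{-1}\|\lesssim e^{Ck}$ coming from the explicit Cramer-type formulas; here I would invoke the known results on Kalman-type characterizations, e.g. Ammar-Khodja–Benabdallah–González-Burgos–de Teresa and the more recent block-moment papers). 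Then I would use a biorthogonal family $(\psi_{k,j})$ to $\{t^j e^{-\lambda_{\alpha_1,k}t}\}$ in $L^2(0,T)$, whose existence and the sharp bound $\|\psi_{k,j}\|_{L^2(0,T)}\le C e^{C/T} e^{C\sqrt{\lambda_{\alpha_1,k}}}$ follow from the gap condition and the counting estimate via the classical construction (Fattorini–Russell, refined in Ammar-Khodja et al. and in the degenerate papers). Summing the series $h=\sum_{k,j} a_{k,j}\psi_{k,j}$, the Fourier coefficients $a_{k,j}$ of $w_0$ in $H^{-1}_{\alpha_1}(0,1)^n$ decay like $\lambda_{\alpha_1,k}^{-1/2}$ times an $\ell^2$ sequence, and the factor $e^{C\sqrt{\lambda_{\alpha_1,k}}}\sim e^{Ck}$ in the biorthogonal bound is absorbed because $\lambda_{\alpha_1,k}$ also appears in the decaying exponential $e^{-\lambda_{\alpha_1,k}T/2}$ of the "remaining time"; optimizing the split of $[0,T]$ yields convergence and the cost $\|h\|_{L^2(0,T)^m}\le C e^{C/T}\|w_0\|_{H^{-1}_{\alpha_1}(0,1)^n}$ with $C$ independent of $T$.

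For the converse, I would argue that if the Kalman rank condition fails at some level $k_0$, then there is a nonzero vector $v$ in the orthogonal complement of the range of $\boldsymbol{\mathcal K}_{k_0}$, i.e. $v^\top \mathbf L_{k_0}^j\mathbf B_{k_0}=0$ for all $j$; building the corresponding adjoint solution (supported on the $k_0$-block) produces a nonzero initial datum whose moment must vanish for every admissible control, so controllability is impossible — this is the standard unique-continuation obstruction and is purely finite-dimensional once the spectral decomposition is in place. The main obstacle I anticipate is the quantitative bookkeeping in the summability step: one must track simultaneously (i) the polynomial-in-$k$ growth of the boundary traces $\phi_{\alpha_1,k}'(0)$ (or $x^{\alpha_1}\phi_{\alpha_1,k}'|_{0}$), (ii) the $e^{Ck}$ growth of the Kalman block inverse, and (iii) the $e^{C/T}e^{Ck}$ bound on the biorthogonal family, and verify that, after distributing the time interval, everything recombines into the clean cost $Ce^{C/T}$ with $C$ independent of $T$ — uniformity in $T$ as $T\to\infty$ (so that the Lebeau–Robbiano iteration in \Cref{lr sc} can later be run) is the delicate point. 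The degenerate-operator specifics (the precise Bessel asymptotics and the weighted duality pairing defining $H^{-1}_{\alpha_1}$) enter only through (i) and through the decay of the Fourier coefficients of $w_0$, and these are by now well documented in the one-dimensional references listed above.
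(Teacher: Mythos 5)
Your proposal follows essentially the same route as the paper: reduction to a moment problem via transposition, explicit Bessel asymptotics for the boundary traces $(x^{\alpha_1}\phi_{\alpha_1,k}')|_{x=0}$ (resp.\ $\phi_{\alpha_1,k}(0)$), use of the Kalman condition to invert the finite-dimensional blocks, a biorthogonal family with bound $Ce^{C/T}e^{C\sqrt{\lambda}}$ obtained from the gap and counting conditions, absorption of $e^{C\sqrt{\lambda_{\alpha_1,k}}}$ by $e^{-\lambda_{\alpha_1,k}T/2}$ in the summation, and the finite-dimensional unique-continuation obstruction for necessity. The only notable details you gloss over — and which the paper makes precise — are that the biorthogonal family must be taken with respect to the rearranged shifted spectrum $\Lambda_k=\lambda_{\alpha_1,k}-\mu_l$ of the coupled operator (verifying the gap and counting hypotheses for this family is where most of the work lies), and that uniformity of the cost for large $T$ is obtained simply by restricting the control to $(0,T_0/2)$ and extending by zero.
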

\begin{remark}\label{rm}	In what concerns \Cref{null control 1}, the following remarks are worth mentioning:
	\begin{enumerate}
		\item	For 1-$d$ coupled degenerate parabolic systems, we refer to \cite{allal2022boundary}, where boundary controllability was studied for a system of two weakly degenerate equations (\(0 \leq \alpha_1 < 1\)) with control acting at the non-degenerate point. It is important to note that no controllability result exists for coupled degenerate systems with control at the point of degeneracy, as in the scalar case studied in \cite{galo2024boundary}. This is an additional novelty of our work.
 \item  \label{rm1}Without loss of generality, we assume throughout the paper that $A$ is stable (i.e., all eigenvalues of $A$ have negative real part). In fact, we can replace the matrix $A$ by $A-\mu \mathcal I_n$, for some $\mu>0$ to make the coupling matrix a stable one. Importantly, this modification does not affect the controllability result, as it simply involves replacing $w(t)$ by $e^{-\mu t} w(t)$ in \eqref{oned}. 
\end{enumerate}
\end{remark}

\item Next, we explore a partial observability (see \Cref{par obs}) for the corresponding adjoint system with data in a subset of $H^{1}_{\alpha,0}(\Omega)^n$ (defined in \Cref{sec:wp}). To construct the control in two dimensions, we use the controllability result from the one-dimensional case (see \Cref{null control 1}) along with the following spectral inequality
\begin{theorem}[Theorem 1.1, \cite{RB24}]\label{thm:spec_ineq_degen}
	Let $(\lambda_j,\Phi_j)$ be the solution of the eigenvalue problem \eqref{eigen eqn1}. Let $\omega$ be an open and nonempty subset of $(0, 1)$. There exists a constant $C > 0$ such that
	\begin{equation}\label{lr}
		\sum_{\lambda_j \leq \mu} |a_j|^2 \leq C e^{C_1\frac{1}{(2-\alpha_1)^2}  \sqrt{\mu}} 
		\int_{\omega} \left| \sum_{\lambda_j \leq \mu} a_j \Phi_j \right|^2 \, \rd{x}
	\end{equation}
	for any $\alpha_1 \in [0, 2)$, $\{a_j\} \in \mathbb{R}$, and any $\mu > 0$.
	\end{theorem}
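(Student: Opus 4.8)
\textbf{Proof proposal for \Cref{thm:spec_ineq_degen}.} Since this estimate is quoted from \cite{RB24}, I only outline the route I would take. Inequality \eqref{lr} is a Lebeau--Robbiano type spectral inequality for the one-dimensional degenerate operator $\mathcal{A}_{\alpha_1}\phi=-(x^{\alpha_1}\phi')'$, which is self-adjoint in $L^2(0,1)$ with compact resolvent; hence $\{\Phi_j\}$ is an orthonormal basis and the left-hand side of \eqref{lr} equals $\big\|\sum_{\lambda_j\le\mu}a_j\Phi_j\big\|_{L^2(0,1)}^2$. The starting point is that \eqref{eigen eqn1} is explicitly solvable in terms of Bessel functions: with $\kappa=\tfrac{2-\alpha_1}{2}\in(0,1]$, the substitution $\Phi(x)=x^{(1-\alpha_1)/2}\psi(x^{\kappa})$ reduces the equation to Bessel's equation, so that $\Phi_j(x)=c_j\,x^{(1-\alpha_1)/2}J_\nu\!\big(j_{\nu,j}\,x^{\kappa}\big)$ with $\lambda_j=\kappa^2 j_{\nu,j}^2$, where $\nu=\tfrac{|1-\alpha_1|}{2-\alpha_1}\ge 0$ and the value of $\nu$ (and of $c_j$) is fixed by the boundary condition at $x=0$, so that the weakly and strongly degenerate regimes run in parallel, $j_{\nu,j}$ is the $j$-th positive zero of $J_\nu$, and $c_j$ is the normalising constant. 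The plan is to convert \eqref{lr} into a statement about band-limited sums of Bessel functions and then run a one-dimensional complex-analysis argument.

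First I would pass to the variable $y=x^{\kappa}\in(0,1)$: stripped of the power weight (bounded away from $0$ and $\infty$ on compact subsets of $(0,1)$) and of the Jacobian of the change of variables (a harmless factor $\kappa^{-1}$), the function $F_\mu=\sum_{\lambda_j\le\mu}a_j\Phi_j$ becomes a constant multiple of $\Psi_\mu(y)=\sum_{\lambda_j\le\mu}a_jc_j\,J_\nu(j_{\nu,j}y)$. Since $J_\nu(z)=z^{\nu}$ times an even entire function of $z$ with $|J_\nu(z)|\lesssim e^{|\mathrm{Im}\,z|}$, $\Psi_\mu$ extends to an entire function of $y$ of exponential type at most $\max_j j_{\nu,j}\le\sqrt{\mu}/\kappa$, and on a fixed complex neighbourhood of $[0,1]$ one gets $|\Psi_\mu|\le e^{C\sqrt\mu/(2-\alpha_1)}\|\Psi_\mu\|_{L^2(0,1)}$ by Bernstein-type bounds for band-limited entire functions. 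A Hadamard three-circle / harmonic-measure estimate then gives $\|\Psi_\mu\|_{L^\infty([0,1])}\le\|\Psi_\mu\|_{L^\infty(\tl\omega)}^{\theta}\big(e^{C\sqrt\mu/(2-\alpha_1)}\|\Psi_\mu\|_{L^2(0,1)}\big)^{1-\theta}$ for some $\theta=\theta(\tl\omega)\in(0,1)$, where $\tl\omega\subset(0,1)$ is the image of $\omega$; combining this with Nikolskii-type inequalities to return from $L^\infty$ to $L^2$ on $\tl\omega$ and on $(0,1)$, and absorbing the factor $\|\Psi_\mu\|_{L^2(0,1)}^{1-\theta}$, yields $\|\Psi_\mu\|_{L^2(0,1)}\le Ce^{C_1\sqrt\mu/(2-\alpha_1)^2}\|\Psi_\mu\|_{L^2(\tl\omega)}$, and undoing the change of variables gives \eqref{lr}. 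An alternative closer to the original \cite{LR} argument would be to apply a degenerate elliptic Carleman estimate for $\partial_s^2+\partial_x(x^{\alpha_1}\partial_x)$ on $(0,1)_s\times(0,1)_x$ to $u(s,x)=\sum_{\lambda_j\le\mu}a_j\tfrac{\sinh(\sqrt{\lambda_j}\,s)}{\sqrt{\lambda_j}}\Phi_j(x)$ and optimise the Carleman parameter, which avoids Bessel functions at the cost of a weight adapted to the degeneracy at $x=0$.

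The qualitative unique continuation is not the difficulty — the $\Phi_j$ are real-analytic on $(0,1)$, so a finite combination cannot vanish on $\omega$ without vanishing identically. The genuinely delicate point, and where the real work of \cite{RB24} lies, is making every constant uniform in $\alpha_1\in[0,2)$ and extracting the stated exponent $(2-\alpha_1)^{-2}$: as $\alpha_1\to 2$ one has $\kappa\to 0$ and, in the strongly degenerate range, $\nu=\tfrac{\alpha_1-1}{2-\alpha_1}\to\infty$, so one needs estimates on the zeros $j_{\nu,j}$, on the normalising constants $c_j$ (equivalently on $|J_\nu'(j_{\nu,j})|$), and on the growth of $J_\nu$ in the complex plane that are \emph{uniform in the order} $\nu$, together with a careful accounting of how the factor $1/\kappa\sim(2-\alpha_1)^{-1}$ compounds with the large-order behaviour of $J_\nu$ to produce the quadratic loss; carrying this out uniformly in $\mu$ is the core of the argument.
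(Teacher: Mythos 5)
The paper does not prove \Cref{thm:spec_ineq_degen} at all: it is imported verbatim as Theorem~1.1 of \cite{RB24} and used as a black box in the Lebeau--Robbiano iteration of \Cref{lr sc}, so there is no in-paper argument to compare yours against. Judged on its own terms, your outline is a sensible reconstruction of how one-dimensional spectral inequalities of this type are usually obtained (explicit Bessel diagonalization, change of variables $y=x^{\kappa_{\alpha_1}}$, propagation of smallness for band-limited combinations via three-circle/harmonic-measure estimates, or alternatively a degenerate Carleman estimate for $\partial_s^2+\partial_x(x^{\alpha_1}\partial_x)$), and you correctly identify that the real content is uniformity: the constants must be controlled as $\alpha_1\to 2$, where $\kappa_{\alpha_1}\to 0$ and the Bessel order $\nu(\alpha_1)\to\infty$, and this is precisely what produces the $(2-\alpha_1)^{-2}$ in the exponent.

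That said, as written the proposal is a programme rather than a proof, and two of the steps you wave through are exactly where it would have to earn its keep. First, for non-integer $\nu$ the function $\Psi_\mu(y)=\sum a_jc_jJ_\nu(j_{\nu,j}y)$ is not entire (only $y^{-\nu}\Psi_\mu$ is), and after the substitution the natural measure is $y\,\rd y$, so the weight is not ``harmless'' near $y=0$: the left-hand side of \eqref{lr} is $\sum_{\lambda_j\le\mu}|a_j|^2=\|\sum a_j\Phi_j\|_{L^2(0,1)}^2$, a norm over the \emph{whole} interval including the degeneracy, and the passage between $\|F_\mu\|_{L^2(0,1)}$ and $\|\Psi_\mu\|_{L^2}$ must be quantified with constants uniform in $\nu$. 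Second, the growth bound $|J_\nu(z)|\lesssim e^{|\mathrm{Im}\,z|}$ and the asymptotics of $c_j\sim$ (a power of $j_{\nu,j}$ over $|J_\nu'(j_{\nu,j})|$) carry $\nu$-dependent constants that blow up as $\nu\to\infty$; tracking how these compound with the factor $1/\kappa_{\alpha_1}$ to yield exactly $(2-\alpha_1)^{-2}$ is the theorem, and your sketch explicitly defers it back to \cite{RB24}. So the proposal is an accurate and well-informed outline with no wrong turns, but it does not constitute an independent proof of the cited result --- which, to be fair, the paper itself also does not attempt.
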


\item We utilize the partial observability mentioned in the previous step and combining it with the usual Lebeau-Robbiano approach, we design a control strategy driving $u$ to zero at time $T$ (see \Cref{control_2d_suf}). 
  This is achieved in two stages:
 \begin{enumerate}
 	\item First, apply the control derived in the previous step to suppress low-frequency components.
 	\item  Then, allow the system to evolve naturally without the action of control, using its inherent dissipative properties to further decrease the norm of the state.
 \end{enumerate}
\end{itemize}

\subsection{Paper organization}
The rest of the paper is organized as follows. \cref{sec:wp} is devoted to the functional framework and well-posedness of the 2-$d$ degenerate system. In \Cref{1d}, we establish that under the Kalman rank condition, the one-dimensional coupled degenerate (weak or strong) parabolic system is null controllable with an optimal estimate of the cost of the control. \Cref{lr sc} contains the proof of the main result \Cref{main theorem} which uses the previous 1-$d$ controllability result and the Lebeau-Robbiano technique. \Cref{higher_d} deals with the control problem in higher dimensions.
In \Cref{sec:open pr}, we investigate some further extensions of our main result, raise some open problems and comments. 


\section{Functional framework and well-posedness}\label{sec:wp}
Let us denote the space $H^1_{\alpha}(\Omega)=\{u\in L^2(\Omega)\mid \nabla u\cdot D\nabla u\in L^1(\Omega)\},$ endowed with the norm 
\begin{equation*}
\norm{u}^2_{H^1_{\alpha}(\Omega)}=\int_{\Omega}|u|^2+\int_{\Omega}|D^{1/2}\nabla u|^2=\int_{\Omega}|u|^2+\int_{\Omega}\left(x^{\alpha_1}|\pa_xu|^2+y^{\alpha_2}|\pa_yu|^2\right).
\end{equation*} 
It is easy to note that $H^1(\Omega)\subset H^1_{\alpha}(\Omega).$ We also denote 
\begin{equation}\label{def:H_alpha2}
H^2_{\alpha}(\Omega) := \left\{ u \in H^1_{\alpha}(\Omega) \mid \operatorname{div} (D \nabla u) \in L^2(\Omega) \right\}.
\end{equation}
%
We will consider the following norms
	\begin{equation*}
\|u\|_{H^1_{\alpha}(\Omega)} := \left( \int_{\Omega} |u|^2 + \int_{\Omega}|D^{1/2} \nabla u |^2 \right)^{1/2}, \quad u \in H^1_{\alpha}(\Omega),
	\end{equation*}
	\begin{equation*}
\|u\|_{H^{2}_{\alpha}(\Omega)} := \left( \|u\|^2_{H^1_{\alpha}(\Omega)} +\int_{\Omega} |\operatorname{div} (D \nabla u)|^2 \right)^{1/2}, \quad u \in H^2_{\alpha}(\Omega).
	\end{equation*}
Note that, for these (natural) norms, \( H^1_{\alpha}(\Omega) \), \( H^2_{\alpha}(\Omega) \), 
are Hilbert spaces, and the following continuous embedding are satisfied
	\begin{equation*}
H^1(\Omega) \hookrightarrow H^1_{\alpha}(\Omega) \hookrightarrow L^2(\Omega); \quad H^2_{\alpha}(\Omega) \hookrightarrow H^1_{\alpha}(\Omega). 
	\end{equation*}
Furthermore, we have that $ H^1_{\alpha}(\Omega) \subset H^1_{\operatorname{loc}}(\Omega) $ and $ H^2_{\alpha}(\Omega) \subset H^2_{\operatorname{loc}}(\Omega) $.

Next we define the space $H^1_{\alpha,0}(\Omega):=\overline{\mathcal D_{\alpha}}^{H^1_{\alpha}(\Omega)},$ where the space $\mathcal D_{\alpha}$ 
depends on \( \alpha = (\alpha_1, \alpha_2) \) are given by
\[
\mathcal D_{\alpha} := 
\begin{cases} 
	\{ v \in C^{\infty}(\overline\Omega) : \operatorname{supp}(v) \subset\subset \Omega \} & \text{if } \alpha_1, \alpha_2 \in [0, 1), \\
	\{ v \in C^{\infty}(\overline\Omega) : \exists \, \epsilon > 0, \; \operatorname{supp}(v) \subset  (0, 1 - \epsilon) \times (0, 1 - \epsilon) \} & \text{if } \alpha_1, \alpha_2 \in [1, 2], \\
	\{ v \in C^{\infty}(\overline\Omega) : \exists \, \epsilon > 0, \; \operatorname{supp}(v) \subset  (\epsilon, 1 - \epsilon) \times (0, 1 - \epsilon) \} & \text{if } (\alpha_1, \alpha_2) \in [0, 1) \times [1, 2], \\
	\{ v \in C^{\infty}(\overline\Omega) : \exists \, \epsilon > 0, \; \operatorname{supp}(v) \subset  (0, 1 - \epsilon) \times (\epsilon, 1 - \epsilon) \} & \text{if } (\alpha_1, \alpha_2) \in [1, 2] \times [0, 1).
\end{cases}
\]
Thanks to \cite[Lemmas 4 and 13]{FA19}, we can state the following Hardy-Poincar\'e inequality.
\begin{lemma}\label{lm hardy}
Assume that $\alpha_i \in [0,2]\setminus\{1\}$ for $i = 1, 2$. Then, there exists a constant $C = C(\alpha_1, \alpha_2)$ such that 
	\begin{equation}\label{hardy}
		\int_{\Omega} x_i^{\alpha_i - 2} |u|^2 \, \rd x_1 \, \rd x_2 \leq C 
		\int_{\Omega} x_i^{\alpha_i} \left| \frac{\partial u}{\partial x_i} \right|^2 \, \rd x_1 \, \rd x_2, \quad \forall \, u \in H^1_{\alpha, 0}(\Omega).
	\end{equation}
\end{lemma}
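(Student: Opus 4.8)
The plan is to reduce the 2-$d$ inequality to its 1-$d$ counterpart by a Fubini/slicing argument, since the weight $x_i^{\alpha_i-2}$ and the derivative $\partial u/\partial x_i$ only involve the $i$-th variable. Fix $i=1$ (the case $i=2$ is symmetric). For a.e. $x_2 \in (0,1)$, consider the slice $v(x_1) := u(x_1,x_2)$. The 1-$d$ Hardy inequality on $(0,1)$ with weight $x_1^{\alpha_1}$ — which is precisely the content of \cite[Lemmas 4 and 13]{FA19}, distinguishing the weakly degenerate case $\alpha_1 \in [0,1)$ (where a Dirichlet-type trace $v(0)=0$ is available) from the strongly degenerate case $\alpha_1 \in (1,2]$ (where no boundary condition at $0$ is needed, the inequality holding on the appropriate weighted Sobolev space) — gives, for a.e. $x_2$,
\[
\int_0^1 x_1^{\alpha_1-2}\,|v(x_1)|^2\,\rd x_1 \;\leq\; C(\alpha_1)\int_0^1 x_1^{\alpha_1}\,|v'(x_1)|^2\,\rd x_1,
\]
with $C(\alpha_1)$ independent of $x_2$. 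Integrating this over $x_2 \in (0,1)$ and applying Tonelli's theorem to swap the order of integration yields \eqref{hardy} for $i=1$, and the constant $C(\alpha_1,\alpha_2)$ can be taken to be $\max_i C(\alpha_i)$.

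The step that requires care is making the slicing argument rigorous on the space $H^1_{\alpha,0}(\Omega)$ rather than on smooth functions. I would first establish the inequality for $v \in \mathcal D_\alpha$, where the slices are smooth with the correct support/boundary behavior (e.g.\ $\operatorname{supp}(v)\subset\subset\Omega$ in the weakly degenerate case forces $v(0,x_2)=0$ on each slice; in the other cases the support condition near $x_1=0$ or away from it matches exactly the hypothesis of the 1-$d$ lemma). Then one passes to the closure: if $u\in H^1_{\alpha,0}(\Omega)$, take $u_k\in\mathcal D_\alpha$ with $u_k\to u$ in $H^1_\alpha(\Omega)$; the right-hand sides converge, so $\{u_k\}$ is Cauchy in the weighted space $L^2(\Omega; x_1^{\alpha_1-2}\rd x)$, and its limit must coincide with $u$ (using $H^1_\alpha(\Omega)\subset H^1_{\operatorname{loc}}(\Omega)$ to identify the limit a.e.\ on compact subsets of $\Omega$, hence a.e.\ on $\Omega$). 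Passing to the limit in the inequality for $u_k$ gives \eqref{hardy} for $u$.

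The main obstacle I anticipate is purely bookkeeping: the four regimes of $\alpha=(\alpha_1,\alpha_2)$ in the definition of $\mathcal D_\alpha$ each impose a different support condition, and one has to check that in every regime the $x_1$-slices of elements of $\mathcal D_\alpha$ land in exactly the 1-$d$ function class for which \cite[Lemmas 4 and 13]{FA19} applies with the weight $x_1^{\alpha_1}$ — in particular that the excluded value $\alpha_i=1$ is what forces us to invoke two separate 1-$d$ lemmas (weak vs.\ strong degeneracy) and that for $\alpha_1=2$ the strong-degeneracy Hardy inequality still holds (this is the boundary case handled in \cite{FA19}). Once the correct 1-$d$ statement is quoted in each regime, the 2-$d$ result follows by the above tensorization with no further analytical input.
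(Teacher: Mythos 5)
Your argument is correct. Note that the paper does not actually prove this lemma: it simply invokes \cite[Lemmas 4 and 13]{FA19}, which are precisely the two--dimensional Hardy--Poincar\'e inequalities for the weakly and strongly degenerate weights, and whose proofs proceed by exactly the Fubini/slicing reduction to the one--dimensional inequality that you describe. Your bookkeeping of the four regimes of $\mathcal D_\alpha$ is right in each case: when $\alpha_1\in[0,1)$ the support condition forces the $x_1$-slices into $C^\infty_c(0,1)$, so the weak one--dimensional Hardy inequality \eqref{HPIi} (in its weighted form) applies, while for $\alpha_1\in[1,2]$ the slices only vanish near $x_1=1$ and one invokes \eqref{weightespoincare1}, which indeed persists at the endpoint $\alpha_1=2$ with constant $4/(\alpha_1-1)^2$. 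One small simplification in your density step: since $\alpha_i-2\leq 0$, the weight satisfies $x_i^{\alpha_i-2}\geq 1$ on $(0,1)$, so convergence of $\{u_k\}$ in the weighted space $L^2(\Omega;x_i^{\alpha_i-2}\,\rd x)$ already implies convergence in $L^2(\Omega)$, and the identification of the limit with $u$ is immediate without appealing to $H^1_{\operatorname{loc}}$.
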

Thus, we define the equivalent norm on the Hilbert space $H^1_{\alpha, 0}(\Omega)$, when $\alpha_i \in [0,2]\setminus\{1\}$ for $i = 1, 2$ as 
\begin{align*}
	\norm{u}^2_{H^1_{\alpha, 0}(\Omega)}:=\int_{\Omega}|D^{1/2}\nabla u|^2=\int_{\Omega}\left(x^{\alpha_1}|\pa_xu|^2+y^{\alpha_2}|\pa_yu|^2\right).
\end{align*}
We denote $H^{-1}_{\alpha}(\Omega)$ as the dual of the space $H^1_{\alpha,0}(\Omega)$ with respect to the pivot space $L^2(\Omega)$ with the natural norm $\norm{u}_{H^{-1}_{\alpha}(\Omega)}=\sup\limits_{\norm{v}_{H^1_{\alpha,0}(\Omega)}=1}{\ip{u}{v}_{H^{-1}_{\alpha}(\Omega),H^1_{\alpha,0}(\Omega)}}
.$

Let us denote $D^{-1}(x,y):=\text{diag}(x^{-\alpha_1}, y^{-\alpha_2})$
and introduce the following spaces, $L^2_{\alpha^{-1}}(\Omega):=\{w\in [L^2(\Omega)]^2\mid w\cdot D^{-1}w\in L^1(\Omega)\}$ and $H_\alpha^{\dv}(\Omega):=\{w\in L^2_{\alpha^{-1}}(\Omega):\dv(w)\in L^2(\Omega)\}$. We state the following useful integration-by-parts formula.
\begin{lemma}\label{lem:int_parts}
Let $\alpha_i\in[0,2)$, $i=1,2$. For any $w\in H_{\alpha}^{\dv}(\Omega)$
\begin{equation*}
\int_{\Omega}w\cdot \nabla u=-\int_{\Omega}\dv(w)u \quad \text{for all } u\in H_{\alpha,0}^1(\Omega).
\end{equation*}
\end{lemma}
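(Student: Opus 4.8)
The plan is the standard two-step density argument. The identity is immediate for compactly supported smooth test functions, so the real work is (i) to extend it to the generators $u\in\mathcal D_\alpha$ of $H^1_{\alpha,0}(\Omega)$, whose supports may reach the degenerate sides of $\partial\Omega$, and then (ii) to pass to the limit.

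First, since the weights $x^{-\alpha_1},y^{-\alpha_2}$ are $\ge1$ on $\Omega$, one has $L^2_{\alpha^{-1}}(\Omega)\subset[L^2(\Omega)]^2$, so every $w\in H^{\dv}_\alpha(\Omega)$ satisfies, by the very definition of its distributional divergence, $\int_\Omega w\cdot\nabla\phi=-\int_\Omega(\dv w)\phi$ for all $\phi\in C^\infty_c(\Omega)$. Fix $u\in\mathcal D_\alpha$. Depending on the regime of $\alpha$, $\operatorname{supp}u$ may touch the strongly degenerate portions of $\partial\Omega$ (the parts of $\Gamma_1$ and/or $\Gamma_2$ where the corresponding $\alpha_i\ge1$), but is always separated from $\Gamma_3$, $\Gamma_4$ and from the weakly degenerate sides. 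I would introduce a cut-off $\theta_\delta\in C^\infty(\overline\Omega)$, a product of one-variable functions $\eta_\delta(x)$ and/or $\zeta_\delta(y)$ vanishing for $x\le\delta$ (resp. $y\le\delta$), equal to $1$ for $x\ge2\delta$, with $0\le\eta_\delta\le1$ and $\abs{\eta_\delta'}\le C/\delta$, keeping only the factors in the variables $x_i$ with $\alpha_i\ge1$ whose sides are met by $\operatorname{supp}u$. Then $\theta_\delta u\in C^\infty_c(\Omega)$ (its support is bounded away from all of $\partial\Omega$), so
\[
\int_\Omega\theta_\delta\,w\cdot\nabla u+\int_\Omega u\,w\cdot\nabla\theta_\delta=-\int_\Omega(\dv w)\,\theta_\delta u .
\]

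Next I would let $\delta\to0$. Since $w\cdot\nabla u$ and $(\dv w)u$ lie in $L^1(\Omega)$ ($\nabla u$ bounded, $w\in[L^2(\Omega)]^2$; $\dv w,u\in L^2(\Omega)$) and $0\le\theta_\delta\le1\to1$ a.e., dominated convergence disposes of the first and third integrals. The crux is the middle term. Its part coming from $\partial_x\theta_\delta$ is, using $0\le\zeta_\delta\le1$, Cauchy--Schwarz against the weight $x^{\alpha_1}$, and $\int_\delta^{2\delta}x^{\alpha_1}\ud x\le C\delta^{\alpha_1+1}$, bounded by
\[
\frac{C}{\delta}\,\norm{u}_{L^\infty(\Omega)}\int_{\{\delta\le x\le2\delta\}}\abs{w_1}\le C\,\norm{u}_{L^\infty(\Omega)}\,\delta^{\frac{\alpha_1-1}{2}}\left(\int_{\{\delta\le x\le2\delta\}}x^{-\alpha_1}\abs{w_1}^2\right)^{\!1/2},
\]
and this tends to $0$: the factor $\delta^{(\alpha_1-1)/2}$ stays bounded precisely because $\alpha_1\ge1$ on that side, while the weighted integral is the tail of the convergent integral $\int_\Omega x^{-\alpha_1}\abs{w_1}^2$. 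The $\partial_y\theta_\delta$-part is handled identically with $w_2$ and $y^{-\alpha_2}$. Hence the identity holds for every $u\in\mathcal D_\alpha$.

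Finally, for an arbitrary $u\in H^1_{\alpha,0}(\Omega)$ I would pick $u_n\in\mathcal D_\alpha$ with $u_n\to u$ in $H^1_\alpha(\Omega)$, so that $u_n\to u$ in $L^2(\Omega)$ and $D^{1/2}\nabla u_n\to D^{1/2}\nabla u$ in $[L^2(\Omega)]^2$. Writing $\int_\Omega w\cdot\nabla u_n=\int_\Omega(D^{-1/2}w)\cdot(D^{1/2}\nabla u_n)$ with $D^{-1/2}w\in[L^2(\Omega)]^2$ (which is exactly the membership $w\in L^2_{\alpha^{-1}}(\Omega)$), and using $\dv w\in L^2(\Omega)$ on the other side, both sides pass to the limit, proving the lemma. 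The only genuinely delicate point is the vanishing of the boundary-layer term in the middle step: this is where the exponent condition $\alpha_i\ge1$ on the sides reached by $\operatorname{supp}u$ is exactly what absorbs the singular weight $x_i^{-\alpha_i}$ built into $L^2_{\alpha^{-1}}(\Omega)$; everything else is routine.
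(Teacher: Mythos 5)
Your proof is correct. The paper itself does not give an argument for this lemma: it simply defers to Lemmas 6, 7 and 11 of \cite{FA19}, so your write-up is a genuinely self-contained alternative. The structure you use (distributional identity for $C^\infty_c(\Omega)$, cut-off near the degenerate sides for $u\in\mathcal D_\alpha$, then density in $H^1_{\alpha,0}(\Omega)$) is the natural one, and you have correctly isolated the only non-routine point: the boundary-layer term $\int_\Omega u\,w\cdot\nabla\theta_\delta$ must vanish, and it does precisely because the definition of $\mathcal D_\alpha$ only allows $\operatorname{supp}u$ to reach a side $\{x_i=0\}$ when $\alpha_i\ge1$, so the factor $\delta^{(\alpha_i-1)/2}$ produced by the weighted Cauchy--Schwarz estimate stays bounded while the tail $\int_{\{\delta\le x_i\le 2\delta\}}x_i^{-\alpha_i}|w_i|^2$ tends to $0$ by absolute continuity of the integral of the $L^1$ function $x_i^{-\alpha_i}|w_i|^2$. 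The final density step is also handled correctly: splitting $w\cdot\nabla u_n=(D^{-1/2}w)\cdot(D^{1/2}\nabla u_n)$ is exactly the pairing for which the hypotheses $w\in L^2_{\alpha^{-1}}(\Omega)$ and $u_n\to u$ in $H^1_\alpha(\Omega)$ are tailored. What your approach buys is transparency about where each hypothesis on $w$ and each case of the definition of $\mathcal D_\alpha$ is used, at the cost of a page of computation that the paper outsources.
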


\begin{proof}
The proof of this result follows directly from Lemmas 6, 7 and 11 in \cite{FA19}.
\end{proof}
Next, note that the inner product $\ip{\cdot}{\cdot}_{H^1_{\alpha, 0}(\Omega)}$ induces an isomorphism $-\Aalb:H^1_{\alpha, 0}(\Omega) \mapsto H^{-1}_{\alpha}(\Omega)$ given by 
	$\ip{-\Aalb u}{v}_{H^{-1}_{\alpha}(\Omega), H^1_{\alpha, 0}(\Omega)}:=\ip{u}{v}_{H^1_{\alpha, 0}(\Omega)}=\int_{\Omega}\left(x^{\alpha_1}\pa_xu\,\pa_x v + y^{\alpha_2}\pa_yu\,\pa_y v\right),$
see \Cref{iso app} for more details.

Let us define  $\mathcal D(\Aalb):=\left(-\Aalb\right)^{-1}(L^2(\Omega))=\{u\in H^1_{\alpha, 0}(\Omega)\mid -\Aalb u\in L^2(\Omega)\}=\{u\in H^1_{\alpha, 0}(\Omega)\mid \text{ there exists } g\in L^2(\Omega) \text{ such that } \ip{u}{v}_{H^1_{\alpha, 0}(\Omega)}=\ip{g}{v}_{L^2(\Omega)}  \text{ for all }  v \in H^1_{\alpha,0}(\Omega)\}.$
\begin{proposition}\label{well p op}
	Let $\alpha_i\in[0,2)\setminus{1}$, $i=1,2$. Then, \(\mathcal D(\Aalb)=H^2_{\alpha}(\Omega) \cap H^1_{\alpha,0}(\Omega)=\{u\in H^1_{\alpha,0}(\Omega)\mid \dv (D\nabla u)\in H^2_{\alpha}(\Omega)  \} \) and $\Aalb u= \dv(D\nabla u)$, $u\in \mathcal D(\Aalb)=H^2_{\alpha}(\Omega) \cap H^1_{\alpha,0}(\Omega).$ 
\end{proposition}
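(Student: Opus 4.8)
The plan is to establish $\mathcal D(\Aalb)=H^2_\alpha(\Omega)\cap H^1_{\alpha,0}(\Omega)$ by proving the two inclusions separately, reading off the formula $\Aalb u=\dv(D\nabla u)$ along the way; the third description in the statement is then just \eqref{def:H_alpha2} combined with the trivial embedding $H^1_{\alpha,0}(\Omega)\subset H^1_\alpha(\Omega)$.

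For the inclusion $H^2_\alpha(\Omega)\cap H^1_{\alpha,0}(\Omega)\subseteq\mathcal D(\Aalb)$, I would take $u$ in the left-hand side and set $w:=D\nabla u=(x^{\alpha_1}\partial_x u,\,y^{\alpha_2}\partial_y u)$. The first step is to check that $w\in H^{\dv}_\alpha(\Omega)$: since $\alpha_i\in[0,2)$ the weights satisfy $x^{\alpha_1},y^{\alpha_2}\le 1$ on $\Omega$, so $\int_\Omega|w|^2\le\int_\Omega\big(x^{\alpha_1}|\partial_x u|^2+y^{\alpha_2}|\partial_y u|^2\big)<\infty$ and $w\cdot D^{-1}w=x^{\alpha_1}|\partial_x u|^2+y^{\alpha_2}|\partial_y u|^2\in L^1(\Omega)$ because $u\in H^1_\alpha(\Omega)$, hence $w\in L^2_{\alpha^{-1}}(\Omega)$; moreover $\dv(w)=\dv(D\nabla u)\in L^2(\Omega)$ by definition of $H^2_\alpha(\Omega)$. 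Then \Cref{lem:int_parts} applies and gives, for all $v\in H^1_{\alpha,0}(\Omega)$, $\ip{u}{v}_{H^1_{\alpha,0}(\Omega)}=\int_\Omega w\cdot\nabla v=-\int_\Omega\dv(D\nabla u)\,v$, that is $\ip{u}{v}_{H^1_{\alpha,0}(\Omega)}=\ip{g}{v}_{L^2(\Omega)}$ with $g:=-\dv(D\nabla u)\in L^2(\Omega)$. By the very definition of $\mathcal D(\Aalb)$ this means $u\in\mathcal D(\Aalb)$, and comparing with the definition of $-\Aalb$ we read off $-\Aalb u=g$, i.e. $\Aalb u=\dv(D\nabla u)$.

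For the converse inclusion $\mathcal D(\Aalb)\subseteq H^2_\alpha(\Omega)\cap H^1_{\alpha,0}(\Omega)$, I would take $u\in\mathcal D(\Aalb)$, so that $u\in H^1_{\alpha,0}(\Omega)$ and there is $g\in L^2(\Omega)$ with $\ip{u}{v}_{H^1_{\alpha,0}(\Omega)}=\ip{g}{v}_{L^2(\Omega)}$ for all $v\in H^1_{\alpha,0}(\Omega)$. Restricting this identity to $v\in C_c^\infty(\Omega)$ --- which is contained in $\mathcal D_\alpha$, and hence in $H^1_{\alpha,0}(\Omega)$, in each of the four cases defining $\mathcal D_\alpha$ --- and using $u\in H^1_{\mathrm{loc}}(\Omega)$ to interpret the left-hand side as a distributional pairing, one obtains $\dv(D\nabla u)=-g$ in $\mathcal D'(\Omega)$; since $-g\in L^2(\Omega)$ this yields $\dv(D\nabla u)\in L^2(\Omega)$, so (using $H^1_{\alpha,0}(\Omega)\subset H^1_\alpha(\Omega)$) $u\in H^2_\alpha(\Omega)\cap H^1_{\alpha,0}(\Omega)$, and again $\Aalb u=-g=\dv(D\nabla u)$. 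The remaining identity $H^2_\alpha(\Omega)\cap H^1_{\alpha,0}(\Omega)=\{u\in H^1_{\alpha,0}(\Omega)\mid\dv(D\nabla u)\in L^2(\Omega)\}$ is immediate from \eqref{def:H_alpha2}.

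The step that is not pure bookkeeping is the verification $w=D\nabla u\in H^{\dv}_\alpha(\Omega)$, which is what makes \Cref{lem:int_parts} applicable; this is where the hypothesis $\alpha_i\in[0,2)$ genuinely enters, keeping the weights bounded on $\Omega$ so that $D\nabla u\in[L^2(\Omega)]^2$. The exclusion $\alpha_i\ne 1$ is inherited from the surrounding functional framework --- it is what (via \Cref{lm hardy}) makes $\ip{\cdot}{\cdot}_{H^1_{\alpha,0}(\Omega)}$ an equivalent inner product on $H^1_{\alpha,0}(\Omega)$ and thus $-\Aalb$ the isomorphism we started from. Apart from that, the proof is a matter of unwinding the definitions of $\Aalb$, $H^1_{\alpha,0}(\Omega)$, $H^{-1}_\alpha(\Omega)$, $H^{\dv}_\alpha(\Omega)$ and of the distributional divergence.
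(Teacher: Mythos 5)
Your proof is correct and follows essentially the same route as the paper: both inclusions are obtained exactly as in the paper's argument, with the density of $C_c^\infty(\Omega)\subset\mathcal D_\alpha$ for one direction and \Cref{lem:int_parts} applied to $w=D\nabla u\in H^{\dv}_\alpha(\Omega)$ for the other. You in fact supply one detail the paper states without proof (the verification that $D\nabla u\in L^2_{\alpha^{-1}}(\Omega)$ via the boundedness of the weights), and you correctly read the third set in the statement as $\{u\in H^1_{\alpha,0}(\Omega)\mid\dv(D\nabla u)\in L^2(\Omega)\}$.
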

\begin{proof}
We begin by proving that \(\mathcal D(\Aalb)=H\) where $H:=H^2_{\alpha}(\Omega) \cap H^1_{\alpha,0}(\Omega)$. Let us take $u\in \mathcal D(\Aalb)$. Then there is $g\in L^2(\Omega)$ such that 
\begin{equation*}
\ip{u}{v}_{H^1_{\alpha, 0}(\Omega)}=\int_{\Omega}\left(x^{\alpha_1}\pa_xu\,\pa_x v + y^{\alpha_2}\pa_yu\,\pa_y v\right)=\int_{\Omega} g v, \quad \forall \, v\in H_{\alpha,0}^1(\Omega).
\end{equation*}
By density and the fact that $\mathcal C_c^\infty(\Omega)\subset \mathcal D_{\alpha}$, we have
\begin{equation*}
\int_{\Omega}D\nabla u\cdot \nabla v =\int_{\Omega} g v, \quad \forall\, v\in \mathcal C_c^\infty(\Omega).
\end{equation*}
Hence $-\dv(D\nabla u)=g$ in $(\mathcal C_c^\infty(\Omega))^\prime$, $-\dv(D\nabla u)\in L^2(\Omega)$ and therefore $u\in H$ (cf. eq. \eqref{def:H_alpha2}). 

On the other hand, let us take $u\in H$. Note that $(D \nabla u)\in H_{\alpha}^{\dv}(\Omega).$  Applying \Cref{lem:int_parts} we obtain,
\begin{equation}\label{dis wellp}
\ip{u}{v}_{H^1_{\alpha, 0}(\Omega)}=\int_{\Omega}D\nabla u\cdot \nabla v=-\int_{\Omega}\dv(D\nabla u) v = \langle g,v \rangle_{L^2(\Omega)},\quad  \forall \, v\in H_{\alpha,0}^1(\Omega).
\end{equation}
where $g:=-\dv(D\nabla u)\in L^2(\Omega)$ by assumption. Therefore $u\in \mathcal D(\Aalb)$ as required. 

Next considering $u\in \mathcal D(\Aalb), v\in H^1_{\alpha, 0}(\Omega),$ the following identity \begin{align*}\ip{-\Aalb u}{v}_{H^{-1}_{\alpha}(\Omega), H^1_{\alpha, 0}(\Omega)}=-\int_{\Omega}\dv(D\nabla u) v =\ip{-\dv(D\nabla u)}{v}_{H^{-1}_{\alpha}(\Omega), H^1_{\alpha, 0}(\Omega)}
\end{align*}
completes the proof.
%
%
\end{proof}
From now on wards we consider $\alpha_i\in[0,2)\setminus{1}$, $i=1,2$. 

The previous result justifies the following. 
\begin{proposition}\label{semig}
	The operator \( \Aalb : \mathcal D(\Aalb)\subset L^2(\Omega) \to L^2(\Omega) \) is an infinitesimal generator of a strongly continuous semigroup of operators on \( L^2(\Omega) \).
\end{proposition}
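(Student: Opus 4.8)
The plan is to identify $-\Aalb$ with the self-adjoint operator canonically associated with the bilinear form
\[
a(u,v):=\ip{u}{v}_{H^1_{\alpha,0}(\Omega)}=\int_{\Omega}\left(x^{\alpha_1}\pa_xu\,\pa_xv+y^{\alpha_2}\pa_yu\,\pa_yv\right),\qquad u,v\in V:=H^1_{\alpha,0}(\Omega),
\]
viewed as a form on the pivot space $L^2(\Omega)$, and then to invoke the classical generation theorem for densely defined, closed, symmetric, coercive forms (equivalently, to combine self-adjointness with the Lumer--Phillips theorem). This route has the bonus of also yielding that the semigroup is analytic and contractive, which is convenient for the dissipativity estimates used later in the Lebeau--Robbiano argument.

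First I would check the three structural hypotheses on the pair $(a,V)$. \emph{Density:} since $\mathcal C_c^\infty(\Omega)\subset\mathcal D_\alpha\subset V$, the space $V$ is dense in $L^2(\Omega)$; moreover $\mathcal C_c^\infty(\Omega)\subset H^2_\alpha(\Omega)\cap H^1_{\alpha,0}(\Omega)=\mathcal D(\Aalb)$ by \Cref{well p op}, so $\mathcal D(\Aalb)$ is itself dense in $L^2(\Omega)$. \emph{Closedness:} $V$ is by construction a closed subspace of the Hilbert space $H^1_\alpha(\Omega)$, and by the Hardy--Poincaré inequality of \Cref{lm hardy} the form norm $a(\cdot,\cdot)^{1/2}$ coincides with the equivalent norm $\norm{\cdot}_{H^1_{\alpha,0}(\Omega)}$, so $(a,V)$ is a closed symmetric form. \emph{Continuity and coercivity:} $|a(u,v)|\le\norm{u}_{H^1_{\alpha,0}(\Omega)}\norm{v}_{H^1_{\alpha,0}(\Omega)}$ by Cauchy--Schwarz, $a(u,u)=\norm{u}_{H^1_{\alpha,0}(\Omega)}^2$ gives $V$-ellipticity, and the continuous embedding $V\hookrightarrow L^2(\Omega)$ provides a constant $c_0>0$ with $a(u,u)\ge c_0\norm{u}_{L^2(\Omega)}^2$, so that no spectral shift is needed.

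By the first representation theorem for closed symmetric forms, the operator $\mathcal B$ defined by
\[
\mathcal D(\mathcal B)=\{u\in V:\exists\, g\in L^2(\Omega)\ \text{ with }\ a(u,v)=\ip{g}{v}_{L^2(\Omega)}\ \ \forall v\in V\},\qquad \mathcal Bu:=g,
\]
is self-adjoint and nonnegative on $L^2(\Omega)$ with bounded inverse. Comparing this with the description of $\mathcal D(\Aalb)$ given above, one reads off $\mathcal B=-\Aalb$; hence $\Aalb$ is self-adjoint and, for $u\in\mathcal D(\Aalb)$,
\[
\ip{\Aalb u}{u}_{L^2(\Omega)}=-a(u,u)\le -c_0\norm{u}_{L^2(\Omega)}^2\le 0,
\]
so $\Aalb$ is dissipative. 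Finally, Lax--Milgram applied to $a(u,v)+\ip{u}{v}_{L^2(\Omega)}=\ip{f}{v}_{L^2(\Omega)}$ for arbitrary $f\in L^2(\Omega)$ shows that $I-\Aalb:\mathcal D(\Aalb)\to L^2(\Omega)$ is onto, and then the Lumer--Phillips theorem gives that $\Aalb$ generates a strongly continuous (in fact analytic, contraction) semigroup on $L^2(\Omega)$.

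I do not expect a genuine obstacle here: the content is entirely captured by the form-theoretic machinery, and the only two points requiring attention are the coercivity estimate -- which is exactly where the Hardy--Poincaré inequality of \Cref{lm hardy}, and hence the hypothesis $\alpha_i\in[0,2)\setminus\{1\}$, enters -- and the identification $\mathcal B=-\Aalb$, which is immediate from the description of $\mathcal D(\Aalb)$ already recorded before \Cref{well p op}. If one prefers to avoid quoting the representation theorem, the same conclusion follows by verifying directly that $\Aalb$ is symmetric and that $I-\Aalb$ is bijective (whence $\Aalb$ is self-adjoint), and then applying the spectral theorem, since a nonpositive self-adjoint operator always generates an analytic $C_0$-semigroup.
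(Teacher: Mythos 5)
Your proof is correct and follows essentially the same route as the paper: dissipativity of $\Aalb$ from the weak formulation, surjectivity of $I-\Aalb$ via Lax--Milgram/Riesz representation using the coercivity supplied by the Hardy--Poincar\'e inequality of \Cref{lm hardy}, and then Lumer--Phillips. The form-theoretic packaging and the additional observations (self-adjointness, analyticity) are extras the paper does not record, but the core argument is the same.
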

\begin{proof}
Taking $u=v$ in \eqref{dis wellp} where $u\in\mathcal{D}(\Aalb)$, we get $\ip{\Aalb u}{u}_{L^2(\Omega)}\leq 0.$ Hence $\Aalb$ is dissipative. 

Next, we show that $\operatorname{Ran}(\boldsymbol{I}-\Aalb) = L^2(\Omega)$. Let $g \in L^2(\Omega)$ be given. Thanks to \Cref{lm hardy}, the inner product $\langle \cdot, \cdot \rangle_{L^2(\Omega)} + \langle \cdot, \cdot \rangle_{H^1_{\alpha}(\Omega)}$ is equivalent to $\langle \cdot, \cdot \rangle_{ H_{\alpha,0}^1(\Omega)}$ in $ H_{\alpha,0}^1(\Omega)$. As $g \in H^{-1}_{\alpha}(\Omega)$, using the Riesz representation theorem, we obtain that there exists a unique $u \in  H_{\alpha,0}^1(\Omega)$ such that
\begin{equation*}
	\int_{\Omega}uv+	\int_{\Omega}D\nabla u\cdot \nabla v =\int_{\Omega} g v, \quad \forall v\in H_{\alpha,0}^1(\Omega) .
\end{equation*}
This implies that $u-\dv(D\nabla u)=g$ in $(\mathcal C_c^\infty(\Omega))^\prime$. Therefore  $u \in\mathcal D(\Aalb), \text{ and }  (u-\Aalb u) =g\in L^2(\Omega).$ Thus, we have proved that $\Aalb$ is maximal dissipative and hence forms an infinitesimal generator of a strongly continuous semigroup of operators on \( L^2(\Omega) \). This ends the proof.
\end{proof}
Next, we introduce the unbounded operator corresponding to the coupled system \eqref{DCP}
\begin{align*}
	\AalAb u= {\mathcal{I}_n \mathbf{D} u+A u}
	\quad\textnormal{with}\quad  \mathcal D(\AalAb)=(H^2_{\alpha}(\Omega) \cap H^1_{\alpha,0}(\Omega))^n. 
\end{align*}
As $A$ is a constant matrix, thanks to \Cref{semig}, the full operator $ \AalAb: \mathcal D(\AalAb)\subset L^2(\Omega)^n \to L^2(\Omega)^n$, is an infinitesimal generator of a strongly continuous semigroup of operators on $ L^2(\Omega)^n$.

Let us write the adjoint system of the homogeneous version of \eqref{DCP}--\eqref{bd} as follows
\begin{equation}\label{adjintro}
	\begin{cases}
		\partial_t \sigma+\mathcal{I}_n \mathbf{D} \sigma+A^* \sigma 
		=0 &  \text{ in } (0,T)\times \Omega,\\
		\sigma(T)=\sigma_T & \text{ in } \Omega,
	\end{cases}
\end{equation}
where $A^*$ denotes the adjoint matrix of $A$ and with the following boundary conditions
{\begin{equation}\label{bd2}
		\begin{cases}
			\sigma(t)=0 \text{ on } \Sigma \,\, \text{ if } \alpha_1, \alpha_2 \in [0,1),\\
			\sigma(t)=0 \text{ on } \Sigma_{34} \text{ and }  \mathbf{P} \sigma(t)=0 \text{ on }  \Sigma_1, \,  \mathbf{P} \sigma(t)=0 \text{ on } \Sigma_2 \,\, \textnormal{ if }\alpha_1,\alpha_2\in [1,2],\\
			\sigma(t)=0\text{ on } \Sigma_1, \sigma(t)=0 \text{ on } \Sigma_{34} \text{ and }  \mathbf{P} \sigma(t)=0 \text{ on } \Sigma_2 \,\, \textnormal{ if } \alpha_1 \in [0,1), \,\alpha_2\in [1,2],\\
			\sigma(t)=0 \text{ on } \Sigma_{234} \text{ and } \mathbf{P} \sigma(t)=0\text{ on }  \Sigma_1 \,\, \textnormal{ if }  \alpha_1\in [1,2], \, \alpha_2\in [0,1).
		\end{cases}
\end{equation}}
We have the following well-posedness result 
\begin{proposition}
	Let $T>0$ and assume that $ \sigma_T \in H^1_{\alpha,0}(\Omega)^n $. 
	 Then, system \eqref{adjintro}--\eqref{bd2} has a unique strong solution
		\begin{equation*}
	\sigma \in L^2(0,T; (H^2_{\alpha}(\Omega)\cap H^1_{\alpha,0}(\Omega))^n) \cap \mathcal C^0([0,T]; H^1_{\alpha,0}(\Omega)^n).
		\end{equation*}
	In addition, there exists a positive constant \( C \) such that
	\begin{equation*}
	\|\sigma\|_{L^2(0,T;(H^2_{\alpha}(\Omega)\cap H^1_{\alpha,0}(\Omega))^n)} + \|\sigma\|_{\mathcal C^0([0,T]; H^1_{\alpha,0}(\Omega)^n)} \leq C  \|\sigma_T\|_{H^1_{\alpha,0}(\Omega)^n}.
	\end{equation*}
\end{proposition}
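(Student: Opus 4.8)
The plan is to reduce the backward problem \eqref{adjintro}--\eqref{bd2} to a forward Cauchy problem for the operator already analyzed in \Cref{well p op} and \Cref{semig}, and then run a Galerkin scheme to obtain the ``one extra derivative'' regularity. First I would set $\tilde\sigma(t,\cdot):=\sigma(T-t,\cdot)$, so that \eqref{adjintro} becomes $\partial_t\tilde\sigma=\mathcal I_n\mathbf D\tilde\sigma+A^*\tilde\sigma$ with $\tilde\sigma(0)=\sigma_T$, subject to the boundary conditions \eqref{bd2}. The point to record here is that, in each of the four parameter cases, these boundary conditions are exactly those encoded in the space $H^1_{\alpha,0}(\Omega)$: homogeneous Dirichlet on the non-degenerate sides $\Gamma_3,\Gamma_4$, and the conormal condition $D\nabla\sigma\cdot\nu=0$ on the degenerate sides, which is the natural boundary condition of the form and is built in through \Cref{lem:int_parts}. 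Hence the relevant unbounded operator is $\boldsymbol{\mathcal A}_{\alpha,A^*}$ with domain $(H^2_\alpha(\Omega)\cap H^1_{\alpha,0}(\Omega))^n$, and it suffices to prove the assertion for the forward problem $v'=\boldsymbol{\mathcal A}_{\alpha,A^*}v$, $v(0)=v_0\in H^1_{\alpha,0}(\Omega)^n$.

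For this I would use Galerkin approximations $v^N$ in $V_N:=(\mathrm{span}\{e_1,\dots,e_N\})^n$, where $\{e_k\}$ is a Hilbert basis of $H^1_{\alpha,0}(\Omega)$ and the truncation is the $H^1_{\alpha,0}$-orthogonal projection, so that $v^N(0)\to v_0$ in $H^1_{\alpha,0}(\Omega)^n$ with $\|v^N(0)\|_{H^1_{\alpha,0}}\le\|v_0\|_{H^1_{\alpha,0}}$. Two energy identities do the job. Testing the Galerkin equation with $v^N$ and using the Hardy--Poincaré coercivity of \Cref{lm hardy} together with the boundedness of $A^*$ gives, via Grönwall, a bound for $v^N$ in $L^\infty(0,T;L^2(\Omega)^n)\cap L^2(0,T;H^1_{\alpha,0}(\Omega)^n)$ in terms of $\|v_0\|_{L^2(\Omega)^n}$. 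Testing with $\partial_t v^N\in V_N$ and using symmetry of the form (so $a(v^N,\partial_t v^N)=\tfrac12\tfrac{d}{dt}\|D^{1/2}\nabla v^N\|_{L^2}^2$) yields, after absorbing $\langle A^* v^N,\partial_t v^N\rangle$ and a second Grönwall argument fed by the previous $L^2$-bound, a bound for $v^N$ in $L^\infty(0,T;H^1_{\alpha,0}(\Omega)^n)$ and for $\partial_t v^N$ in $L^2(0,T;L^2(\Omega)^n)$, all controlled by $\|v_0\|_{H^1_{\alpha,0}(\Omega)^n}$. Passing to the weak/weak-$*$ limit produces $v$ with $v\in L^\infty(0,T;H^1_{\alpha,0}(\Omega)^n)$, $\partial_t v\in L^2(0,T;L^2(\Omega)^n)$ and $a(v(t),w)=\langle -\partial_t v(t)+A^* v(t),w\rangle_{L^2}$ for all $w\in H^1_{\alpha,0}(\Omega)^n$ and a.e.\ $t$; since the right-hand side is an $L^2(\Omega)^n$-function, \Cref{well p op} upgrades this to $v(t)\in(H^2_\alpha(\Omega)\cap H^1_{\alpha,0}(\Omega))^n$ with the componentwise identity $\Aalb v=-\partial_t v+A^* v$, giving the $L^2(0,T;(H^2_\alpha\cap H^1_{\alpha,0})^n)$ regularity and its bound. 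Continuity $v\in C^0([0,T];H^1_{\alpha,0}(\Omega)^n)$ then follows from the Lions--Magenes interpolation lemma applied to $v\in L^2(0,T;\mathcal D(\Aalb)^n)$, $\partial_t v\in L^2(0,T;L^2(\Omega)^n)$, using $[\mathcal D(\Aalb),L^2(\Omega)]_{1/2}=\mathcal D((-\Aalb)^{1/2})=H^1_{\alpha,0}(\Omega)$. Uniqueness is the standard energy estimate on the difference of two solutions, and reverting the time change gives the statement for $\sigma$.

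I would also mention the shorter, purely semigroup-theoretic route: $\Aalb$ is self-adjoint and $\le 0$ (it is minus the operator of a symmetric, continuous, coercive form), hence generates an analytic semigroup on $L^2(\Omega)$; the coupling term is a bounded perturbation, bounded also on $H^1_{\alpha,0}(\Omega)^n=\mathcal D((-\Aalb)^{1/2})^n$, so $\boldsymbol{\mathcal A}_{\alpha,A^*}$ generates an analytic semigroup too, and for $v_0$ in the form domain one checks the smoothing estimate directly --- spectral calculus gives $\int_0^T\|\Aalb e^{t\Aalb}v_0\|_{L^2}^2\,dt\le\tfrac12\|(-\Aalb)^{1/2}v_0\|_{L^2}^2$ for the diagonal part, and a Volterra/Duhamel iteration handles the coupling.

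I do not expect a genuine obstacle here: all the degeneracy-specific input has been isolated in earlier results --- the identification $\mathcal D(\Aalb)=H^2_\alpha(\Omega)\cap H^1_{\alpha,0}(\Omega)$ in \Cref{well p op}, the weighted integration-by-parts formula \Cref{lem:int_parts}, the Hardy--Poincaré inequality \Cref{lm hardy}, and the generation property \Cref{semig}. The only points needing a little care are (i) verifying once and for all that the four boundary-condition cases in \eqref{bd2} are all subsumed by the single space $H^1_{\alpha,0}(\Omega)$, so that the argument is case-independent, and (ii) bookkeeping the (harmless) dependence of $C$ on $T$, $\alpha$ and $A$ through the two Grönwall inequalities.
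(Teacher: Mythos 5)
The paper does not actually prove this proposition: it states only that ``the proof of this result is standard and we omit it.'' Your Galerkin/semigroup argument is a correct and complete instantiation of that standard proof, and you have correctly isolated exactly the degeneracy-specific ingredients the paper supplies elsewhere (the identification $\mathcal D(\Aalb)=H^2_\alpha(\Omega)\cap H^1_{\alpha,0}(\Omega)$, the weighted integration by parts, the Hardy--Poincar\'e coercivity, and the generation property), so nothing beyond bounded-perturbation bookkeeping is needed for the coupling matrix $A^*$. Two small points worth making explicit if you wrote this out in full: first, it is cleanest to take the Galerkin basis to be the eigenbasis of $-\Aalb$ (self-adjoint, positive, associated with the coercive form), so that the truncation is simultaneously orthogonal in $L^2(\Omega)$ and in $H^1_{\alpha,0}(\Omega)$ and both energy estimates close without extra constants; second, the observation that all four boundary-condition cases of \eqref{bd2} are subsumed by the single space $H^1_{\alpha,0}(\Omega)$ (Dirichlet conditions encoded in the space, conormal conditions arising as natural boundary conditions of the form) is exactly what makes the argument case-independent, and is consistent with the paper's definition of $\mathcal D_\alpha$ and \Cref{well p op}.
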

The proof of this result is standard and we omit it. This motivates to define a weaker notion of solution for \eqref{DCP}--\eqref{bd}. 
\begin{definition}[Solution by transposition]
	 Let \( u_0 \in H^{-1}_{\alpha}(\Omega)^n \), \( q \in L^2(0,T; L^2(\pa \Omega)^m) \) be given. A function \( u \in \mathcal C^0([0,T]; H^{-1}_{\alpha}(\Omega)^n) \) is said to be a solution by transposition to \eqref{DCP}--\eqref{bd} if, for each \( \sigma_{\tau} \in H^1_{\alpha,0}(\Omega)^n) \) and for all $\tau\in (0,T]$ one has
	\begin{align*}
		\begin{cases}
				 \ip{u(\tau)}{\sigma_{\tau}}_{{H^{-1}_{\alpha}(\Omega)^n, H^1_{\alpha, 0}(\Omega)^n}}  = \langle u_0, \sigma( 0,\cdot) \rangle_{H^{-1}_{\alpha}(\Omega)^n, H^1_{\alpha, 0}(\Omega)^n}\vspace{0.1cm} \\
				\hspace{4.1cm}- \displaystyle \int_0^\tau\int_{\pa \Omega} \left(\mathbf{1}_{\gamma} B^*(  \mathbf{P} \sigma(t)), q(t) \right)_{\cplx^m} \,\rd S \,  \rd t \hspace{0.5cm} \text{ if } 0\leq \alpha_1<1,\vspace{0.2cm}\\
				\ip{u(\tau)}{\sigma_{\tau}}_{{H^{-1}_{\alpha}(\Omega)^n, H^1_{\alpha, 0}(\Omega)^n}}  = \langle u_0, \sigma( 0,\cdot) \rangle_{H^{-1}_{\alpha}(\Omega)^n, H^1_{\alpha, 0}(\Omega)^n}\vspace{0.1cm}\\ \hspace{4.1cm} \displaystyle -\int_0^\tau \int_{\pa\Omega} \left(\mathbf{1}_{\gamma} B^* \sigma(t), q(t) \right)_{\cplx^m} \, \rd S \, \rd t \hspace{1cm}  \text{ if } 1<\alpha_1<2,
		\end{cases}
	\end{align*}
where \( \sigma \in L^2(0,\tau; (H^2_{\alpha}(\Omega)\cap H^1_{\alpha,0}(\Omega))^n) \cap\mathcal C^0([0,\tau]; H^1_{\alpha,0}(\Omega)^n) \) is    the solution of \eqref{adjintro}--\eqref{bd2} with $\sigma(\tau,\cdot)=\sigma_\tau(\cdot).$
\end{definition}
Using the admissibility condition of the control operator (see \Cref{ad} in \Cref{sec:ad}) and following classical arguments (see eg. \cite[Section 2.3]{Cor08}), we can prove the existence and uniqueness for the solutions to \eqref{DCP}--\eqref{bd} in the above sense. 
\begin{proposition}
	Let $T>0$. for every $u_0\in H^{-1}_{\alpha}(\Omega)^n$, $q\in L^2(0,T;L^2(\pa \Omega)^m),$ equation \eqref{DCP}--\eqref{bd} possesses a unique solution $u\in \mathcal C^0([0,T]; H^{-1}_{\alpha}(\Omega)^n)$ and satisfies the following continuity estimate
	\begin{equation}\label{con est}
		\norm{u}_{\mathcal C^0([0,T]; H^{-1}_{\alpha}(\Omega)^n)}\leq Ce^{CT}\left(\norm{u_0}_{H^{-1}_{\alpha}(\Omega)^n}+\norm{q}_{L^2(0,T;L^2(\pa \Omega)^m)}\right),
	\end{equation}
	for some $C>0.$
\end{proposition}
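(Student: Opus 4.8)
The proof follows the classical transposition (duality) method, so I would reduce everything to one key input: the admissibility estimate of \Cref{ad} for the adjoint control operator. Fix $u_0\in H^{-1}_{\alpha}(\Omega)^n$ and $q\in L^2(0,T;L^2(\pa\Omega)^m)$. For each $\tau\in(0,T]$ the plan is to define the candidate state $u(\tau)$ as the Riesz representative of the linear functional $\Lambda_\tau:H^1_{\alpha,0}(\Omega)^n\to\cplx$ given by the right-hand side of the transposition identity, namely
\[
\Lambda_\tau(\sigma_\tau):=\ip{u_0}{\sigma(0,\cdot)}_{H^{-1}_{\alpha}(\Omega)^n,H^1_{\alpha,0}(\Omega)^n}-\int_0^\tau\!\!\int_{\pa\Omega}\big(\mathbf{1}_{\gamma}B^*(\mathbf{P}\sigma(t)),q(t)\big)_{\cplx^m}\,\rd S\,\rd t
\]
in the weakly degenerate case $0\le\alpha_1<1$, and with $\mathbf{P}\sigma$ replaced by $\sigma$ when $1<\alpha_1<2$, where $\sigma$ solves the backward adjoint problem \eqref{adjintro}--\eqref{bd2} with terminal datum $\sigma(\tau,\cdot)=\sigma_\tau$. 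Since $H^{-1}_{\alpha}(\Omega)^n$ is by definition the dual of the Hilbert space $H^1_{\alpha,0}(\Omega)^n$ with pivot $L^2(\Omega)^n$, once $\Lambda_\tau$ is shown to be bounded the Riesz theorem produces a unique $u(\tau)\in H^{-1}_{\alpha}(\Omega)^n$ with $\ip{u(\tau)}{\sigma_\tau}=\Lambda_\tau(\sigma_\tau)$ and $\|u(\tau)\|_{H^{-1}_{\alpha}(\Omega)^n}=\|\Lambda_\tau\|$; it then remains to prove continuity in time, the bound \eqref{con est}, and uniqueness.

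For the boundedness of $\Lambda_\tau$, linearity is clear from linearity of $\sigma_\tau\mapsto\sigma$. The first term is bounded by $\|u_0\|_{H^{-1}_{\alpha}(\Omega)^n}\,\|\sigma(0,\cdot)\|_{H^1_{\alpha,0}(\Omega)^n}\le Ce^{C\tau}\|u_0\|_{H^{-1}_{\alpha}(\Omega)^n}\|\sigma_\tau\|_{H^1_{\alpha,0}(\Omega)^n}$ by the well-posedness estimate for \eqref{adjintro}--\eqref{bd2} recalled just above, the factor $e^{C\tau}$ absorbing the lower-order coupling $A^*$. For the boundary term, Cauchy--Schwarz on $(0,\tau)\times\pa\Omega$ and boundedness of $B^*$ reduce matters to controlling $\|\mathbf{1}_{\gamma}\mathbf{P}\sigma\|_{L^2(0,\tau;L^2(\pa\Omega)^n)}$ (respectively $\|\mathbf{1}_{\gamma}\sigma\|_{L^2(0,\tau;L^2(\pa\Omega)^n)}$), which is exactly the admissibility/hidden-regularity estimate \Cref{ad}, yielding a bound $C(\tau)\|\sigma_\tau\|_{H^1_{\alpha,0}(\Omega)^n}$ with $C(\tau)\le Ce^{C\tau}$. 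Hence $\|\Lambda_\tau\|\le Ce^{C\tau}\big(\|u_0\|_{H^{-1}_{\alpha}(\Omega)^n}+\|q\|_{L^2(0,T;L^2(\pa\Omega)^m)}\big)$, so $u(\tau)$ exists with the same bound; taking $\tau=0$ gives $\Lambda_0(\sigma_0)=\ip{u_0}{\sigma_0}$, i.e. $u(0)=u_0$, and the supremum over $\tau\in[0,T]$ yields \eqref{con est} as soon as continuity in time is established.

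For continuity of $\tau\mapsto u(\tau)$ into $H^{-1}_{\alpha}(\Omega)^n$ and for uniqueness I would argue by density and linearity, following \cite[Section 2.3]{Cor08}. Approximate $u_0$ in $H^{-1}_{\alpha}(\Omega)^n$ by data $u_0^k\in\mathcal D(\AalAb)$ and $q$ in $L^2(0,T;L^2(\pa\Omega)^m)$ by smooth controls $q^k$; for such data one builds strong solutions $u^k$ (via a lifting of the boundary data plus the semigroup of \Cref{semig}) lying in $\mathcal C^0([0,T];H^{-1}_{\alpha}(\Omega)^n)$, and these coincide with the transposition solutions after integrating by parts in $(t,x)$ using \Cref{lem:int_parts} and the boundary conditions \eqref{bd}, \eqref{bd2}. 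Applying the bound just obtained to the differences $u^k-u^\ell$ shows $(u^k)$ is Cauchy in $\mathcal C^0([0,T];H^{-1}_{\alpha}(\Omega)^n)$; its limit is the required solution $u$, which therefore lies in that space and satisfies \eqref{con est}. Uniqueness is immediate: a transposition solution with $u_0=0$ and $q=0$ satisfies $\ip{u(\tau)}{\sigma_\tau}=0$ for all $\sigma_\tau\in H^1_{\alpha,0}(\Omega)^n$, and this test space is dense in the pivot $L^2(\Omega)^n$, forcing $u(\tau)=0$ in $H^{-1}_{\alpha}(\Omega)^n$ for every $\tau$.

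The hard part is genuinely the admissibility estimate \Cref{ad}, everything else being the standard Lions--Coron machinery. On the degenerate face $\gamma=\{0\}\times\omega$ the trace entering the transposition formula is either the Dirichlet trace $\sigma|_\gamma$ (strongly degenerate, Neumann-type control) or the co-normal flux $\mathbf{P}\sigma=D\nabla\sigma\cdot\nu$ at $x=0$ (weakly degenerate, Dirichlet-type control); because the diffusion coefficient $x^{\alpha_1}$ vanishes there, neither trace is controlled by the $H^1_{\alpha,0}$-norm through the usual trace theorem. Establishing \Cref{ad} thus requires a multiplier argument adapted to the degeneracy together with the time-averaged $H^2_\alpha$-regularity of the adjoint solution (in the spirit of \cite{FA19}), in order to make sense of and bound that trace in $L^2((0,\tau)\times\gamma)$ with constant $\le Ce^{C\tau}$.
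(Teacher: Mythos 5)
Your proposal is correct and follows exactly the route the paper indicates: the paper omits the proof, stating only that it follows from the admissibility condition of \Cref{ad} together with the classical transposition arguments of \cite[Section 2.3]{Cor08}, which is precisely the Riesz-representation/density machinery you spell out. The only minor remark is that the paper establishes \Cref{ad} not by a fresh multiplier argument but by Fourier decomposition in the $y$-direction reducing to the known $1$-$d$ admissibility estimates of \cite{galo2023boundary,galo2024boundary}; since you correctly treat \Cref{ad} as an input, this does not affect your argument.
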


\section{A one-dimensional boundary controllability problem for degenerate systems}\label{1d}
In this section, our primary objective is to establish the null controllability of the one-dimensional degenerate coupled parabolic system \eqref{oned}, as stated in \Cref{null control 1}. 

\subsection{Functional setting for the one dimensional case}\label{sec_fn}
We recall some foundational aspects of the well-known functional framework associated with the corresponding one-dimensional degenerate spatial operator  which can be found in \cite{gueye2014exact} and \cite{CMV20} for the weak and strong degenerate cases, respectively.
\subsubsection{Weakly degenerate case ($0\leq\alpha_1<1$)}
For $0\leq \alpha_1 <1$, let us define 
\begin{equation*}
	H^1_{\alpha_1}(0,1):=\{ u\in L^2(0,1) \, \mid u \text{ is absolutely continuous on } [0,1], \, x^{ \frac{\alpha_1}{2}} \pa_x u \in L^2(0,1)\},\end{equation*}
endowed with the inner product 
\begin{equation}	\label{ip1}
\ip{u}{v}_{H^1_{\alpha_1}(0,1)}:=\int_0^1 \left( uv+ x^{\alpha_1} \pa_x u \, \pa_x v\right) \, \textrm{\textrm{d}}x, \qquad u, v\in H^1_{\alpha_1}(0,1).
\end{equation}
We denote $||\cdot||_{H^1_{\alpha_1}(0,1)}$ the corresponding norm and note that $(H^1_{\alpha_1}(0,1),(\cdot,\cdot)_{H^1_{\alpha_1}(0,1)})$ is a Hilbert space.
Let us also define 
\begin{equation*}
H^1_{ \alpha_1,0}(0,1):=\{ u\in H^1_{ \alpha_1}(0,1) \, | \, u(0)=u(1)=0\},
\end{equation*}
and
\begin{equation*}
	H^2_{\alpha_1}(0, 1) := \left\{ u \in H^1_{\alpha_1}(0, 1) \mid x^{\alpha_1} \pa_x u \in H^1(0, 1) \right\}.
\end{equation*}
We recall the following Hardy-Poincar\'e inequality:
\begin{equation*}
\int_0^1 x^{ \alpha_1-2} \left| u \right|^2 \, \rd x\leq \dfrac{4}{(1- \alpha_1)^2}	\int_0^1 x^{\alpha_1} \left| \pa_x u \right|^2 \, \rd x , \qquad \text{ for all } u\in  C^{\infty}_{ c}(0,1).
\end{equation*}
Using the fact that $x^{\alpha_1-2}\geqslant 1$ for $x\in (0,1)$, we obtain (since $H^1_{\alpha_1,0}(0,1)$ is the closure of $C^{\infty}_c(0,1)$ endowed with the $H^1_{\alpha_1}(0,1)$ norm) 
\begin{equation}\label{HPIi}
	\int_0^1 \left| u \right|^2 \, \textrm{\textrm{d}}x \leq \dfrac{4}{(1- \alpha_1)^2} \int_0^1 x^{\alpha_1} |\pa_x u|^2 \, \textrm{\textrm{d}}x, \qquad \text{ for all } u \in H^1_{ \alpha_1,0}(0,1).
\end{equation}
Finally, thanks to \eqref{HPIi}, we define the following norm on $H^1_{ \alpha_1,0}(0,1)$, which is equivalent to the restriction of the norm $||\cdot||_{H^1_{\alpha_1}}$ on $H^1_{ \alpha_1,0}(0,1)$ 
\begin{equation}\label{eq_1d}
\| u \|^2_{H^1_{ \alpha_1,0}}:=\left( \int_0^1 x^{\alpha_1} | \pa_x u |^2 \, \rd x \right).
\end{equation}
Let $ H^{-1}_{\alpha}(0, 1)$ be the dual space of $ H^1_{\alpha, 0}(0, 1) $ with respect to the pivot space $ L^2(0, 1) $, endowed with the natural norm:
\begin{equation}\label{h-1}
	\|f\|_{H^{-1}_{\alpha_1}(0,1)} := \sup_{\|g\|_{H^1_{\alpha_1, 0}(0,1)} = 1} \langle f, g \rangle_{H^{-1}_{\alpha}(0,1), H^1_{\alpha, 0}(0,1)}.
\end{equation}
We define the unbounded operator $\mathcal A_{\alpha_1} :\mathcal D(\mathcal A_{\alpha_1}) \subset L^2(0,1) \rightarrow L^2(0,1)$ by
\begin{equation*}
\begin{cases}
	\mathcal A_{\alpha_1}u:= \pa_x(x^{\alpha_1} \pa_x u), \text{ for all } u \in \mathcal D(\mathcal A_{\alpha_1}),\\ 
\mathcal	D(\mathcal A_{\alpha_1}):=\{ u\in H^1_{\alpha_1,0}(0,1) \, | \, x^{\alpha_1} \pa_x u \in H^1(0,1) \}= H^2_{\alpha_1}(0, 1) \cap H^1_{\alpha_1,0}(0, 1).
\end{cases}
\end{equation*}
One can prove that the operator $\mathcal{A}_{\alpha_1}$ is
the infinitesimal generator of a strongly continuous semigroup of contractions on $L^2(0, 1)$.
\subsubsection{Strongly degenerate case ($1<\alpha_1< 2$)}\label{sec_str}
 Let us introduce the weighted Sobolev space
\begin{eqnarray}\label{h1_a}
	H_{\alpha_1}^1(0,1) := \left\{ u \in L^2(0,1) \mid \, u \textrm{ is locally absolutely continuous on } (0,1], 
	  x^{\frac{\alpha_1}{2}} \pa_x u \in L^2(0,1) \right\}, 
\end{eqnarray} endowed with the norm
\begin{equation*}
	\|u\|_{H^1_{\alpha_1}(0,1)}^2 := \|u\|_{L^2(0,1)}^2 + \|x^{\frac{\alpha_1}{2}} \pa_x u\|_{L^2(0,1)}^2, \quad \forall u \in H^1_{\alpha_1}(0,1).
\end{equation*} 
We remark that $H^1_{\alpha_1}(0,1)$ is a Hilbert space with the scalar product \eqref{ip1}. Let us define the following Sobolev spaces
\begin{equation*}
H^1_{\alpha_1,0}(0, 1) := \left\{ u \in H^1_{\alpha_1}(0, 1) \mid u(1) = 0 \right\},
\end{equation*}
and
\begin{equation*}
H^2_{\alpha_1}(0, 1) := \left\{ u \in H^1_{\alpha_1}(0, 1) \mid x^{\alpha_1} \pa_x u \in H^1(0, 1) \right\}.
\end{equation*}
We recall the following Hardy-Poincar\'e inequality (see \cite[Proposition 3.3]{galo2024boundary}) for the case $1<\alpha_1<2$, 
\begin{equation}
\label{weightespoincare1}
\int_0^1 |u|^2 \, \rd x \leq \int_0^1 \frac{|u|^2}{x^{2 - \alpha_1}} \, \rd x \leq \frac{4}{(\alpha_1-1)^2} \int_0^1 x^{\alpha_1 } |\pa_x u|^2 \, \rd x, \quad \text{ for all } u \in H^1_{ \alpha_1,0}(0,1).
\end{equation}
We can define the equivalent norm on $H^1_{ \alpha_1,0}(0,1)$ as \eqref{eq_1d} and the space $H^{-1}_{\alpha_1}(0,1)$ as in \eqref{h-1}.

Next, let us define the operator $\mathcal A_{\alpha_1} : \mathcal D(\mathcal A_{\alpha_1}) \subset L^2(0, 1) \to L^2(0, 1) $ by
\begin{equation*}
\begin{cases}
	 \mathcal A_{\alpha_1} u := \pa_x(x^{\alpha_1} \pa_x u), \text{ for all } u \in \mathcal D(\mathcal A_{\alpha_1}), \\
	\mathcal D(\mathcal A_{\alpha_1}) := \left\{ u \in H^1_{\alpha_1,0}(0, 1) \mid x^{\alpha_1} \pa_x u \in H^1(0, 1), (x^{\alpha_1}u)\in H^1_0(0,1) \right\} = H^2_{\alpha_1}(0, 1) \cap H^1_{\alpha_1,0}(0, 1).
\end{cases}
\end{equation*}
Note that, if $ u \in D(\mathcal A_{\alpha_1}) $, then $ u $ satisfies the Neumann boundary condition $ (x^{\alpha_1} \pa_x u)(0) = 0 $ at $ x = 0$ and the Dirichlet boundary condition $ u(1) = 0 $ at $ x = 1 $. 
\begin{remark}
	Note that, for simplicity of the notation, we have not distinguished between spaces such as $H^1_{ \alpha_1}(0,1)$, $H^1_{ \alpha_1,0}(0,1),$ $H^{-1}_{\alpha_1}(0,1)$ etc and the corresponding operators $(\mathcal A_{\alpha_1}, \mathcal D(\mathcal A_{\alpha_1})) $ for the weak and strong degenerate cases. We maintain this convention in the subsequent sections.
\end{remark}

\subsection{Spectral analysis of the 1-$d$ degenerate parabolic operator}
This section is devoted to recall some information regarding the eigenvalues and eigenfunctions of the scalar degenerate parabolic operator under the suitable boundary conditions (see \cite{gueye2014exact}, \cite{CMV20}). We begin by stating the following well-known result. 
\begin{proposition}\label{prop:hilbert_basis}
The weak and strong degenerate operators $-\mathcal A_{\alpha_1}$ defined above are self-adjoint, positive definite, and have a compact resolvent. Therefore, for each case, there exists an orthonormal basis  $\{\phi_{\alpha_1,k}\}_{k \in \N^*}$, of $L^2(0,1)$ and an increasing sequence $(\lambda_{\alpha_1,k})_{k \in \N^*}$,  of real numbers such that $\lambda_{\alpha_1,k}>0$ and $\lambda_{\alpha_1,k} \rightarrow +\infty$.
\end{proposition}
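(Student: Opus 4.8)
The goal is to prove \Cref{prop:hilbert_basis}: the one-dimensional operators $-\mathcal A_{\alpha_1}$ (in both the weakly degenerate case $0\le\alpha_1<1$ and the strongly degenerate case $1<\alpha_1<2$) are self-adjoint, positive definite, and have compact resolvent, hence admit an orthonormal basis of eigenfunctions with eigenvalues increasing to $+\infty$.

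\medskip

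The plan is to realize $-\mathcal A_{\alpha_1}$ as the Friedrichs-type operator associated to the symmetric, continuous, coercive bilinear form
\begin{equation*}
a(u,v):=\int_0^1 x^{\alpha_1}\,\pa_x u\,\pa_x v\,\rd x,\qquad u,v\in H^1_{\alpha_1,0}(0,1),
\end{equation*}
on the pivot space $L^2(0,1)$. First I would record that $a$ is symmetric and that, by the Hardy--Poincar\'e inequality \eqref{HPIi} (weak case) or \eqref{weightespoincare1} (strong case), one has $a(u,u)\ge c\|u\|_{L^2}^2$ and also $a(u,u)=\|u\|_{H^1_{\alpha_1,0}}^2$, so $a$ is coercive and defines an equivalent Hilbert norm on $H^1_{\alpha_1,0}(0,1)$. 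Continuity of $a$ on this space is immediate from Cauchy--Schwarz. The Lax--Milgram theorem then gives, for each $f\in L^2(0,1)\hookrightarrow H^{-1}_{\alpha_1}(0,1)$, a unique $u\in H^1_{\alpha_1,0}(0,1)$ with $a(u,v)=\langle f,v\rangle_{L^2}$ for all $v$; by the characterization of $\mathcal D(\mathcal A_{\alpha_1})$ recalled in \Cref{sec_fn} (which identifies this variational solution with $u\in H^2_{\alpha_1}(0,1)\cap H^1_{\alpha_1,0}(0,1)$ solving $-\pa_x(x^{\alpha_1}\pa_x u)=f$ together with the appropriate Dirichlet/Neumann conditions), this $u$ lies in $\mathcal D(\mathcal A_{\alpha_1})$ and $-\mathcal A_{\alpha_1}u=f$. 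Hence $-\mathcal A_{\alpha_1}$ is invertible, and $T:=(-\mathcal A_{\alpha_1})^{-1}:L^2(0,1)\to L^2(0,1)$ is well defined; symmetry of $a$ makes $T$ self-adjoint and positivity of $a$ makes it positive, which in turn yields that $-\mathcal A_{\alpha_1}$ is self-adjoint and positive definite (its spectrum is bounded below by the coercivity constant, so $\lambda_{\alpha_1,k}>0$).

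\medskip

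The second ingredient is compactness of $T$, equivalently compactness of the resolvent of $-\mathcal A_{\alpha_1}$. The estimate $\|u\|_{H^1_{\alpha_1,0}}^2=a(u,u)=\langle f,u\rangle_{L^2}\le\|f\|_{L^2}\|u\|_{L^2}\le C\|f\|_{L^2}\|u\|_{H^1_{\alpha_1,0}}$ shows $T$ maps $L^2(0,1)$ boundedly into $H^1_{\alpha_1,0}(0,1)$; so it suffices to show that the embedding $H^1_{\alpha_1,0}(0,1)\hookrightarrow L^2(0,1)$ is compact. This is the one genuinely case-sensitive point: in the weak case functions in $H^1_{\alpha_1,0}(0,1)$ are absolutely continuous on all of $[0,1]$ and one can bound $\|u\|_{H^1(\epsilon,1)}$ by the weighted norm, use Rellich on $(\epsilon,1)$, and control the tail $\int_0^\epsilon|u|^2$ uniformly via the Hardy inequality $\int_0^\epsilon|u|^2\le\int_0^\epsilon x^{\alpha_1-2}|u|^2\,x^{2-\alpha_1}\le\epsilon^{2-\alpha_1}\cdot\tfrac{4}{(1-\alpha_1)^2}\|u\|^2_{H^1_{\alpha_1,0}}\to0$ as $\epsilon\to0$ — a standard $\epsilon$/Rellich truncation argument; the strong case is handled identically using \eqref{weightespoincare1} and local absolute continuity on $(0,1]$. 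Once compactness of the embedding is in hand, $T$ is a compact, self-adjoint, positive operator on $L^2(0,1)$, so the Hilbert--Schmidt spectral theorem provides an orthonormal basis $\{\phi_{\alpha_1,k}\}_{k\in\N^*}$ of eigenfunctions of $T$ with positive eigenvalues $\mu_k\downarrow 0$; setting $\lambda_{\alpha_1,k}:=1/\mu_k$ (reordered increasingly) gives the eigenpairs of $-\mathcal A_{\alpha_1}$ with $\lambda_{\alpha_1,k}>0$ and $\lambda_{\alpha_1,k}\to+\infty$, which is exactly the assertion.

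\medskip

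I expect the main obstacle to be the compact embedding $H^1_{\alpha_1,0}(0,1)\hookrightarrow L^2(0,1)$ at the degeneracy point $x=0$, since standard Rellich--Kondrachov does not apply up to a boundary where the weight $x^{\alpha_1}$ vanishes; the cure is the Hardy--Poincar\'e inequality, which provides precisely the quantitative decay of the $L^2$-mass near $0$ needed for the truncation argument, and which is also what fails at $\alpha_1=1$ (explaining the exclusion). All remaining steps — symmetry, coercivity, Lax--Milgram, the abstract spectral theorem for compact self-adjoint operators — are routine, and the identification of the variational solution with $\mathcal D(\mathcal A_{\alpha_1})$ is already built into the definitions recalled in \Cref{sec_fn}, so it can be invoked directly.
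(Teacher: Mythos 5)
Your proof is correct, and it is precisely the standard Friedrichs-form argument that the paper itself does not spell out (the proposition is stated as ``well-known'', with the spectral analysis deferred to \cite{gueye2014exact} and \cite{CMV20}): symmetric coercive form, Lax--Milgram, compactness of $H^1_{\alpha_1,0}(0,1)\hookrightarrow L^2(0,1)$ via the Hardy--Poincar\'e tail estimate near the degeneracy, then the spectral theorem for compact self-adjoint operators. The only point worth flagging is that the abstract argument yields a \emph{nondecreasing} enumeration of eigenvalues (repeated according to multiplicity); strict increase requires simplicity of the spectrum, which here is a one-dimensional Sturm--Liouville fact and is confirmed a posteriori by the explicit formula \eqref{eigenvalue} in terms of the distinct Bessel zeros.
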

In order to find explicit expressions of the spectrum, we solve the following Sturm-Liouville problem for any suitable $\lambda\in \rea:$
\begin{equation}\label{eigen eqn}
	\begin{cases}
		-(x^{\alpha_1}\phi')'(x)=\lambda \phi(x) \qquad x\in (0,1),\\
		\begin{cases}
			\phi(0)=0 &\text{ if } 0\leq \alpha_1<1\\
			(x^{\alpha_1}\phi')(0)=0 &\text{ if } 1\leq\alpha_1<2
		\end{cases},\\
		\phi(1)=0.
	\end{cases}
\end{equation}
Let us introduce some certain standard definitions, notations, and properties of Bessel's functions to study the above eigenvalue problem \eqref{eigen eqn}. The Bessel's functions denoted by $J_{\nu}$
of order $\nu$ are the solutions of the following differential equation
\begin{equation*}
	x^2y''(x)+xy'(x)+(x^2-\nu^2)y(x)=0, \quad  x\in (0,\infty).
\end{equation*}
One can define Bessel functions of the first kind by the following formula
\begin{equation}\label{exp_of_bessel}
	J_\nu(x)=\sum_{k\geq 0} \frac{(-1)^k}{k!\,\Gamma(k+\nu+1)}\left(\frac{x}{2}\right)^{2k+\nu},\: x \geq 0,
\end{equation}
where $\Gamma(\cdot)$ is the gamma function\footnote{We recall that the gamma function is defined as follows: $\Gamma:\: x\in (0,\infty)\mapsto \int_0^{+\infty} t^{x-1}e^{-t}\rd t.$}. It can be proved that the function $J_\nu$ has an infinite number of real zeros. These zeros are all simple, with the possible exception of $x=0$, depending on the value of $\nu$. In particular, for $\nu \geq 0$ and $0 < x \leq \sqrt{\nu + 1}$, from \eqref{exp_of_bessel} one can obtain (see \cite[9.1.7, p. 360]{AM1992})
\begin{equation*}
	J_\nu(x) \sim \frac{1}{\Gamma(\nu + 1)} \left(\frac{x}{2}\right)^\nu \quad \text{as } x \to 0^+.
\end{equation*}
From now on, we concentrate on the following choice of parameter $\nu$
\begin{align*}
\nu(\alpha_1):=\frac{2}{2-\alpha_1}\sqrt{\left(\frac{1-\alpha_1}{2}\right)^2}=\frac{|1-\alpha_1|}{2-\alpha_1} &\text{ for }  0\leq \alpha_1<2.
\end{align*}
We also introduce the parameter $\kappa_{\alpha_1}$ given by 
\begin{equation*}
	\kappa_{\alpha_1}:= \dfrac{2-\alpha_{1}}{2}.
\end{equation*}
Let us denote $\{j_{\nu(\alpha_1),k}\}_{k\geq 1}$ the strictly increasing sequence of positive zeros of the Bessel function $J_{\nu(\alpha_1)}$
\begin{equation*}
	0<j_{\nu(\alpha_1),1}<j_{\nu(\alpha_1),2}<...<j_{\nu(\alpha_1),k}<...
\end{equation*}
and also we recall that $j_{\nu(\alpha_1),k}\to \infty$ as $k\to \infty$.  We have the following bounds on the zeros (see \cite{LM08})
\begin{align}
	\label{first in}	& \text{ For } \, \nu(\alpha_1) \in \left[0,\frac{1}{2}\right],\,  \forall \, k\geq 1,\, \,   \left(k+\frac{\nu(\alpha_1)}{2}-\frac{1}{4}\right)\pi\leq j_{\nu(\alpha_1),k}\leq \left(k+\frac{\nu(\alpha_1)}{4}-\frac{1}{8}\right)\pi,\\
	\label{second in}	& \text{ For } \, \nu(\alpha_1)\leq \frac{1}{2}, \, \forall \, k \,\geq 1,\, \,   \left(k+\frac{\nu(\alpha_1)}{4}-\frac{1}{8}\right)\pi\leq j_{\nu(\alpha_1),k}\leq \left(k+\frac{\nu(\alpha_1)}{2}-\frac{1}{4}\right)\pi.
\end{align}
Moreover we have the following classical result from \cite[Proposition 7.8]{KL05}.
\begin{itemize}
	\item For $\nu(\alpha_1) \in [0,\frac{1}{2}]$, the sequence $\{j_{\nu(\alpha_1),{k+1}}-j_{\nu(\alpha_1),{k}}\}_{k\geq 1}$ is nondecreasing and converges to $\pi$ as $k\to \infty$.
	\item For  $\nu(\alpha_1)\geq \frac{1}{2}$, the sequence $\{j_{\nu(\alpha_1),{k+1}}-j_{\nu(\alpha_1),{k}}\}_{k\geq 1}$ is nonincreasing and converges to $\pi$ as $k\to \infty$.
\end{itemize}
For any $\nu(\alpha_1)\geq 0$, Bessel functions satisfy the following orthogonality property
\begin{equation*}
	\int_{0}^{1}x^{1-\alpha_1}J_{\nu(\alpha_1)}(j_{\nu(\alpha_1),k}x^{\kappa_{\alpha_1}})J_{\nu(\alpha_1)}(j_{\nu,m}x^{\kappa_{\alpha_1}})\rd x=\begin{cases}\frac{1}{2\kappa_{\alpha_1}}[J_{\nu(\alpha_1)}'(j_{\nu(\alpha_1),k})]^2, \text{ if } k=m,\\
		0 , \text{ if } k\neq m.
	\end{cases}
\end{equation*}
We are ready to state the result regarding the spectrum of the degenerate parabolic operators
\begin{proposition}\label{eigenelement}
	Assume $0\leq \alpha_1<2$.
	Then, the admissible eigenvalues $\lambda$ for problem \eqref{eigen eqn}
	are given by 
	\begin{equation}\label{eigenvalue}
		\lambda_{\alpha_1,k}=\kappa_{\alpha_1}^2j_{\nu(\alpha_1), k}^2, \,\, k\geq 1
	\end{equation}
	and the corresponding normalized eigenfunctions are 
	\begin{equation}\label{eigenfn}
		\phi_{\alpha_1,k}(x)=\frac{\sqrt{2-\alpha_1}}{|J'_{\nu({\alpha_1})}(j_{\nu(\alpha_1),k})|}\sqrt{x^{1-\alpha_1}}\,\, J_{\nu({\alpha
		_1})}(j_{\nu(\alpha_1),k}x^{\kappa_{\alpha_1}}), \,\, x\in (0,1), k\geq 1.
	\end{equation}
	Moreover the family $\{\phi_{\alpha_1,  k}\}_{k\geq 1}$ forms an orthonormal basis in $L^2(0,1).$
\end{proposition}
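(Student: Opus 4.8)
The plan is to reduce the eigenvalue problem \eqref{eigen eqn} to Bessel's equation by a suitable change of variables, then read off the eigenvalues and eigenfunctions from the known zeros of $J_{\nu(\alpha_1)}$. Concretely, I would look for solutions of $-(x^{\alpha_1}\phi')'=\lambda\phi$ on $(0,1)$ of the form $\phi(x)=x^{\beta}\,y(c\,x^{\kappa_{\alpha_1}})$ for exponents $\beta$, $c$ and a function $y$ to be determined. The natural guess, motivated by the weight $x^{1-\alpha_1}$ appearing in the orthogonality relation quoted just above, is $\beta=\tfrac{1-\alpha_1}{2}$ and the new variable $s=\sqrt{\lambda}\,x^{\kappa_{\alpha_1}}/\kappa_{\alpha_1}$ (equivalently $\lambda=\kappa_{\alpha_1}^2 c^2$ with $s=cx^{\kappa_{\alpha_1}}$). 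Substituting and using $\kappa_{\alpha_1}=\tfrac{2-\alpha_1}{2}$, a direct computation should turn the ODE into $s^2y''(s)+s\,y'(s)+(s^2-\nu^2)y(s)=0$ with exactly $\nu=\nu(\alpha_1)=\tfrac{|1-\alpha_1|}{2-\alpha_1}$, i.e.\ Bessel's equation of order $\nu(\alpha_1)$.

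Next I would impose the boundary conditions to pick out the admissible $\lambda$. Near $x=0$ the two linearly independent Bessel solutions behave like $J_{\nu}\sim s^{\nu}$ and $Y_{\nu}$ (or $J_{-\nu}$) which blows up or fails the required condition; using the asymptotics $J_\nu(x)\sim \tfrac{1}{\Gamma(\nu+1)}(x/2)^\nu$ recalled above one checks that the combination $x^{(1-\alpha_1)/2}J_{\nu(\alpha_1)}(cx^{\kappa_{\alpha_1}})$ lies in $H^1_{\alpha_1,0}(0,1)$ and satisfies the correct condition at $x=0$ — Dirichlet $\phi(0)=0$ when $0\le\alpha_1<1$ (here the prefactor exponent $\tfrac{1-\alpha_1}{2}>0$ forces decay), and the weighted-Neumann condition $(x^{\alpha_1}\phi')(0)=0$ when $1\le\alpha_1<2$; while the second solution is excluded in each case because it is either not in the energy space or violates the boundary condition. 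This is the step requiring the most care: one must verify both that the selected solution genuinely satisfies the stated condition at the degenerate endpoint and that the other solution is genuinely inadmissible, handling the cases $\alpha_1<1$, $\alpha_1=1$, $1<\alpha_1<2$ separately because $\nu(\alpha_1)$ and the endpoint behavior change regime. The Dirichlet condition at $x=1$, $\phi(1)=0$, then becomes $J_{\nu(\alpha_1)}(c)=0$, i.e.\ $c=j_{\nu(\alpha_1),k}$, whence $\lambda_{\alpha_1,k}=\kappa_{\alpha_1}^2 j_{\nu(\alpha_1),k}^2$ as claimed in \eqref{eigenvalue} and $\phi_{\alpha_1,k}(x)=C_k\sqrt{x^{1-\alpha_1}}\,J_{\nu(\alpha_1)}(j_{\nu(\alpha_1),k}x^{\kappa_{\alpha_1}})$.

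Finally I would fix the normalization constant $C_k$ and invoke \Cref{prop:hilbert_basis}. The orthogonality relation
$\int_0^1 x^{1-\alpha_1}J_{\nu(\alpha_1)}(j_{\nu(\alpha_1),k}x^{\kappa_{\alpha_1}})J_{\nu(\alpha_1)}(j_{\nu(\alpha_1),m}x^{\kappa_{\alpha_1}})\,\rd x=\tfrac{1}{2\kappa_{\alpha_1}}[J'_{\nu(\alpha_1)}(j_{\nu(\alpha_1),k})]^2\delta_{km}$
stated above gives $\int_0^1|\phi_{\alpha_1,k}|^2=C_k^2\cdot\tfrac{1}{2\kappa_{\alpha_1}}[J'_{\nu(\alpha_1)}(j_{\nu(\alpha_1),k})]^2$, so choosing $C_k=\sqrt{2\kappa_{\alpha_1}}/|J'_{\nu(\alpha_1)}(j_{\nu(\alpha_1),k})|=\sqrt{2-\alpha_1}/|J'_{\nu(\alpha_1)}(j_{\nu(\alpha_1),k})|$ yields \eqref{eigenfn} with $\|\phi_{\alpha_1,k}\|_{L^2(0,1)}=1$, and orthogonality for $k\ne m$ is immediate from the same relation. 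Since by \Cref{prop:hilbert_basis} the operator $-\mathcal A_{\alpha_1}$ is self-adjoint with compact resolvent, its normalized eigenfunctions form an orthonormal basis of $L^2(0,1)$; having exhibited a complete list of eigenpairs indexed by $k\ge1$ (the zeros $j_{\nu(\alpha_1),k}$ exhaust the solutions of the transcendental equation $J_{\nu(\alpha_1)}(c)=0$), we conclude $\{\phi_{\alpha_1,k}\}_{k\ge1}$ is exactly that basis. The only genuine obstacle, as noted, is the careful endpoint analysis at $x=0$; the rest is a bookkeeping substitution and an appeal to classical Bessel identities.
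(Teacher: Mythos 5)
Your proposal is correct and follows essentially the same route as the sources the paper relies on: the paper does not prove \Cref{eigenelement} itself but recalls it from \cite{gueye2014exact} and \cite{CMV20}, where exactly this substitution $\phi(x)=x^{(1-\alpha_1)/2}\,y(cx^{\kappa_{\alpha_1}})$ reducing \eqref{eigen eqn} to Bessel's equation of order $\nu(\alpha_1)$, the endpoint analysis at $x=0$ discarding the second solution, and the normalization via the quoted orthogonality relation are carried out. Your identification of the delicate step (admissibility at the degenerate endpoint in the three regimes of $\alpha_1$) and the normalization constant $C_k=\sqrt{2\kappa_{\alpha_1}}/|J'_{\nu(\alpha_1)}(j_{\nu(\alpha_1),k})|=\sqrt{2-\alpha_1}/|J'_{\nu(\alpha_1)}(j_{\nu(\alpha_1),k})|$ are both accurate.
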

We now write the following gap conditions of the eigenvalues given in \eqref{eigenvalue}.
\begin{lemma}\label{lmm_gap}
	The sequence of eigenvalues $\{\lambda_{\alpha_1, k}\}_{k\geq 1}$ satisfies the following gap condition: there exist positive constants $\rho_1, \rho_2$
	\begin{equation}\label{gap con}
		\rho_1|k^2-m^2|\leq |\lambda_{ \alpha_1, k}-\lambda_{ \alpha_1, m}|\leq \rho_2|k^2-m^2|, \,\, \forall k,m\geq 1.
	\end{equation}
\end{lemma}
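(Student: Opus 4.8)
The plan is to derive the gap condition directly from the explicit formula $\lambda_{\alpha_1,k} = \kappa_{\alpha_1}^2 j_{\nu(\alpha_1),k}^2$ in \eqref{eigenvalue}, reducing everything to two-sided bounds on the Bessel zeros $j_{\nu(\alpha_1),k}$. Writing $\nu = \nu(\alpha_1)$ and $\kappa = \kappa_{\alpha_1}$, one has
\[
|\lambda_{\alpha_1,k} - \lambda_{\alpha_1,m}| = \kappa^2\, |j_{\nu,k}^2 - j_{\nu,m}^2| = \kappa^2\,(j_{\nu,k}+j_{\nu,m})\,|j_{\nu,k}-j_{\nu,m}|,
\]
so it suffices to show that both $j_{\nu,k}+j_{\nu,m}$ and $|j_{\nu,k}-j_{\nu,m}|$ are comparable, up to fixed constants, to $k+m$ and $|k-m|$ respectively; multiplying these comparisons gives comparability with $(k+m)|k-m| = |k^2-m^2|$.

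First I would record the uniform two-sided linear bounds on the zeros. From \eqref{first in}--\eqref{second in} (covering the relevant range of $\nu$, which for $\alpha_1 \in [0,2)$ satisfies $\nu \in [0,\tfrac12]$, except one should double-check the endpoint behaviour and the strong-degenerate subrange), there are constants $0 < c_1 \le c_2$, depending only on $\alpha_1$ through $\nu$, with $c_1 k \le j_{\nu,k} \le c_2 k$ for all $k \ge 1$; indeed the cited inequalities give $j_{\nu,k} = k\pi + O(1)$, hence such bounds. Consequently $c_1(k+m) \le j_{\nu,k}+j_{\nu,m} \le c_2(k+m)$, which handles the ``sum'' factor immediately.

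Next I would handle the ``difference'' factor $|j_{\nu,k}-j_{\nu,m}|$. The upper bound $|j_{\nu,k}-j_{\nu,m}| \le \rho\,|k-m|$ for some $\rho > 0$ follows from the quoted monotonicity result of \cite[Proposition 7.8]{KL05}: the consecutive gaps $j_{\nu,k+1}-j_{\nu,k}$ form a monotone sequence converging to $\pi$, hence are uniformly bounded above by a constant (either $\pi$ itself when the sequence is nondecreasing, or $j_{\nu,2}-j_{\nu,1}$ when it is nonincreasing), and telescoping over the indices between $m$ and $k$ gives the bound. For the lower bound $|j_{\nu,k}-j_{\nu,m}| \ge \rho'\,|k-m|$, the same monotonicity gives that each consecutive gap is bounded below by a positive constant (the limit $\pi$ in the nondecreasing case, or $j_{\nu,2}-j_{\nu,1} > 0$ in the nonincreasing case — here one uses that the first positive zeros are strictly increasing), and telescoping again yields the claim. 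Combining the sum and difference comparisons,
\[
\rho_1|k^2-m^2| \le \kappa^2 (j_{\nu,k}+j_{\nu,m})|j_{\nu,k}-j_{\nu,m}| = |\lambda_{\alpha_1,k}-\lambda_{\alpha_1,m}| \le \rho_2|k^2-m^2|,
\]
with $\rho_1 = \kappa^2 c_1 \rho'$ and $\rho_2 = \kappa^2 c_2 \rho$, which is \eqref{gap con}.

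The only delicate point — and the main obstacle — is making the constants genuinely uniform over the admissible range of $\alpha_1$ (or at least verifying the claim is meant for fixed $\alpha_1$): one must be careful that $\nu(\alpha_1)$ stays in a range where the quoted bounds \eqref{first in}--\eqref{second in} and the monotonicity dichotomy of \cite{KL05} apply, and that the lower bound on the first gap $j_{\nu,2}-j_{\nu,1}$ does not degenerate. Since the statement only asserts existence of $\rho_1,\rho_2$ (which may depend on $\alpha_1$), this is harmless: for each fixed $\alpha_1 \in [0,2)\setminus\{1\}$ the parameter $\nu$ is a fixed number in $[0,\tfrac12]$ and all the constants above are finite and positive, so the argument closes.
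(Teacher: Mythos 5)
Your proof is correct and follows essentially the same route as the paper: both reduce \eqref{gap con} to the explicit formula $\lambda_{\alpha_1,k}=\kappa_{\alpha_1}^2 j_{\nu(\alpha_1),k}^2$ together with the two-sided linear bounds \eqref{first in}--\eqref{second in} on the Bessel zeros; the paper merely expands the squared linear bounds directly and factors $(k+a)^2-(m+b)^2$, whereas you factor $j_{\nu,k}^2-j_{\nu,m}^2$ into sum and difference and control the difference by telescoping the consecutive gaps from \cite{KL05}. Two harmless slips worth noting: in your lower bound for the consecutive gaps you swapped which constant ($\pi$ versus $j_{\nu,2}-j_{\nu,1}$) applies in the nondecreasing versus the nonincreasing case, and your closing claim that $\nu(\alpha_1)\in[0,\tfrac12]$ fails for $\alpha_1\in(\tfrac43,2)$ where $\nu(\alpha_1)>\tfrac12$ --- but your own argument already covers that regime via \eqref{second in} and the nonincreasing-gap case, since the constants need only depend on the fixed $\alpha_1$.
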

\begin{proof} First assume $0\leq \alpha_1<1.$ Note that, in this case $\nu(\alpha_1) \in \left[0,\frac{1}{2}\right]$. Let us first prove the right hand side inequality. Let us assume that $k>m.$
	  Thanks to \eqref{first in}, we have 
		\begin{align*}
			\lambda_{ \alpha_1, k}-\lambda_{ \alpha_1, m}&\leq \kappa_{\alpha_1}^2\bigg[ \left(k+\frac{\nu(\alpha_1)}{4}-\frac{1}{8}\right)^2\pi^2-\left(m+\frac{\nu(\alpha_1)}{2}-\frac{1}{4}\right)^2\pi^2\bigg]\\
			&\leq \pi^2\kappa_{\alpha_1}^2\bigg[\left(k+m+\frac{3\nu(\alpha_1)}{4}-\frac{3}{8}\right)\left(k-m-\frac{\nu(\alpha_1)}{4}+\frac{1}{8}\right)\bigg]\\
			&\leq 2\pi^2\kappa_{\alpha_1}^2\left(k^2-m^2\right), \left[\text{using} \left(k-m-\frac{\nu(\alpha_1)}{4}+\frac{1}{8}\right)\leq 2(k-m)\right].
		\end{align*}
	Therefore there exists $\rho_2>0$ such that \begin{equation*}\lambda_{ \alpha_1, k}-\lambda_{ \alpha_1, m}\leq \rho_2(k^2-m^2).
	\end{equation*}
	Interchanging the role of $k,m$ in the above two cases, we have
	\begin{equation*}\lambda_{ \alpha_1, m}-\lambda_{ \alpha_1, k}\leq \rho_2(m^2-k^2).
	\end{equation*} 
	Hence the right hand inequality of \eqref{gap con} follows.
	
	\noindent
		Again using \eqref{first in}, we have 
		\begin{align*}
			\lambda_{ \alpha_1, k}-\lambda_{ \alpha_1, m}&\geq \kappa_{\alpha_1}^2\bigg[\left(k+\frac{\nu(\alpha_1)}{2}-\frac{1}{4}\right)^2\pi^2-\left(m+\frac{\nu(\alpha_1)}{4}-\frac{1}{8}\right)^2\pi^2\bigg]\\
			&\geq \pi^2\kappa_{\alpha_1}^2\left(k+m+\frac{3\nu(\alpha_1)}{4}-\frac{3}{8}\right)\left(k-m+\frac{\nu(\alpha_1)}{4}-\frac{1}{8}\right)\\
			&\geq \pi^2\kappa_{\alpha_1}^2\frac{(k+m)}{2}\frac{(k-m)}{2}\\
			&\geq \frac{\pi^2}{4}\kappa_{\alpha_1}^2\left(k^2-m^2\right). 
		\end{align*}
	Again interchanging the role of $k,m$ in the above two cases, we can conclude the desired inequality.
	
\noindent
For the case of $\alpha_1\in [1,2),$ note that $\nu(\alpha_1)\leq \frac{1}{2}, \text{ when } 1\leq \alpha_1\leq \frac{4}{3}$ and $\nu(\alpha_1)\geq \frac{1}{2},$ otherwise. Thus in this cases, using \eqref{first in} and \eqref{second in}, one can prove the required result.
\end{proof}

\subsection{Well-posedness}\label{sub_wp}
Let us first consider the adjoint system
\begin{equation}\label{adj system}
	\begin{cases}
		\pa_t v+\pa_{x}(x^{\alpha_1}\pa_{x}v)+A^*v=0 & \text{ in }   (0,T) \times (0,1),\\
		\begin{cases}v(t,0)=0 & \text{ if } 0\leq \alpha_1<1\\
			(x^{\alpha_1}\pa_x v)(t,0)=0 & \text{ if } 1 < \alpha_1<2
		\end{cases} &\text{ in }   (0,T),\\
		v(t,1)=0 &\text{ in }   (0,T),\\
		v(T,x)=v_T(x)  & \text{ in }   (0,1).
	\end{cases}
\end{equation}
The following well-posedness result is now classical (see  \cite[Proposition 2]{El2013}).
\begin{proposition}
	Assume that \( v_T \in H^1_{\alpha_1,0}(0,1)^n \). Then, system \eqref{adj system} has a unique strong solution
		\begin{equation*}
	v \in L^2(0,T; (H^2_{\alpha_1}(0,1)\cap H^1_{\alpha_1,0}(0,1))^n) \cap\mathcal C^0([0,T]; H^1_{\alpha_1,0}(0,1)^n).
		\end{equation*}
	In addition, there exists a positive constant \( C \) such that
	\begin{equation*}
		\|v\|_{L^2(0,T;(H^2_{\alpha_1}(0,1)\cap H^1_{\alpha_1,0}(0,1))^n)} + \|v\|_{\mathcal C^0([0,T]; H^1_{\alpha_1,0}(0,1)^n)} \leq C  \|v_T\|_{H^1_{\alpha_1,0}(0,1)^n}. 
	\end{equation*}
\end{proposition}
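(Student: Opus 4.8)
The plan is to reduce the problem to a forward parabolic Cauchy problem and then invoke semigroup theory together with an energy (equivalently, spectral) estimate. First I would reverse time, setting $\tilde v(t,\cdot):=v(T-t,\cdot)$, so that $\tilde v$ solves $\pa_t \tilde v=\mathcal{I}_n\mathcal{A}_{\alpha_1}\tilde v+A^*\tilde v$ on $(0,T)\times(0,1)$ with $\tilde v(0,\cdot)=v_T$, subject to the homogeneous boundary conditions built into $\mathcal{D}(\mathcal{A}_{\alpha_1})$. Recall from \Cref{sec_fn} that $\mathcal{A}_{\alpha_1}$ generates a strongly continuous semigroup of contractions on $L^2(0,1)$ (indeed analytic, since $-\mathcal{A}_{\alpha_1}$ is self-adjoint and positive definite by \Cref{prop:hilbert_basis}). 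Since $\mathcal{I}_n\mathcal{A}_{\alpha_1}$ acts componentwise on $L^2(0,1)^n$ and $A^*\in\mathcal{L}(\cplx^n)$ is a bounded, order-zero perturbation, the operator $\mathcal{I}_n\mathcal{A}_{\alpha_1}+A^*$ with domain $(H^2_{\alpha_1}(0,1)\cap H^1_{\alpha_1,0}(0,1))^n$ still generates an analytic $C_0$-semigroup on $L^2(0,1)^n$.

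The crucial structural fact is that the energy space $H^1_{\alpha_1,0}(0,1)$, equipped with the equivalent norm \eqref{eq_1d} (whose equivalence is exactly what the Hardy--Poincar\'e inequalities \eqref{HPIi} and \eqref{weightespoincare1} provide in the weak and strong degenerate regimes, respectively), coincides with the form domain $\mathcal{D}((-\mathcal{A}_{\alpha_1})^{1/2})$. Indeed, writing $\hat u_k:=\langle u,\phi_{\alpha_1,k}\rangle_{L^2(0,1)}$ for the coefficients in the eigenbasis of \Cref{eigenelement}, the variational identity characterizing $\mathcal{A}_{\alpha_1}$ yields $\int_0^1 x^{\alpha_1}|\pa_x u|^2=\sum_{k\geq 1}\lambda_{\alpha_1,k}|\hat u_k|^2$. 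Hence, for the scalar uncoupled equation with datum $v_T=\sum_k\hat v_{T,k}\phi_{\alpha_1,k}\in H^1_{\alpha_1,0}(0,1)$, the solution $v(t)=\sum_k e^{-\lambda_{\alpha_1,k}t}\hat v_{T,k}\phi_{\alpha_1,k}$ satisfies, by a direct term-by-term computation,
\begin{equation*}
\int_0^T\|\mathcal{A}_{\alpha_1}v(t)\|_{L^2(0,1)}^2\,\rd t=\sum_{k\geq 1}\tfrac12\bigl(1-e^{-2\lambda_{\alpha_1,k}T}\bigr)\lambda_{\alpha_1,k}|\hat v_{T,k}|^2\leq\tfrac12\|v_T\|_{H^1_{\alpha_1,0}(0,1)}^2,
\end{equation*}
while $\sup_{t\in[0,T]}\|v(t)\|_{H^1_{\alpha_1,0}(0,1)}^2=\sup_{t}\sum_k\lambda_{\alpha_1,k}e^{-2\lambda_{\alpha_1,k}t}|\hat v_{T,k}|^2\leq\|v_T\|_{H^1_{\alpha_1,0}(0,1)}^2$, with continuity in $t$ obtained from dominated convergence on the series. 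This yields $v\in L^2(0,T;(H^2_{\alpha_1}(0,1)\cap H^1_{\alpha_1,0}(0,1)))\cap\mathcal C^0([0,T];H^1_{\alpha_1,0}(0,1))$ with the stated bound in the uncoupled case.

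To restore the coupling I would use Duhamel's formula $\tilde v(t)=S(t)v_T+\int_0^t S(t-s)A^*\tilde v(s)\,\rd s$, where $S$ denotes the componentwise uncoupled semigroup, and close a Gronwall estimate in the $H^1_{\alpha_1,0}(0,1)^n$-norm using the uncoupled bound above together with $\|A^*w\|_{L^2}\lesssim\|w\|_{L^2}\lesssim\|w\|_{H^1_{\alpha_1,0}}$; this produces the claimed estimate with a constant $C=C(T,A)$. Alternatively, a Galerkin scheme on $\mathrm{span}\{\phi_{\alpha_1,k}\}_{1\le k\le N}$ works: solving the finite-dimensional ODE, testing the approximate equation against $-\mathcal{A}_{\alpha_1}\tilde v_N$ and using the equation to control $\pa_t\tilde v_N$ gives $N$-uniform bounds in $L^\infty(0,T;H^1_{\alpha_1,0})\cap L^2(0,T;\mathcal{D}(\mathcal{A}_{\alpha_1}))\cap H^1(0,T;L^2)$; passing to the limit and invoking the Lions--Magenes lemma for the embedding $H^1(0,T;L^2)\cap L^2(0,T;\mathcal{D}(\mathcal{A}_{\alpha_1}))\hookrightarrow\mathcal C^0([0,T];H^1_{\alpha_1,0})$ yields the regularity and continuity. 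Uniqueness follows at once from linearity and the energy estimate. I expect the only point requiring care to be the identification $H^1_{\alpha_1,0}(0,1)\simeq\mathcal{D}((-\mathcal{A}_{\alpha_1})^{1/2})$ holding \emph{uniformly} across the change of boundary condition at $x=0$ between the weak and strong degenerate cases; this rests entirely on the Hardy--Poincar\'e inequalities above, and once it is in hand the remaining steps are the standard parabolic template, so that the statement is in fact essentially \cite[Proposition 2]{El2013}.
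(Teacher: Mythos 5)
Your proposal is correct. The paper itself offers no proof of this proposition, dismissing it as classical with a citation to \cite[Proposition 2]{El2013}; the argument you give — time reversal, the analytic contraction semigroup generated by the self-adjoint operator $\mathcal A_{\alpha_1}$ plus the bounded perturbation $A^*$, the identification of $H^1_{\alpha_1,0}(0,1)$ with the form domain via the Hardy--Poincar\'e inequalities, the term-by-term spectral estimates, and Duhamel/Galerkin to absorb the coupling — is precisely the standard argument that citation stands in for, and your estimates (in particular $\int_0^T\|\mathcal A_{\alpha_1}v\|_{L^2}^2\leq\tfrac12\|v_T\|_{H^1_{\alpha_1,0}}^2$) check out. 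The only cosmetic remark is that the resulting constant $C$ depends on $T$ and $A$, which is consistent with the statement as written.
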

As in the two dimensional case, we can state the following weaker notion of solution and its corresponding existence/uniqueness result. For more details regarding scalar parabolic equations, we refer to \cite[Proposition 4.3]{gueye2014exact} 
 for the weakly degenerate case, and \cite[Proposition 3.10]{galo2024boundary} for the strongly degenerate one.
	\begin{definition}[Solution by transposition]\label{trans_oned}
		Let \( w_0 \in H^{-1}_{\alpha_1}(0,1)^n \), \( h \in L^2(0,T)^m \) be given. We will say that \( w \in\mathcal C^0([0,T]; H^{-1}_{\alpha_1}(0,1)^n) \) is a solution by transposition to \eqref{oned} if, for each $ v_\tau \in H^1_{\alpha_1, 0}(0,1)^n $, and for all $\tau \in (0,T]$, one has
		\begin{equation*}
			\begin{cases}
			\ip{w(\tau)}{v_{\tau}}_{{H^{-1}_{\alpha_1}(0,1)^n, H^1_{\alpha_1, 0}(0,1)^n}} &= \langle w_0, v( 0,\cdot) \rangle_{{H^{-1}_{\alpha_1}(0,1)^n, H^1_{\alpha_1, 0}(0,1)^n}}  \\
			 &\displaystyle \hspace{0.2cm} -\int_0^\tau \left(  h(t), B^*(x^{\alpha_1} \pa_x v)(t,0) \right)_{\cplx^m} \, \rd t  \qquad \text{ if } 0\leq \alpha_1<1,\\
				\ip{w(\tau)}{v_{\tau}}_{{H^{-1}_{\alpha_1}(0,1)^n, H^1_{\alpha_1, 0}(0,1)^n}} &= \langle w_0, v( 0,\cdot) \rangle_{{H^{-1}_{\alpha_1}(0,1)^n, H^1_{\alpha_1, 0}(0,1)^n}}  \\
				&\hspace{0.2cm}- \displaystyle \int_0^\tau \left(  h(t), B^*( v)(t,0) \right)_{\cplx^m} \, \rd t  \hspace{1.74 cm} \text{ if } 1<\alpha_1<2,
			\end{cases}
		\end{equation*}
		where \( v \in L^2(0,\tau;(H^1_{\alpha_1,0}(0,1)\cap H^2_{\alpha_1}(0,1))^n) \cap\mathcal C^0([0,\tau]; H^1_{\alpha_1,0}(0,1)^n) \) is the solution to \eqref{adj system} with $v(\tau,\cdot)=v_{\tau}(\cdot).$
	\end{definition}

	\begin{proposition}
		Let $T>0$. for every $w_0\in H^{-1}_{\alpha_1}(0,1)^n, h\in L^2(0,T)^m,$ equation \eqref{oned} possesses a unique solution $w\in C^0([0,T]; H^{-1}_{\alpha_1}(0,1)^n)$ and satisfies the following continuity estimate
		\begin{equation*}
			\norm{w}_{\mathcal C^0([0,T]; H^{-1}_{\alpha_1}(0,1)^n)}\leq Ce^{CT}\left(\norm{w_0}_{H^{-1}_{\alpha_1}(0,1)^n}+\norm{h}_{L^2(0,T)^m}\right),
		\end{equation*}
		for some $C>0.$
	\end{proposition}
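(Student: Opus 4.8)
The statement to prove is the well-posedness (existence, uniqueness, and continuity estimate) for solutions by transposition of the one-dimensional degenerate coupled system \eqref{oned}, with data $w_0 \in H^{-1}_{\alpha_1}(0,1)^n$ and $h \in L^2(0,T)^m$.

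\medskip

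The plan is to follow the classical transposition/duality argument, combined with an admissibility (hidden regularity) estimate for the adjoint system. First I would fix $\tau \in (0,T]$ and, for each $v_\tau \in H^1_{\alpha_1,0}(0,1)^n$, consider the backward adjoint problem \eqref{adj system} on $(0,\tau)$ with terminal data $v(\tau,\cdot) = v_\tau$; by the preceding proposition this has a unique strong solution $v \in L^2(0,\tau;(H^2_{\alpha_1}\cap H^1_{\alpha_1,0})^n) \cap \mathcal C^0([0,\tau]; H^1_{\alpha_1,0}(0,1)^n)$ depending linearly and continuously on $v_\tau$. Then I would define the linear functional
\[
\Lambda_\tau(v_\tau) := \langle w_0, v(0,\cdot)\rangle_{H^{-1}_{\alpha_1}(0,1)^n, H^1_{\alpha_1,0}(0,1)^n} - \int_0^\tau \big(h(t), B^*\,\mathsf{T}v(t,0)\big)_{\cplx^m}\,\mathrm dt,
\]
where $\mathsf{T}v(t,0)$ stands for $(x^{\alpha_1}\partial_x v)(t,0)$ in the weakly degenerate case and for $v(t,0)$ in the strongly degenerate case. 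The goal is to show $\Lambda_\tau$ is bounded on $H^1_{\alpha_1,0}(0,1)^n$; then the Riesz representation theorem (with respect to the $H^1_{\alpha_1,0}$--$H^{-1}_{\alpha_1}$ duality) produces a unique $w(\tau) \in H^{-1}_{\alpha_1}(0,1)^n$ with $\langle w(\tau), v_\tau\rangle = \Lambda_\tau(v_\tau)$, which is exactly the defining identity of \Cref{trans_oned}.

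\medskip

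The first term of $\Lambda_\tau$ is bounded by $\|w_0\|_{H^{-1}_{\alpha_1}}\|v(0,\cdot)\|_{H^1_{\alpha_1,0}} \leq C\|w_0\|_{H^{-1}_{\alpha_1}}\|v_\tau\|_{H^1_{\alpha_1,0}}$ using the adjoint estimate. The main obstacle is the boundary term: one needs a \emph{hidden regularity / admissibility} estimate of the form
\[
\int_0^\tau |\mathsf{T}v(t,0)|^2\,\mathrm dt \leq C\,\|v_\tau\|^2_{H^1_{\alpha_1,0}(0,1)^n},
\]
i.e.\ the trace $(x^{\alpha_1}\partial_x v)(t,0)$ (resp.\ $v(t,0)$) makes sense in $L^2(0,\tau)$ and is controlled by the terminal data. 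This is where the degeneracy enters delicately. I would prove it by expanding $v$ in the orthonormal eigenbasis $\{\phi_{\alpha_1,k}\}$ of \Cref{prop:hilbert_basis} (together with a diagonalization of $A^*$, or, since $A$ is assumed stable, a Gronwall-type bound treating the coupling as a perturbation): writing $v(t,x) = \sum_k v_k(t)\phi_{\alpha_1,k}(x)$, one has $|\mathsf{T}v(t,0)| \lesssim \sum_k |v_k(t)|\,|\mathsf{T}\phi_{\alpha_1,k}(0)|$ with $|\mathsf{T}\phi_{\alpha_1,k}(0)| \sim C\sqrt{\lambda_{\alpha_1,k}}$ (this uses the explicit Bessel-function form \eqref{eigenfn} and the asymptotics of $J_\nu$ near $0$, as in \cite{gueye2014exact,galo2024boundary}). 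Since $v_k(t)$ decays like $e^{-\lambda_{\alpha_1,k}(\tau-t)}$ times data (up to the bounded perturbation from $A^*$), one gets $\int_0^\tau |\mathsf{T}v(t,0)|^2\,\mathrm dt \lesssim \sum_k \lambda_{\alpha_1,k} \int_0^\tau e^{-2\lambda_{\alpha_1,k}(\tau-t)}\,\mathrm dt\, |\hat v_{T,k}|^2 \lesssim \sum_k |\hat v_{T,k}|^2 \cdot (\text{bounded}) \lesssim \|v_\tau\|^2_{L^2}$, which is even stronger than needed. Alternatively, this admissibility estimate is precisely \Cref{ad} referenced in \Cref{sec:ad}, so I would simply invoke it.

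\medskip

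Granting the admissibility estimate, $\Lambda_\tau$ is bounded with $\|\Lambda_\tau\| \leq C(\|w_0\|_{H^{-1}_{\alpha_1}} + \|h\|_{L^2(0,T)^m})$, so $w(\tau)$ is well-defined and satisfies $\|w(\tau)\|_{H^{-1}_{\alpha_1}} \leq C(\|w_0\|_{H^{-1}_{\alpha_1}} + \|h\|_{L^2(0,T)^m})$. To upgrade to the stated estimate with the factor $e^{CT}$: I would use the decomposition $w = w^1 + w^2$ where $w^1$ solves the homogeneous-boundary problem with initial datum $w_0$ (handled by the semigroup generated by $\mathcal A_{\alpha_1}\mathcal I_n + A$, which is of type $e^{Ct}$ because $A$ is bounded) and $w^2$ solves the zero-initial-datum problem with boundary control $h$ (handled by the admissibility/duality bound above, uniformly on $[0,T]$). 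For continuity in time, $t \mapsto w(t) \in \mathcal C^0([0,T];H^{-1}_{\alpha_1}(0,1)^n)$, I would argue that $\tau \mapsto \Lambda_\tau(v_\tau)$ is continuous for fixed $v_\tau$ — using continuity of $v(0,\cdot)$ and of the boundary integral in $\tau$ — and then pass to the supremum over $\|v_\tau\|=1$ using a density/equicontinuity argument, or equivalently invoke the standard semigroup representation $w(t) = e^{t(\mathcal A_{\alpha_1}\mathcal I_n + A)}w_0 + \int_0^t e^{(t-s)(\cdots)}\mathcal B h(s)\,\mathrm ds$ in the extrapolation space, whose continuity is classical (see \cite[Section 2.3]{Cor08}). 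Uniqueness is immediate: if $w$ has zero data then $\Lambda_\tau \equiv 0$, hence $w(\tau) = 0$ for every $\tau$. The only genuinely nontrivial point is the degenerate-boundary admissibility estimate; everything else is routine duality and semigroup theory.
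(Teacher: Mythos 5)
Your proposal is correct and follows essentially the same route the paper takes: the paper omits a detailed proof, appealing to the transposition/duality framework of \Cref{trans_oned}, the boundary admissibility estimates of \Cref{lm adm} (imported from \cite{galo2023boundary,galo2024boundary}), and the classical arguments of \cite[Section 2.3]{Cor08}, which is precisely the structure you lay out. The only caveat is that your spectral sketch of the admissibility bound is loose in passing the square inside the sum over $k$ (the cross terms need a Cauchy--Schwarz with weights or a multiplier identity), but since you also propose simply invoking the paper's admissibility lemma, this does not affect the argument.
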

	
\subsection{Analysis of the coupled degenerate operator}
It is well-known that the following duality result provides an equivalent criterion for the null controllability of \eqref{oned}.
\begin{lemma}
	Let us choose $w_0\in H^{-1 }_{ \alpha_1}(0,1)^n.$ We know that the null controllability of \eqref{oned} is equivalent to finding a control $h\in L^2(0,T)^m$
	such that for all $v_T \in H^{1 }_{ \alpha_1,0}(0,1)^n$, the following holds
	\begin{align}
\label{moment}	&	\ip{w_0}{v(0,\cdot)}_{{H^{-1}_{\alpha_1}(0,1)^n, H^1_{\alpha_1, 0}(0,1)^n}}=\int_{0}^{T}\left(h(t), B^* (x^{ \alpha_1}\pa_x v)(t,0)\right)_{\cplx^m} \rd t & \text{ if } 0\leq \alpha_1<1,\\
		\label{moment2}
				&	\ip{w_0}{v(0,\cdot)}_{{H^{-1}_{\alpha_1}(0,1)^n, H^1_{\alpha_1, 0}(0,1)^n}}=\int_{0}^{T}\left(h(t), B^* v(t,0)\right)_{\cplx^m} \rd t &  \text{ if } 1<\alpha_1<2,
		\end{align}
	where $v$ is the solution of the adjoint system \eqref{adj system} with $v(T,\cdot)=v_T$.
\end{lemma}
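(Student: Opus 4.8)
The plan is to run the by-now-standard transposition duality argument, whose only inputs are the definition of solution by transposition (\Cref{trans_oned}) and the identification of $H^{-1}_{\alpha_1}(0,1)^n$ with the dual of $H^1_{\alpha_1,0}(0,1)^n$ (equipped with the norm \eqref{h-1}).

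First, I would fix $w_0 \in H^{-1}_{\alpha_1}(0,1)^n$ and $h \in L^2(0,T)^m$, and let $w \in \mathcal C^0([0,T];H^{-1}_{\alpha_1}(0,1)^n)$ be the associated transposition solution of \eqref{oned}. Applying the defining identity of \Cref{trans_oned} at the endpoint $\tau = T$ — which is legitimate since that definition quantifies over all $\tau \in (0,T]$ — gives, for every $v_T \in H^1_{\alpha_1,0}(0,1)^n$ and with $v$ the solution of \eqref{adj system} satisfying $v(T,\cdot)=v_T$,
\[
\ip{w(T)}{v_T}_{H^{-1}_{\alpha_1}(0,1)^n, H^1_{\alpha_1,0}(0,1)^n} = \ip{w_0}{v(0,\cdot)}_{H^{-1}_{\alpha_1}(0,1)^n, H^1_{\alpha_1,0}(0,1)^n} - \int_0^T \left( h(t), B^*(x^{\alpha_1}\pa_x v)(t,0)\right)_{\cplx^m}\rd t
\]
when $0 \le \alpha_1 < 1$, and the same identity with $B^* v(t,0)$ replacing $B^*(x^{\alpha_1}\pa_x v)(t,0)$ when $1 < \alpha_1 < 2$.

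Next, I would use that $w(T)\in H^{-1}_{\alpha_1}(0,1)^n$ vanishes if and only if $\ip{w(T)}{v_T}=0$ for every $v_T\in H^1_{\alpha_1,0}(0,1)^n$ (this is immediate from the definition of the dual norm \eqref{h-1}; no density reduction is needed, since we are already testing against the whole space). Substituting this into the identity above shows that $w(T)=0$ is equivalent to
\[
\ip{w_0}{v(0,\cdot)}_{H^{-1}_{\alpha_1}(0,1)^n, H^1_{\alpha_1,0}(0,1)^n} = \int_0^T \left( h(t), B^*(x^{\alpha_1}\pa_x v)(t,0)\right)_{\cplx^m}\rd t \quad\text{for all } v_T\in H^1_{\alpha_1,0}(0,1)^n,
\]
i.e., to \eqref{moment} (respectively \eqref{moment2} in the strongly degenerate case). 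Existentially quantifying over $h$ then yields the asserted equivalence between the null controllability of \eqref{oned} and the solvability of the moment problem.

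There is no genuine obstacle in this lemma: the only two points deserving a line of justification are that the transposition identity is valid at $\tau=T$ and that testing against all of $H^1_{\alpha_1,0}(0,1)^n$ (rather than a dense subspace) already characterizes $w(T)=0$ — both being automatic from the setup, so that no regularization or approximation beyond the already-established well-posedness of the transposition solution is required. The genuinely hard work, namely actually producing a control $h\in L^2(0,T)^m$ satisfying this moment identity (via a biorthogonal family to the exponentials $\{e^{-\lambda_{\alpha_1,k}t}\}$ together with the Kalman rank condition \eqref{kalman}), is the content of the subsequent subsections and is not part of this statement.
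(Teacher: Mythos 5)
Your argument is correct and is exactly the paper's proof, which simply invokes the definition of solution by transposition (\Cref{trans_oned}) at $\tau=T$ and notes that $w(T)=0$ in $H^{-1}_{\alpha_1}(0,1)^n$ is equivalent to the vanishing of $\ip{w(T)}{v_T}$ for all $v_T\in H^1_{\alpha_1,0}(0,1)^n$. The signs and the two cases (weak/strong degeneracy) match the paper's formulas \eqref{moment}--\eqref{moment2}, so nothing further is needed.
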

\begin{proof}
	Thanks to \Cref{trans_oned}, the proof follows directly.
\end{proof}
By well-known arguments (see e.g. \cite[Section V.4.2]{Boy23}), using \Cref{prop:hilbert_basis} the above result can be reformulated in an equivalent moment problem in terms of the eigenvalues of the underlying system (see eq. \eqref{moment1} below). In turn, this moment problem will be solved by verifying the hypothesis of the following theorem. 
\begin{theorem}[Theorem 1.5, \cite{AB2014}]\label{d1}
	Let $\Lambda =\left \{ \Lambda_{ k} \right \}_{k \geq 1}\subset \cplx$ be a complex sequence satisfying the following properties:
	\begin{enumerate}[label=\roman*)]
		\item \label{item1} $\Lambda_{k}\neq \Lambda_m$ for all $m,k\in \mathbb{N} $ with $m\neq k$; 
		\item \label{item2}$\Re(\Lambda_k)>0$ for every $k\geq1$;
		\item \label{item3} for some $\beta>0$, $$\vert \Im(\Lambda_k) \vert \leq \beta \sqrt{\Re(\Lambda_k)},\,\, \text{for any} \,\,  k\geq1;$$
		\item \label{item4}  $\{ \Lambda_k\}_{k\geq1}$ is non decreasing in modulus, i.e., $\vert \Lambda_k\vert \leq \vert \Lambda_{k+1} \vert$, for any $ k\geq1$;
		\item \label{item5}$\left \{ \Lambda_k \right \}_{k \geq 1}$ satisfies the following gap condition: for some $\rho, q>0$
		\begin{align*}
			\rho \left| k^2 -m^2 \right| \leq &\left\vert \Lambda_{k}-\Lambda_{m} \right\vert   \text{ for any }  m,k\geq 1: \vert k-m \vert \geq q; \\
			&	\inf_{k\neq m: |k-m|<q}\left\vert \Lambda_{k}-\Lambda_{m} \right\vert>0
		\end{align*}

		\item \label{item7} There exist $p_0$, $p_1$, $p_2$ with $p_1,  p_2 \geq p_0>0 $ such that one has,
		\begin{equation*}
			-\varpi+p_1\sqrt{r}  \leq\mathcal{N}(r)  \leq \varpi+ p_2 \sqrt{r}, \quad \forall r>0,
		\end{equation*}
		where $\mathcal{N}$ is the counting function associated with the sequence $\Lambda $, defined by 
		\begin{equation}\label{counting}
			\mathcal{N}(r)= \# \ens{ k :\, \vert \Lambda_{k}\vert \leq r}, \quad \forall r>0.
		\end{equation}
	\end{enumerate}
	Then, there exists $T_0>0$ such that, for every $\eta\geq 1$ and $0<T<T_0$, we can find a family of complex valued functions $\{\Psi_{k,j}\}_{k\geq1, 0\leq j\leq \eta-1}\in L^2\left(-\frac{T}{2},\frac{T}{2}\right)$ biorthogonal to $\{e_{k,j}\}_{k\geq1, 0\leq j\leq \eta-1},$ where for every $t\in (-\frac{T}{2},\frac{T}{2}),$  $e_{k,j}=t^je^{-\Lambda_k t}$ with in addition,
	\begin{equation}\label{cost}
		\norm{\Psi_{k,j}}_{L^2(-\frac{T}{2},\frac{T}{2})}\leq C e^{C\sqrt{\Re(\Lambda_k)}+\frac{C}{T}}.
	\end{equation}
\end{theorem}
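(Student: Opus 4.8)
The plan is a Fattorini--Russell-type argument carried out by the multiplier method. First I would pass to the Fourier--Laplace side: a function $\Psi$ belongs to $L^2(-\tfrac T2,\tfrac T2)$ iff its two-sided Laplace transform $F(z):=\int_{-T/2}^{T/2}\Psi(t)e^{-zt}\,\rd t$ is entire of exponential type at most $T/2$ and square-integrable on the imaginary axis, with $\|\Psi\|_{L^2(-T/2,T/2)}$ a universal multiple of $\|F(i\,\cdot\,)\|_{L^2(\mathbb R)}$ (Paley--Wiener and Plancherel). Because $t^{j'}e^{-\Lambda_{k'}t}=(-1)^{j'}\partial_z^{j'}(e^{-zt})\big|_{z=\Lambda_{k'}}$, the biorthogonality conditions for $\{\Psi_{k,j}\}$ against $\{t^{j'}e^{-\Lambda_{k'}t}\}$ are equivalent to asking that $F_{k,j}:=\mathcal L\Psi_{k,j}$ vanish to order $\eta$ at every node $\Lambda_{k'}$ with $k'\neq k$ and have a prescribed $\eta$-jet (Kronecker in $j,j'$) at $\Lambda_k$. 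By hypotheses (ii)--(iii) the $\Lambda_k$ lie in a parabolic neighbourhood of the positive real axis, and by (vii) we have $|\Lambda_k|\asymp k^2$; in particular $\sum_k|\Lambda_k|^{-1}<\infty$ and the exponent of convergence of $\{\Lambda_k\}$ equals $1/2$.

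Next I would construct $F_{k,j}$. The canonical product $P(z):=\prod_{k\ge1}(1-z/\Lambda_k)^{\eta}$ then converges to an entire function whose only zeros are the $\Lambda_k$, each of multiplicity $\eta$; being of order $1/2$ it satisfies $|P(z)|\le Ce^{C|z|^{1/2}}$ — in particular it is of exponential type zero — and it grows on the imaginary axis like $e^{C_0\sqrt{|y|}}$, with $C_0$ governed by the spectral density. To regain $L^2(i\mathbb R)$ I would introduce a Gevrey-$2$ multiplier built from a real nonnegative bump $\theta_\delta$ supported in $[-\delta/4,\delta/4]$ with $\int\theta_\delta=1$ (for instance an infinite convolution of box functions): $M_\delta(z):=\int\theta_\delta(t)e^{-zt}\,\rd t$ is entire of exponential type $\le\delta/4$, bounded by $1$ on $i\mathbb R$, and decays there like $e^{-c(\delta)\sqrt{|y|}}$. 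With $P_k:=P/(1-z/\Lambda_k)^{\eta}$ (still entire) and $N$ an integer with $N\delta\le T$, I would set $F_{k,j}(z)=M_\delta(z)^{N}M_\delta(\Lambda_k)^{-N}P_k(z)Q_{k,j}(z)$, where $Q_{k,j}$ is the polynomial of degree $\le\eta-1$ — computed by divided differences on a disc about $\Lambda_k$ of radius comparable to the local gap — that makes the $\eta$-jet of $F_{k,j}$ at $\Lambda_k$ the prescribed one. For $N$ large enough the decay of $M_\delta^{N}$ on $i\mathbb R$ overcomes the growth of $P_k$, so $F_{k,j}$ is entire, of exponential type $\le T/2$, has the required zeros and jet, and lies in $L^2(i\mathbb R)$; it therefore defines the biorthogonal $\Psi_{k,j}$.

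Finally I would estimate $\|F_{k,j}\|_{L^2(i\mathbb R)}$, which controls $\|\Psi_{k,j}\|_{L^2}$ and which factors through three quantities: (a) the lower bound $|P_k(\Lambda_k)|=\prod_{m\neq k}|1-\Lambda_k/\Lambda_m|^{\eta}\ge e^{-C\sqrt{\Re\Lambda_k}}$, where the gap condition (v) controls the finitely many indices with $|k-m|<q$ together with their neighbours, while (vii) lets one compare $\sum_{m\neq k}\log|1-\Lambda_k/\Lambda_m|$ with an integral against $\rd\mathcal N(\sqrt{\cdot})$ and extract precisely the $\sqrt{\Re\Lambda_k}$ growth — hypothesis (iii) being what allows one to replace $\Lambda_m$ by $\Re\Lambda_m$ throughout; (b) the bound $|M_\delta(\Lambda_k)|^{-N}\le e^{C/T}$, which follows readily because $\theta_\delta\ge0$ and $\Re\Lambda_k>0$ make $|M_\delta(\Lambda_k)|$ only large; and (c) $\|M_\delta^{N}P_kQ_{k,j}\|_{L^2(i\mathbb R)}\le e^{C/T}$, obtained by splitting $i\mathbb R$ at $|y|\sim\Re\Lambda_k$ and re-using the product bounds. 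Multiplying the three yields \eqref{cost}, and $T_0$ enters only to keep the Gevrey constants attached to $\theta_\delta$ finite.

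The step I expect to be the real obstacle is the simultaneous bookkeeping of all the $\delta$- and $N$-dependences so that the final constant is $e^{C/T}$ rather than $e^{C/T^{2}}$ or worse: one has a fixed exponential-type budget $T/2$ to split between the Weierstrass product (which contributes none, being of exponential type zero) and the multiplier, whose decay rate on $i\mathbb R$ weakens as its support shrinks yet must stay strong enough to beat the fixed growth constant $C_0$ of $P$, and reconciling this for all small $T$ while tracking the blow-up of the $\theta_\delta$-norms is delicate. It is precisely here that the two-sided counting estimate (vii) and the gap condition (v) must be used at full strength, rather than as crude density bounds. The remaining ingredients — Paley--Wiener, convergence of $P$, and the Hermite correction $Q_{k,j}$ — are routine.
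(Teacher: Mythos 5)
You should first note that the paper does not prove this statement at all: it is imported verbatim as Theorem 1.5 of \cite{AB2014}, so there is no ``paper's proof'' to compare against, only the proof in that reference. Your outline is essentially the argument used there and throughout the Fattorini--Russell tradition: pass to the Laplace side via Paley--Wiener, build a Weierstrass product of order $1/2$ with zeros of multiplicity $\eta$ at the nodes, restore integrability on $i\mathbb{R}$ with a compactly supported Gevrey-$2$ mollifier raised to a power $N$ with $N\delta\lesssim T$, correct the jet at $\Lambda_k$ by a Hermite interpolation polynomial, and extract the lower bound on $|P_k(\Lambda_k)|$ from the gap condition \ref{item5} and the two-sided counting estimate \ref{item7}. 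So the strategy is the right one and matches the source.

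Two caveats. First, your step (b) is not correct as stated: for a fixed bump $\theta_\delta\ge 0$ one has $M_\delta(\Lambda_k)=\int\theta_\delta(t)e^{-\Lambda_k t}\,\rd t$, and positivity of $\theta_\delta$ together with $\Re(\Lambda_k)>0$ does \emph{not} make $|M_\delta(\Lambda_k)|$ ``only large'' uniformly in $k$ --- the oscillation $e^{-i\Im(\Lambda_k)t}$ can produce cancellation once $\delta\,|\Im(\Lambda_k)|$ is of order one, and even on the real axis the naive bound only gives $|M_\delta(\Lambda_k)|\ge e^{-\Re(\Lambda_k)\delta/4}$, i.e.\ $|M_\delta(\Lambda_k)|^{-N}\le e^{CT\Re(\Lambda_k)}$, which is not $e^{C/T}$. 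The correct statement, and the one actually needed, is a lower bound of the form $|M_\delta(\Lambda_k)|^{N}\ge e^{-C\sqrt{\Re(\Lambda_k)}-C/T}$, whose proof uses hypothesis \ref{item3} to confine $\Lambda_k$ to a parabolic neighbourhood of $\mathbb{R}_+$; the extra factor $e^{C\sqrt{\Re(\Lambda_k)}}$ is harmless because \eqref{cost} allows it, but it must be accounted for rather than claimed away. Second, you correctly identify that the entire quantitative content of the theorem --- obtaining $e^{C/T}$ rather than a worse blow-up from the interplay between the shrinking support $\delta$, the power $N$, and the fixed growth constant of the canonical product --- is concentrated in the step you defer; as written your text is a roadmap to the proof in \cite{AB2014} rather than a self-contained argument, and a referee would send you to that reference (or ask you to reproduce its multiplier estimates) for precisely that step.
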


So, our task will be reduced to arrange the eigenvalues of the one-dimensional degenerate operator $\mathcal{A}_{\alpha_1,A^*}=\pa_{x}(x^{\alpha_1}\pa_{x})+A^*$ in a suitable way and verify that the collection of the eigenvalues satisfies the conditions \ref{item1} to \ref{item7} of \Cref{d1}, in the same spirit as in \cite[Section 3.1]{AB2014}.

\begin{lemma}\label{lem:rea}
Let $\{\mu_l\}_{1\leq l\leq p}\subset \cplx$ be the set of distinct eigenvalues of the matrix $A^*$ and $\{\Lambda_k\}_{k\geq 1}$ be the eigenvalues of the the operator $-\mathcal{A}_{\alpha_1,A^*}$. There are natural numbers $\tilde{p}$ and $K_0$ such that the family $\{\Lambda_k\}_{k\geq 1}$ can be rearranged as 
\begin{equation}\label{ar eigen}
	\begin{cases}
		\Lambda_{\ell}=-\gamma_{\ell}\;\; \text{ for } 1\leq \ell\leq \tilde{p},\\
		\Lambda_{\tilde{p}+i}=\lambda_{ \alpha_1, K_0+j}-\mu_l \text{ with } j=\lfloor \frac{i-1}{p}\rfloor+1 \text{ and } l=i-\lfloor\frac{i-1}{p}\rfloor p , \;\;  i\geq 1,
	\end{cases}
\end{equation}
where $\{\gamma_{\ell}\}_{1\leq \ell\leq \tilde{p}}=\{-\lambda_{ \alpha_1, k}+\mu_l\}_{1\leq k \leq K_0, 1\leq l\leq {p}}$.
\end{lemma}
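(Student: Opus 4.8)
The plan is to compute the spectrum of $-\mathcal{A}_{\alpha_1,A^*}$ explicitly and then to organise it into the claimed two–part list by peeling off a finite initial block whose size is governed by the gaps of the one–dimensional eigenvalues. Since $A^*$ is a constant matrix and $\{\phi_{\alpha_1,k}\}_{k\geq1}$ is an orthonormal basis of $L^2(0,1)$ (cf. \Cref{prop:hilbert_basis}), expanding $v=\sum_{k\geq1}v_k\,\phi_{\alpha_1,k}$ with $v_k\in\cplx^n$ block–diagonalises $-\mathcal{A}_{\alpha_1,A^*}$ into the orthogonal direct sum of the $n\times n$ matrices $N_k:=\lambda_{\alpha_1,k}\mathcal I_n-A^*$. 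Hence $-\mathcal{A}_{\alpha_1,A^*}$ has compact resolvent and pure point spectrum $\bigcup_{k\geq1}\operatorname{sp}(N_k)=\{\lambda_{\alpha_1,k}-\mu_l:\ k\geq1,\ 1\leq l\leq p\}$, the multiplicity of $\lambda_{\alpha_1,k}-\mu_l$ being that of $\mu_l$ for $A^*$ (the generalized eigenfunctions coming from nontrivial Jordan blocks only account for the polynomial factors $t^j$ in \Cref{d1} and are irrelevant here). Note also that, by the stability reduction recalled in \Cref{rm}, $A$ and hence $A^*$ may be taken stable, so $\Re(\lambda_{\alpha_1,k}-\mu_l)=\lambda_{\alpha_1,k}-\Re\mu_l>0$ for every $k,l$.

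Next I would fix the cut–off $K_0$ using the quadratic gap of \Cref{lmm_gap}. Put $M:=\max_{1\leq l\leq p}|\mu_l|$. Since $\lambda_{\alpha_1,k+1}-\lambda_{\alpha_1,k}\geq \rho_1(2k+1)\to+\infty$, there is $K_0\in\mathbb N$ with
\[
\lambda_{\alpha_1,k+1}-\lambda_{\alpha_1,k}>2M\qquad\text{for all }k\geq K_0 .
\]
This single inequality already yields everything needed on the tail: for $k>k'\geq K_0$ one has $\lambda_{\alpha_1,k}-\lambda_{\alpha_1,k'}\geq\lambda_{\alpha_1,k'+1}-\lambda_{\alpha_1,k'}>2M\geq|\mu_l-\mu_{l'}|$, so the numbers $\lambda_{\alpha_1,k}-\mu_l$ with $k>K_0$ are pairwise distinct (they are distinct for a fixed $k$ because the $\mu_l$ are), and none of them coincides with a value $\lambda_{\alpha_1,k'}-\mu_{l'}$ having $k'\leq K_0$ (same estimate with $k>K_0\geq k'$). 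Moreover, after relabelling $\mu_1,\dots,\mu_p$ by decreasing real part, breaking ties by increasing $|\Im\mu_l|$, the map $l\mapsto|\lambda_{\alpha_1,k}-\mu_l|$ is nondecreasing for all $k\geq K_0$ (enlarging $K_0$ if necessary): writing $|\lambda_{\alpha_1,k}-\mu_l|^2=(\lambda_{\alpha_1,k}-\Re\mu_l)^2+(\Im\mu_l)^2$, the first summand dominates and is decreasing in $l$ for $\lambda_{\alpha_1,k}$ large, while equal–real–part ties are settled by $|\Im\mu_l|^2$, and only finitely many pairs $(l,l')$ must be compared.

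Finally I would record the enumeration. Let $\{\gamma_\ell\}_{1\leq\ell\leq\tilde p}$ be the values $-\lambda_{\alpha_1,k}+\mu_l$, $1\leq k\leq K_0$, $1\leq l\leq p$, written without repetition (so $\tilde p\leq K_0 p$) and ordered so that $\ell\mapsto|\gamma_\ell|$ is nondecreasing; set $\Lambda_\ell:=-\gamma_\ell$ for $1\leq\ell\leq\tilde p$, which is precisely the part of the spectrum coming from $k\leq K_0$. For $i\geq1$ let $j:=\lfloor\tfrac{i-1}{p}\rfloor+1$ and $l:=i-\lfloor\tfrac{i-1}{p}\rfloor p\in\{1,\dots,p\}$ and define $\Lambda_{\tilde p+i}:=\lambda_{\alpha_1,K_0+j}-\mu_l$; since $i\mapsto(j,l)$ is a bijection of $\mathbb N$ onto $\mathbb N\times\{1,\dots,p\}$, the family $\{\Lambda_{\tilde p+i}\}_{i\geq1}$ runs exactly through the part of the spectrum with $k>K_0$, block by block in $k$. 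Combined with the distinctness and block separation from the previous step (and with $|\Lambda_{\tilde p}|\leq\lambda_{\alpha_1,K_0}+M<\lambda_{\alpha_1,K_0+1}-M\leq|\Lambda_{\tilde p+1}|$), $\{\Lambda_k\}_{k\geq1}$ is a repetition–free enumeration of the spectrum of $-\mathcal{A}_{\alpha_1,A^*}$, which is the assertion of the lemma; the additional modulus ordering, although not explicitly part of the statement, is exactly what will be needed later to verify the hypotheses of \Cref{d1}. The only genuinely delicate point is the selection of $K_0$: it is what makes the blocks $\{\lambda_{\alpha_1,K_0+j}-\mu_l\}_{1\leq l\leq p}$ mutually separated and internally ordered, and it rests precisely on the super–linear growth of the gaps $\lambda_{\alpha_1,k+1}-\lambda_{\alpha_1,k}$ supplied by \Cref{lmm_gap}; everything else is bookkeeping.
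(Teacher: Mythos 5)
Your proposal is correct and follows essentially the same route as the paper: diagonalize the operator over the basis $\{\phi_{\alpha_1,k}\}$ to identify the spectrum as $\{\lambda_{\alpha_1,k}-\mu_l\}$, choose a finite cutoff $K_0$ via the quadratic gap of \Cref{lmm_gap} so that the tail values are pairwise distinct, internally ordered, and separated from the head block, and then enumerate head and tail exactly as in \eqref{ar eigen}. The only difference is cosmetic: you derive distinctness and block separation from the single inequality $\lambda_{\alpha_1,k+1}-\lambda_{\alpha_1,k}>2M$, whereas the paper splits this into three constants $k_0,k_1,k_2$ (invoking an external proposition for distinctness); your version is self-contained but not a genuinely different argument.
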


\begin{proof}

Let $\{\mu_l\}_{1\leq l\leq p}\subset \cplx$ be the set of distinct eigenvalues of the matrix $A^*.$ We assume that the geometric multiplicity of the eigenvalue $\mu_l$ is $n_l,$ for $1\leq l\leq p$ and we assume that the size of the Jordan chains are $\tau_{l,j}, 1\leq j\leq n_l$. By using \cite[case 2 p. 583]{JMPA11}, one may consider $\tau_{l,j}=\tau_{l},$ independent of $j.$ We also set $\hat{n}=\max_{1\leq n\leq p}n_l.$

We next assume the eigenvalues $\mu_l$ of $A^*$ in the following way: for $1\leq l\leq p-1$,
\begin{equation}\label{arr}
	\begin{cases}
		\Re({\mu_l})\geq \Re({\mu_{l+1}}),\\
		|\mu_l|\geq |\mu_{l+1}|, \text{ if } \Re({\mu_l})=\Re({\mu_{l+1}}). 
	\end{cases}
\end{equation}
The eigenvalues of the operator $\mathcal{A}_{\alpha_1,A^*}=\pa_{x}(x^{\alpha_1}\pa_{x})+A^*$,  
 are given by $-\lambda_{ \alpha_1, k}+\mu_l$, $k\geq 1$, $1\leq l \leq p$. Using the gap condition \eqref{gap con} of \Cref{lmm_gap}, and \cite[Proposition 3.2]{JMPA11}, we can find $k_0\in \N$ such that \begin{equation}\label{distinct}-\lambda_{ \alpha_1, k}+\mu_i\neq -\lambda_{\alpha_1,l}+\mu_j, \text{ for every } k\geq k_0, l\geq 1, l\neq 1 \text{ and }  1\leq i,j\leq p, i\neq j.
 \end{equation}
Thanks to the arrangements \eqref{arr} of the eigenvalues of $A^*$, we get the existence of $k_1\in \N$ (as large as needed) such that  for $1\leq l\leq p-1$
\begin{equation*}
	2\lambda_{ \alpha_1, k_1}\left(\Re(\mu_l)-\Re(\mu_{l+1})\right)+|\mu_{l+1}|^2-|\mu_l|^2\geq 0.
\end{equation*}
A simple computation allows us to deduce for $k\geq k_1, 1\leq l\leq p-1,$
\begin{equation}\label{con1}
	|\lambda_{ \alpha_1, k}-\mu_l|\leq |\lambda_{ \alpha_1, k}-\mu_{l+1}|.
\end{equation}
Also we have $k_2\in \N$ such that for all $k\geq k_2, 1\leq i\leq p, 1\leq j\leq p$ with $i\neq j$
\begin{equation}\label{con2}
	|\lambda_{ \alpha_1, k}-\mu_i|\leq |\lambda_{\alpha_1,k+1}-\mu_j|.
\end{equation}

Let us set $K_0=\max\{k_0,k_1,k_2\}.$ With this $K_0$, let $\tilde{p}$ be the number of distinct eigenvalues of the matrix $\mathbf{L}^*_{K_0}$ (see \eqref{matrix}) and denote them as $\{\gamma_{\ell}\}_{1\leq \ell\leq \tilde{p}}\subset \cplx$. We arrange these distinct eigenvalues in such a way that $|\gamma_{\ell}|\leq |\gamma_{\ell+1}|$ for every $1\leq \ell\leq \tilde{p}$. Keeping the consistent notation used for the matrix $A^*$, here we assume that for $1\leq \ell\leq \tilde p$, $N_{\ell}$ is the number of geometric multiplicity of the eigenvalue $\gamma_{\ell}$ and the size of the Jordan chains is $\tilde{\tau}_{\ell,j}, 1\leq j\leq N_{\ell}$. As we have assumed that the $\tau_{l,j}=\tau_l$ for the case of the matrix $A^*$, here also we assume that $\tilde{\tau}_{\ell,j}=\tilde{\tau}_{\ell}$ is independent of $j$. We set $\hat{N}=\max_{1\leq \ell\leq \tilde{p}}N_{\ell}$. Now, the structure of the eigenvalue $\gamma_{\ell}$ for the matrix operator $\mathbf{L}^*_{K_0}$ is $\{\gamma_{\ell}\}_{1\leq \ell\leq \tilde{p}}=\{-\lambda_{ \alpha_1, k}+\mu_l\}_{1\leq k\leq K_0, 1\leq l\leq {p}}$. From here, we can relabel the full sequence $\{\Lambda_k\}_{k\geq 1}$ and deduce \eqref{ar eigen}. This ends the proof.
\end{proof}

\begin{proposition}\label{prop:verify}
Let $\{\Lambda_k\}_{k\geq 1}$ be the eigenvalues of $-\mathcal{A}_{\alpha_1,A^*}$ rearranged as in \Cref{lem:rea}. The family $\{\Lambda_k\}_{k\geq 1}$ verifies conditions \ref{item1} to \ref{item7} of \Cref{d1}. 
\end{proposition}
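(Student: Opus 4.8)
The goal is to verify the seven conditions \ref{item1}--\ref{item7} of \Cref{d1} for the sequence $\{\Lambda_k\}_{k\geq 1}$ rearranged as in \Cref{lem:rea}. The rearrangement separates the sequence into a finite "bad" part $\{-\gamma_\ell\}_{1\leq\ell\leq\tilde p}$ and an "asymptotic tail" $\Lambda_{\tilde p+i}=\lambda_{\alpha_1,K_0+j}-\mu_l$, and most conditions are checked for the tail, with the finite part absorbed into constants. I would proceed condition by condition.

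First, for \ref{item1} (distinctness): within the tail this follows from \eqref{distinct} together with the strictly increasing nature of $\{\lambda_{\alpha_1,k}\}$ for $k\geq K_0$, and the choice of $K_0\geq k_0$ ensures no collision between a tail element and any element with a different $\mu$-shift; the finite part consists of the \emph{distinct} eigenvalues $\gamma_\ell$ by construction, and \eqref{con1}--\eqref{con2} guarantee the tail does not revisit the finite part. For \ref{item2} ($\Re(\Lambda_k)>0$): since $A$ (hence $A^*$) is assumed stable (see \Cref{rm}\eqref{rm1}), $\Re(\mu_l)<0$, so $\Re(\lambda_{\alpha_1,k}-\mu_l)=\lambda_{\alpha_1,k}-\Re(\mu_l)>\lambda_{\alpha_1,k}>0$; the finitely many $\gamma_\ell$ also satisfy $\Re(-\gamma_\ell)=\lambda_{\alpha_1,k}-\Re(\mu_l)>0$. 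For \ref{item3}: $\Im(\Lambda_{\tilde p+i})=-\Im(\mu_l)$ is bounded by $\max_l|\Im(\mu_l)|=:M$, while $\Re(\Lambda_{\tilde p+i})\geq\lambda_{\alpha_1,K_0+1}>0$, so $|\Im(\Lambda_k)|\leq M\leq (M/\sqrt{\lambda_{\alpha_1,K_0+1}})\sqrt{\Re(\Lambda_k)}$ on the tail, and one enlarges $\beta$ to cover the finite part.

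For \ref{item4} (monotonicity in modulus): this is where the careful choices in \Cref{lem:rea} pay off — conditions \eqref{con1} and \eqref{con2}, valid for $k\geq K_0$, say precisely that within a fixed level $j$ the moduli $|\lambda_{\alpha_1,K_0+j}-\mu_l|$ are nondecreasing in $l$, and passing from level $j$ to $j+1$ is also nondecreasing; the arrangement \eqref{arr} of the $\mu_l$ is exactly what makes this work. The finite block $\{-\gamma_\ell\}$ is listed in nondecreasing modulus by construction, and one checks $|\gamma_{\tilde p}|\leq|\Lambda_{\tilde p+1}|$ by possibly enlarging $K_0$. For \ref{item5} (gap condition): on the tail, $|\Lambda_{\tilde p+i}-\Lambda_{\tilde p+i'}|$ with $i,i'$ in levels $j,j'$ equals $|\lambda_{\alpha_1,K_0+j}-\lambda_{\alpha_1,K_0+j'}-(\mu_l-\mu_{l'})|$; when $j\neq j'$ one uses the gap bound $\rho_1|(K_0+j)^2-(K_0+j')^2|$ from \Cref{lmm_gap} (which dominates $(K_0+j)+(K_0+j')\gtrsim$ the index difference $|i-i'|$ up to the bounded factor $p$), absorbing the bounded perturbation $|\mu_l-\mu_{l'}|$ for $|i-i'|$ large enough; for $j=j'$, $l\neq l'$ the difference is just $|\mu_l-\mu_{l'}|>0$, bounded below by the minimal gap among the finitely many distinct eigenvalues of $A^*$. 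This gives the two-part gap condition with $q$ chosen large enough and $\rho$ a suitable fraction of $\rho_1/p^2$; collisions with the finite part are excluded by \eqref{distinct}--\eqref{con2}.

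Finally, \ref{item7} (counting function asymptotics): $\mathcal N(r)$ counts the finitely many $\gamma_\ell$ (a bounded contribution, absorbed into $\varpi$) plus, for each $l\in\{1,\dots,p\}$, the number of $k\geq K_0$ with $|\lambda_{\alpha_1,k}-\mu_l|\leq r$; since $\lambda_{\alpha_1,k}=\kappa_{\alpha_1}^2 j_{\nu(\alpha_1),k}^2$ with $j_{\nu(\alpha_1),k}\sim k\pi$ by \eqref{first in}--\eqref{second in}, one has $\lambda_{\alpha_1,k}\sim(\kappa_{\alpha_1}\pi)^2 k^2$ and hence each such count is $\frac{1}{\kappa_{\alpha_1}\pi}\sqrt r+O(1)$, so $\mathcal N(r)=\frac{p}{\kappa_{\alpha_1}\pi}\sqrt r+O(1)$, which yields the required bounds with $p_1=p_2=p/(\kappa_{\alpha_1}\pi)$ up to adjusting $\varpi$. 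The main obstacle is \ref{item4} together with \ref{item5}: one must be genuinely careful that the finitely many Jordan-block perturbations coming from $A^*$ do not destroy either the monotone ordering or the quadratic gap — this is exactly why \Cref{lem:rea} builds in the three thresholds $k_0,k_1,k_2$ and the lexicographic ordering \eqref{arr} of the $\mu_l$, and the verification amounts to showing these provisions are sufficient. I expect the remaining conditions to be essentially bookkeeping, following \cite[Section 3.1]{AB2014} closely.
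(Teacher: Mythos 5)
Your proposal is correct and follows essentially the same route as the paper: a condition-by-condition check using the stability of $A$, the provisions \eqref{arr}, \eqref{distinct}, \eqref{con1}--\eqref{con2} built into \Cref{lem:rea}, the quadratic gap of \Cref{lmm_gap} to absorb the bounded perturbations $\mu_l$ for condition \ref{item5}, and the Bessel-zero asymptotics \eqref{first in}--\eqref{second in} for the counting function. Your explicit handling of the junction between the finite block $\{-\gamma_\ell\}$ and the tail in conditions \ref{item1} and \ref{item4} is if anything slightly more careful than the paper's, which simply cites \eqref{con1}--\eqref{con2}.
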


\begin{proof}
We verify the six conditions of \Cref{d1}.
\begin{itemize}
	\item Condition \ref{item1} is immediate by the arrangements \eqref{ar eigen} in \Cref{lem:rea} of the eigenvalues.
	\item Condition \ref{item2} is obvious for higher frequencies. For the lower frequencies it does also hold by \Cref{rm1} of \Cref{rm}. 
	\item By definition, $|\Im(\Lambda_k)|=|\Im(\mu_l)|\leq \max_{1\leq l\leq p}|\Im(\mu_l)|$ and also $\Re(\Lambda_k)\geq \lambda_1- \max_{1\leq l\leq p}|\Re(\mu_l)|,$ a positive number as $A$ is stable. Therefore condition \ref{item3} follows.
	\item \eqref{con1} and \eqref{con2} imply condition \ref{item4}.
	\item Thanks to definition \eqref{ar eigen} in \Cref{lem:rea}, we have $\displaystyle	\inf_{k\neq n: |k-n|<q}\left\vert \Lambda_{k}-\Lambda_{n} \right\vert>0$, for any $q\in \N.$
	Now we show the first part of condition \ref{item5}. Let us first choose, $k=\tilde p+i_k, n=\tilde p+i_n,$ (for the case $k\leq \tilde p$ or $n\leq \tilde p$ the condition \ref{item5} easily follows). Using \eqref{ar eigen}, we write $\Lambda_k=\lambda_{ \alpha_1,   K_0+j_k}-\mu_{l_k}, \text{ and } \Lambda_n=\lambda_{ \alpha_1,   K_0+j_n}-\mu_{l_n} $ where 
	\begin{equation}\label{ik}
		\begin{cases}
			j_k=\lfloor \frac{i_k-1}{p}\rfloor+1 \text{ and } l_k=i_k-\lfloor\frac{i_k-1}{p}\rfloor p,\\
			j_n=\lfloor \frac{i_n-1}{p}\rfloor+1 \text{ and } l_n=i_n-\lfloor\frac{i_n-1}{p}\rfloor p. 
		\end{cases}
	\end{equation}
	Let us now compute the gap of the eigenvalues
	\begin{align*}
		|\Lambda_n-\Lambda_{k}|&\geq ||\lambda_{ \alpha_1,  K_0+j_n}-\lambda_{ \alpha_1,  K_0+j_k}|-|\mu_{l_n} -\mu_{l_k} ||^2\\
		&\geq|\lambda_{ \alpha_1,  K_0+j_n}-\lambda_{ \alpha_1,  K_0+j_k}|^2-2|\lambda_{ \alpha_1,  K_0+j_n}-\lambda_{ \alpha_1,  K_0+j_k}||\mu_{l_n} -\mu_{l_k}|+|\mu_{l_n} -\mu_{l_k}|^2.
	\end{align*}
	Let us denote $m=\min_{1\leq l,l'\leq p; \\l\neq l'} |\mu_l-\mu_{l'}|$, $M=\max_{1\leq l,l'\leq p; \\l\neq l'} |\mu_l-\mu_{l'}|, d=|j_k-j_n|, s=j_k+j_n$
	and $b=d(s+2K_0).$
	Using the gap condition \eqref{gap con} in  \Cref{lmm_gap} of the eigenvalues of the operator $-\mathcal{A}_{\alpha_1}$, we have \begin{equation}\label{gap_1}|\Lambda_n-\Lambda_{k}|^2\geq \rho_1 b^2-2Mb\rho_2+m^2.
	\end{equation}
	By the expression \eqref{ik} of $j_k$, we have $j_k\geq \frac{i_k-1}{p} \text{ and } j_k\leq \frac{i_k-1}{p}+1$. 
	This implies that $i_k\leq pj_k+1$ and $i_k\geq pj_k-p+1.$ Thus using these conditions we have
	\begin{equation*}
		|k^2-n^2|^2=|2\tilde p+i_k+i_n|^2|i_k-i_n|^2\leq |ps+2+2\tilde p|^2|pd+p|^2.
	\end{equation*}
	For large enough $d, s$ we have a constant $C>0$ such that
	\begin{equation*}
		|k^2-n^2|^2\leq Cd^2(s+2K_0)^2=C b^2.
	\end{equation*}
	As $b$ is large enough, using \eqref{gap_1}, we have the existence of a constant $\rho>0$ such that condition \ref{item5} holds.
	\item Finally, we will verify the assumption \ref{item7} regarding the counting function. The definition of the counting function \eqref{counting} and condition \ref{item4} imply that $\mathcal{N}(r)=n$ if and only if $|\Lambda_n|\leq r$ and $|\Lambda_{n+1}|>r$ and which further implies ${|\Lambda_{\mathcal{N}(r)}|}\leq r <{|\Lambda_{\mathcal{N}(r)+1}|}.$
	
	We choose $r$ in such a way that $\mathcal{N}(r)>\tilde p$ with $\tilde p$ as in \Cref{lem:rea}. Set $\mathcal{N}(r)=\tilde p+i.$ Then $\Lambda_{\mathcal{N}(r)}=\lambda_{ \alpha_1, K_0+j}-\mu_l,$ where $j=\lfloor \frac{i-1}{p}\rfloor+1 \text{ and } l=i-\lfloor\frac{i-1}{p}\rfloor p , i\geq 1$. Further we have $|\Lambda_{\mathcal{N}(r)}|\leq |\lambda_{ \alpha_1, K_0+j}|+\hat{M}$, where $\hat{M}=\max_{1\leq l\leq p}\mu_l$. Thanks to \eqref{first in}, we have for $\nu(\alpha_1) \in [0,\frac{1}{2}]$
	\begin{align}
	\nonumber	\kappa_{\alpha_1}^2\left(K_0+j+\frac{\nu(\alpha_1)}{2}-\frac{1}{4}\right)^2\pi^2-\hat{M}&\leq |\Lambda_{\mathcal{N}(r)}|
		\leq \kappa_{\alpha_1}^2\left(K_0+j+\frac{\nu(\alpha_1)}{4}-\frac{1}{8}\right)^2\pi^2+\hat{M}.
	\end{align}
	By the expression of $j$ we have $\frac{i-1}{p}\leq j\leq \frac{i-1}{p}+1 $. Setting  $c_1=-\frac{1+\tilde p}{p}+\frac{\nu(\alpha_1)}{2}-\frac{1}{4}+K_0,$ and 
$c_2=-\frac{1+\tilde p}{p}+1+\frac{\nu(\alpha_1)}{4}-\frac{1}{8}+K_0,$ one can obtain
	\begin{equation}\label{eq1}
		\kappa_{\alpha_1}^2\left(\frac{\mathcal{N}(r)}{p}+c_1\right)^2\pi^2-\hat{M}\leq |\Lambda_{\mathcal{N}(r)}|\leq  \kappa_{\alpha_1}^2\left(\frac{\mathcal{N}(r)}{p}+c_2\right)^2\pi^2+\hat{M}.
	\end{equation}
	Using the left inequality of \eqref{eq1} and the fact $\mathcal{N}(r)\leq r $, we have 
	\begin{align*}
		 \mathcal{N}(r)\leq \frac{1}{\kappa_{\alpha_1}}\frac{p}{\pi}\sqrt{r}+\left(\frac{p}{\kappa_{\alpha_1}\pi}\sqrt{\hat{M}}-pc_1\right).
	\end{align*}
	Using the right inequality of \eqref{eq1}, we have 
	\begin{align*}
		r<	|\Lambda_{\mathcal{N}(r)+1}|&\leq  \kappa_{\alpha_1}^2\left(\frac{\mathcal{N}(r)+1}{p}+c_2\right)^2\pi^2+\hat{M}\\
		&\leq \left(\kappa_{\alpha_1}\left(\frac{\mathcal{N}(r)+1}{p}+c_2\right)\pi+\sqrt{\hat{M}}\right)^2.
	\end{align*}
It can be easily checked that the condition \ref{item7} is verified. Similarly we can show this when $\nu(\alpha_1) \geq \frac{1}{2}.$ Therefore we complete the counting function argument \ref{item7} and the proof is finished.
\end{itemize}
\end{proof}
We are in position to prove the main result of this section, that is, \Cref{null control 1}. We give a full proof for weakly degenerate case and just mention the changes for strong one.
\subsection{Proof of the main controllability result in 1-$d$}
Here, we prove \Cref{null control 1}. For readability, we separate the proof in three parts. First, we present the proof of sufficiency of the Kalman rank condition (see eq. \eqref{kalman}) for weakly degenerate case. Then, we present the modifications needed for strongly degenerate one. We conclude by presenting the necessity of \eqref{kalman}.

\begin{proof}[Proof of \Cref{null control 1} for weakly degenerate case $(0\leq\alpha_1<1)$]
Let us choose $v_T\in H^{1 }_{ \alpha_1,0}(0,1)^n.$ The solution of the adjoint problem \eqref{adj system} is given by
\begin{equation*}
	v(t,x)=\sum_{k=1}^{\infty}  e^{(-\lambda_{ \alpha_1, k}I_d+A^*)(T-t)}\phi_{ \alpha_1, k}(x) v_{T,k}, \text{ where } v_{T,k}=\int_{0}^{1}v_T(x) \phi_{ \alpha_1, k}(x) \rd x \in \cplx^n.
\end{equation*}
This implies the observation term is
\begin{equation}\label{observation}
	B^*(x^{ \alpha_1}\pa_x v)(t,0)=\sum_{k=1}^{\infty}  B^* e^{(-\lambda_{ \alpha_1, k}I_d+A^*)(T-t)} (x^{ \alpha_1}\pa_x \phi_{ \alpha_1, k})|_{x=0} \, 
	 v_{T,k}.
\end{equation}
Thanks to \Cref{obs weak}, we have the expression $$(x^{ \alpha_1}\pa_x \phi_{ \alpha_1, k})|_{x=0}=\frac{(1-\alpha_1)\sqrt{(2- \alpha_1)}(j_{\nu({ \alpha_1 }),k})^{\nu(\alpha_1)} }{2^{\nu(\alpha_1)} \Gamma(\nu(\alpha_1)+1)\left|J'_{\nu({ \alpha_1})}(j_{\nu({ \alpha_1 }),k})\right|}.$$

\noindent
Let us set $V_T=\left(\frac{(1-\alpha_1)\sqrt{(2- \alpha_1)}(j_{\nu({ \alpha_1 }),k})^{\nu(\alpha_1)} }{2^{\nu(\alpha_1)} \Gamma(\nu(\alpha_1)+1)\left|J'_{\nu({ \alpha_1})}(j_{\nu({ \alpha_1 }),k})\right|} v_{T,k}\right)_{1\leq k\leq K_0} \in \cplx^{nK_0}$. Then the expression of \eqref{observation} can be written as:
{\small\begin{equation}\label{obs fin}
	B^*(x^{ \alpha_1}\pa_x v)(t,0)=\mathbf{B}^*_{K_0}e^{\mathbf{L}^*_{K_0}(T-t)}V_T+\sum_{k=K_0}^{\infty}  B^* e^{(-\lambda_{ \alpha_1, k}I_d+A^*)(T-t)} \frac{(1-\alpha_1)\sqrt{(2- \alpha_1)}(j_{\nu({ \alpha_1 }),k})^{\nu(\alpha_1)} }{2^{\nu(\alpha_1)} \Gamma(\nu(\alpha_1)+1)\left|J'_{\nu({ \alpha_1})}(j_{\nu({ \alpha_1 }),k})\right|} v_{T,k}.
\end{equation}}
Taking the duality product $\ip{\cdot}{\cdot}_{{H^{-1}_{\alpha}(\Omega)^n, H^1_{\alpha, 0}(\Omega)^n}}$ between $w_0$ and $v(0,\cdot)$, we obtain:
\begin{equation*}
	\ip{w_0}{v(0,\cdot)}=\sum_{k=1}^{\infty} \left(w_{0,k}, e^{(-\lambda_{ \alpha_1, k}I_d+A^*)T} v_{T,k}\right)_{\cplx^n}, \quad \text{where } w_{0,k}=\ip{w_0}{\phi_{\alpha_1,k}}\in \cplx^n. 
\end{equation*}
Let us choose $U_0=\left( \frac{2^{\nu(\alpha_1)} \Gamma(\nu(\alpha_1)+1)\left|J'_{\nu({ \alpha_1})}(j_{\nu({ \alpha_1 }),k})\right| }{ (1-\alpha_1)\sqrt{(2- \alpha_1)}(j_{\nu({ \alpha_1 }),k})^{\nu(\alpha_1)}} u_{0,k}\right)_{1\leq k\leq K_0} \in \cplx^{nK_0}$. Then as before, we represent the above duality product in the following form
\begin{equation}\label{inner pr}
	\ip{w_0}{v(0,\cdot)}=\left(U_0, e^{\mathbf{L}^*_{K_0}T}V_T\right)_{\cplx^{nK_0}}+\sum_{k=K_0}^{\infty} \left(w_{0,k}, e^{(-\lambda_{ \alpha_1, k}I_d+A^*)T} v_{T,k}\right)_{\cplx^n}. 
\end{equation}
Next, we first take $v_T$ arbitrarily from the space $\text{span}\{\phi_{ \alpha_1, k}\}_{1\leq k\leq K_0},$ and then taking $v_T=a\phi_{ \alpha_1, k}$ for $k>K_0, a\in \cplx^n$ as $\{\phi_{ \alpha_1, k}\}_{k\geq 1}$ forms an orthonormal basis in $L^2(0,1),$ using \eqref{obs fin} and \eqref{inner pr}, the moment problem \eqref{moment} reduces to the following:
\begin{equation}\label{moment h}
	\begin{cases}
		\displaystyle \int_{0}^{T}\left( h(T-t,)\mathbf{B}^*_{K_0}e^{\mathbf{L}^*_{K_0}(T-t)}V_T\right)_{\cplx^m} \rd t=F(U_0, V_T), \quad \forall\, V_T\,\in \cplx^{nK_0},\\
		\displaystyle \int_{0}^{T}\left( h(T-t),   B^* e^{(-\lambda_{ \alpha_1, k}I_d+A^*)t} a \right)_{\cplx^m} \rd t= G_k(w_0,a),
	\end{cases}
\end{equation}
where the function $F:\cplx^{nK_0}\times \cplx^{nK_0}\to \cplx$ and $G_k:H^{-1 }_{\alpha_1}(0,1)^n\times \cplx^n\to \cplx$ are defined by
\begin{equation*}
	\begin{cases}
		F(U_0, V_T)=-\left(U_0, e^{\mathbf{L}^*_{K_0}T}V_T\right)_{\cplx^{nK_0}},\\
		G_k(w_0,a)=-\frac{2^{\nu(\alpha_1)} \Gamma(\nu(\alpha_1)+1)\left|J'_{\nu({ \alpha_1})}(j_{\nu({ \alpha_1 }),k})\right| }{ (1-\alpha_1)\sqrt{(2- \alpha_1)}(j_{\nu({ \alpha_1 }),k})^{\nu(\alpha_1)}} \left(w_{0,k}, e^{(-\lambda_{ \alpha_1, k}I_d+A^*)T} a\right)_{\cplx^n}.
	\end{cases}
\end{equation*}
With this setting, using \cite[Proposition 5.1]{JMPA11} we now write down the simplified moment problem in a similar way as done in \cite[Section 3.2]{AB2014}.
Assuming the Kalman rank condition \eqref{kalman}, the control system \eqref{oned} is null controllable in time $T>0$ if for every $1\leq q\leq \hat{N},$ there exists a solution $h_q\in L^2(0,T)$ to the following moment problems
\begin{equation}\label{moment1}
	\begin{cases}
		\displaystyle \int_{0}^{T}\frac{t^\delta}{\delta!}e^{\overline{\gamma_{\ell}}t}h_q(t) \, \rd t=c_{\ell,\delta,q}(w_0;T),  \quad 1\leq \ell\leq \tilde{p}, 0\leq \delta\leq \tilde{\tau}_{\ell}-1,\\
		\displaystyle \int_{0}^{T}\frac{t^\sigma}{\sigma!}e^{({-\lambda_{ \alpha_1, k}+\overline{\mu_l}})t}h_q(t) \, \rd t=d^k_{l,\sigma,q}(w_0;T), \quad \forall k>K_0, 1\leq l\leq {p}, 0\leq \sigma\leq {\tau}_l-1,
	\end{cases}
\end{equation}
where $c_{l,\delta,q}$ and $d^k_{l,\sigma,q}$ have the following estimates
\begin{align*}
	&|c_{\ell,\delta,q}|\leq C \norm{e^{\mathbf{L}^*_{K_0}T}}_{\mc{M}_{nK_0}(\cplx)} \norm{w_0}_{H^{-1}_{ \alpha_1}(0,1)^n}\leq Ce^{CT}\norm{w_0}_{H^{-1 }_{\alpha_1}(0,1)^n}\\
	&|d^k_{l,\sigma,q}|\leq \frac{C}{(j_{\nu(\alpha_1),k})^{\nu(\alpha_1)+1/2}}\norm{e^{(-\lambda_{ \alpha_1, k}I_d+A^*)T}}_{\mc{M}_{n}(\cplx)}\left|\ip{w_0}{\phi_{ \alpha_1, k}}\right|\leq C ({j_{\nu( \alpha_1),k}})^{\frac{1}{2}-\nu(\alpha_1)}{e^{-\lambda_{ \alpha_1, k} T}} \norm{w_0}_{H^{-1}_{ \alpha_1}(0,1)^n},
\end{align*}
here we have used that $\norm{\phi_{ \alpha_1, k}}_{H^1_{\alpha_1,0}(0,1)}\leq C {j_{\nu( \alpha_1),k}},$ for some $C>0$ and the estimate \eqref{est obs weak} in \Cref{obs weak}. 
In this case, control $h(t)$ in \eqref{moment h} is given by a linear combination of $h_q(T-t), 1\leq q\leq \hat{N}$ and we can write
\begin{align}\label{main con}
	\norm{h}_{L^2(0,T)^m}\leq C \max_{1\leq q\leq \hat{N}}||h_q||_{L^2(0,T)}.
\end{align}
Next, we use a change of variable $s=t-\frac{T}{2}$ in the moment problem \eqref{moment1} so that we can use the biorthogonal result \Cref{d1} for the time interval $(-\frac{T}{2},\frac{T}{2})$. Using the binomial formula\\ $t^J=(s+\frac{T}{2})^J=\sum_{j=0}^{J}\begin{pmatrix}
	J\\j
\end{pmatrix}s^{J-j}(\frac{T}{2})^j$, we obtain 
\begin{equation*}
	\begin{cases}
		\displaystyle \sum_{j=0}^{\delta}\begin{pmatrix}
			\nu \\ j
		\end{pmatrix}(\tfrac{T}{2})^j\int_{-\frac{T}{2}}^{\frac{T}{2}}s^{\delta-j}e^{\overline{\gamma_{\ell}}s}h_q\left(s+\frac{T}{2}\right) \rd s=c'_{\ell,\delta,q}(w_0;T), \;\; 1\leq \ell\leq \tilde{p}, 0\leq \delta\leq \tilde{\tau}_{\ell}-1,\\
		\displaystyle \sum_{j=0}^{\sigma}\begin{pmatrix}
			\sigma \\ j
		\end{pmatrix}(\tfrac{T}{2})^j\int_{-\frac{T}{2}}^{\frac{T}{2}}s^{\sigma-j}e^{(-\lambda_{ \alpha_1, k}+\overline{\mu_l})s}h_q\left(s+\frac{T}{2}\right) \rd s={d^k}'_{l,\sigma,q}(w_0;T),\;\; \forall k>K_0,   1\leq l\leq {p}, 0\leq \sigma\leq {\tau}_l-1,
	\end{cases}
\end{equation*}
where $c'_{\ell,\delta,q}(w_0;T)=\delta! e^{-\frac{T}{2}\overline{\gamma_{\ell}}}c_{\ell,\delta,q}(w_0;T),{d^k}'_{l,\sigma,q}(w_0;T)=\sigma! e^{-(-\lambda_{ \alpha_1, k}+\overline{\mu_l})\frac{T}{2}}{d^k}_{l,\sigma,q}(w_0;T).$

Let us now choose $\eta=\max\{\tau_l,\tilde{\tau}_{\ell}, 1\leq l\leq p, 1\leq \ell\leq \tilde{p}\}$. Thanks to \Cref{prop:verify} and \Cref{d1}, there exists $T_0>0$ such that, for this $\eta\geq1$ and $0<T<T_0$, we can find a family of complex valued functions $\{\Psi_{k,j}\}_{k\geq1, 0\leq j\leq \eta-1}\in L^2\left(-\frac{T}{2},\frac{T}{2}\right)$ biorthogonal to 
$e_{k,j}=t^je^{-\Lambda_k t}.$ Thus noting the fact $-\lambda_{ \alpha_1, k}+\mu_l=-\Lambda_{\tilde p+(k-K_0-1)p+l}, \text{ for } k>K_0,$ we observe that if we consider the control function as
\begin{align}
	\nonumber	h_q(t)=\sum_{\ell=1}^{
		\tilde p}\sum_{\delta=0}^{\tilde{\tau}_{\ell}-1}c''_{\ell,\delta,q}(w_0;T) &\Psi_{\ell,\delta}\left(t-\frac{T}{2}\right)\\
	\label{control} &+\sum_{k=K_0}^{\infty}\sum_{\l=1}^{
		p}\sum_{\sigma=0}^{{\tau}_l-1}{d^k}''_{l,\sigma,q}(w_0;T) \Psi_{\tilde p+(k-K_0-1)p+l,\sigma}\left(t-\frac{T}{2}\right)
\end{align}
then $h_q$ satisfy the moment problem \eqref{moment1}, provided $h_q\in L^2(0,T)$ and $c''_{\ell,\delta,q}(w_0;T)$ and ${d^k}''_{l,\sigma,q}(w_0;T)$ solve the following system 
\begin{align*}
	P(T)\begin{pmatrix}
		c''_{\ell,0,q}(w_0;T)\\\vdots\\c''_{\ell,\tilde{\tau}_\ell-1,q}(w_0;T)
	\end{pmatrix}=\begin{pmatrix}
		c'_{\ell,0,q}(w_0;T)\\\vdots\\c'_{\ell,\tilde{\tau}_\ell-1,q}(w_0;T)
	\end{pmatrix}, \text{ and } Q(T)\begin{pmatrix}
		{d^k}''_{l,0,q}(w_0;T)\\\vdots\\{d^k}''_{l,{\tau}_l-1,q}(w_0;T)
	\end{pmatrix}=\begin{pmatrix}
		{d^k}'_{l,0,q}(w_0;T)\\\vdots\\{d^k}'_{l,{\tau}_l-1,q}(w_0;T)
	\end{pmatrix}
\end{align*}
where the coefficients of the matrix $P(T)$ and $Q(T)$. are respectively, given for $i\geq j,$ $p_{ij}(T)=\begin{pmatrix}
	i-1\\j-1
\end{pmatrix}(\frac{T}{2})^{i-j},$ $q_{ij}(T)=\begin{pmatrix}
	i-1\\j-1
\end{pmatrix}(\frac{T}{2})^{i-j}, $ and $p_{ij}(T)=q_{ij}(T)=0$ otherwise. We also get 
\begin{align*}
	\norm{P(T)^{-1}}_{\mc{M}_{\tilde{\tau}_{\ell}-1}(\cplx)}\leq C T^{\tilde{\tau}_{\ell}-1},\, \norm{Q(T)^{-1}}_{\mc{M}_{{\tau}_l-1}(\cplx)}\leq C T^{{\tau}_l-1}. 
\end{align*}
Now we recover the estimates for the scalar $c''_{\ell,\delta,q}(w_0;T)$ and ${d^k}''_{l,\sigma,q}(w_0;T).$
\begin{align}
	\label{c}	&|c''_{\ell,\delta,q}(w_0;T)|\leq C T^{\tilde{\tau}_{\ell}-1} |e^{-\frac{T}{2}\overline{\gamma_{\ell}}}|e^{CT}\norm{w_0}_{H^{-1 }_{ \alpha_1}(0,1)^n}\leq  C e^{CT}\norm{w_0}_{H^{-1 }_{ \alpha_1}(0,1)^n}\\
	\nonumber	&|{d^k}''_{l,{\tau}_l-1,q}(w_0;T)|\leq C T^{{\tau}_l-1}({j_{\nu( \alpha_1),k}})^{\frac{1}{2}-\nu(\alpha_1)} |e^{-(-\lambda_{ \alpha_1, k}+\overline{\mu_l})\frac{T}{2}}|e^{CT}e^{-\lambda_{ \alpha_1, k} T}\norm{w_0}_{H^{-1 }_{ \alpha_1}(0,1)^n}\\
	\label{d}	&\hspace{2.84cm}\leq Ce^{CT} ({j_{\nu( \alpha_1),k}})^{\frac{1}{2}-\nu(\alpha_1)}e^{-\lambda_{ \alpha_1, k}\frac{T}{2}}\norm{w_0}_{H^{-1 }_{ \alpha_1}(0,1)^n}.
\end{align}
Next, we show that $h_q$ indeed is in $L^2(0,T).$ Thanks to the estimate \eqref{cost}, \eqref{c} and \eqref{d}, we have from the definition \eqref{control},
\begin{align*}
	\norm{h_q}_{L^2(0,T)}&\leq C e^{CT}\bigg(\sum_{\ell=1}^{\tilde p}e^{C\sqrt{-\Re(\gamma_{\ell})}+\frac{C}{T}}
	+\sum_{k=K_0}^{\infty}({j_{\nu( \alpha_1),k}})^{\frac{1}{2}-\nu(\alpha_1)}e^{-\lambda_{ \alpha_1, k}\frac{T}{2}}\sum_{l=1}^{p}e^{C\sqrt{\lambda_{ \alpha_1, k}-\Re(\mu_l)}+\frac{C}{T}}\bigg)\norm{w_0}_{H^{-1 }_{ \alpha_1}(0,1)^n}\\
	&\leq Ce^{CT+\frac{C}{T}}\bigg(1+\sum_{k=K_0}^{\infty}({j_{\nu( \alpha_1),k}})^{\frac{1}{2}-\nu(\alpha_1)}e^{-\lambda_{ \alpha_1, k}\frac{T}{2}}e^{C\sqrt{\lambda_{ \alpha_1, k}}}\bigg)\norm{w_0}_{H^{-1 }_{ \alpha_1}(0,1)^n}.
\end{align*}
Using Young's inequality, for every $k\geq 1, T>0$ we have $C\sqrt{\lambda_{ \alpha_1, k}}\leq \lambda_{ \alpha_1, k} \frac{T}{4}+\frac{C^2}{T}$ and using the expression of the eigenvalues $\lambda_{ \alpha_1, k}=\kappa_{\alpha_1}^2 j^2_{\nu( \alpha_1 ),k}$ and its bound \eqref{first in}--\eqref{second in} we finally obtain,
\begin{align*}
	\norm{h_q}_{L^2(0,T)}&\leq Ce^{CT+\frac{C}{T}} \sum_{k=1}^{\infty}e^{-\lambda_{ \alpha_1, k} \frac{ C T}{4}} \norm{w_0}_{H^{-1 }_{ \alpha_1}(0,1)^n}\leq Ce^{CT+\frac{C}{T}} \sum_{k=1}^{\infty}e^{-(k+c)^2 \frac{ C T}{4}} \norm{w_0}_{H^{-1 }_{ \alpha_1}(0,1)^n},
\end{align*}
for some real numbers $ C, c>0$. Next using Gauss integral, we have 
\begin{align*}
	\norm{h_q}_{L^2(0,T)}&\leq Ce^{CT+\frac{C}{T}}\sqrt{\frac{C}{T}}\norm{w_0}_{H^{-1 }_{ \alpha_1}(0,1)^n} \leq Ce^{CT+\frac{C}{T}}\norm{w_0}_{H^{-1 }_{ \alpha_1}(0,1)^n}.
\end{align*}
As we consider $T<T_0,$ using \eqref{main con} we get the existence of the control with the cost $$\norm{h}_{L^2(0,T)^m}\leq Ce^{\frac{C}{T}}\norm{w_0}_{H^{-1 }_{ \alpha_1}(0,1)^n}.$$
The case $T\geq T_0$ can be reduced to the previous one. In fact, any continuation by zero of a control on $(0,\frac{T_0}{2})$ is a control on $(0,T)$ and the estimate follows from the decrease of the cost with respect to time.

\noindent
This ends the proof for weakly degenerate case. 
\end{proof}
In the case of strong degeneracy, we only indicate the main changes in the previous proof and everything else will work. 

\begin{proof}[Proof of \Cref{null control 1} in the strong degenerate case $(1<\alpha_1<2)$]
Note that, in this case the observation term is
\begin{equation*}
	B^*(v)(t,0)=\sum_{k=1}^{\infty}  B^* e^{(-\lambda_{ \alpha_1, k}I_d+A^*)(T-t)} ( \varphi_{ \alpha_1, k})|_{x=0} \, 
	v_{T,k}.
\end{equation*}
Thanks to \Cref{obs strong}, we have the expression $$( \phi_{ \alpha_1, k})|_{x=0}=\frac{\sqrt{2\kappa_{\alpha_1}}(j_{\nu({ \alpha_1 }),k})^{\nu(\alpha_1)} }{2^{\nu(\alpha_1)} \Gamma(\nu(\alpha_1)+1)\left|J'_{\nu({ \alpha_1})}(j_{\nu({ \alpha_1 }),k})\right|}.$$
Our next job is to take $V_T, U_0$ suitably 
and the whole proof works in the same way as weak degenerate case. In particular we choose 
\begin{align*}V_T&=\left(\frac{{\sqrt{2\kappa_{\alpha_1}}}(j_{\nu({ \alpha_1 }),k})^{\nu(\alpha_1)} }{2^{\nu(\alpha_1)} \Gamma(\nu(\alpha_1)+1)\left|J'_{\nu({ \alpha_1})}(j_{\nu({ \alpha_1 }),k})\right|} v_{T,k}\right)_{1\leq k\leq K_0}, \\
U_0&=\left( \frac{2^{\nu(\alpha_1)} \Gamma(\nu(\alpha_1)+1)\left|J'_{\nu({ \alpha_1})}(j_{\nu({ \alpha_1 }),k})\right| }{ \sqrt{2\kappa_{\alpha_1}}(j_{\nu({ \alpha_1 }),k})^{\nu(\alpha_1)}} u_{0,k}\right)_{1\leq k\leq K_0}
.\end{align*}
\end{proof}
\begin{proof}[Necessity of the Kalman rank condition \eqref{kalman}]  
	Let us recall that there exists $ k_0\in \N$ such that \eqref{distinct} holds. If possible, assume that
	\begin{align*}
			\text{rank } 	\boldsymbol{\mathbf{\mathcal{K}}}_{k_0}= 	\text{rank } [\mathbf{L}_{k_0} | \mathbf{B}_{k_0}]=	\text{rank } \left[\mathbf{B}_{k_0},\, \mathbf{L}_{k_0}\mathbf{B}_{k_0},\,  \mathbf{L}^{2}_{k_0} \mathbf{B}_{k_0},...,\mathbf{L}^{nk-2}_{k_0}\mathbf{B}_{k_0},\, \mathbf{L}^{nk-1}_{k_0} \mathbf{B}_{k_0}\right]<nk_0.
	\end{align*}
This condition implies that the ODE represented by the pair $(\mathbf{L}_{k_0},\mathbf{B}_{k_0})$ is not controllable. Duality argument gives that the pair $(\mathbf{L}^*_{k_0},\mathbf{B}^*_{k_0})$ does not satisfy the observability inequality. In other words, there exists some nontrivial $V_T\in \cplx^{nk_0}$ such that the solution of the adjoint system 
\begin{equation*}
	\begin{cases}
		V'(t)+\mathbf{L}^*_{k_0} V(t)=0 & \text{ in } (0,T),\\
		V(T)=V_T,
	\end{cases}
\end{equation*}
satisfies $\mathbf{B}^*_{k_0}V(t)=0$ for all $t\in (0,T).$ If we take,
 $V_T=\left(\frac{(1-\alpha_1)\sqrt{(2- \alpha_1)}(j_{\nu({ \alpha_1 }),k})^{\nu(\alpha_1)} }{2^{\nu(\alpha_1)} \Gamma(\nu(\alpha_1)+1)\left|J'_{\nu({ \alpha_1})}(j_{\nu({ \alpha_1 }),k})\right|} v_{T,k}\right)_{1\leq k\leq k_0} \in \cplx^{n k_0}$, where $v_{T,k}\in \cplx^n $ for $1\leq k\leq k_0$ and $v_T=\sum_{k=1}^{k_0}v_{T,k} \phi_{\alpha_1,k}$, we have $v_T\in H^1_{\alpha_1}(0,1)^n.$ Let us now consider the 1-$d$ adjoint system \eqref{adj system} corresponding to this $v_T.$ The solution of the adjoint problem \eqref{adj system} is given by
 \begin{equation*}
 	v(t,x)=\sum_{k=1}^{k_0}  e^{(-\lambda_{ \alpha_1, k}I_d+A^*)(T-t)}\phi_{ \alpha_1, k}(x) v_{T,k}.
 \end{equation*}
 This implies the observation term for the weakly degenerate case is
\begin{align*}
	B^*(x^{ \alpha_1}\pa_x v)(t,0)=\sum_{k=1}^{k_0}  B^* e^{(-\lambda_{ \alpha_1, k}I_d+A^*)(T-t)} (x^{ \alpha_1}\pa_x \phi_{ \alpha_1, k})|_{x=0} 
	v_{T,k}.
\end{align*}
Using the expression of the observation term (see \Cref{obs weak}) 
 \begin{align*}
	B^*(x^{ \alpha_1}\pa_x v)(t,0)= \mathbf{B}^*_{k_0} e^{(-\mathbf{L}^*_{k_0})(T-t)} V_T=\mathbf{B}^*_{k_0} V(t)=0.
\end{align*}
 Hence the system \eqref{adj system} is not observable. Thus we proved that a necessary condition for the null controllability of 1-$d$ degenerate parabolic equation \eqref{oned} is $\text{rank } 	\boldsymbol{\mathbf{\mathcal{K}}}_{k_0}= nk_0.$ Noting the fact that this implies the Kalman rank condition \eqref{kalman} (thanks to \cite[Corollary 3.3]{JMPA11}), we conclude our proof.
	\end{proof}

\section{Boundary controllability of the degenerate system in 2-$d$}\label{lr sc}

In this section, we will prove our main 2-$d$ result, that is, \Cref{main theorem}. As we have mentioned in \Cref{sec:strat}, we will exploit the geometrical configuration of our problem and employ the 1-$d$ controllability result proved in \Cref{null control 1} in combination with the Lebeau-Robbiano technique. Here we follow the strategy developed in \cite{AB2014}.

Let us consider  $(	\lambda_{\alpha_2,k}, 	\phi_{\alpha_2,k})_{k\in \N}$ the corresponding eigenvalues and eigenfunctions of the problem \begin{equation}\label{eigen eqn2}
	\begin{cases}
		-(y^{\alpha_2}\phi')'(y)=\lambda \phi(y) \quad y\in (0,1),\\
		\begin{cases}
			\phi(0)=0 &\text{ if } 0\leq \alpha_2<1\\
			(y^{\alpha_2}\phi')(0)=0 &\text{ if } 1< \alpha_2<2
		\end{cases},\\
		\phi(1)=0,
	\end{cases}
\end{equation}
and introduce the following closed subspace of $ H^{1 }_{ \alpha,0}(\Omega)^n$ 
\begin{equation}\label{E_aJ}
	E_{\alpha,J}=\bigg\{\sum\limits_{j=1}^{J}\ip{u}{\phi_{\alpha_2,j}}_{L^2(0,1)}\phi_{\alpha_2,j} \mid  u\in  H^{1 }_{ \alpha,0}(\Omega)^n\bigg\}\subset H^{1 }_{ \alpha,0}(\Omega)^n, \quad J\geq1,
\end{equation}
where the notation $\sum\limits_{j=1}^{J}\ip{u}{\phi_{\alpha_2,j}}_{L^2(0,1)}\phi_{\alpha_2,j}$ means the following
\begin{equation*}
	(x,y)\mapsto \sum\limits_{j=1}^{J}\begin{pmatrix}\ip{u_1(x,\cdot)}{\phi_{\alpha_2,j}}_{L^2(0,1)}\phi_{\alpha_2,j}(y)\\
		\ip{u_2(x,\cdot)}{\phi_{\alpha_2,j}}_{L^2(0,1)}\phi_{\alpha_2,j}(y)\\
		\vdots\\
		\ip{u_n(x,\cdot)}{\phi_{\alpha_2,j}}_{L^2(0,1)}\phi_{\alpha_2,j}(y)
		\end{pmatrix}.
\end{equation*}
\begin{lemma}\label{exp of u}
	Any function $u\in H^{1 }_{ \alpha,0}(\Omega)^n$ has the following representation:
	\begin{equation*}
		u=\sum\limits_{j=1}^{\infty}\ip{u}{\phi_{\alpha_2,j}}_{L^2(0,1)}\phi_{\alpha_2,j}.
	\end{equation*}
\end{lemma}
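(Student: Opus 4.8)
The statement to prove is \Cref{exp of u}: every $u\in H^1_{\alpha,0}(\Omega)^n$ admits the expansion $u=\sum_{j\geq 1}\langle u,\phi_{\alpha_2,j}\rangle_{L^2(0,1)}\phi_{\alpha_2,j}$, where the inner product is taken in the second variable only. The plan is to reduce the vector-valued statement componentwise to the scalar case, and then to argue in two stages: first establish $L^2(\Omega)$-convergence of the partial sums, and then upgrade it to convergence in the $H^1_{\alpha,0}(\Omega)$-norm by controlling the weighted gradient terms.

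\emph{Step 1: componentwise reduction and $L^2$-convergence.} It suffices to treat a scalar $u\in H^1_{\alpha,0}(\Omega)$, since the norm on $H^1_{\alpha,0}(\Omega)^n$ is the product norm and the expansion acts diagonally on the components. By \Cref{prop:hilbert_basis} applied to the $y$-direction operator $-\mathcal A_{\alpha_2}$ (the eigenvalue problem \eqref{eigen eqn2}), the family $\{\phi_{\alpha_2,j}\}_{j\geq 1}$ is an orthonormal basis of $L^2(0,1)$. Hence for a.e.\ $x\in(0,1)$ the slice $u(x,\cdot)\in L^2(0,1)$ expands as $u(x,\cdot)=\sum_{j\geq1}\langle u(x,\cdot),\phi_{\alpha_2,j}\rangle_{L^2(0,1)}\phi_{\alpha_2,j}(\cdot)$ with Parseval identity $\int_0^1|u(x,y)|^2\,\rd y=\sum_{j\geq1}|u_j(x)|^2$, where $u_j(x):=\langle u(x,\cdot),\phi_{\alpha_2,j}\rangle_{L^2(0,1)}$. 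Integrating in $x$ and using Fubini gives that $S_J u:=\sum_{j=1}^J u_j(x)\phi_{\alpha_2,j}(y)\to u$ in $L^2(\Omega)$.

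\emph{Step 2: convergence in $H^1_{\alpha,0}(\Omega)$.} Recall the equivalent norm $\|v\|_{H^1_{\alpha,0}(\Omega)}^2=\int_\Omega\big(x^{\alpha_1}|\partial_x v|^2+y^{\alpha_2}|\partial_y v|^2\big)$. For the $x$-derivative, one checks that $\partial_x(S_J u)=\sum_{j=1}^J u_j'(x)\phi_{\alpha_2,j}(y)$ with $u_j'(x)=\langle \partial_x u(x,\cdot),\phi_{\alpha_2,j}\rangle_{L^2(0,1)}$ (differentiation under the integral, justified since $\partial_x u\in L^2(\Omega)$ with $x^{\alpha_1/2}\partial_x u\in L^2$), so that $\int_\Omega x^{\alpha_1}|\partial_x(S_Ju-u)|^2 = \int_0^1 x^{\alpha_1}\sum_{j>J}|u_j'(x)|^2\,\rd x\to 0$ by dominated convergence / Parseval in $y$. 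For the $y$-derivative, use that $\phi_{\alpha_2,j}$ solves $-(y^{\alpha_2}\phi_{\alpha_2,j}')'=\lambda_{\alpha_2,j}\phi_{\alpha_2,j}$, so $\{\lambda_{\alpha_2,j}^{-1/2}(y^{\alpha_2/2}\phi_{\alpha_2,j}')\}$ (equivalently, the functions $\phi_{\alpha_2,j}$ viewed in $H^1_{\alpha_2,0}(0,1)$) form an orthogonal system, and for $u(x,\cdot)\in H^1_{\alpha_2,0}(0,1)$ one has $\int_0^1 y^{\alpha_2}|\partial_y u(x,y)|^2\,\rd y=\sum_{j\geq1}\lambda_{\alpha_2,j}|u_j(x)|^2$; the tail $\sum_{j>J}\lambda_{\alpha_2,j}|u_j(x)|^2$ is integrable in $x$ (its integral is $\le\|u\|_{H^1_{\alpha,0}(\Omega)}^2$) and goes to $0$ pointwise, so dominated convergence finishes the argument. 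Combining, $S_J u\to u$ in $H^1_{\alpha,0}(\Omega)$.

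\emph{Main obstacle.} The routine part is the $L^2$ expansion; the delicate point is justifying the spectral identity $\int_0^1 y^{\alpha_2}|\partial_y u(x,\cdot)|^2 = \sum_j\lambda_{\alpha_2,j}|u_j(x)|^2$ for a.e.\ $x$ and the interchange of $\partial_x$ with the series — i.e.\ checking that the slices $u(x,\cdot)$ genuinely lie in the form domain $H^1_{\alpha_2,0}(0,1)$ of the $y$-operator for a.e.\ $x$, and that the coefficient functions $u_j$ inherit enough regularity. This requires a Fubini-type argument on the weighted space $H^1_{\alpha,0}(\Omega)$, which is exactly where the Hardy–Poincaré inequality \Cref{lm hardy} and the density definition of $H^1_{\alpha,0}(\Omega)$ (via $\mathcal D_\alpha$) are used: first prove the claim for $u\in\mathcal D_\alpha$ (where everything is smooth and the interchanges are trivial) and then pass to the limit using that $S_J$ is a bounded projection on $H^1_{\alpha,0}(\Omega)$, so that $S_Ju\to u$ for all $u$ by a standard $\varepsilon/3$ argument.
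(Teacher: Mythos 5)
Your argument is correct and follows essentially the same route as the paper: the paper's proof also rests on the orthogonal expansion of the $H^1_{\alpha,0}(\Omega)$-norm in the basis $\{\phi_{\alpha_2,j}\}$ (the identity \eqref{norm}, which is exactly the combination of your two slice-wise Parseval/spectral identities for $x^{\alpha_1/2}\partial_x u$ and $y^{\alpha_2/2}\partial_y u$), and concludes by a tail/dominated-convergence argument on the partial sums. The only cosmetic difference is that the paper shows $\{S_Ju\}$ is Cauchy and then identifies the limit with $u$ via its Fourier coefficients, whereas you prove $S_Ju\to u$ directly; your extra care about the slices lying in $H^1_{\alpha_2,0}(0,1)$ and the density argument via $\mathcal D_\alpha$ fills in details the paper leaves implicit.
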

\begin{proof}
	Let us show that the sequence $\{S_J u\}_{J \geq 1}$ defined by $S_J u = \sum_{j=1}^J \langle u, \phi_{\alpha_2, j} \rangle_{L^2(0,1)} \phi_{\al_2, j}$ is a Cauchy sequence in $H^1_{\alpha,0}(\Omega)^n$. For any $J > K \geq 1$, we have
		\begin{equation*}
	\| S_J u - S_K u \|^2_{H^1_{\alpha,0}(\Omega)^n} =
	\left\| \sum_{j=K+1}^J \langle u, \phi_{\alpha_2, j} \rangle_{L^2(0,1)} \phi_{\alpha_2, j} \right\|^2_{H^1_{\alpha,0}(\Omega)^n}.
		\end{equation*}
	Thanks to \eqref{norm}, we have
		\begin{equation*}
		\| S_J u - S_K u \|^2_{H^1_{\alpha,0}(\Omega)^n} =\sum_{j=K+1}^J \left\| \langle u, \phi_{\alpha_2, j} \rangle_{L^2(0,1)} \right\|^2_{H^1_{\alpha_1,0}(0,1)^n}
	+ \sum_{j=K+1}^J \lambda_{\alpha_2, j} \left\| \langle u, \phi_{\alpha_2, j} \rangle_{L^2(0,1)} \right\|^2_{L^2(0,1)^n}.
		\end{equation*}
	Using Lebesgue’s dominated convergence theorem, it is easy to follow that the above terms go to zero as $J, K \to +\infty$. Thus we obtain
	$
	S_J u \xrightarrow[H^1_{\alpha,0}(\Omega)]{J \to +\infty} v$ for some  $v \in H^1_{\alpha,0}(\Omega)^n.
	$
	In particular,	$\langle v, \phi_{\alpha_1, k} \phi_{\alpha_2, j} \rangle_{L^2(\Omega)} = \langle u, \phi_{\alpha_1, k} \phi_{\al_2, j} \rangle_{L^2(\Omega)}$ for every $j, k \geq 1,$
	and it follows that $u = v$.
	\end{proof}
\subsection{Partial Observability}
 Using the Lebeau-Robbiano spectral inequality \eqref{lr} we have, for any $J\in \N,$
\begin{equation}\label{lr1}
	\sum_{j=1}^{J}|a_j|^2\leq C e^{C\sqrt{\lambda_{\alpha_2,J}}}\int_{\omega}\left|\sum_{j=1}^{J}a_j \phi_{\alpha_2,j}(y)\right|^2 \rd y.
\end{equation}
where $\lambda_{\alpha_2,j}$ and $\phi_{\alpha_2,j}$ are the eigenvalues and eigenfunctions of \eqref{eigen eqn2}, respectively. 
\begin{proposition}\label{par obs}
Let $T>0$ be given and we assume \Cref{null control 1} holds. Let $C>0$ be the constant provided by \Cref{null control 1} and define $C_T:=Ce^{C/T}$. Then for all $\sigma_T\in H^1_{\alpha,0}(\Omega)^n,$ we have the following partial observability inequality 
	\begin{equation}\label{eq:obs_inequalities}
		\begin{cases}
		\displaystyle \norm{\Pi_{E_{\alpha,J}}\sigma(0)}_{H^{1 }_{ \alpha,0}(\Omega)^n}^2\leq  (C_T)^2 e^{2C\sqrt{\lambda_{\alpha_2,J}}}\int_{0}^{T}\left\|\mathbf{1}_{\gamma} B^*(  \mathbf{P} \sigma(t))\right\|_{L^2(\pa \Omega)^m}^2 \rd t & \text{ if } 0\leq \alpha_1<1,\\
		\displaystyle\norm{\Pi_{E_{\alpha,J}}\sigma(0)}_{H^{1 }_{ \alpha,0}(\Omega)^n}^2\leq  (C_T)^2 e^{2C\sqrt{\lambda_{\alpha_2,J}}}\int_{0}^{T}\left\|\mathbf{1}_{\gamma} B^*(\sigma)(t)\right\|_{L^2(\pa \Omega)^m}^2 \rd t& \text{ if } 1<\alpha_1<2,\\
		\end{cases}
	\end{equation}
where $\sigma$ is the solution of the two-dimensional adjoint system \eqref{adjintro}--\eqref{bd2} with $\sigma(T)=\sigma_T$.
\end{proposition}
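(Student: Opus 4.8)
The strategy is to decompose the two-dimensional adjoint system in the $y$-variable using the eigenbasis $\{\phi_{\alpha_2,j}\}_{j\geq1}$ of \eqref{eigen eqn2}, thereby reducing the partial observability inequality to the one-dimensional controllability estimate \eqref{cost_cyl} from \Cref{null control 1} combined with the spectral inequality \eqref{lr1}. First I would write $\sigma_T\in H^1_{\alpha,0}(\Omega)^n$ as $\sigma_T=\sum_{j\geq1}\sigma_{T,j}(x)\phi_{\alpha_2,j}(y)$ with $\sigma_{T,j}\in H^1_{\alpha_1,0}(0,1)^n$, and observe that by separation of variables the solution of \eqref{adjintro}--\eqref{bd2} is $\sigma(t,x,y)=\sum_{j\geq1}v^j(t,x)\phi_{\alpha_2,j}(y)$, where each $v^j$ solves the one-dimensional adjoint system \eqref{adj system} but with coupling matrix $A^*-\lambda_{\alpha_2,j}\mathcal I_n$ in place of $A^*$ (equivalently, $v^j$ is the 1-$d$ adjoint solution with the same degenerate operator in $x$, shifted by the Dirichlet eigenvalue $\lambda_{\alpha_2,j}$). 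Since the Kalman rank condition \eqref{kalman} is insensitive to adding a multiple of the identity to the coupling matrix (the pair $(\mathbf L_k,\mathbf B_k)$ with $L_k=-\lambda_{\alpha_1,k}\mathcal I_n+A$ becomes $-(\lambda_{\alpha_1,k}+\lambda_{\alpha_2,j})\mathcal I_n+A$, and $\mathcal K_k$ keeps full rank), \Cref{null control 1} applies uniformly to each $v^j$ with the \emph{same} constant $C$, giving the cost $C_T=Ce^{C/T}$ independent of $j$.

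Next I would dualize: the 1-$d$ controllability estimate \eqref{cost_cyl} is equivalent (by the usual HUM duality, as in the lemma preceding \eqref{moment}) to the 1-$d$ observability inequality
\begin{equation*}
\norm{v^j(0)}_{H^{-1}_{\alpha_1}(0,1)^n}^2\leq (C_T)^2\int_0^T\left|B^*(x^{\alpha_1}\partial_x v^j)(t,0)\right|_{\cplx^m}^2\,\rd t
\end{equation*}
in the weakly degenerate case (and with $B^*v^j(t,0)$ in place of $B^*(x^{\alpha_1}\partial_x v^j)(t,0)$ when $1<\alpha_1<2$). Here I must be careful that duality gives observability with the \emph{same} constant as controllability; this is standard. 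Applying this to each $j=1,\dots,J$, summing, and using that $\mathbf P\sigma(t)|_{\Sigma_1}$ restricted to $\gamma$ reads $\sum_{j}(x^{\alpha_1}\partial_x v^j)(t,0)\,\phi_{\alpha_2,j}(y)$ (i.e. the $x$-normal component of $D\nabla\sigma$ at $x=0$), I get
\begin{equation*}
\sum_{j=1}^{J}\norm{v^j(0)}_{H^{-1}_{\alpha_1}(0,1)^n}^2\leq (C_T)^2\int_0^T\int_0^1\left|\sum_{j=1}^{J}B^*(x^{\alpha_1}\partial_x v^j)(t,0)\,\phi_{\alpha_2,j}(y)\right|^2\rd y\,\rd t\cdot\big(\text{after }\eqref{lr1}\big),
\end{equation*}
where the spectral inequality \eqref{lr1} is what lets me pass from the full $L^2(0,1)$ norm in $y$ to the $L^2(\omega)$ norm at the cost of the factor $e^{C\sqrt{\lambda_{\alpha_2,J}}}$, producing exactly the right-hand side of \eqref{eq:obs_inequalities} (recall $\gamma=\{0\}\times\omega$). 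Finally, the left-hand side: $\Pi_{E_{\alpha,J}}\sigma(0)=\sum_{j=1}^{J}v^j(0)\phi_{\alpha_2,j}$, and by the norm identity for $H^1_{\alpha,0}(\Omega)^n$ expressed in the $y$-eigenbasis (the formula labelled \eqref{norm} used in \Cref{exp of u}, giving $\norm{\Pi_{E_{\alpha,J}}\sigma(0)}_{H^1_{\alpha,0}}^2=\sum_{j=1}^J(\norm{v^j(0)}_{H^1_{\alpha_1,0}(0,1)^n}^2+\lambda_{\alpha_2,j}\norm{v^j(0)}_{L^2}^2)$), I would relate this to $\sum_j\norm{v^j(0)}_{H^{-1}_{\alpha_1}}^2$.

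The main obstacle is this last interpolation/regularity point: the 1-$d$ observability naturally controls $\norm{v^j(0)}_{H^{-1}_{\alpha_1}(0,1)^n}$, but the partial observability is stated in the $H^1_{\alpha,0}$ norm. The resolution is to exploit the parabolic smoothing of the 1-$d$ semigroup together with the explicit eigenvalue asymptotics: one runs the 1-$d$ controllability on, say, $(0,T/2)$ to control the $H^{-1}$ norm, and uses the strong dissipativity of the shifted operator (whose spectrum is $\{-\lambda_{\alpha_1,k}-\lambda_{\alpha_2,j}+\mu_l\}$) on $(T/2,T)$ to upgrade to the $H^1_{\alpha_1,0}$ norm, absorbing the extra powers of eigenvalues into the exponential; the factor $e^{C\sqrt{\lambda_{\alpha_2,J}}}$ already present in $C_T e^{C\sqrt{\lambda_{\alpha_2,J}}}$ comfortably dominates any polynomial loss. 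Alternatively — and more cleanly — one observes that \Cref{null control 1} can be run with initial data in $H^1_{\alpha_1,0}$ directly: controllability from $H^{-1}_{\alpha_1}$ in particular gives controllability from the smoother space, and dualizing yields the observability inequality in the $H^1_{\alpha_1,0}$ norm on the left, which is precisely what is needed. I would adopt that route, so that the summation over $j\leq J$ of the 1-$d$ inequalities, followed by a single application of \eqref{lr1} in the $y$-variable inside the space-time integral, delivers \eqref{eq:obs_inequalities} with the claimed constant; the strong-degeneracy case $1<\alpha_1<2$ is identical after replacing the observation $B^*(x^{\alpha_1}\partial_x v^j)(t,0)$ by $B^*v^j(t,0)$ throughout.
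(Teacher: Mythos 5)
Your overall strategy --- decompose $\sigma$ in the $y$-eigenbasis, apply the 1-$d$ result modewise, localize in $y$ with the spectral inequality \eqref{lr1}, and reassemble via the norm identity \eqref{norm} --- is exactly the paper's. But your final step contains a genuine error about the direction of duality. Null controllability of \eqref{oned} for initial data in $H^{-1}_{\alpha_1}(0,1)^n$ with $L^2$ controls of cost $C_T$ is equivalent, by the standard duality (see e.g.\ \cite[Theorem 2.44]{Cor08}), to an observability inequality whose left-hand side carries the norm of the \emph{dual} of the data space, i.e.\ $\norm{v^j(0)}^2_{H^{1}_{\alpha_1,0}(0,1)^n}\le (C_T)^2\int_0^T|B^*(x^{\alpha_1}\partial_x v^j)(t,0)|^2_{\cplx^m}\,\rd t$ --- already the $H^1_{\alpha_1,0}$ norm, since $H^{-1}_{\alpha_1}$ is by definition the dual of $H^1_{\alpha_1,0}$ with pivot $L^2$. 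There is no ``obstacle'' to overcome: no parabolic smoothing and no interpolation are needed. Your preferred alternative is backwards: controllability from the \emph{smoother} space $H^1_{\alpha_1,0}$ would dualize to observability in the \emph{weaker} norm $H^{-1}_{\alpha_1}$, not the stronger one. The smoothing route is also not viable as stated, because passing from a bound on $\norm{v^j(0)}_{H^{-1}_{\alpha_1}}$ to one on $\norm{v^j(0)}_{H^{1}_{\alpha_1,0}}$ is a reverse estimate over the infinitely many $x$-modes and cannot be had with a constant controlled by $e^{C\sqrt{\lambda_{\alpha_2,J}}}$, which only sees the $y$-frequencies.

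Two smaller points. Your claim that \Cref{null control 1} applies ``with the same constant $C$'' to the systems shifted by $-\lambda_{\alpha_2,j}\mathcal I_n$ is not justified by the Kalman condition alone: the moment-method constant depends on the spectrum of the coupling matrix, and uniformity in $j$ must be argued. The paper avoids this entirely by writing $\sigma^j(t,\cdot)=e^{-\lambda_{\alpha_2,j}(T-t)}v(t,\cdot)$ with $v$ solving the unshifted adjoint system \eqref{adj system}; since $e^{-2\lambda_{\alpha_2,j}t}\le1$, the observability constant for $\sigma^j$ is the one for $v$, manifestly independent of $j$. Finally, the identity \eqref{norm} produces the extra term $\sum_{j\le J}\lambda_{\alpha_2,j}\norm{\sigma^j(0)}^2_{L^2(0,1)^n}$ on the left; the paper controls it via the 1-$d$ Hardy--Poincar\'e inequalities \eqref{HPIi} and \eqref{weightespoincare1} at the cost of a factor $\lambda_{\alpha_2,J}$, which is then absorbed into $e^{2C\sqrt{\lambda_{\alpha_2,J}}}$ --- a step your sketch leaves implicit.
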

\begin{proof}
	Let us first assume that $\sigma_T=\sum\limits_{j=1}^{J}\sigma^{j}_T(x)\phi_{\alpha_2,j}(y),$ for some $\sigma^j_T\in H^{1 }_{ \alpha_1,0}(0,1)^n.$ 
Thus if we write the solution $\sigma$ of the adjoint system \eqref{adjintro}--\eqref{bd2} as
\begin{equation}\label{exp_sigma}
\sigma(t,x,y)=\sum\limits_{j=1}^{J}\sigma^j(t,x)\phi_{\alpha_2,j}(y),
\end{equation}
 then $\sigma^j$ satisfy the following
\begin{equation}\label{adj2}
	\begin{cases}
		\partial_t \sigma^j+\pa_{x}(x^{\alpha_1}\pa_{x}\sigma^j)-\lambda_{\alpha_2,j}\sigma^j+A^*\sigma^j=0 &  t\in (0,T), x\in (0,1),\\
		\begin{cases}\sigma^j(t,0)=0 & \text{ if } 0\leq \alpha_1<1\\
			(x^{\alpha_1}\pa_x \sigma^j)(t,0)=0 & \text{ if } 1< \alpha_1<2
		\end{cases}, &\text{ in }   (0,T),\\
		 \sigma^j(t,1)=0 & t\in (0,T),\\
		\sigma^j(T,x)=\sigma^j_T(x) & x\in (0,1).
	\end{cases}
\end{equation}
Note that $\sigma^j$ can be written as $\sigma^j(t,\cdot)=e^{-\lambda_{\alpha_2,j}(T-t)}v(t,\cdot),$ where $v$ is the solution of the adjoint system \eqref{adj system} associated with $\sigma_T^j.$ By \Cref{null control 1} and a functional analysis argument (see e.g. \cite[Theorem 2.44]{Cor08}), system \eqref{adj2} satisfies the following observability inequality for each $1\leq j\leq J$ 
\begin{equation}\label{obs1}
	\begin{cases}
	\norm{\sigma^j(0)}^2_{H^{1 }_{ \alpha_1,0}(\Omega)^n}\leq (C_T)^2\displaystyle \int_{0}^{T}|B^*(x^{\alpha_1}\pa_x\sigma^j)(t,0)|_{\cplx^m}^2 \rd t & \text{ if } 0\leq \alpha_1<1,\\
		\norm{\sigma^j(0)}^2_{H^{1 }_{ \alpha_1,0}(\Omega)^n}\leq (C_T)^2\displaystyle \int_{0}^{T}|B^*(\sigma^j)(t,0)|_{\cplx^m}^2 \rd t & \text{ if } 1<\alpha_1<2.
	\end{cases}
\end{equation}
Note that $\Pi_{E_{\alpha,J}}\sigma(0)=\sigma(0).$ From expression \eqref{exp_sigma} and using the computation of the $H^1_{\alpha,0}(\Omega)^n$-norm (see \Cref{sec:norm_comp}), we can write the following
\begin{align*}
\norm{\sigma(0)}^2_{H^{1 }_{ \alpha,0}(\Omega)^n}=\sum_{j=1}^{J}\norm{\sigma^j(0)}^2_{H^1_{\alpha_1,0}(0,1)^n}+\sum_{j=1}^{J} \lambda_{\alpha_2,j}\norm{\sigma^j(0)}^2_{L^2(0,1)^n}.
\end{align*}
Thanks to weighted Hardy-Poincare inequality \eqref{HPIi} and \eqref{weightespoincare1},
we obtain
\begin{align}\label{poin}
	\norm{\sigma(0)}^2_{H^{1 }_{ \alpha,0}(\Omega)^n}\leq C\lambda_{\alpha_2,J}\sum_{j=1}^{J}\norm{\sigma^j(0)}^2_{H^1_{\alpha_1,0}(0,1)^n}.
\end{align}
Combining \eqref{obs1} and \eqref{poin}, we further have
\begin{align}\label{ob}
	\begin{cases}
	\displaystyle \norm{\sigma(0)}^2_{H^{1 }_{ \alpha,0}(\Omega)^n}\leq C\lambda_{\alpha_2,J}(C_T)^2 \int_{0}^{T} \sum_{j=1}^{J}|B^*(x^{\alpha_1}\pa_x\sigma^j)(t,0)|_{\cplx^m}^2 \rd t  & \text{ if } 0\leq \alpha_1<1,\\
	\displaystyle \norm{\sigma(0)}^2_{H^{1 }_{ \alpha,0}(\Omega)^n}\leq C\lambda_{\alpha_2,J}(C_T)^2 \int_{0}^{T} \sum_{j=1}^{J}|B^*(\sigma^j)(t,0)|_{\cplx^m}^2 \rd t  & \text{ if } 1<\alpha_1<2.
	\end{cases}
\end{align}
Let us denote $b_k$ as the $k$-th column of the matrix $B\in \mathcal{L}(\mathbb C^m,\mathbb C^n)$. Next, we put in the Lebeau-Robbiano inequality \eqref{lr1}, $a^k_j=b_k^{*}(x^{\alpha_1}\pa_x\sigma^j)(t,0)$ for  weakly degenerate case (resp. $a^k_j=b_k^{*}(\sigma^j)(t,0)$ for strongly degenerate one), thus obtaining for all $1\leq k\leq m$
\begin{equation}\label{lr11}
		\begin{cases}
	\displaystyle	\sum_{j=1}^{J}|b_k^{*}(x^{\alpha_1}\pa_x\sigma^j)(t,0)|^2\leq C e^{C\sqrt{\lambda_{\alpha_2,J}}}\int_{\omega}\left|\sum_{j=1}^{J}b_k^{*}(x^{\alpha_1}\pa_x\sigma^j)(t,0) \phi_{\alpha_2,j}(y)\right|^2 \rd y& \text{ if } 0\leq \alpha_1<1,\\
	\displaystyle	\sum_{j=1}^{J}|b_k^{*}\pa\sigma^j)(t,0)|^2\leq C e^{C\sqrt{\lambda_{\alpha_2,J}}}\int_{\omega}\left|\sum_{j=1}^{J}b_k^{*}(\sigma^j)(t,0) \phi_{\alpha_2,j}(y)\right|^2 \rd y & \text{ if } 1< \alpha_1<2,
		\end{cases}
\end{equation}
Adding each inequality \eqref{lr11} for $1\leq k\leq m$, we deduce
\begin{equation}\label{ob1}
	\begin{cases}
	\displaystyle\sum_{j=1}^{J}\left|B^* (x^{\alpha_1}\pa_x\sigma^j)(t,0)\right|_{\cplx^m}^2\leq C e^{C\sqrt{\lambda_{\alpha_2,J}}}\int_{\omega}\left|\sum_{j=1}^{J}B^*(x^{\alpha_1}\pa_x\sigma^j)(t,0) \phi_{\alpha_2,j}(y)\right|_{\cplx^m}^2 \rd y  & \text{ if } 0\leq \alpha_1<1,\\
	\displaystyle\sum_{j=1}^{J}\left|B^* (\sigma^j)(t,0)\right|_{\cplx^m}^2\leq C e^{C\sqrt{\lambda_{\alpha_2,J}}}\int_{\omega}\left|\sum_{j=1}^{J}B^*(\sigma^j)(t,0) \phi_{\alpha_2,j}(y)\right|_{\cplx^m}^2 \rd y  & \text{ if } 1<\alpha_1<2.
	\end{cases}
\end{equation}
Integrating both sides of \eqref{ob1} with respect to $t$ over $(0,T)$ and then using \eqref{ob}, we have
the desired result.
\end{proof}
Let us recall the definition \eqref{E_aJ} of the space $E_{\alpha,J}$. We define $E_{\alpha,J}^{-1}:=-\mathbf{D} E_{\alpha,J}\subset H^{-1}_{\alpha}(\Omega)^n$, where we recall that $\mathbf{D}u=\left(\begin{array}{ccc}\text{div}\left(D\nabla u_1\right) & \cdots & \text{div}\left(D\nabla u_n\right)\end{array}\right)^\top,$ when $u= \left(\begin{array}{cccc}u_1 & u_2 & \cdots & u_n\end{array}\right)^\top.$
By classical duality arguments, we can prove the following. 
\begin{proposition}\label{thm_dual}
	Let $T>0.$ There exists $C_T>0$ such that the following two properties are equivalent
	\begin{itemize}
		\item For every $u_0\in E^{-1}_{\alpha,J},$ there exists a control $q\in L^2(0,T;L^2(\pa \Omega)^m)$ such that
		\begin{equation*}
			\begin{cases}
				\Pi_{E^{-1}_{\alpha,J}}u(T)=0\\
				\norm{q}_{L^2(0,T;L^2(\pa \Omega)^m)}\leq  C_Te^{C\sqrt{\lambda_{\alpha_2,J}}}\norm{u_0}_{H^{-1 }_{ \alpha}(\Omega)^n},
			\end{cases}
		\end{equation*} 
where $u$ is the solution of the system \eqref{DCP}--\eqref{bd}.
\item For all $\sigma_T\in {E_{\alpha,J}}$, the solution $\sigma$ of the adjoint system	 \eqref{adjintro}--\eqref{bd2}
satisfies \eqref{eq:obs_inequalities}.
\end{itemize}
\end{proposition}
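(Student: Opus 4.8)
The plan is to run the classical HUM / Fenchel--Rockafellar duality between observability and null controllability, as in \cite[Section V.4]{Boy23} or \cite[Chapter 2]{Cor08}, but carried out on the pair of closed subspaces $E_{\alpha,J}\subset H^1_{\alpha,0}(\Omega)^n$ and $E^{-1}_{\alpha,J}=-\mathbf D E_{\alpha,J}\subset H^{-1}_\alpha(\Omega)^n$. The indispensable preliminary is a \emph{block structure}: since the $y$-operator $\pa_y(y^{\alpha_2}\pa_y\cdot)$ is diagonal in the orthonormal basis $\{\phi_{\alpha_2,j}\}$ and the boundary/control data in \eqref{bd}--\eqref{bd2} are carried only by $\Sigma_1$, the subspace $E_{\alpha,J}$ is invariant under the adjoint flow \eqref{adjintro}--\eqref{bd2}; concretely, if $\sigma_T=\sum_{j\le J}\sigma_T^j(x)\phi_{\alpha_2,j}(y)$ then $\sigma(0)\in E_{\alpha,J}$, so $\Pi_{E_{\alpha,J}}\sigma(0)=\sigma(0)$, and the restricted dynamics decouples into the $J$ one-dimensional systems \eqref{adj2}. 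By the very definition of the duality isomorphism $-\Aalb$ (applied componentwise), the map $-\mathbf D:E_{\alpha,J}\to E^{-1}_{\alpha,J}$ is an isometry and $\langle -\mathbf D v,v\rangle_{H^{-1}_\alpha(\Omega)^n,H^1_{\alpha,0}(\Omega)^n}=\|v\|_{H^1_{\alpha,0}(\Omega)^n}^2$. Finally, backward uniqueness for each \eqref{adj2} (a consequence of \Cref{null control 1}) shows that $\sigma_T\mapsto\big(\int_0^T\|\mathbf 1_\gamma B^*(\mathbf P\sigma)\|_{L^2(\pa\Omega)^m}^2\,\rd t\big)^{1/2}$ (resp.\ with $\sigma$ in place of $\mathbf P\sigma$ in the strong case) is a genuine norm on $E_{\alpha,J}$.

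For the implication ``controllability $\Rightarrow$ observability'', given $\sigma_T\in E_{\alpha,J}$ I would choose $u_0:=-\mathbf D\sigma(0)\in E^{-1}_{\alpha,J}$ and take the control $q$ furnished by the first item, so that $\Pi_{E^{-1}_{\alpha,J}}u(T)=0$. Writing the transposition identity in the definition of solution by transposition with $\tau=T$ and $\sigma_\tau=\sigma_T$, and using $\langle u(T),\sigma_T\rangle=\langle\Pi_{E^{-1}_{\alpha,J}}u(T),\sigma_T\rangle=0$ (valid because $\sigma_T\in E_{\alpha,J}$), one obtains
\begin{equation*}
\|\sigma(0)\|_{H^1_{\alpha,0}(\Omega)^n}^2=\langle u_0,\sigma(0)\rangle=\int_0^T\!\!\int_{\pa\Omega}\big(\mathbf 1_\gamma B^*(\mathbf P\sigma(t)),q(t)\big)_{\cplx^m}\,\rd S\,\rd t .
\end{equation*}
Cauchy--Schwarz, the cost bound $\|q\|_{L^2}\le C_Te^{C\sqrt{\lambda_{\alpha_2,J}}}\|u_0\|_{H^{-1}_\alpha}$, and the isometry $\|u_0\|_{H^{-1}_\alpha}=\|\sigma(0)\|_{H^1_{\alpha,0}}$ then give, after dividing by $\|\sigma(0)\|_{H^1_{\alpha,0}}$ and squaring, exactly \eqref{eq:obs_inequalities}; the strong case $1<\alpha_1<2$ is identical with $\mathbf P\sigma$ replaced by $\sigma$ on $\Sigma_1$.

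For the converse ``observability $\Rightarrow$ controllability'', I would fix $u_0\in E^{-1}_{\alpha,J}$ and minimise over $E_{\alpha,J}$ the HUM functional
\begin{equation*}
\mathcal J(\sigma_T):=\frac12\int_0^T\big\|\mathbf 1_\gamma B^*(\mathbf P\sigma)(t)\big\|_{L^2(\pa\Omega)^m}^2\,\rd t+\langle u_0,\sigma(0)\rangle_{H^{-1}_\alpha(\Omega)^n,H^1_{\alpha,0}(\Omega)^n},
\end{equation*}
which is continuous and strictly convex; the observability inequality \eqref{eq:obs_inequalities} combined with $|\langle u_0,\sigma(0)\rangle|\le\|u_0\|_{H^{-1}_\alpha}\|\sigma(0)\|_{H^1_{\alpha,0}}$ makes $\mathcal J$ coercive for the observation norm introduced above, so it admits a unique minimiser $\widehat\sigma_T$ on the completion of $E_{\alpha,J}$ for that norm. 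Setting $q:=-\mathbf 1_\gamma B^*(\mathbf P\widehat\sigma)$, comparing the Euler--Lagrange equation of $\mathcal J$ with the transposition identity shows $\langle u(T),\sigma_T\rangle=0$ for every $\sigma_T\in E_{\alpha,J}$, i.e.\ $\Pi_{E^{-1}_{\alpha,J}}u(T)=0$; plugging $\sigma_T=\widehat\sigma_T$ into the Euler--Lagrange equation yields $\|q\|_{L^2}^2=-\langle u_0,\widehat\sigma(0)\rangle\le C_Te^{C\sqrt{\lambda_{\alpha_2,J}}}\|u_0\|_{H^{-1}_\alpha}\|q\|_{L^2}$, hence the claimed cost estimate.

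The step I expect to be the main obstacle is not the duality computation, which is textbook, but the careful justification of the block-invariance reduction to the one-dimensional systems \eqref{adj2} (so that $\sigma(0)$ genuinely lies in $E_{\alpha,J}$ and the relevant pairings localise to the $y$-modes $\le J$), together with the density/completion argument guaranteeing that the minimiser $\widehat\sigma_T$ of $\mathcal J$ corresponds to a bona fide adjoint state and that the resulting $q$ is admissible in the sense required for the transposition identity (\Cref{ad} in \Cref{sec:ad}).
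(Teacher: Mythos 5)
Your proposal is correct and is precisely the classical HUM/duality argument that the paper invokes for \Cref{thm_dual} without writing out (the text only says ``by classical duality arguments''): the forward implication via the transposition identity with the choice $u_0=-\mathbf D\sigma(0)$ and the isometry \eqref{iso}, and the converse via minimisation of the HUM functional, with the block-invariance of $E_{\alpha,J}$ under the adjoint flow and the admissibility condition of \Cref{ad} correctly identified as the points that make the computation legitimate. No gaps.
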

Observe that, \Cref{par obs} combined with \Cref{thm_dual} implies the following
\begin{corollary}\label{cor_cost}
	For every $J\geq 1,$ and $u_0\in E^{-1}_{\alpha,J},$ there exists a control $q=q(u_0)\in L^2(0,T; L^2(\pa \Omega)^m)$ with
	\begin{equation*}
		\norm{q}_{L^2(0,T;L^2(\pa\Omega)^m)}\leq  (C_T) e^{C\sqrt{\lambda_{\alpha_2,J}}}\norm{u_0}_{H^{-1}_{\alpha}(\Omega)^n},
	\end{equation*}
such that the solution of equation \eqref{DCP}--\eqref{bd} satisfies $\Pi_{E^{-1}_{\alpha,J}}u(T)=0.$
\end{corollary}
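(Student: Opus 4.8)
The plan is to read the corollary off directly from the equivalence in \Cref{thm_dual}, once the partial observability inequality has been verified on the relevant space. First I would observe that $E_{\alpha,J}\subset H^1_{\alpha,0}(\Omega)^n$ by the very definition \eqref{E_aJ}, so \Cref{par obs} applies to every $\sigma_T\in E_{\alpha,J}$: with $C>0$ the constant from \Cref{null control 1} and $C_T:=Ce^{C/T}$, the solution $\sigma$ of the two-dimensional adjoint system \eqref{adjintro}--\eqref{bd2} with $\sigma(T)=\sigma_T$ satisfies the partial observability inequality \eqref{eq:obs_inequalities}, with observability constant $(C_T)^2 e^{2C\sqrt{\lambda_{\alpha_2,J}}}$ (in either the weakly or strongly degenerate regime, according to the value of $\alpha_1$).

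Next I would invoke \Cref{thm_dual}. Its second item is precisely the statement that every $\sigma_T\in E_{\alpha,J}$ produces a solution of \eqref{adjintro}--\eqref{bd2} satisfying \eqref{eq:obs_inequalities}; this has just been established in the previous step. The equivalence then yields, for each $u_0\in E^{-1}_{\alpha,J}$, a control $q=q(u_0)\in L^2(0,T;L^2(\pa\Omega)^m)$ such that the solution $u$ of \eqref{DCP}--\eqref{bd} satisfies $\Pi_{E^{-1}_{\alpha,J}}u(T)=0$, together with the cost bound
\begin{equation*}
\norm{q}_{L^2(0,T;L^2(\pa\Omega)^m)}\leq C_T\,e^{C\sqrt{\lambda_{\alpha_2,J}}}\norm{u_0}_{H^{-1}_{\alpha}(\Omega)^n};
\end{equation*}
the passage from the squared observability constant to the (non-squared) control cost is the standard square-root in the HUM/duality correspondence. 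This is exactly the assertion of \Cref{cor_cost}, so the proof is complete.

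Since the corollary is an immediate consequence of two results proved just above it, there is no genuine obstacle here. The only point requiring a little care is the bookkeeping in \Cref{thm_dual}: namely that the projection $\Pi_{E^{-1}_{\alpha,J}}$ on $H^{-1}_{\alpha}(\Omega)^n$ is, up to the pivot $L^2(\Omega)^n$, the adjoint of $\Pi_{E_{\alpha,J}}$ on $H^1_{\alpha,0}(\Omega)^n$, so that controlling the $E^{-1}_{\alpha,J}$-component of $u$ is dual to observing the $E_{\alpha,J}$-component of $\sigma$; but this identification is precisely what \Cref{thm_dual} packages, via the classical functional-analytic argument (see e.g. \cite[Section 2.3]{Cor08}), and nothing further is needed.
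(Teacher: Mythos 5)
Your proposal is correct and follows exactly the paper's argument: the corollary is obtained by combining the partial observability inequality of \Cref{par obs} (valid since $E_{\alpha,J}\subset H^1_{\alpha,0}(\Omega)^n$) with the duality equivalence of \Cref{thm_dual}. The extra remarks on the square root in the HUM correspondence and on $\Pi_{E^{-1}_{\alpha,J}}$ being dual to $\Pi_{E_{\alpha,J}}$ are accurate bookkeeping that the paper leaves implicit.
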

\subsection{Dissipation along the $y$-direction}
\begin{proposition}\label{dis}
	Let us consider system \eqref{DCP}--\eqref{bd} and assume that in some time interval $(t_0,t_1)$ control $q=0$ and also assume that for all $J\geq 1,$ $\Pi_{E_{\alpha,J}^{-1}}u(t_0)=0.$ Then we have the following estimate
	\begin{equation*}
		\norm{u(t)}_{H^{-1}_{\alpha}(\Omega)^n}\leq Ce^{-\lambda_{\alpha_2,{J+1}}(t-t_0)}\norm{u(t_0)}_{H^{-1}_{\alpha}(\Omega)^n}, \forall t\in (t_0,t_1).
	\end{equation*}
\end{proposition}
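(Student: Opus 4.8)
The plan is to expand $u(t)$ in the $y$-direction eigenbasis $\{\phi_{\alpha_2,j}\}$ and exploit the hypothesis $\Pi_{E_{\alpha,J}^{-1}}u(t_0)=0$, which says precisely that the Fourier modes $j=1,\dots,J$ of $u(t_0)$ (in the $H^{-1}_\alpha$ pairing) vanish, so only the tail $j\geq J+1$ survives. Since on the interval $(t_0,t_1)$ the control is off, $u$ solves the homogeneous system $\partial_t u=\mathcal I_n\mathbf{D}u+Au$ with the homogeneous boundary conditions, so each $y$-mode $u^j(t,x):=\langle u(t,\cdot,\cdot),\phi_{\alpha_2,j}\rangle_{L^2_y(0,1)}\in H^{-1}_{\alpha_1}(0,1)^n$ evolves according to the decoupled one-dimensional system $\partial_t u^j=\partial_x(x^{\alpha_1}\partial_x u^j)-\lambda_{\alpha_2,j}u^j+Au^j$, exactly as in \eqref{adj2} (but forward in time). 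Writing $u^j(t,\cdot)=e^{-\lambda_{\alpha_2,j}(t-t_0)}\,z^j(t,\cdot)$ where $z^j$ solves the undamped system $\partial_t z^j=\partial_x(x^{\alpha_1}\partial_x z^j)+Az^j$ with the same boundary conditions, the well-posedness/semigroup estimate (using that $A$ is stable, cf.\ \Cref{rm1} of \Cref{rm}, and that $\mathcal A_{\alpha_1}$ generates a contraction semigroup on $L^2$, extended to $H^{-1}_{\alpha_1}$ by duality) gives $\norm{z^j(t)}_{H^{-1}_{\alpha_1}(0,1)^n}\leq C\norm{u^j(t_0)}_{H^{-1}_{\alpha_1}(0,1)^n}$ uniformly in $j$.

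Next I would assemble these modewise bounds into a bound on $\norm{u(t)}_{H^{-1}_\alpha(\Omega)^n}$. Using the orthogonal decomposition of the $H^{-1}_\alpha(\Omega)^n$ norm in the $y$-eigenbasis — the dual analogue of the $H^1_{\alpha,0}(\Omega)^n$ norm computation invoked in \Cref{sec:norm_comp} and used in the proof of \Cref{par obs} — one has, schematically,
\begin{equation*}
\norm{u(t)}_{H^{-1}_\alpha(\Omega)^n}^2 \;\simeq\; \sum_{j\geq 1}\frac{1}{1+\lambda_{\alpha_2,j}}\,\norm{u^j(t)}_{H^{-1}_{\alpha_1}(0,1)^n}^2,
\end{equation*}
and similarly for $u(t_0)$; the key point is that the weights are monotone in $j$. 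Since the hypothesis kills the modes $j\leq J$ for all $t_0$ (the semigroup preserves the splitting $E_{\alpha,J}^{-1}\oplus (E_{\alpha,J}^{-1})^\perp$ because the $1$-$d$ evolutions are decoupled across $j$), we get for $t>t_0$
\begin{equation*}
\norm{u(t)}_{H^{-1}_\alpha(\Omega)^n}^2 \;\leq\; C\sum_{j\geq J+1} e^{-2\lambda_{\alpha_2,j}(t-t_0)}\,\frac{1}{1+\lambda_{\alpha_2,j}}\,\norm{u^j(t_0)}_{H^{-1}_{\alpha_1}(0,1)^n}^2 \;\leq\; Ce^{-2\lambda_{\alpha_2,J+1}(t-t_0)}\norm{u(t_0)}_{H^{-1}_\alpha(\Omega)^n}^2,
\end{equation*}
where in the last step I use $\lambda_{\alpha_2,j}\geq\lambda_{\alpha_2,J+1}$ for $j\geq J+1$ to pull the worst exponential out of the sum, and then recognize the remaining sum as $\norm{u(t_0)}_{H^{-1}_\alpha(\Omega)^n}^2$. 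Taking square roots yields the claim.

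The main obstacle — really the only non-bookkeeping point — is justifying that the homogeneous evolution genuinely decouples across the $y$-modes at the level of $H^{-1}_\alpha$ solutions, i.e.\ that $u^j(t)=e^{-\lambda_{\alpha_2,j}(t-t_0)}z^j(t)$ with $z^j$ solving the $1$-$d$ system, and that the $H^{-1}_\alpha(\Omega)^n$ norm admits the claimed Parseval-type decomposition along $\{\phi_{\alpha_2,j}\}$ with those dual weights. This requires that the $2$-$d$ operator $\mathcal I_n\mathbf D+A$ commutes with the projection onto $y$-frequencies — true because $D=\operatorname{diag}(x^{\alpha_1},y^{\alpha_2})$ separates variables and $\{\phi_{\alpha_2,j}\}$ diagonalizes $-\partial_y(y^{\alpha_2}\partial_y\cdot)$ with the prescribed boundary conditions on $\Gamma_2,\Gamma_4$ — together with the fact that this projection extends continuously to $H^{-1}_\alpha(\Omega)^n$ by duality from $H^1_{\alpha,0}(\Omega)^n$. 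Once this separation-of-variables structure is recorded (it is essentially already used in \eqref{exp_sigma}--\eqref{adj2} and \Cref{exp of u}), the rest is the elementary exponential-sum estimate above.
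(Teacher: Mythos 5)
Your overall strategy is the same as the paper's (decouple the homogeneous evolution across the $y$-modes, get modewise decay $e^{-\lambda_{\alpha_2,j}(t-t_0)}$ for $j\geq J+1$, and reassemble), but you execute it directly in the dual space $H^{-1}_{\alpha}(\Omega)^n$, whereas the paper lifts to regular data first: it writes $u(t_0)=-\mathbf{D}\tilde u_0$ with $\tilde u_0\in H^1_{\alpha,0}(\Omega)^n$, observes that $u(t)=-\mathbf{D}\tilde u(t)$ for the corresponding regular solution (since $A$ is constant and $\mathbf{D}$ commutes with the evolution), and uses the isometry $\norm{-\mathbf{D}\tilde u}_{H^{-1}_{\alpha}(\Omega)^n}=\norm{\tilde u}_{H^1_{\alpha,0}(\Omega)^n}$ from \eqref{iso}. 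This reduces everything to elementary energy estimates in $L^2$ and $H^1_{\alpha_1,0}$ for the $1$-$d$ modes, where the Parseval decomposition \eqref{norm} is already available, and no dual-norm bookkeeping is needed. Your route is viable in principle, but it forces you to characterize the $H^{-1}_{\alpha}(\Omega)^n$ norm modewise, which is exactly the point you flag as the "main obstacle" and which the paper's isomorphism trick sidesteps entirely.

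The one concrete flaw is the formula you propose for that dual decomposition. The $j$-th mode of the $H^1_{\alpha,0}(\Omega)^n$ norm is $\norm{a_j}^2_{H^1_{\alpha_1,0}(0,1)^n}+\lambda_{\alpha_2,j}\norm{a_j}^2_{L^2(0,1)^n}$, so the correct dual weight on the $j$-th slice is the dual of this $j$-dependent norm; expanding in the $x$-eigenbasis it equals $\sum_k |f_k|^2/(\lambda_{\alpha_1,k}+\lambda_{\alpha_2,j})$, which is \emph{not} uniformly equivalent in $j$ to your $\tfrac{1}{1+\lambda_{\alpha_2,j}}\norm{f}^2_{H^{-1}_{\alpha_1}(0,1)^n}=\tfrac{1}{1+\lambda_{\alpha_2,j}}\sum_k|f_k|^2/\lambda_{\alpha_1,k}$ (test it on a single high $x$-frequency with $\lambda_{\alpha_1,k}\gg\lambda_{\alpha_2,j}\gg 1$: the two expressions differ by a factor of order $\lambda_{\alpha_2,j}$). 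Your final chain of inequalities is nevertheless repairable, because all it really uses is (a) that the dual norm is an $\ell^2$ sum over $j$ of fixed, $t$-independent slice norms, and (b) that each slice norm commutes with the $1$-$d$ generator so that the modewise factor $e^{-\lambda_{\alpha_2,j}(t-t_0)}$ can be extracted; both hold for the correct slice norms. Either substitute the correct dual weights, or, more simply, adopt the paper's reduction through $-\mathbf{D}$ and work with $\tilde u\in H^1_{\alpha,0}(\Omega)^n$, where \eqref{norm} does the job directly.
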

\begin{proof}
	Let $ u(t_0)  
	=-\mathbf{D} \tilde{u}_0,$
\,$\tilde{u}_0 \in H^1_{\alpha,0} (\Omega)^n \).
		The assumption $ \Pi_{E_{\alpha,J}^{-1}} u(t_0) = 0 $ implies that $ \Pi_{E_{\alpha,J}} \tilde{u}_0 = 0 $.
		Let $ \tilde{u} $ be the solution in $ H^1_{\alpha,0} (\Omega)^n $ to
		\begin{equation}\label{DCP1}
		\begin{cases}
			\partial_t \tilde{u} =\mathcal{I}_n \mathbf{D} \tilde u+A \tilde u 
			& \text{in } (t_0, t_1) \times \Omega, \\
			\tilde{u}(t_0) = \tilde{u}_0 & \text{in } \Omega,
		\end{cases}
		\end{equation}
		with the homogeneous boundary conditions as \eqref{bd} with $q=0$ in the time interval $(t_0,t_1).$
		Since the matrix $ A $ is constant, one can check that
		\begin{equation*}
		u = -
		\mathbf{D} \tilde{u} 
		\quad \text{in } (t_0, t_1) \times \Omega,
		\end{equation*}
		and thus using the isomorphism between $H_{\alpha,0}^1(\Omega)^n$ and $H^{-1}_{\alpha}(\Omega)^n$  (see \eqref{iso} in \Cref{iso app})
		\begin{align}\label{norm equivalent1}
		\| u(t) \|_{H^{-1}_{\alpha}(\Omega)^n} = \| \tilde{u}(t) \|_{H^1_{\alpha,0}(\Omega)^n}, \quad \| u(t_0) \|_{H^{-1}_{\alpha}(\Omega)^n} = \| \tilde{u}_0 \|_{H^1_{\alpha,0}(\Omega)^n}.
		\end{align}
		As a consequence, it is enough to prove the dissipation estimate for regular data, namely,
		\begin{equation*}
		\| \tilde{u}(t) \|_{H^1_{\alpha,0}(\Omega)^n} \leq C e^{\lambda_{\alpha_2,{J+1}}{(t - t_0)}} \| \tilde{u}_0 \|_{H^1_{\alpha,0}(\Omega)^n} \quad \text{ for all } t \in (t_0, t_1),
		\end{equation*}
		where $ \tilde{u}_0 $ such that $ \Pi_{E_{\alpha,J}} \tilde{u}_0 = 0$, i.e., $\tilde{u}_0$ is of the form (see \cref{exp of u})
		\begin{equation*}
		\tilde{u}_0 = \sum_{j=J+1}^{+\infty} \tilde{u}_{0,j} \phi_{\alpha_{2,j}}, \quad \tilde{u}_{0,j} = \left\langle \tilde{u}_0, \phi_{\alpha_{2,j}} \right\rangle_{L^2(0,1)^n} \in H^1_{\alpha_1,0}(0,1)^n.
		\end{equation*}

\noindent		
Let us write the solution of the system \eqref{DCP1} in $(t_0,t_1)$ as
\begin{equation*}
	\tl u(t,x,y)=\sum\limits_{j=1}^{\infty}\tl u^j(t,x)\phi_{\alpha_2,j}(y),
\end{equation*}
where $\tl u^j$ satisfies the following equation in $(0,1)$
\begin{equation}\label{dcp2}
	\begin{cases}
		\partial_t \tl u^j =\pa_{x}(x^{\alpha_1}\pa_{x}\tl u^j)-\lambda_{\alpha_2,j}\tl u^j+A \tl u^j &  t\in (t_0,t_1), x\in (0,1),\\
		\begin{cases}\tl u(t,0)=0 & \text{ if } 0\leq \alpha_1<1\\
			(x^{\alpha_1}\pa_x \tl u)(t,0)=0 & \text{ if } 1 < \alpha_1<2
		\end{cases} &\text{ in }   (t_0,t_1),\\
		 \tl u^j(t,1)=0& t\in (t_0,t_1)\\
		\tl u^j(0,x)=\tl u_{0,j}(x) & x\in (0,1).
	\end{cases}
\end{equation}
In order to find the dissipation estimate, let us multiply both sides of \eqref{dcp2} by $\tl u^{j}$. Next, performing the integration by parts and noting the fact that we assume matrix $A$ is stable, we have
\begin{align*}
	\dfrac{\rd}{\rd t}\int_{0}^{1}|\tl u^j(t,x)|^2 \rd x\leq -2\int_{0}^{1}x^{\alpha_1}|\pa_x \tl u^j(t,x)|^2 \rd x-2\lambda_{\alpha_2,j}\int_{0}^{1}|\tl u^j(t,x)|^2 \rd x.
\end{align*} 
Integrating on $(t_0,t)$, we have
\begin{align}\label{diss 1}
	\norm{\tl u^j(t)}^2_{L^2(0,1)^n}\leq e^{-2\lambda_{\alpha_{2,j}}(t-t_0)}\norm{\tl u_{0,j}}^2_{L^2(0,1)^n},\quad t\in (t,t_0).
\end{align}
Similarly the both sides of \eqref{dcp2} by $\pa_x(x^{\alpha_1}\pa_x \tl u^{j})$ and performing the integration by parts using the fact that the matrix $A$ is stable we, have
\begin{align*}
\dfrac{\rd}{\rd t}\int_{0}^{1}| x^{\alpha_1} \pa_x \tl u^j(t,x)|^2 \rd x\leq -2\int_{0}^{1}|\pa_x(x^{\alpha_1}\pa_x \tl u^j(t,x))|^2 \rd x-2\lambda_{\alpha_2,j}\int_{0}^{1}|x^{\alpha_1}\pa_x \tl u^j(t,x)|^2 \rd x, \quad t\in (t,t_0).
\end{align*}
Performing integration over $(t_0,t)$ we have
\begin{align}\label{diss 2}
	\norm{\tl u^j(t)}^2_{H^1_{\alpha_1,0}(0,1)^n}\leq e^{-2\lambda_{\alpha_{2,j}}(t-t_0)}\norm{\tl u_{0,j}}^2_{H^1_{\alpha_1,0}(0,1)^n},\quad t\in (t,t_0).
\end{align}
Next, as \( \Pi_{E_{\alpha,J}} \tilde{u}_0 = 0 \) and \( A \) is constant, we have \(\Pi_{E_{\alpha,J}} \tilde{u}(t) = 0 \) for every \( t \in (t_0, t_1) \).
This gives that 	\begin{equation*}
	\tl u(t,x,y)=\sum\limits_{j=J+1}^{\infty}\tl u^j(t,x)\phi_{\alpha_2,j}(y).
\end{equation*}
Thus using the computation of the $H^1_{\alpha,0}(\Omega)^n$ norm (see \eqref{norm}), and thanks to the dissipation estimates \eqref{diss 1} and \eqref{diss 2} corresponding to the one-dimensional system \eqref{dcp2}, one can derive the following
\begin{align*}
	\norm{\tl u(t)}^2_{H^{1 }_{ \alpha,0}(\Omega)^n}&=\sum_{j=J+1}^{\infty}\norm{\tl u^j(t)}^2_{H^1_{\alpha_1,0}(0,1)^n}+\sum_{j=J+1}^{\infty} \lambda_{\alpha_2,j}\norm{\tl u^j(t)}^2_{L^2(0,1)^n}\\
	&\leq e^{-2\lambda_{\alpha_2,{J+1}}(t-t_0)}\left(\sum_{j=J+1}^{\infty}\norm{\tl u_{0,j}}^2_{H^1_{\alpha_1,0}(0,1)^n}+\sum_{j=J+1}^{\infty} \lambda_{\alpha_2,j}\norm{\tl u_{0,j}}^2_{L^2(0,1)^n}\right)\\
	&= e^{-2\lambda_{\alpha_2,{J+1}}(t-t_0)} \norm{\tl u_0}^2_{H^{1 }_{ \alpha,0}(\Omega)^n}.
\end{align*}
This estimate along with \eqref{norm equivalent1} provides the desired one.
\end{proof}
\subsection{Lebeau-Robbiano approach}\label{sec:lr}
In this section, we first prove the following boundary null controllability result for equation \eqref{DCP}--\eqref{bd}. 
\begin{theorem}\label{control_2d_suf}
	Let $T>0$ and $u_0\in H^{-1}_{\alpha}(\Omega)^n$. For any given $A\in \mathcal{L}(\cplx^n)$ and $B\in\mathcal{L}(\cplx^m;\cplx^n)$ verifying \eqref{kalman}, there exists a control $q\in L^2(0,T;L^2(\pa \Omega)^m)$ such that system \eqref{DCP}--\eqref{bd} satisfies $u(T)=0$.
\end{theorem}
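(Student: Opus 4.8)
The plan is to run the Lebeau--Robbiano iteration, alternating short time windows on which \Cref{cor_cost} extinguishes the low $y$-frequencies with equally short windows on which the control is switched off and \Cref{dis} dissipates the high $y$-frequencies; since we assume the Kalman condition \eqref{kalman}, the results \Cref{null control 1}, \Cref{par obs} and \Cref{cor_cost} are all available (for an arbitrary time horizon, with cost constant $Ce^{C/T}$, since $T>0$ is arbitrary there). First I would fix interval lengths $\rho_k=T\,2^{-(k+1)}$, so that $\sum_{k\ge0}\rho_k=T$, and set $a_0=0$, $a_{k+1}=a_k+\rho_k$, $\tau_k=\rho_k/2$, $b_k=a_k+\tau_k$; I will also choose, inductively along the way, an increasing sequence of cut-offs $(J_k)_{k\ge0}\subset\mathbb N$. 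The control will be built by concatenation: it equals some $q_k$ on each $(a_k,b_k)$ and vanishes on each $(b_k,a_{k+1})$.

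The $k$-th stage would go as follows. Assume inductively that $\Pi_{E^{-1}_{\alpha,J_{k-1}}}u(a_k)=0$ (vacuous for $k=0$), and split $u(a_k)=\Pi_{E^{-1}_{\alpha,J_k}}u(a_k)+\big(I-\Pi_{E^{-1}_{\alpha,J_k}}\big)u(a_k)$. Apply \Cref{cor_cost} with time horizon $\tau_k$ and cut-off $J_k$ to the low-frequency summand to get $q_k$ with
\[
\|q_k\|_{L^2(a_k,b_k;L^2(\pa\Omega)^m)}\le Ce^{C/\tau_k}\,e^{C\sqrt{\lambda_{\alpha_2,J_k}}}\,\|u(a_k)\|_{H^{-1}_{\alpha}(\Omega)^n},
\]
and feed $q_k$ into the full system on $(a_k,b_k)$. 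By linearity of \eqref{DCP}--\eqref{bd} in $(u_0,q)$ the solution splits as $v+w$, with $v$ the controlled low-frequency part (so that $\Pi_{E^{-1}_{\alpha,J_k}}v(b_k)=0$ by construction) and $w$ the free high-frequency part; because $\AalAb$ is block-diagonal with respect to the $y$-expansion in the basis $\{\phi_{\alpha_2,j}\}$, the free evolution of $w$ preserves the property of having all $y$-modes indexed above $J_k$, so $\Pi_{E^{-1}_{\alpha,J_k}}w(b_k)=0$ and hence $\Pi_{E^{-1}_{\alpha,J_k}}u(b_k)=0$. The well-posedness estimate \eqref{con est} on $(a_k,b_k)$ gives $\|u(b_k)\|_{H^{-1}_{\alpha}(\Omega)^n}\le Ce^{C/\tau_k}e^{C\sqrt{\lambda_{\alpha_2,J_k}}}\|u(a_k)\|_{H^{-1}_{\alpha}(\Omega)^n}$. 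On $(b_k,a_{k+1})$ I would put $q=0$; since $\Pi_{E^{-1}_{\alpha,J_k}}u(b_k)=0$, \Cref{dis} yields $\|u(a_{k+1})\|_{H^{-1}_{\alpha}(\Omega)^n}\le Ce^{-\lambda_{\alpha_2,J_k+1}\tau_k}\|u(b_k)\|_{H^{-1}_{\alpha}(\Omega)^n}$, and the same block-diagonal structure keeps $\Pi_{E^{-1}_{\alpha,J_k}}u(t)=0$ throughout $[b_k,a_{k+1}]$, which closes the induction.

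Chaining the two bounds gives
\[
\|u(a_{k+1})\|_{H^{-1}_{\alpha}(\Omega)^n}\le C\exp\!\big(-\lambda_{\alpha_2,J_k+1}\tau_k+C\sqrt{\lambda_{\alpha_2,J_k}}+C/\tau_k\big)\,\|u(a_k)\|_{H^{-1}_{\alpha}(\Omega)^n},
\]
and the crucial feature is that $\lambda_{\alpha_2,j}$ grows quadratically in $j$ (by \Cref{lmm_gap} applied to the $y$-operator, equivalently from \eqref{eigenvalue}), so for each fixed $k$ the exponent tends to $-\infty$ as $J_k\to\infty$. I would therefore take $J_k$ increasing and so large that, say,
\[
\lambda_{\alpha_2,J_k+1}\,\tau_k\ \ge\ 2\big(C\sqrt{\lambda_{\alpha_2,J_k}}+C/\tau_k+C/\tau_{k+1}+k+\log(2C)\big);
\]
this forces $\|u(a_k)\|_{H^{-1}_{\alpha}(\Omega)^n}\to 0$, and leaves enough slack that the gain $e^{-\lambda_{\alpha_2,J_{k-1}+1}\tau_{k-1}}$ collected at stage $k-1$ beats the cost factor $e^{C/\tau_k+C\sqrt{\lambda_{\alpha_2,J_k}}}$ at stage $k$, so that $\sum_k\|q_k\|^2_{L^2(a_k,b_k;L^2(\pa\Omega)^m)}<\infty$ and $q:=\sum_k q_k\,\mathbf 1_{(a_k,b_k)}$ belongs to $L^2(0,T;L^2(\pa\Omega)^m)$. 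By uniqueness of the transposition solution, this $q$ drives \eqref{DCP}--\eqref{bd} with $u(a_k)\to0$ in $H^{-1}_{\alpha}(\Omega)^n$, and since $t\mapsto u(t)$ is continuous into $H^{-1}_{\alpha}(\Omega)^n$ and $a_k\nearrow T$, we conclude $u(T)=0$. The hard part is this final balancing act — choosing $(\tau_k,J_k)$ so that the two competing exponentials reconcile and the partial controls are square-summable — while everything else is a direct assembly of \Cref{cor_cost}, \Cref{dis}, and the continuity estimate \eqref{con est}.
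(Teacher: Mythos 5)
Your architecture is exactly the paper's: iterate over a sequence of shrinking time windows, on each of which \Cref{cor_cost} extinguishes the low $y$-frequencies and \Cref{dis} then dissipates what remains, the whole proof reducing to a choice of window lengths and cut-offs for which the dissipation gain beats the control cost. The assembly of the three ingredients (including the linearity splitting $u=v+w$ and the invariance of the high-frequency subspace under the free semigroup) is fine. The problem is in the balancing step that you yourself single out as the hard part: the condition you impose on $J_k$,
\[
\lambda_{\alpha_2,J_k+1}\,\tau_k\ \ge\ 2\big(C\sqrt{\lambda_{\alpha_2,J_k}}+C/\tau_k+C/\tau_{k+1}+k+\log(2C)\big),
\]
does control the decay of $\norm{u(a_{k+1})}_{H^{-1}_{\alpha}(\Omega)^n}$, but it does not make $\sum_k\norm{q_k}^2$ finite. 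Chaining your estimates gives $\norm{u(a_{k+1})}\le \tfrac12 e^{-k}e^{-C/\tau_{k+1}}\norm{u(a_k)}$, hence $\norm{q_{k+1}}\le Ce^{C/\tau_{k+1}+C\sqrt{\lambda_{\alpha_2,J_{k+1}}}}\norm{u(a_{k+1})}\le \tfrac{C}{2}e^{-k}e^{C\sqrt{\lambda_{\alpha_2,J_{k+1}}}}\norm{u(a_k)}$: the factor $e^{C\sqrt{\lambda_{\alpha_2,J_{k+1}}}}$ is nowhere absorbed, and your prose claim that the stage-$k$ gain beats the stage-$(k+1)$ cost factor $e^{C/\tau_{k+1}+C\sqrt{\lambda_{\alpha_2,J_{k+1}}}}$ is not implied by the displayed condition. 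Moreover your condition already forces $\lambda_{\alpha_2,J_k+1}\tau_k\gtrsim 1/\tau_{k+1}$, i.e.\ $J_k\gtrsim 1/\tau_k\sim 2^k/T$, so this unabsorbed factor is at least $e^{c\,2^k/T}$; the accumulated gains $\prod_j\tfrac12 e^{-j}\sim e^{-k^2/2}$ cannot beat it, and whether the accumulated $e^{-C\sum_j 1/\tau_j}\sim e^{-C'2^k/T}$ does depends on a comparison of constants you do not control. As written, the concatenated control need not lie in $L^2(0,T;L^2(\pa\Omega)^m)$.

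The repair is to make the dissipation collected on $(b_k,a_{k+1})$ dominate the frequency cost of the \emph{next} stage, i.e.\ to add $C\sqrt{\lambda_{\alpha_2,J_{k+1}}}$ to the right-hand side of your condition; with $\tau_k\sim 2^{-k}$ this forces roughly $J_k^2\,2^{-k}\gtrsim J_{k+1}$, which is satisfiable by $J_k\sim 2^{ak}$ with $a>1$, and then the recursion closes. The paper achieves the same balance the other way around: it fixes $\gamma_k=\beta 2^k$ and takes $T_k\sim 2^{-k\rho}$ with $\rho\in(0,1)$, so the dissipation exponent $\lambda_{\alpha_2,\gamma_k+1}T_k\gtrsim \beta\,2^{k(2-\rho)}$ strictly dominates the cost exponent $C\beta\,2^{k+1}$ of the following stage because $2-\rho>1$. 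Your choice $\tau_k\sim 2^{-k}$ is exactly the borderline case $\rho=1$, which is why the cut-off frequencies alone must supply the extra room — and your displayed condition does not make them do so.
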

\begin{proof}
	For ease of reading, we split the proof into several steps. 
	
	\smallskip
	\textbf{Step 1. A time-splitting procedure.}

We divide the time interval $(0,T)$ in a infinite sequence of smaller intervals with appropriate length which will be chosen later along with suitable cut off frequencies to steer the solution to $0$ in time $T.$

Let us choose $u_0\in H^{-1}_{\alpha}(\Omega)^n.$ We decompose the time interval $[0,T)$ as follows:
\begin{align*}
	[0,T)=\cup_{k=0}^{\infty}[a_k, a_{k+1}]
\end{align*}
with $a_0=0, a_{k+1}=a_k+2T_k, \, T_k=\frac{\hat \alpha}{\beta} 2^{-k\rho}$, where $\rho\in (0,1), \hat \alpha=\frac{\beta T}{2}(1-2^{-\rho}), \beta\in \N$ so that we have $2\sum\limits_{k=0}^{\infty}T_k=T.$ We choose the frequencies as $\gamma_k=\beta 2^k$, where $\beta$ is a natural number to be chosen later.

Next, for all $k\geq 0$, we construct a control $q$ and the solution $u$ of the system \eqref{DCP}--\eqref{bd} by induction as follows
\begin{equation*}
	q(t)=\begin{cases}
		q\left({\Pi_{E^{-1}_{\alpha,\gamma_k}}u(a_k)}\right)(t),& \text{ if } t \in (a_k, a_k+T_k),\\
		0,& \text{ if } t\in (a_k+T_k, a_{k+1}).
	\end{cases}
\end{equation*}
Our main goal is to show that the control $q\in L^2(0,T; L^2(\pa \Omega)^m)$ steers the solution $u$ to rest in time $T$.

\smallskip
\textbf{Step 2. Estimate on the interval $[a_k, a_k+T_k]$}. For each $k\geq 0$,
thanks to the continuity estimate \eqref{con est}, we have
\begin{equation}\label{uak}
	\norm{u(a_k+T_k)}_{H^{-1}_{\alpha}(\Omega)^n}\leq C\left(\norm{u(a_k)}_{H^{-1}_{\alpha}(\Omega)^n}+\norm{q}_{L^2(a_k,a_k+T_k;L^2(\pa \Omega)^m)}\right),
\end{equation}
since $T_k\leq T$. By \Cref{cor_cost}, we can build a control $q=q\left(\Pi_{E^{-1}_{\alpha,\gamma_k}}u(a_k)\right)$ such that $\Pi_{E^{-1}_{\alpha,\gamma_k}}u(T_k+a_k)=0$, verifying the estimate
\begin{align*}
	\norm{q}_{L^2(a_k,a_k+T_k;L^2(\pa \Omega)^m)}\leq  (C_{T_k}) e^{C\sqrt{\lambda_{\alpha_2,{\gamma_k}}}}\norm{\Pi_{E^{-1}_{\alpha,\gamma_k}}u(a_k)}_{H^{-1}_{\alpha}( \Omega)^n}.
\end{align*}
As $\norm{\Pi_{E^{-1}_{\alpha,\gamma_k}}}_{\mathcal{L}_{H^{-1}_{\alpha}(\Omega)^n}}\leq 1$, recalling that $C_T=Ce^{C/T}$ (see \Cref{par obs}), the above inequality can be written in the form, 
\begin{align*}
	\norm{q}_{L^2(a_k,a_k+T_k;L^2(\pa \Omega)^m)}\leq Ce^{C\left(\frac{1}{T_k}+\sqrt{\lambda_{\alpha_2,{\gamma_k}}}\right)} \norm{u(a_k)}_{H^{-1}_{\alpha}(\Omega)^n}.
\end{align*}
We also have \begin{align*}\frac{1}{T_k}=\frac{\beta}{\hat \alpha}2^{k\rho}\leq C\beta 2^{k} \text{ and } \sqrt{\lambda_{\alpha_2,{\gamma_k}}}\leq C \beta 2^{k},
\end{align*}
which yields
\begin{align}\label{control est lr}
	\norm{q}_{L^2(a_k,a_k+T_k;L^2(\pa \Omega)^m)}\leq Ce^{C\beta 2^k} \norm{u(a_k)}_{H^{-1}_{\alpha}(\Omega)^n}.
\end{align} 
Thanks to \eqref{uak}, we obtain
\begin{equation}\label{uak1}
	\norm{u(a_k+T_k)}_{H^{-1}_{\alpha}(\Omega)^n}\leq C\left(1+e^{C\beta 2^k}\right)\norm{u(a_k)}_{H^{-1}_{\alpha}(\Omega)^n}\leq  C e^{C\beta 2^k}\norm{u(a_k)}_{H^{-1}_{\alpha}(\Omega)^n}.
\end{equation}

\smallskip
\textbf{Step 3. Estimate on the interval $[a_k+T_k, a_{k+1}].$} Since $\Pi_{E^{-1}_{\alpha,\gamma_k}}u(a_k+T_k)=0,$ using the dissipation result in \Cref{dis} we have
\begin{equation}\label{est_step_3}
	\norm{u(a_{k+1})}_{H^{-1}_{\alpha}(\Omega)^n}\leq C e^{-\lambda_{\alpha_2,{\gamma_k+1}}T_k}\norm{u(a_k+T_k)}_{H^{-1}_{\alpha}(\Omega)^n}.
\end{equation}

\smallskip
\textbf{Step 4. Final Estimate.} Putting together \eqref{uak1} and \eqref{est_step_3}, we write 
\begin{equation*}
	\norm{u(a_{k+1})}_{H^{-1}_{\alpha}(\Omega)^n}\leq C e^{-\lambda_{\alpha_2,{\gamma_k+1}}T_k} e^{C\beta 2^k}\norm{u(a_k)}_{H^{-1}_{\alpha}(\Omega)^n}.
\end{equation*}
By induction we have
\begin{equation*}
	\norm{u(a_{k+1})}_{H^{-1}_{\alpha}(\Omega)^n}\leq C e^{\sum_{p=0}^{k}\left(-\lambda_{\alpha_2,{\gamma_p+1}}T_p+C\beta 2^p\right)} \norm{u_0}_{H^{-1}_{\alpha}(\Omega)^n}.
\end{equation*}
Thanks to \eqref{eigenvalue} and \eqref{first in}, we have \begin{align*}\lambda_{\alpha_2,{\gamma_p+1}}=\kappa_{\alpha_2}^2j_{\nu(\alpha_2), {\gamma_p+1}}^2\geq \kappa_{\alpha_2}^2\left((\beta 2^p+1)+\frac{\nu}{2}-\frac{1}{4}\right)^2 {\pi^2}>C_2\beta^2 2^{2p}. 
\end{align*}
Using the fact that  \begin{align*}
	-\lambda_{\alpha_2,{\gamma_p+1}}T_p=-\frac{\hat \alpha}{\beta}2^{-p\rho}\lambda_{\alpha_2,{\gamma_p+1}}\leq -C_3\beta 2^{p(2-\rho)},
\end{align*}
we deduce 
\begin{equation*}
\norm{u(a_{k+1})}_{H^{-1}_{\alpha}(\Omega)^n}\leq C e^{\sum_{p=0}^{k}\left(-C_3 \beta 2^{p(2-\rho)}+C\beta 2^p\right)} \norm{u_0}_{H^{-1}_{\alpha}(\Omega)^n}.
\end{equation*}
There exists a $l_0\in \N$ such that $\left(-C_3 \beta 2^{p(2-\rho)}+C\beta 2^p\right)\leq -C_4 \beta 2^{p(2-\rho)}, \,\, \forall p\geq l_0.$
Therefore, for all $k>l_0$ we have
\begin{equation}\label{lo}\sum_{p=0}^{k}\left(-C_3 \beta 2^{p(2-\rho)}+C\beta 2^p\right)\leq C'\beta-C_4\beta\sum_{p=l_0}^{k}2^{p(2-\rho)}\leq C'\beta-C''\beta 2^{k(2-\rho)}.
 \end{equation}
Finally, we obtain \begin{equation}\label{u at time ak}
	\norm{u(a_{k+1})}_{H^{-1}_{\alpha}(\Omega)^n}\leq C e^{-C\beta  2^{k(2-\rho)}} \norm{u_0}_{H^{-1}_{\alpha}(\Omega)^n}.
\end{equation}

\smallskip
\textbf{Step 5. Control function.} Estimates \eqref{control est lr}, \eqref{lo} and \eqref{u at time ak} show that  $q\in L^2(0,T; L^2(\pa \Omega)^m)$, as
\begin{align*}
	\norm{q}_{L^2(0,T; L^2(\pa \Omega)^m)}&=\sum_{k=0}^{\infty}\norm{q}_{L^2(a_k,a_k+T_k;L^2(\pa \Omega)^m)}\\
	&\leq Ce^{C\beta}\norm{u_0}_{H^{-1}_{\alpha}(\Omega)^n}+C\sum_{k=0}^{\infty}e^{C\left(\beta-{C_1\beta  2^{k(2-\rho)}}+\beta2^{k+1}\right)}\norm{u_0}_{H^{-1}_{\alpha}(\Omega)^n}\\
	&\leq Ce^{C\beta}\norm{u_0}_{H^{-1}_{\alpha}(\Omega)^n}+Ce^{C\beta}\left(\sum_{k=p_0}^{\infty}e^{-C''' 2^{k(2-\rho)}}\right)\norm{u_0}_{H^{-1}_{\alpha}(\Omega)^n}\\
	&\leq Ce^{C\beta}\norm{u_0}_{H^{-1}_{\alpha}(\Omega)^n}<\infty.
\end{align*}
We also have the controllability
\begin{equation*}
	\norm{u(T)}_{H^{-1}_{\alpha}(\Omega)^n}=\lim_{k\mapsto \infty}\norm{u(a_{k+1})}_{H^{-1}_{\alpha}(\Omega)^n}=0.
\end{equation*}
This ends the proof. 
\end{proof}
\begin{remark}\label{rem:con cost}
	To find the optimal control cost, let us first choose $T<1$ and without loss of generality, we take $\frac{1}{T}=N,$ where $N\in \N.$ Then choose $\beta=\frac{\rho_0}{T},$ where $\rho_0$ is some large natural number independent of $T.$ Note that by this choice of $\beta$, $\hat \alpha$ defined in above theorem is independent of $T$. Therefore by previous analysis we can say that the control for the system \eqref{DCP}--\eqref{bd} satisfies the optimal control cost $Ce^{C/T}.$ The case $T\geq 1$ is also reduced to the previous one. Indeed, any continuation by zero of a control on $(0,\frac{1}{2})$ is a control on $(0,T)$ and the estimate follows from the decrease of the cost with respect to time.
	\end{remark}
\subsection{Proof of the main controllability result in 2-$d$}

\begin{proof}[Proof of \Cref{main theorem}]
	The proof follows from combining the results shown in the previous sections. By \Cref{control_2d_suf}, we have proved the null controllability of the 2-$d$ degenerate parabolic system \eqref{DCP}-\eqref{bd} assuming the Kalman rank condition \eqref{kalman}. The necessary part is implied by the result in one dimension (see \Cref{null control 1}). Indeed, by using Fourier decomposition in the $y$-direction, one can prove that the controllability of the 2-$d$ degenerate system \eqref{DCP}--\eqref{bd} implies the controllability of the 1-$d$ degenerate system \eqref{oned}. Thus, as \eqref{kalman} is necessary for the controllability of 1-$d$ systems \eqref{oned}, it is also necessary for the null controllability of the 2-$d$ degenerate system. Hence, the proof of \Cref{main theorem} is now complete.
\end{proof}

\section{ Degenerate parabolic equation in $N$-dimensional cube}
\label{higher_d}

As mentioned in \Cref{remark_extensions}, we can extend \Cref{main theorem_scalar} to the $N$-dimensional setting. For simplicity, we present the weakly degenerate case.

Let us consider $\Omega=(0,1)^N\subset \mathbb R^{N}$ $(N\geq 3)$ and $\omega=(a_1,b_1)\times \ldots \times (a_{N-1},b_{N-1})\subset \mathbb R^{N-1}$ where each interval $(a_i,b_i)\subset (0,1)$. We introduce the following control system
\begin{equation}\label{eq:sys_dimN}
	\begin{cases}
		\partial_t u = \dv(D {\nabla u}) &\text{in } (0,T)\times\Omega,\\
		u(t)=1_{\gamma}q(t) &\text{on } (0,T)\times \pa \Omega, \\
		u(0)=u_0 &\text{in } \Omega,
	\end{cases}
\end{equation} 
where $\gamma=\{0\}\times \omega,$ $D=\text{diag}(x_1^{\alpha_1},x_2^{\alpha_2},...,x_N^{\alpha^{N}}) \in M_{N\times N}(\mathbb{R}) $, $0\leq\alpha_i<1$, $ i=1,2,...,N.$ We have the following controllability result for \eqref{eq:sys_dimN}.
\begin{theorem}\label{th_highd}
	Let $T>0$. For any $u_0\in H^{-1}_{\alpha}(\Omega)$, there exists a control $q\in L^2(0,T;L^2(\pa \Omega))$ such that system \eqref{eq:sys_dimN} satisfies $u(T)=0.$
\end{theorem}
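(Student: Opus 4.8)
The plan is to repeat, almost verbatim, the proof of \Cref{main theorem}, with the single transverse variable $y$ replaced by the block $y'=(x_2,\dots,x_N)\in(0,1)^{N-1}$. First I would set up the $N$-dimensional functional framework exactly as in \Cref{sec:wp}: the weighted spaces $H^1_{\alpha,0}(\Omega)$ and $H^{-1}_{\alpha}(\Omega)$, the Hardy--Poincar\'e inequalities (which hold coordinate-by-coordinate, by \cite[Lemmas 4 and 13]{FA19}), the isomorphism $-\Aalb\colon H^1_{\alpha,0}(\Omega)\to H^{-1}_{\alpha}(\Omega)$, generation of a $C_0$-semigroup on $L^2(\Omega)$, and the notion of solution by transposition of \eqref{eq:sys_dimN} and its adjoint. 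For $i=2,\dots,N$ let $(\lambda_{\alpha_i,k},\phi_{\alpha_i,k})_{k\ge1}$ be the eigenelements of the one-dimensional problem \eqref{eigen eqn1} in the variable $x_i$; for a multi-index $\mathbf k=(k_2,\dots,k_N)$ with $k_i\ge1$ set $\Phi_{\mathbf k}(y')=\prod_{i=2}^{N}\phi_{\alpha_i,k_i}(x_i)$ and $\Lambda_{\mathbf k}=\sum_{i=2}^{N}\lambda_{\alpha_i,k_i}$, so that $\{\Phi_{\mathbf k}\}$ is an orthonormal basis of $L^2((0,1)^{N-1})$, and for $\mu>0$ let $E_{\alpha,\mu}:=\operatorname{span}\{\Phi_{\mathbf k}:\Lambda_{\mathbf k}\le\mu\}$ play the role of $E_{\alpha,J}$ in \eqref{E_aJ}.

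The one genuinely new ingredient is a tensorized Lebeau--Robbiano spectral inequality on $\omega=\prod_{i=1}^{N-1}(a_i,b_i)$: there is $C>0$ such that
\begin{equation*}
	\sum_{\Lambda_{\mathbf k}\le\mu}|a_{\mathbf k}|^2\le Ce^{C\sqrt{\mu}}\int_{\omega}\Big|\sum_{\Lambda_{\mathbf k}\le\mu}a_{\mathbf k}\Phi_{\mathbf k}(y')\Big|^2\,\mathrm{d}y',\qquad \mu>0.
\end{equation*}
I would prove this by applying the one-dimensional inequality \eqref{lr} of \Cref{thm:spec_ineq_degen} successively in the variables $x_2,\dots,x_N$: since $\Lambda_{\mathbf k}\le\mu$ forces $\lambda_{\alpha_i,k_i}\le\mu$ for every $i$, at each of the $N-1$ steps the relevant partial sum lives in a frequency band of width $\le\mu$, so the $N-1$ factors $e^{C\sqrt{\mu}}$ collapse into a single one of the same form. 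This is the arbitrary-dimension analogue of passing from \eqref{lr} to \eqref{lr1}, and the careful bookkeeping of the nested cutoffs so as to keep the constant $e^{C\sqrt{\mu}}$ is the main technical point of the whole argument.

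With this in hand, the rest is a transcription of \Cref{lr sc}. For $\sigma_T\in E_{\alpha,\mu}$, expand the adjoint solution as $\sigma(t,x_1,y')=\sum_{\Lambda_{\mathbf k}\le\mu}\sigma^{\mathbf k}(t,x_1)\Phi_{\mathbf k}(y')$; each coefficient solves the scalar one-dimensional degenerate equation $\partial_t\sigma^{\mathbf k}+\partial_{x_1}(x_1^{\alpha_1}\partial_{x_1}\sigma^{\mathbf k})-\Lambda_{\mathbf k}\sigma^{\mathbf k}=0$ with $\sigma^{\mathbf k}(t,0)=\sigma^{\mathbf k}(t,1)=0$, and writing $\sigma^{\mathbf k}(t,\cdot)=e^{-\Lambda_{\mathbf k}(T-t)}v(t,\cdot)$ reduces it to \eqref{adj system} with $n=1$, $A=0$, $B=1$, for which \eqref{kalman} holds trivially; thus \Cref{null control 1} gives the fiberwise $1$-$d$ observability with cost $Ce^{C/T}$, uniformly in $\mathbf k$. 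Combining this with the tensorized spectral inequality and the Hardy--Poincar\'e inequalities (to pass between the $x_1$-only seminorm and the full $H^1_{\alpha,0}(\Omega)$ norm, as between \eqref{obs1} and \eqref{ob}) yields the partial observability inequality of \Cref{par obs}, hence by duality (the analogue of \Cref{thm_dual}) a control steering $\Pi_{E^{-1}_{\alpha,\mu}}u(T)$ to $0$ with cost $C_Te^{C\sqrt{\mu}}$. The dissipation estimate of \Cref{dis} carries over unchanged: multiplying each fiber equation by $\tilde u^{\mathbf k}$ and by $\partial_{x_1}(x_1^{\alpha_1}\partial_{x_1}\tilde u^{\mathbf k})$ and using that the surviving modes satisfy $\Lambda_{\mathbf k}>\mu$ gives $\|u(t)\|_{H^{-1}_{\alpha}(\Omega)}\le Ce^{-\mu'(t-t_0)}\|u(t_0)\|_{H^{-1}_{\alpha}(\Omega)}$, with $\mu'$ the smallest $\Lambda_{\mathbf k}$ exceeding the cutoff.

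Finally I would run the Lebeau--Robbiano iteration of \Cref{control_2d_suf} verbatim: split $[0,T)=\bigcup_{k\ge0}[a_k,a_{k+1}]$ with $a_{k+1}=a_k+2T_k$, $T_k=\tfrac{\hat\alpha}{\beta}2^{-k\rho}$, $\rho\in(0,1)$; on $[a_k,a_k+T_k]$ apply the control killing the modes with $\Lambda_{\mathbf k}\le\gamma_k^2$, $\gamma_k=\beta 2^k$ (cost $\le Ce^{C(1/T_k+\gamma_k)}\le Ce^{C\beta 2^k}$ since $\lambda_{\alpha_2,\gamma_k}\sim\gamma_k^2$ and all transverse eigenvalues grow quadratically), and on $[a_k+T_k,a_{k+1}]$ let the system dissipate (factor $\le e^{-C\beta 2^{k(2-\rho)}}$ as the surviving modes have $\Lambda_{\mathbf k}>\beta^2 2^{2k}$). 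Exactly as in \eqref{u at time ak} one gets $\|u(a_{k+1})\|_{H^{-1}_{\alpha}(\Omega)}\le Ce^{-C\beta 2^{k(2-\rho)}}\|u_0\|_{H^{-1}_{\alpha}(\Omega)}\to0$, while the sum of the control norms converges, so $q\in L^2(0,T;L^2(\partial\Omega))$ (with cost $Ce^{C/T}$ after the choice $\beta=\rho_0/T$ of \Cref{rem:con cost}), and $u(T)=0$. The only real obstacle is the tensorized spectral inequality with the correct $e^{C\sqrt{\mu}}$ dependence; everything else is a routine dimensional upgrade of the two-dimensional proof.
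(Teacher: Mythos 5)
Your argument is correct, but it takes a genuinely different route from the paper's. The paper proves \Cref{th_highd} by induction on the dimension: at each step it peels off a single transverse variable, expands the adjoint state in the one-dimensional eigenbasis of that variable, invokes the observability inequality already established for the $(N-1)$-dimensional problem (starting from \Cref{main theorem_scalar} for $N=2$), combines it with the one-dimensional spectral inequality \eqref{lr} to obtain partial observability in dimension $N$, and then re-runs the Lebeau--Robbiano splitting at every stage of the induction. You instead diagonalize in all $N-1$ transverse variables at once and reduce everything to a single tensorized spectral inequality on the cuboid $\prod_{i}(a_i,b_i)$, which you propose to prove by applying \eqref{lr} successively in each variable. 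That tensorization is sound: since all eigenvalues are positive, $\Lambda_{\mathbf k}\le\mu$ forces $\lambda_{\alpha_i,k_i}\le\mu$ in every direction, so after observing in the first transverse variable the coefficient functions are still linear combinations of eigenfunctions below the threshold $\mu$ in the next variable, and iterating produces a constant $e^{(N-1)C\sqrt{\mu}}$, which is of the admissible form; the rest of your argument is then a faithful transcription of \Cref{par obs} and \Cref{control_2d_suf}. What your route buys is a spectral inequality for tensor products of degenerate eigenfunctions on a product of intervals (of independent interest, and precisely the ingredient whose unavailability in more general domains the paper cites as the reason for proceeding by repeated induction) together with a single application of the Lebeau--Robbiano machinery; what the paper's induction buys is that it never has to go beyond the one-dimensional spectral inequality of \Cref{thm:spec_ineq_degen}, at the cost of repeating the splitting argument $N-1$ times. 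Both approaches yield the control cost $Ce^{C/T}$.
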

\begin{proof}We prove this result using induction hypothesis for the dimension of the spatial domain.
\Cref{main theorem} and \Cref{rem:con cost} readily imply the controllability of \eqref{eq:sys_dimN} with an explicit control cost of $Ce^{\frac{C}{T}}$ for $N=2$. Next, we turn our attention to system \eqref{eq:sys_dimN} for $N=3$. Our task is to decompose the solution of this 3-$d$ system using the Fourier basis in the $x_3$-direction. Let $\sigma$ be the solution of adjoint system of \eqref{eq:sys_dimN}, that is,
\begin{equation}\label{adj_nd}
\begin{cases}
\partial_t \sigma + \dv(D\nabla \sigma)=0 &\textnormal{in } (0,T)\times \Omega,\\
\sigma(t) = 0 &\textnormal{on } (0,T)\times \partial \Omega, \\
\sigma(T)=\sigma_T &\textnormal{in } \Omega.
\end{cases}
\end{equation}

Let us write the solution to \eqref{adj_nd} in the following manner 
\begin{equation*}
	\sigma(t,x_1,x_2,x_3)=\sum\limits_{j=1}^{J}\sigma^j(t,x_1,x_2)\phi_{\alpha_3,j}(x_3),
\end{equation*}
where $\sigma^j$ satisfies the $2$-$d$ system
\begin{equation}\label{adj2_3d}
	\begin{cases}
		\partial_t \sigma^j + \dv(D {\nabla \sigma^j})-\lambda_{\alpha_3,j}\sigma^j=0 &\text{in } (0,T)\times\Omega',\\
		\sigma^j(t)=0 &\text{on } (0,T)\times \pa \Omega', \\
		\sigma^j(T)=\sigma^j_T &\text{in } \Omega',
		\end{cases}
	\end{equation} 
where $\Omega'=(0,1)\times(0,1)$. Introducing the change of variables $\sigma^j(t,\cdot)=e^{-\lambda_{\alpha_3,j}(T-t)}v$ where $v$ is the solution to 
\begin{equation*}
\begin{cases}
\partial_t v + \dv(D\nabla v)=0 &\textnormal{in } (0,T)\times \Omega,\\
v(t) = 0 &\textnormal{on } (0,T)\times \partial \Omega, \\
v(T)=\sigma_T^j &\textnormal{in } \Omega,
\end{cases}
\end{equation*}
and due to \Cref{main theorem_scalar}, the solution $\sigma^j$ of the associated adjoint \eqref{adj2_3d} satisfies the observability inequality
\begin{equation}\label{obs_2d-j}
	\displaystyle \norm{\sigma^j(0)}_{H^{1 }_{ \alpha,0}(\Omega')}^2\leq C (C_T)^2 \int_{0}^{T}\left\|\mathbf{1}_{\gamma'}   (D\nabla \sigma^j(t)) \right\|_{L^2(\pa \Omega')}^2 \rd t,
\end{equation}
for each $j\in\mathbb N$, $\gamma'=\{0\}\times(a_1,b_1)$, and where the positive constants $C$ and $C_T$ are independent of $j$.

Thanks to \eqref{obs_2d-j} and the spectral inequality \eqref{lr} applied to the $x_3$-direction with $\omega=(a_2,b_2)$, we prove the partial observability of the form \eqref{eq:obs_inequalities} in \Cref{par obs} for the 3-$d$ adjoint system \eqref{adj_nd}. Thus proceeding with the Lebeau-Robbiano approach as done in \Cref{control_2d_suf},  we can establish the controllability of the 3-$d$ system with the desired control cost. Finally, the null controllability of the $N$-dimensional system \eqref{eq:sys_dimN} can be achieved after a finite number of steps by induction.
\end{proof}

\begin{remark} We make the following remarks.

\begin{itemize}
\item Note that in \cite{AB2014}, for a system of heat equations, the authors considered the $N$-dimensional domain in the form $\Omega = (0,1) \times \Omega'$, where $\Omega'$ is any smooth bounded domain in $\mathbb{R}^{N-1}$. Since the Lebeau-Robbiano spectral inequality \eqref{eq:spec_ineq} holds for the usual Dirichlet Laplace operator $\Delta_{x'}$ in $\Omega'$, it suffices to decompose the solution once using the Fourier basis in the $x'$-direction. In contrast, in our case, the spectral inequality \eqref{lr} (see \Cref{thm:spec_ineq_degen}) holds when $\Omega' = (0,1)$. Consequently, we need to employ the strategy from \cite{AB2014} repeatedly, over a finite number of steps, to conclude the desired result in $\mathbb{R}^N$.
\item It would be very interesting to establish a direct spectral inequality for degenerate elliptic operators \(\dv(D\nabla u)\) in a general smooth bounded domain \(\Omega \subset \mathbb{R}^N\). For the case \(N = 2\), we believe this can be achieved by adapting the approach of \cite{cannarsa2016global} for parabolic Carleman estimates and following the methodology in \cite[Section 5]{LRL12}. This will be addressed in a forthcoming paper.
\end{itemize}

\end{remark}

We can also prove the controllability of \eqref{eq:sys_dimN} when some degenerate parameters $\alpha_i\in (1,2)$. Note, however, that this introduces some notational difficulties for considering the different combinations of boundary conditions according to the degenerate parameters $\alpha_i$.

For simplicity, let us consider the following example in 3-$d$: let $\Omega=(0,1)^3$, where $\omega=(a_1,b_1)\times (a_2,b_2)\subset \mathbb R^{2}$  is a bounded domain, $\alpha_1,\alpha_2\in [0,1),$ $\alpha_3\in (1,2)$, see \Cref{fig_cube_domain}.
We define the boundary conditions of the concerned equation in the same spirit of \eqref{bd}. Let us denote $\Gamma_1=\{0\}\times[0,1]\times[0,1],  \Gamma_2=[0,1]\times\{1\}\times[0,1], \Gamma_3=\{1\}\times[0,1]\times[0,1],   \Gamma_4=[0,1]\times\{0\}\times[0,1]$, $\Gamma_5=[0,1]\times[0,1]\times\{1\},$ $\Gamma_6=[0,1]\times[0,1]\times\{0\}$  so that $\partial \Omega=\cup_{i=1}^{6}\Gamma_i.$ We denote ${\Sigma}_i=(0,T)\times\Gamma_i$, $i=1,2,3,4,5,6$ $\Sigma=\cup_{i=1}^{6}{\Sigma}_i$, $\Sigma_{ij}={\Sigma}_i\cup {\Sigma}_j$, ${\Sigma}_{ijk}={\Sigma}_i\cup{\Sigma}_j\cup {\Sigma}_k$ and so.
We introduce the following control system
\begin{equation}\label{eq:sys_dim3}
	\begin{cases}
		\partial_t u = \dv(D {\nabla u}) \text{ in } (0,T)\times\Omega,\\
		u(t)=1_{\gamma}q(t) \text{ on } \Sigma_1, \quad u(t)=0  \text{ on } \Sigma_{2345}, \quad (D\nabla u(t))\nu=0 \text{ on } \Sigma_{6}, \\
		u(0)=u_0 \text{ in } \Omega,
	\end{cases}
\end{equation} 
where $\gamma=\{0\}\times \omega,$ $D=\text{diag}(x_1^{\alpha_1},x_2^{\alpha_2},x_3^{\alpha_{3}}) \in M_{3\times 3}(\mathbb{R}).$ Arguing as in \Cref{th_highd}, we can get the controllability for \eqref{eq:sys_dim3} and for any other possible combination of degenerate parameters. We skip the details. 

\begin{figure}[h!]
	\centering
	\begin{tikzpicture}[scale=3] 
		\draw[black, thick] (0,0,0) -- (1,0,0) -- (1,1,0) -- (0,1,0) -- cycle; 
		\draw[black, thick] (0,0,0) -- (0,0,1);
		\draw[black, thick] (1,0,0) -- (1,0,1);
		\draw[black, thick] (1,1,0) -- (1,1,1);
		\draw[black, thick] (0,1,0) -- (0,1,1);
		\draw[black, thick] (0,0,1) -- (1,0,1) -- (1,1,1) -- (0,1,1) -- cycle; 
		
		\draw[->, black, thick] (-0.1,0,0) -- (1.2,0,0) node[anchor=north west] {$x_1$};
		\draw[->, black, thick] (0,-0.1,0) -- (0,1.2,0) node[anchor=north west] {$x_3$};
		\draw[->, black, thick] (0,0,-0.1) -- (0,0,1.3) node[anchor=south east] {$x_2$};
		
		\node[blue] at (0.7,0.7,0.6) {\large $\Omega$};
		\node[black] at (0.15,0,0.2) {$0$};
		\node[black] at (1.10,0,0.2) {$1$};
		\node[black] at (0.1,0,1.2) {$1$};
		\node[black] at (-0.05,1.05,0) {$1$};
		\node[black] at (0.5,0,0.5) {$\Gamma_6$};
		\node[black] at (0,.7,0.5) {$\Gamma_1$};
		\node[black] at (0.8,.2,0) {$\Gamma_4$};
		
		\fill[red, opacity=0.6] (0,0.2,0.15) -- (0,0.2,0.65) -- (0,0.5,0.65) -- (0,0.5,0.15) -- cycle;
		\node[red] at (0,0.4,0.8) {\large $\omega$};
		
	\end{tikzpicture}
	\caption{Example of the domain $\Omega$ for equation \eqref{eq:sys_dim3} with $N=3$. The operator degenerates only on the faces $\Gamma_1$, $\Gamma_4$, and $\Gamma_6$. The red region, denoted by $\omega$, represents the control set, which is active at the degenerate boundary}
	\label{fig_cube_domain}
\end{figure}
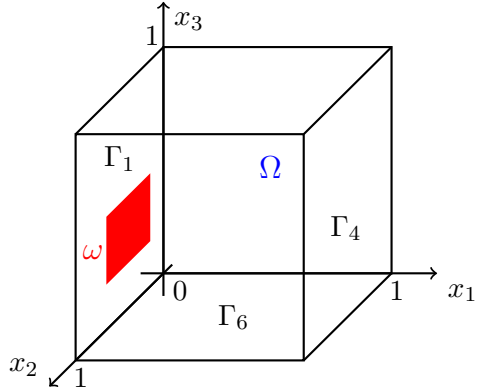


\section{Further results, remarks and open problems}\label{sec:open pr}
\subsection{Controlling from non-degenerate points}\label{non_deg}
In this paper, we studied the controllability of the 2-$d$ degenerate parabolic equation \eqref{DCP_sc} in a square domain $\Omega=(0,1)\times(0,1)$ by means of a boundary control acting through the points of degeneracy ($\gamma=\{0\}\times \omega\subset \pa \Omega$, $\omega\subset (0,1)$). One can establish a similar null controllability result for \eqref{DCP_sc}, when control is acting in the non-degenerate end
($\gamma=\{1\}\times \omega\subset \pa \Omega$, $\omega\subset (0,1)$). 

For simplicity of the definition of boundary data, we consider only the weakly degenerate case $(\alpha_1,\alpha_2)\in [0,1)\times [0,1)$. Let us introduce the following control system
\begin{equation}\label{DCP2}
	\begin{cases}
		\partial_t u=\dv(D {\nabla u})
			 &  \text{ in } (0,T)\times \Omega,\\
		u(t)=\mathbf{1}_{\gamma} q(t)& \text{ in } (0,T)\times \partial \Omega, \\
		u(0)=u_0 & \text{ in } \Omega,
	\end{cases}
\end{equation}
where $\gamma=\{1\}\times \omega\subset \pa \Omega,$ $\omega\subset (0,1)$ (see \Cref{fig:region1}). 
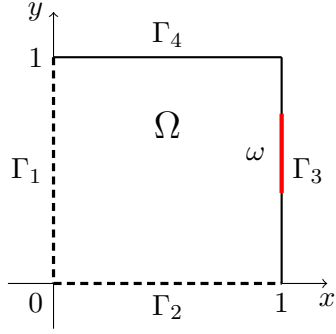
\begin{figure}[h!]
	\centering
	\begin{tikzpicture}[scale=3] 
		\draw[thin, -] (-0.2,0) -- (0,0); 
		\draw[thin, ->] (1,0) -- (1.2,0) node[below] {$x$}; 
		
		\draw[thin, -] (0,-0.2) -- (0,0); 
		\draw[thin, ->] (0,1) -- (0,1.2) node[left] {$y$}; 
		
		\draw[densely dashed, very thick] (0,0) -- (1,0) node[midway, below] {$\Gamma_2$};
		\draw[thick] (1,0) -- (1,1) node[midway, right] {$\Gamma_3$};
		\draw[thick] (1,1) -- (0,1) node[midway, above] {$\Gamma_4$};
		\draw[densely dashed, very thick] (0,1) -- (0,0) node[midway, left] {$\Gamma_1$};
		
		\draw[ultra thick, red] (1,0.4) -- (1,0.75); 
		\node[right] at (0.8,0.575) {$\omega$}; 
		
		\node[below left] at (0,0) {0};
		\node[below] at (1,0) {1};
		\node[left] at (0,1) {1};
		\node at (0.5, 0.7) {\Large$\Omega$}; 
	\end{tikzpicture}
	\caption{The domain $\Omega$ for equation \ref{DCP2}, with the operator degenerating along the dashed lines $\Gamma_1$ and $\Gamma_2$. The red region, denoted by $\omega$, represents the control set, which is active at the non degenerate boundary.}
	\label{fig:region1}
\end{figure}

Unfortunately, the technique of this work cannot be applied directly to equation \eqref{DCP2}. The advantage of taking control at the boundary $\Gamma_1$ is that, here, the unitary outward normal vector to each point on the boundary is $\nu=(1,0)$. Thus, the observation term $\mathbf{1}_{\{0\}\times \omega} ( D\nabla \sigma(t))=\mathbf{1}_{\{0\}\times \omega}( x^{\alpha_1}\pa_x\sigma)(t)$ has only the derivative along the $x$-direction. This particular expression of the observation term helps us to glue the observability inequality for the one-dimensional ($x$-direction) degenerate parabolic equation with the Lebeau-Robbiano spectral inequality \eqref{lr1} for the other one (see the proof of \Cref{par obs}). 

On the other hand, if we put the control as in \Cref{fig:region1}, the normal derivative at any point of the form $(1,y)\in \Gamma_3$ is $\nu=\left(\frac{-y}{\sqrt{1+y^2}},\frac{1}{\sqrt{1+y^2}}\right).$ Thus, our approach does not seem effective to apply directly for \eqref{DCP2}. 

To overcome this difficulty, we introduce a change of variable $x\mapsto x+1$ so that we can move the control at the end $x=0$ and exploit the expression of the  outward normal $\nu=(1,0).$ Thus, the system \eqref{DCP2} transforms to
\begin{equation}\label{DCP3}
	\begin{cases}
		\partial_t u=\dv(D {\nabla u}) &  \text{ in } (0,T)\times (-1,0)\times(0,1),\\
		u(t)=\mathbf{1}_{\gamma} q(t)& \text{ in } (0,T)\times \partial \Omega, \\
		u(0)=u_0 & \text{ in } \Omega,
	\end{cases}
\end{equation}
where $\gamma=\{0\}\times \omega\subset \pa \Omega,\, \omega\subset (0,1),$
\begin{equation*}
	 D(x,y)=\begin{pmatrix}
		(1+x)^{\alpha_1}& 0\\
		0& y^{\alpha_2}
	\end{pmatrix},\quad x\in (-1,0),\quad y\in (0,1).
\end{equation*}
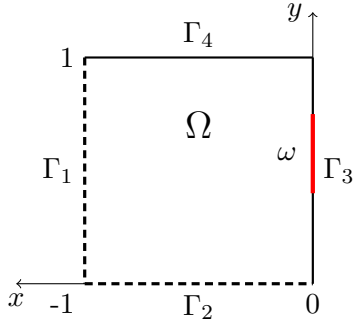
\begin{figure}[h!]
	\centering
	\begin{tikzpicture}[scale=3] 
		\draw[thin, ->]   (0,0)--(-0.3,0)
		node[below] {$x$}; 
		\draw[thin, ->] (1,0) -- (1,1.2) node[left] {$y$} ; 
		
		\draw[thin, -] (0,0) -- (0,0); 
		
		\draw[densely dashed, very thick] (0,0) -- (1,0) node[midway, below] {$\Gamma_2$};
		\draw[thick] (1,0) -- (1,1) node[midway, right] {$\Gamma_3$};
		\draw[thick] (1,1) -- (0,1) node[midway, above] {$\Gamma_4$};
		\draw[densely dashed, very thick] (0,1) -- (0,0) node[midway, left] {$\Gamma_1$};
		
		\draw[ultra thick, red] (1,0.4) -- (1,0.75); 
		\node[right] at (0.8,0.575) {$\omega$}; 
		
		\node[below left] at (0,0) {-1};
		\node[below] at (1,0) {0};
		\node[left] at (0,1) {1};
		\node at (0.5, 0.7) {\Large$\Omega$}; 
	\end{tikzpicture}
	\caption{The domain $\Omega=(-1,0)\times (0,1)$ for equation \ref{DCP3}, with the operator degenerating along the dashed lines $\Gamma_1$ and $\Gamma_2$. The red region, denoted by $\omega$, represents the control set, which is active at the non degenerate boundary.}
	\label{fig:region2}
	\end{figure}
	
Thus, from the techniques developed in the present paper, the controllability of \eqref{DCP3} reduces to establishing the null controllability of the corresponding one-dimensional equation
\begin{equation}\label{oned 2}
	\begin{cases}
		\pa_t w =\pa_{x}((1+x)^{\alpha_1}\pa_{x}w)& \text{ in }   (0,T) \times (-1,0),\\
		w(t,0)=h(t), \quad
		w(t,-1)=0 &\text{ in }   (0,T),\\
		w(0,x)=w_0(x)  & \text{ in }   (-1,0).
	\end{cases}
\end{equation}
Again performing a change of variable $x+1\mapsto x$, the control problem \eqref{oned 2} is transformed into the following
 \begin{equation}\label{oned 3}
	\begin{cases}
		\pa_t w =\pa_{x}(x^{\alpha_1}\pa_{x}w) & \text{ in }   (0,T) \times (0,1),\\
		w(t,1)=h(t), \quad
		w(t,0)=0 &\text{ in }   (0,T),\\
		w(0,x)=w_0(-x)  & \text{ in }   (0,1).
	\end{cases}
\end{equation}

Using the technique of the proof of \Cref{null control 1} for the one-dimensional degenerate parabolic system, it can be proved that system \eqref{oned 3} is null controllable with control cost $Ce^{C/T}.$ This is now enough to lead the null controllability for the equation \eqref{oned 2} and hence the same for \eqref{DCP3} and finally for \eqref{DCP2}. 

We note that a strategy similar to the one described above (but this time changing the role of the $x$- and $y$-coordinates) can be employed to control from the sides $\Gamma_2=[0,1]\times\{0\}$ or $\Gamma_4=[0,1]\times\{1\}$. For brevity, we left the details to the reader. 

\subsection{A 2-$d$ degenerate parabolic equation controlled from a hyperplane}
Let us consider the control system when $(\alpha_1,\alpha_2)\in [0,1)\times [0,1)$
\begin{equation}\label{DCP5}
	\begin{cases}
		\partial_t u=\text{div}\left(D\nabla u\right)+\delta_{x_0}\mathbf{1}_{\omega}(y) f(t,x,y) &  \text{ in } (0,T)\times \Omega,\\
		u(t)=0& \text{ on } (0,T)\times \partial \Omega, \\
		u(0)=u_0 & \text{ in } \Omega,
	\end{cases}
\end{equation}
where $\omega=(a,b)\subseteq (0,1)$ and $x_0\in(0,1)$ (see \Cref{fig:region4}).  
 
\begin{figure}[h!]
	\centering
	\begin{tikzpicture}[scale=3]
		
		\draw[thin, -] (-0.2,0) -- (0,0); 
		\draw[thin, ->] (1,0) -- (1.2,0) node[below] {$x$}; 
		
		\draw[thin, -] (0,-0.2) -- (0,0); 
		\draw[thin, ->] (0,1) -- (0,1.2) node[left] {$y$}; 
		
		\node at (0.7,0.7) {$\Omega$};
		\draw[densely dashed, very thick] (0,0) -- (1,0) node[midway, below] {$\Gamma_2$};
		\draw[thick] (1,0) -- (1,1) node[midway, right] {$\Gamma_3$};
		\draw[thick] (1,1) -- (0,1) node[midway, above] {$\Gamma_4$};;
		\draw[densely dashed, very thick] (0,1) -- (0,0) node[midway, left] {$\Gamma_1$};
		\fill[red] (0,0.3) circle (1pt) node[left] {$a$};
		\fill[red] (0,0.7) circle (1pt) node[left] {$b$};
		\fill[black] (0.35,0) circle (1pt) node[above] {$x_0$};
		
		\draw[red, dashed] (0,0.3) -- (0.35,0.3);
		\draw[red, dashed] (0,0.7) -- (0.35,0.7);
		\draw[ultra thick, red] (0.35,0.3) -- (0.35,0.7);
		
		\draw (0,1) node[left] {$1$};
		\draw (1,0) node[below] {$1$};
		\draw (0,0) node[below left] {$0$};
		\draw (0.54,0.575) node[below left] {$\omega$};
	\end{tikzpicture}
	\caption{The domain $\Omega$ for equation \ref{DCP5}, with the operator degenerating along the dashed lines. The red region, denoted by $\omega$, represents the control set, which is active at the level of the point $x_0$.}
	\label{fig:region4}
\end{figure}
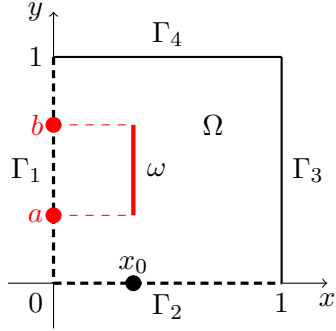
It is worth mentioning that in \cite{AS20}, the authors studied the pointwise controllability of the one-dimensional weakly degenerate $(0\leq\alpha_1<1)$ heat equation given by
\begin{equation}\label{pointwise}
\begin{cases}
	u_t =\pa_x (x^{\alpha_1} \pa_x u) + \delta_{x_0} v(t) & \text{ in } (0,T)\times (0.1), \\
	u(t, 0) = u(t, 1) = 0 & t \in (0, T), \\
	u(0, x) = u_0(x) & x \in (0, 1).
\end{cases}
\end{equation}
 They proved that \eqref{pointwise} is null controllable in time $T>T_0(x_0)$, where
 \begin{equation}\label{time}
 		[0,+\infty]\ni T_0(x_0) := \limsup\limits_{k \to +\infty} \frac{-\log\left( |\phi_{{\alpha_1}, k}(x_0)| \right)}{\lambda_{\alpha_1, k}},
 \end{equation}
  and not controllable if $T<  T_0(x_0).$ 
 
Employing the pointwise controllability result for one-dimensional heat equation (see \cite{D73}) with control cost of the form $Ce^{C/T}$ combined with the Lebeau-Robbiano technique, the author in \cite{S15} studied the controllability of the $N$-dimensional heat equation in a cylindrical domain $\Omega=(0,\pi)\times \Omega^\prime$ where $\Omega^\prime\subset \mathbb R^{N-1}$ by means of control acting through a hyperplane, more precisely, 
 \begin{equation}\label{sam}
 \begin{cases}
 	\partial_t u = \Delta u + \delta_{{x_0}} \mathbf{1}_{\omega} v(t,x,x^\prime) & \text{in } \Omega \times (0, T), \\
 	u(t) = 0 & \text{on } \partial\Omega \times (0, T), \\
 	u(0) = u_0 & \text{in } \Omega.
 \end{cases}
 \end{equation}
  In this spirit, using the techniques of \cite{S15} and the one-dimensional result for degenerate heat equations in \cite{AS20}, one can obtain directly the following controllability result for the 2-$d$ system \eqref{DCP5} when $0\leq\alpha_1<1.$ Let us consider the set
\begin{equation*}\mathcal{S}=\bigg\{\left(\frac{j_{\nu(\alpha_1),k}}{j_{\nu(\alpha_1),n}}\right)^{\frac{1}{\kappa_{\alpha_1}}}: n>k\geq1 \bigg\},
	\end{equation*}
where $j_{\nu(\alpha_1),k}$ is the sequence of the zeros of Bessel functions of the first kind of order $\nu({\alpha_1})$.
\begin{theorem}\label{th:pw1}
	Assume that $x_0 \notin \mathcal S$ and recall 
	$ T_0(x_0) \in [0, +\infty]$ defined in \eqref{time}.
		 Under the assumptions: $x_0 \notin \mathcal S \text{ and } \omega\equiv (0,1),$  we have the following 
		\begin{enumerate}
			\item If $T > T_0(x_0)$, system \eqref{DCP5} is null-controllable at time $T$.
			\item For any $T < T_0(x_0)$, system \eqref{DCP5} is not null-controllable at time $T$.
		\end{enumerate}
	\end{theorem}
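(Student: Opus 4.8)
The plan is to mirror, in the degenerate setting, the strategy already used in the proof of \Cref{th_highd}: decompose the solution of the adjoint of \eqref{DCP5} along the eigenbasis $\{\phi_{\alpha_2,j}\}$ in the $y$-direction (here the non-singular variable $y$, for which the degenerate operator $-(y^{\alpha_2}\phi')'$ plays the role that $-\phi''$ plays in \cite{S15}), reduce to a family of one-dimensional problems, and then glue the one-dimensional pointwise controllability result of \cite{AS20} with the Lebeau--Robbiano spectral inequality \Cref{thm:spec_ineq_degen}. Concretely, for $\sigma_T=\sum_{j\ge 1}\sigma^j_T(x)\phi_{\alpha_2,j}(y)$ the adjoint solution is $\sigma(t,x,y)=\sum_j \sigma^j(t,x)\phi_{\alpha_2,j}(y)$, and after the substitution $\sigma^j(t,\cdot)=e^{-\lambda_{\alpha_2,j}(T-t)}v^j$ each $v^j$ solves the scalar one-dimensional degenerate heat equation \eqref{pointwise}. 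The observation functional of \eqref{DCP5} is $\mathbf 1_\omega(y)\,\sigma(t,x_0,y)$, whose $j$-th Fourier coefficient is $e^{-\lambda_{\alpha_2,j}(T-t)}v^j(t,x_0)$.

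First I would record the one-dimensional ingredient: by \cite{AS20}, for $x_0\notin\mathcal S$ and every $T>T_0(x_0)$ the scalar system \eqref{pointwise} is null controllable, equivalently the adjoint satisfies an observability inequality $\|v(0)\|_{H^1_{\alpha_1,0}(0,1)}^2\le C_T\int_0^T|v(t,x_0)|^2\,\rd t$; the constant $C_T$ depends only on $T$, $\alpha_1$ and the distance of $x_0$ to $\mathcal S$ (not on $j$), and by the same cost-of-control analysis as in \Cref{cost_cyl}, $C_T\le Ce^{C/T}$ for $T$ bounded, and more importantly $C_T$ stays finite as long as $T>T_0(x_0)$. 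The hypothesis $x_0\notin\mathcal S$ guarantees $\phi_{\alpha_1,k}(x_0)\ne 0$ for all $k$ and that the exponents $\lambda_{\alpha_1,k}$ stay apart after the perturbation by a Dirac mass — this is exactly the separation condition in \cite{AS20}. Second, applying this to each $v^j$ and inserting the exponential factor yields, for each $j$ and each $T'>T_0(x_0)$,
\begin{equation*}
\norm{\sigma^j(0)}_{H^1_{\alpha_1,0}(0,1)}^2\le C_{T'}\int_0^{T'}|\sigma^j(t,x_0)|^2\,\rd t.
\end{equation*}
Third, I would use the spectral inequality \eqref{lr1} in the $y$-variable on $\omega$ to pass from $\sum_{j\le J}|\sigma^j(t,x_0)|^2$ to $\int_\omega|\sum_{j\le J}\sigma^j(t,x_0)\phi_{\alpha_2,j}(y)|^2\,\rd y$, losing a factor $Ce^{C\sqrt{\lambda_{\alpha_2,J}}}$; combined with the computation of the $H^1_{\alpha,0}(\Omega)$-norm (as in \Cref{par obs}) and the Hardy--Poincar\'e inequality to absorb the extra $\lambda_{\alpha_2,J}$ weight, this produces the partial observability inequality on $E_{\alpha,J}$. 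From there the Lebeau--Robbiano time-splitting argument of \Cref{control_2d_suf} applies verbatim: alternate a short control phase on low frequencies (cost $e^{C\sqrt{\lambda_{\alpha_2,\gamma_k}}}$ tolerable because $2-\rho>1$) with a free dissipation phase (\Cref{dis}), and the series telescopes to give $u(T)=0$. The negative part (item 2) follows from the one-dimensional obstruction: restricting to $J=1$ (a single Fourier mode in $y$), null controllability of \eqref{DCP5} would force null controllability of \eqref{pointwise}, which fails for $T<T_0(x_0)$ by \cite{AS20}.

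The main obstacle I anticipate is the uniformity in $j$ of the one-dimensional observability constant and, relatedly, the precise meaning of the time threshold: $T_0(x_0)$ as defined in \eqref{time} is governed by $-\log|\phi_{\alpha_1,k}(x_0)|/\lambda_{\alpha_1,k}$, and one must check that the perturbation $-\lambda_{\alpha_2,j}$ (i.e.\ the exponential reweighting $e^{-\lambda_{\alpha_2,j}(T-t)}$) does not change this threshold — it does not, since it is a $j$-dependent but time-uniform multiplicative factor that cancels when passing between $\sigma^j$ and $v^j$, but this needs to be said carefully. A secondary technical point is that \cite{AS20} gives controllability for $T>T_0(x_0)$ without necessarily an explicit $Ce^{C/T}$ cost near $T_0(x_0)$; however for the Lebeau--Robbiano scheme one only ever uses the control on short sub-intervals of fixed small total length, so it suffices to have the cost bound $Ce^{C/T}$ for $T$ in a fixed bounded range bounded away from $0$ — and one must ensure the total control time used is $<T-T_0(x_0)$ so that enough "budget" remains; this forces a minor bookkeeping adjustment in the choice of the $T_k$ in Step 1 of the proof of \Cref{control_2d_suf}, shrinking them so that $2\sum_k T_k$ can be taken arbitrarily small rather than equal to $T$, with the remaining time spent in pure dissipation. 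I would flag this adjustment explicitly and otherwise refer to \cite{S15,AS20} and \Cref{control_2d_suf} for the routine details.
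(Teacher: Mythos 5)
Your architecture (the 1-d pointwise result of \cite{AS20} plus the spectral inequality \Cref{thm:spec_ineq_degen} in the $y$-variable plus the Lebeau--Robbiano iteration of \Cref{control_2d_suf}) can only work in the special case $T_0(x_0)=0$, and therefore does not prove the theorem as stated, which allows $T_0(x_0)\in(0,+\infty]$. The defect is structural, not bookkeeping. In the Lebeau--Robbiano scheme the control windows have lengths $T_k\to 0$, and on each window you must invoke the one-dimensional pointwise observability for (the $-\lambda_{\alpha_2,j}$-shifted version of) \eqref{pointwise}. But \cite{AS20} provides that observability only on time intervals of length strictly greater than $T_0(x_0)$; when $T_0(x_0)>0$ it fails outright on any interval of length $T_k<T_0(x_0)$, with no finite constant available. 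Your proposed fix --- shrinking the $T_k$ so that $2\sum_k T_k$ is small and spending the remaining time in free dissipation --- goes in the wrong direction: the obstruction is a \emph{lower} bound on the length of each individual control window, not on their total, and since unboundedly many windows are needed to reach all frequencies $\gamma_k\to\infty$, no choice of $T_k$ summing to a finite time can keep every $T_k$ above $T_0(x_0)>0$. Relatedly, the asserted bound $C_T\le Ce^{C/T}$ for the 1-d pointwise observability constant is incompatible with $T_0(x_0)>0$ (it would imply controllability for every $T>0$). A telltale sign of the gap is that your argument never uses the hypothesis $\omega\equiv(0,1)$: if it were correct, it would settle, for $T_0(x_0)>0$ and $\omega\subsetneq(0,1)$, precisely the open question the authors pose immediately after the theorem.

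The intended proof uses $\omega=(0,1)$ to dispense with both the spectral inequality and the time-splitting. Writing $\sigma=\sum_j\sigma^j(t,x)\phi_{\alpha_2,j}(y)$ and $\sigma^j=e^{-\lambda_{\alpha_2,j}(T-t)}v^j$ with $v^j$ solving the adjoint of \eqref{pointwise}, orthonormality of $\{\phi_{\alpha_2,j}\}$ gives $\int_0^1|\sigma(t,x_0,y)|^2\,\rd y=\sum_j|\sigma^j(t,x_0)|^2$, so the observation decouples exactly. Applying the single 1-d observability inequality of \cite{AS20} on the \emph{full} interval $(0,T)$ with $T>T_0(x_0)$ to each $v^j$ and multiplying by $e^{-2\lambda_{\alpha_2,j}T}$ yields $\norm{\sigma^j(0)}^2\le C_T\int_0^T e^{-2\lambda_{\alpha_2,j}t}|\sigma^j(t,x_0)|^2\,\rd t\le C_T\int_0^T|\sigma^j(t,x_0)|^2\,\rd t$ with a constant independent of $j$ (the shift by $\lambda_{\alpha_2,j}$ does not alter $T_0(x_0)$ in \eqref{time} since it is a constant perturbation of eigenvalues tending to infinity); summing over $j$, and absorbing the weight $\lambda_{\alpha_2,j}$ appearing in the $H^1_{\alpha,0}(\Omega)$-norm via $\lambda_{\alpha_2,j}e^{-2\lambda_{\alpha_2,j}T}\le C_T$, gives the full observability inequality in one shot, with no iteration and hence no need for any short-time estimate. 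Your reduction for item 2 --- projecting onto a single $y$-mode and invoking the non-controllability of \eqref{pointwise} for $T<T_0(x_0)$ --- is correct.
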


\begin{remark}
In this direction, it seems that the same methodology can be adapted without major modifications for the case of strongly degenerate case $(1<\alpha_1<2)$. Nonetheless, a rigorous proof is still needed.
\end{remark}
Recently the authors in \cite{khodja2024new} dealt with the controllability of \eqref{sam} in a two dimensional setting as in \Cref{fig:region4} by choosing $\omega\subsetneq (0,1)$ and some $x_0$ such that $T_0(x_0)>0$. Exploring a suitable biorthogonal construction and then employing the Lebeau-Robbiano strategy, the authors proved the null controllability of \eqref{sam} in time $T>T_0(x_0)$ and they also established the lack of null controllability when $T<T_0(x_0)$. We we mention a possible extension for further research problem regarding the degenerate parabolic equation \eqref{DCP5}.
 \begin{ques}
 	Let us consider some $x_0\in (0,1)$ such that $T_0(x_0)>0,$ where $T_0(x_0)$ is defined in \eqref{time}. Can we construct some control function $f$ such that the system \eqref{DCP5} satisfies $u(T)=0$ for $T>T_0(x_0),$ when $\omega\subsetneq (0,1)?$
 \end{ques}
An immediate interest would be the combination of the Kalman rank condition \eqref{kalman} for our problem and the minimal time as in \Cref{th:pw1}. In more detail, we can state the following problem. Consider the coupled degenerate parabolic system 
\begin{equation}\label{DCP6}
	\begin{cases}
		\partial_t u=\mathcal{I}_n \mathbf{D} u+A u 
		+\delta_{{x_0}} \mathbf{1}_{\omega} Bv(t,x,y) &  \text{ in } (0,T)\times \Omega,\\
		u(t)=0& \text{ on } (0,T)\times \partial \Omega, \\
		u(0)=u_0 & \text{ in } \Omega,
	\end{cases}
\end{equation}
where $ \omega\subset (0,1)$ and $x_0\in(0,1)$.
\begin{ques}
Under the assumptions of \Cref{th:pw1} and the Kalman rank condition \eqref{kalman}, is system \eqref{DCP6} null controllable for some $T>T_0$ only depending on $x_0$? Does null controllability fails if $T<T_0?$
\end{ques}

\subsection{The half-degenerate case in dimension $N\geq 3$}  
So far, we have only studied the null-controllability of degenerate equations in the domain which has the particular form $\Omega=(0,1)\times\cdots\times (0,1)$.
This restriction arises because the available spectral inequality (see \Cref{thm:spec_ineq_degen}) has been established for degenerate operators only in one dimension. However, by focusing on the specific case of a half-degenerate operator, the analysis can be extended to less restrictive higher-dimensional domains. It is worth noting that the case of interior controllability with such half degenerate operators remains an open problem, as highlighted in \cite[Section 5.2]{FA19}.

Let us consider the domain $\Omega=(0,1)\times\Omega^\prime\subset \mathbb R^{N}$ where $\Omega^\prime\subset \mathbb R^{N-1}$ $(N\geq 3)$ is a smooth bounded domain, and let us consider the control system
\begin{equation}\label{eq:sys_dimN2}
\begin{cases}
\partial_t u = (x^{\alpha_1}u_x)+\Delta_{x^\prime}u &\text{in } (0,T)\times\Omega,\\
u(t)=0 &\text{on } (0,T)\times(0,1)\times \partial \Omega^\prime,\\
u(t)=1_{\gamma}q(t) &\text{on } (0,T)\times\{0,1\}\times \Omega^\prime, \\
u(0)=u_0 &\text{in } \Omega,
\end{cases}
\end{equation} 
where $\gamma=\{0\}\times\omega$ with $\omega\subset \Omega^\prime$ and $\Delta_{x^\prime}$ is the $(N-1)$-dimensional Laplace operator in $\Omega^\prime$ (see \Cref{fig:cyl_domain}) and $0\leq \alpha_1<1.$

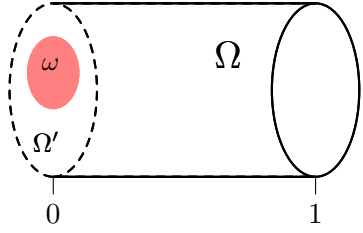
\begin{figure}[htbp!]
\centering
\begin{tikzpicture}[line cap=round,line join=round,scale=1.15]
  \def\a{1} 
  \def\h{0} 
  \def\z{3} 
  
  \foreach\i in {0,\h} 
  {%
    \draw[thick,dashed] (\z,\i+\a) -- (0,\i+\a) arc (90:270:0.5*\a cm and \a cm) -- (\z,\i-\a) ;
    \draw[thick, dashed] (0,\i-\a) arc (-90:90:0.5*\a cm and \a cm);
    \draw[thick] (\z,\i) ellipse (0.5*\a cm and \a cm);
    \draw[thick] (\z,\i+\a) -- (0,\i+\a);
    
    \draw[fill=red!50,draw=none] (0,0.2) ellipse (0.30*\a cm and 0.42\a cm);
    
    \draw[thick] (0,-1) -- (3,-1);
    \node at (2.0, 0.4) {\Large$\Omega$};
    \node[right] at (-0.25,0.3) {\small $\omega$};
    \node[right] at (-0.35,-0.6) {\small $\Omega^\prime$}; 
  }

  \draw[] (0,-\a) -- (0,-1.2) node [below] {$0$};
  \draw[] (\z,-\a) -- (\z,-1.2) node [below] {$1$};
\end{tikzpicture}  
\caption{Example of the domain $\Omega$ for equation \eqref{eq:sys_dimN2} for $N=3$. The operator degenerates only on the dashed face of the cylinder. The red region, denoted by $\omega$, represents the control set, which is active at the degenerate boundary.}
\label{fig:cyl_domain}
\end{figure}
Note that the underlying operator in \eqref{eq:sys_dimN2} only degenerates in the set $\{0\}\times\Omega^\prime$. We also note that similar ideas as developed in \Cref{sec:wp} yield the well-posedness of \eqref{eq:sys_dimN2} and its corresponding adjoint system. 

From the control point of view, since in the $x^\prime$-direction there is no degeneracy, we can follow the approach developed in \Cref{lr sc} and replace the 1-$d$ spectral inequality \eqref{lr1} by the usual one (see \cite{LR})
\begin{equation}\label{eq:spec_ineq}
\sum_{\lambda_j \leq \mu}|a_j|^2 \leq Ce^{C\sqrt{\mu}} \int_{\omega}\left|\sum_{\lambda_j \leq \mu}a_j\Phi_j\right|^2 \rd x^\prime,
\end{equation}
for any any positive constant $\mu$, $\{a_j\}\in \mathbb R$, and where $(\lambda_j,\Phi_j)$ are the corresponding eigenvalues and eigenfunctions of the elliptic problem
\begin{equation*}
-\Delta_{x^\prime} \Phi =\lambda \Phi \quad \text{in $\Omega^\prime$}, \quad \Phi=0 \quad\text{on $\partial \Omega^\prime$}.
\end{equation*}
By direct adaptations, we can establish the null controllability of the system \eqref{eq:sys_dimN2}. In order to state the controllability result, let us mention the associated functional spaces.
Recall the space $H^1_{\alpha}(\Omega)$ for $\alpha=(\alpha_1,0,...,0)$ as defined in \Cref{sec:wp} with $D(x,x')=\text{diag}({x^{\alpha_1},1,...,1}) \in M_{N\times N}(\mathbb{R})$. The spaces
$H^1_{\alpha,0}(\Omega)$ and $H^{-1}_{\alpha}(\Omega)$ are defined in a similar fashion as \Cref{sec:wp}.
\begin{theorem}\label{th_half dg}
	Let $T>0$. For any $u_0\in H^{-1}_{\alpha}(\Omega)$, there exists a control $q\in L^2(0,T;L^2(\pa \Omega))$ such that the system \eqref{eq:sys_dimN2} satisfies $u(T)=0.$
	\end{theorem}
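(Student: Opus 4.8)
The plan is to reduce the $N$-dimensional half-degenerate problem \eqref{eq:sys_dimN2} to the combination of a $1$-$d$ degenerate controllability result in the $x$-direction and the classical Lebeau--Robbiano spectral inequality \eqref{eq:spec_ineq} in the $x'$-direction, mirroring the proof of \Cref{main theorem} but with the roles adjusted so that degeneracy appears only along $x$. First I would fix $\sigma_T \in H^1_{\alpha,0}(\Omega)$ and write the adjoint solution $\sigma$ of the (homogeneous) adjoint system associated with \eqref{eq:sys_dimN2} in the form $\sigma(t,x,x') = \sum_{\lambda_j\le\mu} \sigma^j(t,x)\Phi_j(x')$, where $(\lambda_j,\Phi_j)$ are the Dirichlet eigenpairs of $-\Delta_{x'}$ on $\Omega'$. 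Each $\sigma^j$ then solves a $1$-$d$ degenerate adjoint problem of the type \eqref{adj system} with coupling matrix $A^*$ replaced by the scalar shift $-\lambda_j$ (and $n=1$, $A=0$); equivalently $\sigma^j(t,\cdot)=e^{-\lambda_j(T-t)}v^j(t,\cdot)$ with $v^j$ the solution of the scalar degenerate adjoint equation in $(0,1)$. By the scalar $1$-$d$ boundary null controllability result (the $n=m=1$ case of \Cref{null control 1}, valid since the trivial Kalman condition holds), each $v^j$ satisfies an observability estimate with cost $C_T = Ce^{C/T}$, which transfers to $\sigma^j$ with the extra factor coming from $e^{-\lambda_j(T-t)}$.

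Next I would assemble the partial observability inequality over the spectral window $\{\lambda_j\le\mu\}$, exactly as in \Cref{par obs}: compute the $H^1_{\alpha,0}(\Omega)$-norm of $\sigma(0)$ as $\sum_{\lambda_j\le\mu}\big(\|\sigma^j(0)\|^2_{H^1_{\alpha_1,0}(0,1)} + \lambda_j\|\sigma^j(0)\|^2_{L^2(0,1)}\big)$, use the Hardy--Poincaré inequality in the $x$-direction to bound this by $C\mu\sum_{\lambda_j\le\mu}\|\sigma^j(0)\|^2_{H^1_{\alpha_1,0}(0,1)}$, invoke the $1$-$d$ observability for each $j$, and then glue the fixed-$t$ slices together using the spectral inequality \eqref{eq:spec_ineq} (applied to the boundary traces $a_j = (x^{\alpha_1}\partial_x\sigma^j)(t,0)$, or just $\sigma^j(t,0)$ in a strongly degenerate variant) followed by integration in $t$. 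This yields an observability inequality for $\sigma(0)$ projected onto the span of $\{\Phi_j : \lambda_j\le\mu\}$ with cost $C_T e^{C\sqrt{\mu}}$, which by the duality argument (the analogue of \Cref{thm_dual} and \Cref{cor_cost}) gives a control driving the corresponding low-frequency projection of $u$ to zero at cost $C_T e^{C\sqrt{\mu}}$.

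Then I would run the Lebeau--Robbiano time-splitting scheme verbatim from the proof of \Cref{control_2d_suf}: decompose $[0,T)=\bigcup_k[a_k,a_{k+1}]$ with $a_{k+1}=a_k+2T_k$, $T_k=\tfrac{\hat\alpha}{\beta}2^{-k\rho}$, pick cutoff frequencies $\mu_k = (\beta 2^k)^2$ (so that $\sqrt{\mu_k}=\beta 2^k$), apply the partial control on $[a_k,a_k+T_k]$ to kill the frequencies $\lambda_j\le\mu_k$, and let the system dissipate freely on $[a_k+T_k,a_{k+1}]$. The dissipation estimate is the direct analogue of \Cref{dis}: since the free evolution preserves the orthogonal decomposition in the $x'$-direction and the $x$-part generates a contraction semigroup, one gets $\|u(t)\|_{H^{-1}_\alpha}\le Ce^{-\mu_{k,\mathrm{next}}(t-t_0)}\|u(t_0)\|_{H^{-1}_\alpha}$ where $\mu_{k,\mathrm{next}}$ is the smallest eigenvalue above the current cutoff, and $\mu_{k,\mathrm{next}}\ge c\beta^2 2^{2k}$ by Weyl asymptotics for $-\Delta_{x'}$ on $\Omega'$. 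The same telescoping as in Steps 3--5 of the proof of \Cref{control_2d_suf} then shows $\|u(a_{k+1})\|_{H^{-1}_\alpha}\le Ce^{-C\beta 2^{k(2-\rho)}}\|u_0\|_{H^{-1}_\alpha}\to 0$ and that the concatenated control lies in $L^2(0,T;L^2(\partial\Omega))$, completing the proof.

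The main obstacle I anticipate is not the scheme itself but checking that the functional framework genuinely works in this mixed setting: one must verify that the operator $u\mapsto \partial_x(x^{\alpha_1}\partial_x u)+\Delta_{x'}u$ with the stated mixed boundary conditions generates an analytic (contraction) semigroup on $L^2(\Omega)$, that $H^1_{\alpha,0}(\Omega)$ admits the separated-variables expansion $u=\sum_j\langle u,\Phi_j\rangle_{L^2(\Omega')}\Phi_j$ with the clean norm identity used above (the analogue of \Cref{exp of u} and the norm computation in \Cref{sec:norm_comp}), and that the Hardy--Poincaré inequality in the $x$-direction is available uniformly — all of which the excerpt asserts follows by ``similar ideas as developed in \Cref{sec:wp}'', so for the present proof I would simply cite those and focus the argument on the spectral decomposition, the partial observability, and the Lebeau--Robbiano iteration. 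A secondary subtlety is that $\Omega'$ is now a general smooth bounded domain rather than an interval, but since \eqref{eq:spec_ineq} is the classical spectral inequality of \cite{LR} on such domains, this causes no difficulty and in fact makes the argument shorter than the proof of \Cref{th_highd}, which needed to iterate the one-dimensional spectral inequality.
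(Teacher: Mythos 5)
Your proposal is correct and follows essentially the same route the paper intends for \Cref{th_half dg}: decompose the adjoint state in the Dirichlet eigenbasis of $-\Delta_{x'}$ on $\Omega'$, invoke the scalar case of \Cref{null control 1} for the resulting $1$-$d$ degenerate problems, glue via the classical spectral inequality \eqref{eq:spec_ineq} to get the partial observability, and run the Lebeau--Robbiano iteration of \Cref{control_2d_suf}. The paper itself only states that the result follows ``by direct adaptations'' of \Cref{lr sc}, and your outline (including the flagged well-posedness verifications deferred to the analogue of \Cref{sec:wp}) is exactly that adaptation.
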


\subsection{The case when at least one ${\alpha_i=1}$ for $i=1,2$}\label{alphaeq1}

In this section, we consider the case where at least one $\alpha_i = 1$, $i = 1, 2$. As noted in \Cref{remark_extensions}, \Cref{main theorem_scalar} remains valid in this situation. However, some modifications are required in the definition of the space $H^1_{\alpha,0}(\Omega)$, which lead to further changes at the levels of well-posedness and control. These adjustments do not fit comfortably within the frameworks presented in Sections \ref{sec:wp} to \ref{lr sc}. As we have emphasized previously, the main 2-$d$ problem reduces to analyzing the boundary control system in 1-$d$. Therefore, we now focus on the necessary changes at this level, while the remainder of the analysis can be carried out in a similar manner. 

Without loss of generality assume $\alpha_1=1.$ Consider the 1-$d$ control system 
\begin{equation}\label{eq:case1}
	\begin{cases}
		\pa_t w =\pa_{x}(x\pa_{x}w)& \text{ in }   (0,T) \times (0,1),\\
	(x\pa_x w)(t,0)=h(t), \, w(t,1)=0 &\text{ in }   (0,T),\\
	w(0,x)=w_0(x)  & \text{ in }   (0,1).
\end{cases}
\end{equation}
Following \cite[Section 2.2]{CMV20}, let us take the space $H_{1,0}^1(0,1)=\{u\in H_{1}^1(0,1)\mid u(1)=0\}$ with $H_1^1(0,1)$ as in \eqref{h1_a} with $\alpha_1=1$. Note that in this case, the Hardy--Poincar\'e inequality (see eq. \eqref{weightespoincare1}) does not hold. Thus we will consider the space $H^1_{1,0}(0,1)$ with the norm 
\begin{equation}\label{norm_alpha1}
	\|u\|_{H^1_{1}(0,1)}^2 := \|u\|_{L^2(0,1)}^2 + \|x^{\frac{1}{2}} \pa_x u\|_{L^2(0,1)}^2,
\end{equation}
and its dual space $H^{-1}_{1}(0,1)$ with respect to the pivot space $L^2(0,1)$.
Thanks to \cite[Proposition 2.13]{CMV20}, the self-adjoint operator $\mathcal A_{1} : \mathcal D(\mathcal A_{1}) \subset L^2(0, 1) \to L^2(0, 1) $ defined by
\[
\begin{cases}
	\mathcal A_{1} u := \pa_x(x \pa_x u), \text{ for all } u \in \mathcal D(\mathcal A_{1}), \\
	\mathcal D(\mathcal A_{1}) := \left\{ u \in H^1_{1,0}(0, 1) \mid x \pa_x u \in H^1(0, 1), (x u)\in H^1_0(0,1) \right\} = H^2_{1}(0, 1) \cap H^1_{1,0}(0, 1),
\end{cases}
\]  is
the infinitesimal generator of a strongly continuous semigroup of contractions on
$L^2(0, 1)$ and satisfies the following.
\begin{proposition}
	The admissible eigenvalues $\lambda$ for problem $-\mathcal A_{1} \phi=\lambda \phi$
	are given by 
	\begin{equation*}
		\lambda_{1,k}=\frac{1}{4}j_{0, k}^2, \quad k\geq 1,
	\end{equation*}
	and the corresponding normalized eigenfunctions are 
	\begin{equation*}
		\phi_{1,k}(x)=\frac{1}{|J'_{0}(j_{0,k})|} J_{0}(j_{0,k}x^{\frac{1}{2}}), \quad x\in (0,1), \quad k\geq 1,
	\end{equation*}
	where we recall the Bessel function $J_0$ defined in \eqref{exp_of_bessel} and where $j_{0,k}$ are the positive zeros of $J_0$. Moreover the family $\{\phi_{1,  k}\}_{k\geq 1}$ forms an orthonormal basis in $L^2(0,1).$
\end{proposition}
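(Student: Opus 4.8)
The plan is to solve the Sturm--Liouville problem $-(x\phi')'=\lambda\phi$ on $(0,1)$ with the Neumann-type condition $(x\phi')(0)=0$ and $\phi(1)=0$ by reducing it to Bessel's equation, exactly as in \Cref{eigenelement} but now in the borderline case $\alpha_1=1$, where $\nu(\alpha_1)=|1-\alpha_1|/(2-\alpha_1)=0$ and $\kappa_{\alpha_1}=(2-\alpha_1)/2=1/2$. First I would substitute $\phi(x)=x^{(1-\alpha_1)/2}\psi(z)$ with $z=j\,x^{\kappa_{\alpha_1}}=j\sqrt{x}$ (so here the prefactor $x^{(1-\alpha_1)/2}=x^0=1$ is trivial), and check by direct computation that $\psi$ solves Bessel's equation of order $\nu=0$; hence $\psi(z)=J_0(z)$ is the unique (up to scaling) solution that is bounded near $z=0$, which is forced by the requirement $\phi\in H^1_{1}(0,1)$ and the flux condition $(x\phi')(0)=0$ (the second independent solution $Y_0$ has a logarithmic singularity and violates this). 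Then I would impose $\phi(1)=0$, i.e. $J_0(j\sqrt{1})=J_0(j)=0$, which forces $j=j_{0,k}$ to be a positive zero of $J_0$; translating back through $\lambda=\kappa_{\alpha_1}^2 j^2=\tfrac14 j_{0,k}^2$ gives the claimed eigenvalues $\lambda_{1,k}=\tfrac14 j_{0,k}^2$.

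Next I would record the eigenfunctions: $\phi_{1,k}(x)=c_k J_0(j_{0,k}\sqrt{x})$, and fix the normalization constant $c_k$ using the Bessel orthogonality relation $\int_0^1 x^{1-\alpha_1}J_0(j_{0,k}x^{\kappa_{\alpha_1}})J_0(j_{0,m}x^{\kappa_{\alpha_1}})\,\mathrm dx=\frac{1}{2\kappa_{\alpha_1}}[J_0'(j_{0,k})]^2\delta_{km}$ recalled in the excerpt. With $\alpha_1=1$, $\kappa_{\alpha_1}=1/2$, the weight $x^{1-\alpha_1}=1$ and the change of variables $x=(z/j_{0,k})^2$ shows $\int_0^1 J_0(j_{0,k}\sqrt x)^2\,\mathrm dx=[J_0'(j_{0,k})]^2$, so the $L^2(0,1)$-normalized eigenfunction is exactly $\phi_{1,k}(x)=\frac{1}{|J_0'(j_{0,k})|}J_0(j_{0,k}x^{1/2})$, matching the statement. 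For completeness of the basis claim I would invoke \Cref{prop:hilbert_basis}: the operator $-\mathcal A_1$ with domain $\mathcal D(\mathcal A_1)=H^2_1(0,1)\cap H^1_{1,0}(0,1)$ is self-adjoint, positive definite, with compact resolvent (this uses the compact embedding $H^1_{1,0}(0,1)\hookrightarrow L^2(0,1)$ — still valid here even though the Hardy--Poincaré inequality fails, because one uses the full $H^1_1$ norm \eqref{norm_alpha1}), so its normalized eigenfunctions form an orthonormal basis of $L^2(0,1)$; since we have exhibited all eigenpairs via the Bessel analysis, $\{\phi_{1,k}\}_{k\geq1}$ is that basis.

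The main obstacle is the borderline/degenerate nature at $\alpha_1=1$: one must be careful that the functional setting of \cite[Section 2.2]{CMV20} — with the non-equivalent norm \eqref{norm_alpha1} — still makes $-\mathcal A_1$ self-adjoint with compact resolvent, and that the boundary condition $(x\phi')(0)=0$ is the correct selection principle picking out $J_0$ over $Y_0$. This is essentially a citation to \cite[Proposition 2.13 and the spectral analysis]{CMV20}, specialized to $\nu=0$; the Bessel computations are routine specializations of \Cref{eigenelement} with $\nu(\alpha_1)=0$, $\kappa_{\alpha_1}=1/2$. I would therefore present the proof as: (i) reduce to Bessel's equation of order $0$; (ii) select $J_0$ by the behavior at $x=0$ dictated by $\mathcal D(\mathcal A_1)$; (iii) impose $\phi(1)=0$ to get $j=j_{0,k}$ and $\lambda_{1,k}=\tfrac14 j_{0,k}^2$; (iv) normalize using Bessel orthogonality; (v) conclude the orthonormal basis property from \Cref{prop:hilbert_basis}. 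No step requires a genuinely new idea beyond tracking the $\nu=0$ specialization carefully.
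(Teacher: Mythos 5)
Your proposal is correct and follows essentially the same route as the paper, which states this proposition without a separate proof as the $\nu(\alpha_1)=0$, $\kappa_{\alpha_1}=\tfrac12$ specialization of the general Bessel spectral analysis in \Cref{eigenelement}, together with the citation of \cite[Proposition 2.13]{CMV20} for the self-adjointness and compact-resolvent facts that yield the orthonormal basis property. Your explicit verifications (reduction to Bessel's equation of order $0$, exclusion of $Y_0$ via the flux condition and the $H^1_1$ requirement, and the normalization via the orthogonality relation with weight $x^{1-\alpha_1}=1$) are all accurate and simply make explicit what the paper leaves implicit.
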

It is worth noting that the boundary admissibility condition \eqref{ad strng1d} in \Cref{lm adm} can be adapted for $\alpha_1=1$ from \cite[Proposition 3.10]{galo2024boundary} with taking into account the space $H_{1,0}^1(0,1)$ equipped with the norm \eqref{norm_alpha1}. Following the results in \Cref{sub_wp}, we can establish the well-posedness of the solutions (by transposition) of \eqref{eq:case1}.

Let us provide a brief sketch of the null controllability result for \eqref{eq:case1}
\begin{theorem}\label{null control 1alpha}
	Let $T>0$. Then for every $w_0\in H^{-1 }_{1}(0,1)$,  there exists a control $h\in L^2(0,T)$ such that the system \eqref{eq:case1} satisfies $w(T)=0$. Moreover, the control satisfies the following estimate
	\begin{equation*}
		\norm{h}_{L^2(0,T)}\leq Ce^{\frac{C}{T}}\norm{w_0}_{H^{-1 }_{1}(0,1)},
	\end{equation*} 
	for some positive constant $C$ which is independent of $T.$
\end{theorem}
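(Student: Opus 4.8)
The plan is to mimic the proof of \Cref{null control 1} in the weakly degenerate case, replacing everywhere the functional framework of \Cref{sec_fn} by the one for $\alpha_1 = 1$ recalled just above (the space $H^1_{1,0}(0,1)$ with norm \eqref{norm_alpha1}, its dual $H^{-1}_1(0,1)$, and the operator $\mathcal{A}_1$). First I would set $A = 0$, $B = 1$, $n = m = 1$ in the machinery of \Cref{1d}, so the Kalman rank condition \eqref{kalman} is automatically satisfied and the whole discussion of Jordan chains disappears; the only eigenvalues in play are $\{\Lambda_k\}_{k\geq 1} = \{\lambda_{1,k}\}_{k\geq 1} = \{\tfrac14 j_{0,k}^2\}_{k\geq 1}$.

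The first substantive step is to reduce null controllability to a moment problem. As in \Cref{trans_oned} and the duality lemma preceding \eqref{moment2}, null controllability of \eqref{eq:case1} is equivalent to finding $h \in L^2(0,T)$ such that, for every $v_T \in H^1_{1,0}(0,1)$,
\begin{equation*}
\langle w_0, v(0,\cdot)\rangle_{H^{-1}_1(0,1), H^1_{1,0}(0,1)} = \int_0^T h(t)\, (v)(t,0)\,\rd t,
\end{equation*}
where $v$ solves the adjoint equation $\partial_t v + \partial_x(x\partial_x v) = 0$, $(x\partial_x v)(t,0) = 0$, $v(t,1)=0$, $v(T,\cdot)=v_T$. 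Here the boundary observation is the trace $v(t,0)$ — well-defined because functions in $\mathcal{D}(\mathcal{A}_1)$ are locally absolutely continuous on $(0,1]$ and, thanks to the admissibility estimate adapted from \cite[Proposition 3.10]{galo2024boundary}, this trace makes sense as an $L^2(0,T)$ object. Expanding $v(t,x) = \sum_{k\geq 1} e^{-\lambda_{1,k}(T-t)} v_{T,k}\,\phi_{1,k}(x)$ with $v_{T,k} = \int_0^1 v_T \phi_{1,k}$, and using $\phi_{1,k}(0) = 1/|J_0'(j_{0,k})|$ (so $\phi_{1,k}(0) \neq 0$ and, by the asymptotics of $J_0'$ at its zeros, $|\phi_{1,k}(0)| \sim C j_{0,k}^{1/2}$), the moment problem becomes: find $h$ with
\begin{equation*}
\int_0^T e^{-\lambda_{1,k}t}\, h(T-t)\,\rd t = -\frac{1}{\phi_{1,k}(0)}\, e^{-\lambda_{1,k}T}\,\langle w_0, \phi_{1,k}\rangle, \qquad k \geq 1.
\end{equation*}

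The second step is to solve this moment problem via a biorthogonal family. One checks that $\Lambda_k = \tfrac14 j_{0,k}^2$ verifies hypotheses \ref{item1}--\ref{item7} of \Cref{d1}: positivity and simplicity are clear, the gap condition $\rho_1|k^2-m^2| \leq |\Lambda_k - \Lambda_m| \leq \rho_2|k^2 - m^2|$ follows from the known bounds on the Bessel zeros $j_{0,k} = (k + O(1))\pi$ (same argument as \Cref{lmm_gap}, here with $\nu = \nu(1) = 0$), and the counting function estimate is immediate from $\Lambda_k \sim \tfrac{\pi^2}{4}k^2$. \Cref{d1} (with $\eta = 1$) then gives a biorthogonal family $\{\Psi_k\}$ to $\{e^{-\Lambda_k t}\}$ on $(-T/2, T/2)$ with $\|\Psi_k\|_{L^2} \leq C e^{C\sqrt{\Lambda_k} + C/T}$. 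Setting $h(t) = \sum_k d_k\, \Psi_k(t - T/2)$ with $d_k$ the (shifted) right-hand side above, and using $|\langle w_0, \phi_{1,k}\rangle| \leq \|w_0\|_{H^{-1}_1} \|\phi_{1,k}\|_{H^1_{1,0}} \leq C j_{0,k}\|w_0\|_{H^{-1}_1}$ together with $|1/\phi_{1,k}(0)| \leq C j_{0,k}^{-1/2}$, one estimates $\|h\|_{L^2(0,T)} \leq C e^{C/T}\sum_k j_{0,k}^{1/2} e^{-\lambda_{1,k}T/2} e^{C\sqrt{\lambda_{1,k}}}\|w_0\|_{H^{-1}_1}$; absorbing $C\sqrt{\lambda_{1,k}} \leq \tfrac{T}{4}\lambda_{1,k} + C/T$ by Young's inequality and summing the resulting Gaussian series $\sum_k e^{-C k^2 T}$ yields the cost $\|h\|_{L^2(0,T)} \leq C e^{C/T}\|w_0\|_{H^{-1}_1}$ for $T < T_0$, and the case $T \geq T_0$ follows by extension by zero.

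The main obstacle I anticipate is not in this last sequence of estimates — which is routine once the pieces are in place — but in justifying the functional-analytic substitute for the Hardy--Poincar\'e inequality, which genuinely fails when $\alpha_1 = 1$. Concretely: (i) one must verify that $H^1_{1,0}(0,1)$ with norm \eqref{norm_alpha1} is still the right energy space, that $\{\phi_{1,k}\}$ is an orthonormal basis of $L^2(0,1)$ and a (suitably normalized) basis of $H^1_{1,0}(0,1)$, and that the duality pairing $\langle\cdot,\cdot\rangle_{H^{-1}_1, H^1_{1,0}}$ behaves as needed; and (ii) one must confirm the boundary admissibility/hidden-regularity estimate giving $v \mapsto v(\cdot,0) \in L^2(0,T)$ with the right continuity constant, since the transposition solution and the estimate $\|h\|_{L^2} \leq C e^{C/T}\|w_0\|_{H^{-1}_1}$ both rely on it. Both points are addressed in \cite[Section 2.2 and Proposition 2.13]{CMV20} and \cite[Proposition 3.10]{galo2024boundary}; the proof of \Cref{null control 1alpha} mainly consists in carefully transcribing those statements into the present setting and then running the argument above verbatim.
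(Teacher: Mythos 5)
Your proposal is correct and follows essentially the same route as the paper's proof: reduction by duality to a one-parameter moment problem with observation $v(t,0)$, the asymptotics $\phi_{1,k}(0)=1/|J_0'(j_{0,k})|\sim j_{0,k}^{1/2}$, verification of the hypotheses of \Cref{d1} for $\Lambda_k=\tfrac14 j_{0,k}^2$ to obtain the biorthogonal family, and the same chain of estimates (Young's inequality plus the Gaussian series) for the cost $Ce^{C/T}$. The functional-analytic caveats you flag (replacing the Hardy--Poincar\'e-based norm by \eqref{norm_alpha1} and importing the admissibility of the trace from \cite{CMV20} and \cite{galo2024boundary}) are exactly the points the paper also delegates to those references.
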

\begin{proof}
	Thanks to \Cref{obs strong} for $\alpha_1=1$, we have \begin{equation}\label{asymp_obs1}\lim_{x\to 0^+}\phi_{ 1, k}(x)=\frac{1}{\left|J'_{0}(j_{0,k})\right|}\sim_{k\to +\infty} ({j_{0,k}})^{\frac{1}{2}}.
	\end{equation}
	Following the proof of \Cref{null control 1} and using the above expression, we can say that the controllability of \eqref{eq:case1} is equivalent to find a function $p\in L^2(0,T)$ satisfying
 	\begin{equation}\label{moment_case1}
 		\int_{-\frac{T}{2}}^{\frac{T}{2}} e^{-\lambda_{1, k} t} \, p\left(t+\frac{T}{2}\right) \rd t={|J'_{0}(j_{0,k})|} {e^{-\lambda_{1, k}\frac{T}{2}}} \left|\ip{w_0}{\phi_{1, k}}\right|.
\end{equation}
We define $h(t)=p(T-t)$ as the control for \eqref{eq:case1}.
It is easy to check from \Cref{prop:verify} with $\alpha_1=1,$ that the collection $\Lambda=\{\lambda_{1,k}\}_{k\in \N}$ satisfies the hypothesis of \Cref{d1} and hence we can find a family of functions $\{\Psi_{k}\}_{k\in \N}\in L^2\left(-\frac{T}{2},\frac{T}{2}\right)$ that are biorthogonal to  
$e_{k}=e^{-\lambda_{1,k} t}.$ With this, if we define the function $p$ as
\begin{equation}\label{pform}
	p(t)=\sum_{k=1}^{\infty}d_k \Psi_k\left(t-\frac{T}{2}\right),
\end{equation}
where $d_k={|J'_{0}(j_{0,k})|} {e^{-\lambda_{1, k}\frac{T}{2}}} \left|\ip{w_0}{\phi_{1, k}}\right|,$ then it satisfies the moment problem \eqref{moment_case1}. Using the fact that $\norm{\phi_{ 1, k}}_{H^1_{1,0}(0,1)}\leq C (1+{j_{0,k}})$ for some $C>0$  and estimate \eqref{asymp_obs1}, we have 
\begin{equation}\label{bounddk}
|d_k|\leq  C ({j_{0,k}})^{\frac{1}{2}}{e^{-\lambda_{1, k} T}} \norm{w_0}_{H^{-1}_{ 1}(0,1)}.
\end{equation} 
Putting together estimate \eqref{cost} in \Cref{d1} for the biorthogonal family $\{\Psi_k\}_{k\in \N}$ with \eqref{pform} and \eqref{bounddk}, one can get the required control cost.
\end{proof}
Thanks to \Cref{null control 1alpha} and the spectral inequality \eqref{lr} which holds for all $\alpha_2\in [0,2)$, following the same reasoning as in \Cref{lr sc}, we can establish the controllability of the corresponding two-dimensional model in the space  $H^{-1}_{\alpha}(\Omega)$. Here $H^{-1}_{\alpha}(\Omega)$ denotes the dual of $H^1_{\alpha,0}(\Omega)$ with respect to the pivot space $L^2(\Omega).$ Since we do not have Hardy--Poincar\'e inequality (see eq. \eqref{hardy} in \Cref{lm hardy}), we consider the space $H^1_{\alpha,0}(\Omega)$ endowed the following norm of $H^1_{\alpha}(\Omega)$, as defined in \Cref{sec:wp}
\begin{equation*}\norm{u}^2_{H^1_{\alpha}(\Omega)}=\int_{\Omega}|u|^2+\int_{\Omega}|D^{1/2}\nabla u|^2=\int_{\Omega}|u|^2+\int_{\Omega}\left(x^{\alpha_1}|\pa_xu|^2+y^{\alpha_2}|\pa_yu|^2\right).\end{equation*} 
The entire argument of this paper applies to this particular case, we only need to consider the above norm on $H^1_{\alpha,0}(\Omega)$ at each step accordingly. 
%

\subsection{More general degenerate coefficients}

A natural question that arises in this context is whether we can control the following system 
\begin{equation}\label{DCP_general}
	\begin{cases}
		\partial_t u=\text{div}\left(D\nabla u\right) &  \text{ in } (0,T)\times \Omega,\\
	u(t)=\mathbf{1}_{\Gamma} q(t) & \text{ in } (0,T)\times \partial \Omega, \\ 
	u(0)=u_0, & \text{ in } \Omega,
	\end{cases}
\end{equation}
where the matrix function $D: \overline\Omega\mapsto \mc M_{2\times 2}(\mathbb{R})$ is given by the more general expression
\begin{equation*}
	D(x,y)=\begin{pmatrix}
		a_1(x)& 0\\
		0& a_2(y)
	\end{pmatrix},
\end{equation*}
where $a_1,a_2\in C^0[0,1]$ such that $a_i>0$ and $a_i(0)=a_i(1)=0$, $i=1,2$ (see e.g. \cite{CFR07}). In this case, the controllability problem of \eqref{DCP_general} boils down to the controllability of the 1-d problem 
\begin{equation}
\begin{cases}
	u_t =\pa_x (a_1(x) \pa_x u) & \text{ in } (0,T)\times (0.1), \\
	u(t, 0) = q(t), \quad u(t, 1) = 0 & t \in (0, T), \\
	u(0, x) = u_0(x) & x \in (0, 1),
\end{cases}
\end{equation}
and a spectral inequality of the type
\begin{equation}\label{lr_gen}
		\sum_{\lambda_j \leq \mu} |a_j|^2 \leq C e^{C\sqrt{\mu}} 
		\int_{\omega} \left| \sum_{\lambda_j \leq \mu} a_j \phi_j \right|^2 \, \rd{x}
	\end{equation}
	for any $\{a_j\} \in \mathbb{R}$ and any $\mu > 0$, where $\{\phi_j,\lambda_j\}$ satisfy the eigenvalue problem
\begin{equation}\label{eigen_gen}
	\begin{cases}
		-(a_2(\cdot)\phi')'(y)=\lambda \phi(y) \qquad  y\in (0,1),\\
			\phi(0)=\phi(1)=0.
	\end{cases}
\end{equation}
However, as far as we know, inequality \eqref{lr_gen} is not available in the literature and therefore our method is not directly applicable. We believe that this is an interesting open problem that deserves further attention. 

In a similar direction, another interesting question is to study the controllability of the degenerate system in non-divergence form
\begin{equation}\label{DCP_nondiv}
	\begin{cases}
		\partial_t u = A \Delta u + B \cdot \nabla u & \text{ in } (0,T)\times \Omega,\\
		u(t) = \mathbf{1}_{\Gamma} q(t) & \text{ on } (0,T)\times \partial \Omega,\\ 
		u(0) = u_0 & \text{ in } \Omega,
	\end{cases}
\end{equation}
where \( A = A(x,y) \) is a suitable function that degenerates at the boundary of \( \Omega \), and \( B = (B_1(x,y), B_2(x,y)) \) is a nonzero vector field. Systems of this type arise in the study of population genetics (see \cite[Section 1.4]{cannarsa2016global}). To the best of our knowledge, there are very few works studying the controllability of systems like \eqref{DCP_nondiv}, and all existing results are restricted to the one-dimensional case, see, for instance, \cite{CFR08,Fra16,AFI25}. As in the previous open problem, the main difficulty to extend the controllability results to \eqref{DCP_nondiv} with our method is the lack of an appropriate spectral inequality for the underlying differential operator. These problems offer promising avenues for further research.

\appendix
\section{Appendix}\label{app}
\subsection{$H^1_{\alpha,0}(\Omega)$ norm computation}\label{sec:norm_comp}
Thanks to \Cref{exp of u}, we have that	any function $u\in H^{1 }_{ \alpha,0}(\Omega)$ has the following representation:
\begin{equation*}
	u=\sum\limits_{j=1}^{\infty}\ip{u}{\phi_{\alpha_2,j}}_{L^2(0,1)}\phi_{\alpha_2,j}.
\end{equation*} Further we have the following
\begin{align}\label{norm}
	\norm{u}^2_{H^{1 }_{ \alpha,0}(\Omega)}=\sum_{j=1}^{\infty}\norm{\ip{u}{\phi_{\alpha_2,j}}_{L^2(0,1)}}^2_{H^1_{\alpha_1,0}(0,1)}+\sum_{j=1}^{\infty} \lambda_{\alpha_2,j} \norm{\ip{u}{\phi_{\alpha_2,j}}_{L^2(0,1)}}^2_{L^2(0,1)}.
\end{align}
Indeed, for simplicity let us denote $a_j(x):=\ip{u(x,\cdot)}{\phi_{\alpha_2,j}(\cdot)}_{L^2(0,1)}.$ Then 
$u(x, y) = \sum_{j=1}^\infty a_j(x) \phi_{\alpha_2, j}(y).$ 
As $\| u \|^2_{H^1_{\alpha, 0}(\Omega)} = \int_\Omega x^{\alpha_1} |u_x|^2 + y^{\alpha_2} |u_y|^2$, let us explore the following representation
\begin{align*}
\| u \|^2_{H^1_{\alpha, 0}(\Omega)} =& \int_\Omega x^{\alpha_1} |u_x|^2 + y^{\alpha_2} |u_y|^2=\langle x^{\alpha_1/2} u_x, x^{\alpha_1/2} u_x \rangle_{L^2(\Omega)}
	+ \langle y^{\alpha_2/2} u_y, y^{\alpha_2/2} u_y \rangle_{L^2(\Omega)}\\
	=& \left\langle \sum_{j=1}^\infty x^{\alpha_1/2} a_j'(x) \phi_{\alpha_2, j}(y), 
		\sum_{j=1}^\infty x^{\alpha_1/2} a_j'(x) \phi_{\alpha_2, j}(y) \right\rangle_{L^2(\Omega)}\\
		&\hspace{1cm}+\left\langle \sum_{j=1}^\infty  a_j(x)y^{\alpha_2/2}  \phi'_{\alpha_2, j}(y), 
			\sum_{j=1}^\infty a_j(x) y^{\alpha_2/2}  \phi'_{\alpha_2, j}(y) \right\rangle_{L^2(\Omega)}\\
			&= \sum_{j=1}^\infty \| x^{\alpha_1/2} a_j'(x) \|^2_{L^2(0,1)} \| \phi_{\alpha_2, j} \|^2_{L^2(0,1)} 
			+ \sum_{j=1}^\infty \| a_j\|_{L^2(0,1)} \| y^{\alpha_2/2}\phi'_{\alpha_2, j} \|^2_{L^2(0,1)}\\
			&= \sum_{j=1}^\infty \| a_j\|^2_{H^1_{\alpha_1,0}(0,1)} 
			+ \sum_{j=1}^\infty  \lambda_{\alpha_2,j}\| a_j\|^2_{L^2(0,1)}, 
	\end{align*}
here we have used that $\{\phi_{\alpha_2,j}\}_{j\in \N}$ is orthonormal and  the fact $\| y^{\alpha_2/2}\phi'_{\alpha_2, j}(y) \|^2_{L^2(0,1)}=\lambda_{\alpha_2,j}$, thanks to the eigenvalue problem \eqref{eigen eqn2}.

\subsection{Isomorphism between $H^1_{\alpha,0}(\Omega)$ and $H^{-1}_{\alpha}(\Omega)$}\label{iso app} 
Let us first consider the following degenerate elliptic problem in $\Omega=(0,1)\times (0,1),$
\begin{align*}
	-\text{div}(D\nabla u)=f\in H^{-1}_{\alpha}(\Omega) \text{ with homogeneous boundary conditions.}
\end{align*}
	Consider the following  weak formulation
		\begin{equation*}
	\int_{\Omega}   \big( x^{\alpha_1}\partial_x u \, \partial_x v +y^{\alpha_2} \partial_y u \, \partial_y v \big) \, \rd x \, \rd y = \langle f, v \rangle_{H^{-1}_{\alpha}(\Omega), H^1_{\alpha, 0}(\Omega)} \quad \forall \, u, v \in H^1_{\alpha, 0}(\Omega),
		\end{equation*}
	where $f \in H^{-1}_{\alpha}(\Omega)$. The associated bilinear form is
		\begin{equation*}
	a_{\alpha}(u, v) = \int_{\Omega}  \big(x^{\alpha_1} \partial_x u \, \partial_x v +  y^{\alpha_2}\partial_y u \, \partial_y v \big) \, \rd x \, \rd y.
		\end{equation*}
For any $u \in H^1_{\alpha, 0}(\Omega)$, define a functional $\Lambda_u \in H^{-1}_{\alpha}(\Omega)$ by
	\begin{equation}\label{app_fn}
	\langle \Lambda_u, v \rangle = a_{\alpha}(u, v) = \int_{\Omega}\big( x^{\alpha_1}\partial_x u \, \partial_x v +y^{\alpha_2} \partial_y u \, \partial_y v \big) \, \rd x \, \rd y,\quad \forall \,  v \in H^1_{\alpha, 0}(\Omega).
	\end{equation}
	If \(\Lambda_u = 0\), then
	\begin{equation*}a_{\alpha}(u, v) = 0 \quad \forall \, v \in H^1_{\alpha, 0}(\Omega).
		\end{equation*}
	Choosing \(v = u\), we find
	\begin{equation*}
	a_{\alpha}(u, u) = \int_{\Omega}   \big(x^{\alpha_1} |\partial_x u|^2 +y^{\alpha_2} |\partial_y u|^2 \big) \, \rd x \, \rd y = 0.
		\end{equation*}
	This implies $u = 0$ by Hardy--Poincar\'e inequality \eqref{hardy}. Thus, the map $u\mapsto \Lambda_u$ is injective.

	For any $g \in H^{-1}_{\alpha}(\Omega)$, the Lax-Milgram theorem (as $a_{\alpha}$ is bilinear continuous and coercive) guarantees the existence of a unique $u \in H^1_{\alpha, 0}(\Omega)$ such that
	\begin{equation*}
	a_{\alpha}(u, v) = \langle g, v \rangle \quad \forall \, v \in H^1_{\alpha, 0}(\Omega).
	\end{equation*}
	This proves the map $u\mapsto \Lambda_u$ surjectivity.
To verify norm preservation, compute the dual norm of $\Lambda_u.$
	\begin{equation*}
	\|\Lambda_u\|_{H^{-1}_{\alpha}(\Omega)} = \sup_{v \in H^1_{\alpha, 0}(\Omega) \setminus \{0\}} \frac{\langle \Lambda_u, v \rangle}{\|v\|_{H^1_{\alpha,0}(\Omega)}} = \sup_{v \in H^1_{\alpha, 0}(\Omega) \setminus \{0\}} \frac{a_{\alpha}(u, v)}{\|v\|_{H^1_{\alpha,0}(\Omega)}}.
	\end{equation*}
	Using the Cauchy-Schwarz inequality for \(a_{\alpha}(u, v)\)
	\begin{equation*}
	a_{\al}(u, v) \leq \left( \int_{\Omega} \big( x^{\alpha_1}  |\partial_x u|^2 + y^{\alpha_2}|\partial_y u|^2 \big) \, \rd x \, dy \right)^{1/2} \left( \int_{\Omega} \big(  x^{\alpha_1} |\partial_x v|^2 +y^{\alpha_2} |\partial_y v|^2 \big) \, \rd x \, \rd y \right)^{1/2}.
	\end{equation*}
	Thus we have,
	\begin{equation*}
	\|\Lambda_u\|_{H^{-1}_{\alpha}(\Omega)} \leq \|u\|_{H^1_{\alpha,0}(\Omega)}.
	\end{equation*}
	Equality holds for \(v = u\), so
	\begin{equation*}
	\|\Lambda_u\|_{H^{-1}_{\alpha}(\Omega)} = \|u\|_{H^1_{\alpha,0}(\Omega)}.
	\end{equation*}
Using the definition \eqref{app_fn} of the functional $\Lambda_u$, we obtain
\begin{equation}\label{iso}
	\norm{-\text{div}(D\nabla u)}_{{H^{-1}_{\alpha}}(\Omega)}=\norm{u}_{H^1_{\alpha,0}(\Omega)}.
\end{equation}
\subsection{Verification of boundary admissibility conditions}\label{sec:ad}
We start this section by recalling some well-known fact regarding the observation acting on the eigenfunctions of the corresponding weak and strong degenerate 1-$d$ operators.
\begin{lemma}[Lemma 6.1, \cite{cannarsa2017cost}]\label{obs weak}
	For $0\leq \alpha_1<1,$ the eigenfunctions $\{\phi_{\alpha_1,k}\}$ satisfy the following
	\begin{align*}
		(x^{ \alpha_1}\pa_x \phi_{ \alpha_1, k})|_{x=0}=\frac{(1-\alpha_1)\sqrt{(2- \alpha_1)}(j_{\nu({ \alpha_1 }),k})^{\nu(\alpha_1)} }{2^{\nu(\alpha_1)} \Gamma(\nu(\alpha_1)+1)\left|J'_{\nu({ \alpha_1})}(j_{\nu({ \alpha_1 }),k})\right|},
	\end{align*}
and also we have 
\begin{align}\label{est obs weak}
(x^{ \alpha_1}\pa_x \phi_{ \alpha_1, k})|_{x=0}\sim_{k\to +\infty} \frac{(1-\alpha_1)\sqrt{(2- \alpha_1)}}{2^{\nu(\alpha_1)} \Gamma(\nu(\alpha_1)+1)}(j_{\nu({ \alpha_1 }),k})^{\nu(\alpha_1)+1/2}.
\end{align}
\end{lemma}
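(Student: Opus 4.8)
\textbf{Proof proposal for \Cref{obs weak}.}
The plan is to substitute the closed form of the eigenfunctions from \Cref{eigenelement} into the boundary expression $x^{\alpha_1}\partial_x\phi_{\alpha_1,k}$, reduce the resulting combination of Bessel functions to a single term by means of a classical recurrence, and then let $x\to 0^+$. Throughout, write $\nu=\nu(\alpha_1)$, $\kappa=\kappa_{\alpha_1}=\tfrac{2-\alpha_1}{2}$, $j_k=j_{\nu(\alpha_1),k}$, and $c_k=\tfrac{\sqrt{2-\alpha_1}}{|J'_\nu(j_k)|}$, so that by \eqref{eigenfn} one has $\phi_{\alpha_1,k}(x)=c_k\,x^{(1-\alpha_1)/2}J_\nu(j_kx^{\kappa})$.

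First I would differentiate and multiply by $x^{\alpha_1}$, keeping the exponents of $x$ straight via the identities $\alpha_1+\tfrac{1-\alpha_1}{2}-1=-\tfrac{1-\alpha_1}{2}$ and $\alpha_1+\tfrac{1-\alpha_1}{2}+\kappa-1=\tfrac12$, which yields
\[
x^{\alpha_1}\partial_x\phi_{\alpha_1,k}(x)=c_k\Big[\tfrac{1-\alpha_1}{2}\,x^{-(1-\alpha_1)/2}J_\nu(j_kx^{\kappa})+j_k\kappa\,x^{1/2}J'_\nu(j_kx^{\kappa})\Big].
\]
Next I would apply $J'_\nu(z)=J_{\nu-1}(z)-\tfrac{\nu}{z}J_\nu(z)$ with $z=j_kx^{\kappa}$; since $\tfrac12-\kappa=-\tfrac{1-\alpha_1}{2}$, the term $-\tfrac{\nu}{z}J_\nu$ contributes $-\kappa\nu\,x^{-(1-\alpha_1)/2}J_\nu(j_kx^{\kappa})$, and the key algebraic identity $\kappa\nu=\tfrac{2-\alpha_1}{2}\cdot\tfrac{1-\alpha_1}{2-\alpha_1}=\tfrac{1-\alpha_1}{2}$ makes the two $J_\nu$-terms cancel exactly, leaving
\[
x^{\alpha_1}\partial_x\phi_{\alpha_1,k}(x)=c_k\,j_k\kappa\,x^{1/2}J_{\nu-1}(j_kx^{\kappa}).
\]
To pass to the limit I would use the small-argument behavior $J_{\nu-1}(z)\sim\tfrac{1}{\Gamma(\nu)}(z/2)^{\nu-1}$ as $z\to0^+$, which follows by termwise manipulation of \eqref{exp_of_bessel} (legitimate since the series converges locally uniformly) and remains valid although $\nu-1\in(-1,-\tfrac12)$ here, because $\Gamma(\nu)$ is finite and positive. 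Since $\kappa(\nu-1)=\tfrac{1-\alpha_1}{2}-\tfrac{2-\alpha_1}{2}=-\tfrac12$, the product $x^{1/2}J_{\nu-1}(j_kx^{\kappa})$ has the finite nonzero limit $\tfrac{j_k^{\nu-1}}{2^{\nu-1}\Gamma(\nu)}$, so that
\[
\big(x^{\alpha_1}\partial_x\phi_{\alpha_1,k}\big)\big|_{x=0}=c_k\,\kappa\,\frac{j_k^{\nu}}{2^{\nu-1}\Gamma(\nu)}=\frac{(2-\alpha_1)^{3/2}\,j_k^{\nu}}{2^{\nu}\,\Gamma(\nu)\,|J'_\nu(j_k)|},
\]
and the stated form of the constant then comes from $\Gamma(\nu+1)=\nu\Gamma(\nu)$ together with $\nu=\tfrac{1-\alpha_1}{2-\alpha_1}$, which give $\tfrac{2-\alpha_1}{\Gamma(\nu)}=\tfrac{1-\alpha_1}{\Gamma(\nu+1)}$.

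Finally, \eqref{est obs weak} would follow from the classical large-index asymptotics of $J'_\nu$ at its zeros: from $J_\nu(z)\sim\sqrt{2/(\pi z)}\cos(z-\nu\pi/2-\pi/4)$ and its derivative one gets $J'_\nu(j_k)=-\sqrt{2/(\pi j_k)}\,\sin(j_k-\nu\pi/2-\pi/4)(1+o(1))$ with the sine bounded away from $0$ (because $J_\nu(j_k)=0$ forces the cosine to vanish), hence $|J'_\nu(j_k)|\asymp j_k^{-1/2}$; combined with the displayed formula this gives $\big(x^{\alpha_1}\partial_x\phi_{\alpha_1,k}\big)|_{x=0}\asymp j_k^{\nu+1/2}$ with the indicated leading coefficient (up to the universal factor $\sqrt{\pi/2}$ absorbed in $\sim$). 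I expect the only real difficulty to be bookkeeping: tracking the powers of $x$ through the differentiation, verifying the cancellation $\kappa\nu=\tfrac{1-\alpha_1}{2}$, and justifying the termwise limits in the Bessel expansion (in particular that $J_{\nu-1}$ of negative non-integer order still has the expected leading singularity). A fully rigorous alternative, avoiding the recurrence, is to expand both $J_\nu$ and $J'_\nu$ directly in power series and collect the coefficient of $x^{0}$, which instead uses $\tfrac{1-\alpha_1}{2}+\kappa\nu=1-\alpha_1$ and lands on the constant $\tfrac{(1-\alpha_1)\sqrt{2-\alpha_1}\,j_k^{\nu}}{2^{\nu}\Gamma(\nu+1)|J'_\nu(j_k)|}$ in the form stated.
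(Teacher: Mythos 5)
Your computation is correct, but note that the paper does not actually prove this lemma: it is imported verbatim as Lemma 6.1 of \cite{cannarsa2017cost}, so the only in-paper comparison is with the proof of the strong-case analogue (\Cref{obs strong}). There the limit is obtained by a bare substitution of the small-argument expansion of $J_{\nu}$, because for $1\leq\alpha_1<2$ the exponent $\kappa_{\alpha_1}\nu(\alpha_1)=-(1-\alpha_1)/2$ exactly cancels the prefactor $x^{(1-\alpha_1)/2}$ and no differentiation is needed. Your weak-case argument necessarily does more: after differentiating, you invoke $J_\nu'(z)=J_{\nu-1}(z)-\tfrac{\nu}{z}J_\nu(z)$ and use $\kappa_{\alpha_1}\nu(\alpha_1)=\tfrac{1-\alpha_1}{2}$ to collapse $x^{\alpha_1}\partial_x\phi_{\alpha_1,k}$ to the single term $c_k j_k\kappa_{\alpha_1}x^{1/2}J_{\nu-1}(j_kx^{\kappa_{\alpha_1}})$, whose limit is finite because $\kappa_{\alpha_1}(\nu(\alpha_1)-1)=-\tfrac12$. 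I checked the exponent bookkeeping, the cancellation, the reduction $(2-\alpha_1)^{3/2}/\Gamma(\nu)=(1-\alpha_1)\sqrt{2-\alpha_1}\,/\Gamma(\nu+1)$ via $\Gamma(\nu+1)=\nu\Gamma(\nu)$, and the final constant: all correct. Your alternative route (expanding $J_\nu$ and $J_\nu'$ directly and summing the two $x^0$ coefficients, each equal to $\tfrac{1-\alpha_1}{2}(j_k/2)^{\nu}/\Gamma(\nu+1)$) is equally valid and closer in spirit to the paper's proof of \Cref{obs strong}. Two cosmetic remarks: $\nu(\alpha_1)-1$ ranges over $(-1,-\tfrac12]$, not $(-1,-\tfrac12)$ (the endpoint is attained at $\alpha_1=0$), which changes nothing; and your observation that $|J_\nu'(j_k)|\sim\sqrt{2/(\pi j_k)}$ carries a constant $\sqrt{2/\pi}$ that \eqref{est obs weak} silently absorbs is accurate — the paper's own proof of \Cref{obs strong} cites the same constant-free form $|J_\nu'(j_{\nu,k})|\sim j_{\nu,k}^{-1/2}$ from \cite{GL16}, so the asymptotic is to be read up to a universal multiplicative constant.
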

\begin{lemma}\label{obs strong}
	For $1\leq \alpha_1<2,$ the eigenfunctions $\{\phi_{\alpha_1,k}\}$ satisfy the following expression
	\begin{align*}
		( \phi_{ \alpha_1, k})|_{x=0}=\frac{\sqrt{2\kappa_{\alpha_1}}(j_{\nu({ \alpha_1 }),k})^{\nu(\alpha_1)} }{2^{\nu(\alpha_1)} \Gamma(\nu(\alpha_1)+1)\left|J'_{\nu({ \alpha_1})}(j_{\nu({ \alpha_1 }),k})\right|},
	\end{align*}
	and also we have 
	\begin{align}\label{est obs str}
		( \phi_{ \alpha_1, k})|_{x=0}\sim_{k\to +\infty} \frac{\sqrt{2\kappa_{\alpha_1}}}{2^{\nu(\alpha_1)} \Gamma(\nu(\alpha_1)+1)}(j_{\nu({ \alpha_1 }),k})^{\nu(\alpha_1)+1/2}.
	\end{align}
\end{lemma}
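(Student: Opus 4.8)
The plan is to read off the boundary value directly from the explicit Bessel representation of the eigenfunctions given in \Cref{eigenelement}. Recall that for $1\le\alpha_1<2$ one has $\nu(\alpha_1)=\frac{\alpha_1-1}{2-\alpha_1}\ge 0$, $\kappa_{\alpha_1}=\frac{2-\alpha_1}{2}$, and
\[
\phi_{\alpha_1,k}(x)=\frac{\sqrt{2-\alpha_1}}{|J'_{\nu(\alpha_1)}(j_{\nu(\alpha_1),k})|}\,\sqrt{x^{1-\alpha_1}}\;J_{\nu(\alpha_1)}\!\left(j_{\nu(\alpha_1),k}\,x^{\kappa_{\alpha_1}}\right),\qquad x\in(0,1).
\]
The boundary trace at $x=0$ is to be understood as the limit $\lim_{x\to 0^+}\phi_{\alpha_1,k}(x)$, which makes sense since every element of $H^1_{\alpha_1}(0,1)$ is locally absolutely continuous on $(0,1]$; the computation below shows this limit is finite.

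First I would substitute the small-argument expansion of the Bessel function: from the series \eqref{exp_of_bessel}, as $z\to 0^+$,
\[
J_{\nu(\alpha_1)}(z)=\frac{z^{\nu(\alpha_1)}}{2^{\nu(\alpha_1)}\,\Gamma(\nu(\alpha_1)+1)}\bigl(1+O(z^{2})\bigr).
\]
Taking $z=j_{\nu(\alpha_1),k}\,x^{\kappa_{\alpha_1}}\to 0$ produces a factor $x^{\kappa_{\alpha_1}\nu(\alpha_1)}$. The key algebraic identity is
\[
\kappa_{\alpha_1}\,\nu(\alpha_1)=\frac{2-\alpha_1}{2}\cdot\frac{\alpha_1-1}{2-\alpha_1}=\frac{\alpha_1-1}{2},
\]
so that $\sqrt{x^{1-\alpha_1}}\cdot x^{\kappa_{\alpha_1}\nu(\alpha_1)}=x^{(1-\alpha_1)/2}\,x^{(\alpha_1-1)/2}=1$, while the $O(z^2)$ remainder contributes $O(x^{2\kappa_{\alpha_1}})\to 0$. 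Therefore
\[
\lim_{x\to0^+}\sqrt{x^{1-\alpha_1}}\,J_{\nu(\alpha_1)}\!\left(j_{\nu(\alpha_1),k}\,x^{\kappa_{\alpha_1}}\right)=\frac{(j_{\nu(\alpha_1),k})^{\nu(\alpha_1)}}{2^{\nu(\alpha_1)}\,\Gamma(\nu(\alpha_1)+1)},
\]
and since $\sqrt{2-\alpha_1}=\sqrt{2\kappa_{\alpha_1}}$ this yields precisely the stated expression for $(\phi_{\alpha_1,k})|_{x=0}$. The case $\alpha_1=1$ needed in \Cref{alphaeq1} is covered verbatim, with $\nu(1)=0$, $\kappa_1=\tfrac12$ and $J_0(0)=1$.

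For the asymptotic \eqref{est obs str}, I would combine the identity just obtained with the classical large-order behaviour of the Bessel function at its zeros, $|J'_{\nu(\alpha_1)}(j_{\nu(\alpha_1),k})|\sim\sqrt{2/(\pi j_{\nu(\alpha_1),k})}$, so that $|J'_{\nu(\alpha_1)}(j_{\nu(\alpha_1),k})|^{-1}$ is of order $(j_{\nu(\alpha_1),k})^{1/2}$; substituting into the formula for $(\phi_{\alpha_1,k})|_{x=0}$ gives a quantity of order $(j_{\nu(\alpha_1),k})^{\nu(\alpha_1)+1/2}$ with the indicated constant, up to a universal multiplicative factor absorbed in the $\sim$ notation, exactly as in the weakly degenerate counterpart \Cref{obs weak}. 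The whole argument is elementary and purely computational; the only point deserving a word of care — and the closest thing to an obstacle — is the justification that the boundary trace is the genuine finite limit $\lim_{x\to0^+}$, which is exactly what the first computation establishes, so no Hardy--Poincar\'e inequality or functional-analytic machinery is required.
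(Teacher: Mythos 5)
Your proof is correct and follows essentially the same route as the paper's: both read off the boundary limit from the explicit Bessel representation \eqref{eigenfn} using the small-argument asymptotics $J_\nu(z)\sim \frac{1}{\Gamma(\nu+1)}(z/2)^\nu$, and both obtain \eqref{est obs str} from $|J'_{\nu(\alpha_1)}(j_{\nu(\alpha_1),k})|\sim (j_{\nu(\alpha_1),k})^{-1/2}$. Your write-up is in fact slightly more careful than the paper's, since you make explicit the cancellation $\kappa_{\alpha_1}\nu(\alpha_1)=\tfrac{\alpha_1-1}{2}$ that kills the weight $\sqrt{x^{1-\alpha_1}}$ and you note that the asymptotic constant is only determined up to the factor absorbed in the $\sim$ notation.
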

\begin{proof}
	Thanks to the expression of the eigenfunctions  (see eq. \eqref{eigenfn} in \Cref{eigenelement}) and the following asymptotic behavior of the Bessel function (see \cite[9.1.7, p. 360]{AM1992}), \begin{equation*}
	\text{for } \nu \geq 0 \text{ and } 0 < x \leq \sqrt{\nu + 1},\quad 	J_\nu(x) \sim \frac{1}{\Gamma(\nu + 1)} \left(\frac{x}{2}\right)^\nu \quad \text{as } x \to 0^+,
	\end{equation*}  it follows that,  \begin{equation*}\lim_{x\to 0^+}\phi_{ \alpha_1, k}(x)=\frac{\sqrt{2\kappa_{\alpha_1}}(j_{\nu({ \alpha_1 }),k})^{\nu(\alpha_1)} }{2^{\nu(\alpha_1)} \Gamma(\nu(\alpha_1)+1)\left|J'_{\nu({ \alpha_1})}(j_{\nu({ \alpha_1 }),k})\right|}.
\end{equation*}
	Furthermore, noting the fact $|J'_{\nu({ \alpha_1})}(j_{\nu({ \alpha_1 }),k})|\sim_{k\to +\infty} \frac{1}{\sqrt{j_{\nu({ \alpha_1 }),k}}}$	(see Lemma A.4 of \cite{GL16}), one can readily get \eqref{est obs str}.
\end{proof}
 Thanks to \cite[Proposition 2.9]{galo2023boundary} and \cite[Proposition 3.10]{galo2024boundary}, one can verify the following boundary admissibility conditions associated to the 1-$d$ degenerate control problem which essentially lead to the admissibility conditions in 2-dimensional control problem.
\begin{lemma}\label{lm adm}
		For any $T>0$, the following boundary admissibility conditions hold
	\begin{align}
		\label{ad weak1d}\int_{0}^{T}\left| ( x^{\alpha_1} \pa_x\sigma^j(t))(0)\right|_{\cplx^n}^2 \rd t\leq C\left\| \sigma^j_T\right\|^2_{{H^{1 }_{ \alpha_1,0}(0,1)^n}} \quad \text{ if } 0\leq \alpha_1<1,\\
		\label{ad strng1d}	\int_{0}^{T}\left| (  \sigma^j(t))(0)\right|_{\cplx^n}^2 \rd t\leq C\left\| \sigma^j_T\right\|^2_{{H^{1 }_{ \alpha_1,0}(0,1)^n}}\quad  \text{ if } 1<\alpha_1<2,
	\end{align}
	where $\sigma^j$ is the solution of the adjoint system \eqref{adj2} with $\sigma^j(T)=\sigma^j_T$.
\end{lemma}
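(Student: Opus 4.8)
The statement to prove is \Cref{lm adm}, the boundary admissibility inequalities for the one-dimensional degenerate adjoint system \eqref{adj2}, namely that the boundary observation map $\sigma^j_T \mapsto (x^{\alpha_1}\pa_x\sigma^j)(\cdot,0)$ (weak case) or $\sigma^j_T \mapsto \sigma^j(\cdot,0)$ (strong case) is bounded from $H^1_{\alpha_1,0}(0,1)^n$ into $L^2(0,T)^n$. The plan is to expand $\sigma^j$ in the orthonormal eigenbasis $\{\phi_{\alpha_1,k}\}_{k\ge1}$ of $-\mathcal A_{\alpha_1}$, reduce the observation to a Dirichlet series, and estimate it using the eigenvalue gap \eqref{gap con} together with the asymptotics of the boundary traces of eigenfunctions from \Cref{obs weak} and \Cref{obs strong}.

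First I would write $\sigma^j_T = \sum_{k\ge1} c_k \phi_{\alpha_1,k}$ with $c_k \in \cplx^n$, so that the solution of \eqref{adj2} is $\sigma^j(t,x) = \sum_{k\ge1} e^{(-\lambda_{\alpha_1,k}\mathcal I_n + A^*)(T-t)} c_k \phi_{\alpha_1,k}(x)$. Using the equivalent norm \eqref{eq_1d} and the fact that $\norm{\phi_{\alpha_1,k}}_{H^1_{\alpha_1,0}(0,1)}^2 = \lambda_{\alpha_1,k}$, one has $\norm{\sigma^j_T}_{H^1_{\alpha_1,0}(0,1)^n}^2 = \sum_k \lambda_{\alpha_1,k} |c_k|^2$. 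Applying the relevant boundary trace — $(x^{\alpha_1}\pa_x\phi_{\alpha_1,k})|_{x=0}$ from \Cref{obs weak} in the weak case, $\phi_{\alpha_1,k}(0)$ from \Cref{obs strong} in the strong case — the observation becomes a vector Dirichlet series $\sum_k \theta_k\, e^{(-\lambda_{\alpha_1,k}\mathcal I_n+A^*)(T-t)} c_k$, where $|\theta_k| \le C\, j_{\nu(\alpha_1),k}^{\nu(\alpha_1)+1/2} \le C\, \lambda_{\alpha_1,k}^{(2\nu(\alpha_1)+1)/4}$ by \eqref{est obs weak}--\eqref{est obs str} and \eqref{eigenvalue}. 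Since $A$ is stable, $\norm{e^{(-\lambda_{\alpha_1,k}\mathcal I_n+A^*)(T-t)}} \le C e^{-c\lambda_{\alpha_1,k}(T-t)}$ for some $c>0$.

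Then I would square the $L^2(0,T)$ norm and use a standard summation-of-exponentials estimate: the cross terms are controlled because $\int_0^T e^{-c(\lambda_{\alpha_1,k}+\lambda_{\alpha_1,m})(T-t)}\,\rd t \le C/(\lambda_{\alpha_1,k}+\lambda_{\alpha_1,m})$, and the gap condition \eqref{gap con} gives $\lambda_{\alpha_1,k}+\lambda_{\alpha_1,m}\ge c(k^2+m^2)$, so the bilinear form $\sum_{k,m}\theta_k\bar\theta_m (c_k,c_m)/(\lambda_{\alpha_1,k}+\lambda_{\alpha_1,m})$ is bounded by a Schur-type argument. Concretely, writing $|\theta_k|^2/\lambda_{\alpha_1,k} \le C\lambda_{\alpha_1,k}^{\nu(\alpha_1)-1/2}$, one checks that $\sum_m |\theta_k\theta_m|/((\lambda_{\alpha_1,k}+\lambda_{\alpha_1,m})\sqrt{\lambda_{\alpha_1,k}\lambda_{\alpha_1,m}})$ is bounded uniformly in $k$ because $\nu(\alpha_1)<1/2$ when $\alpha_1\in[0,1)$ (so $\lambda_{\alpha_1,k}^{\nu-1/2}$ is summable against $\lambda_{\alpha_1,k}^{-1}$ in the relevant sense), and similarly in the strong case. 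This yields $\int_0^T |\text{observation}|^2\,\rd t \le C \sum_k \lambda_{\alpha_1,k}|c_k|^2 = C\norm{\sigma^j_T}_{H^1_{\alpha_1,0}(0,1)^n}^2$. Alternatively, and more cleanly, I would simply invoke \cite[Proposition 2.9]{galo2023boundary} and \cite[Proposition 3.10]{galo2024boundary} as cited, which already establish exactly these one-dimensional admissibility estimates (for the scalar equation; the vector case follows componentwise since $A^*$ only contributes a bounded semigroup perturbation), and present the above only as the sketch of why it holds.

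The main obstacle is the exponent bookkeeping: one must verify that the power $\nu(\alpha_1)+1/2$ appearing in the eigenfunction trace asymptotics is small enough relative to the quadratic growth $\lambda_{\alpha_1,k}\sim k^2$ of the eigenvalues for the Dirichlet-series estimate to close — i.e. that $\sum_k \lambda_{\alpha_1,k}^{\nu(\alpha_1)-1/2} \cdot (\text{gap-type weights})$ converges. Since $\nu(\alpha_1)=|1-\alpha_1|/(2-\alpha_1) \in [0,1)$ for $\alpha_1\in[0,2)$ and in fact $\nu(\alpha_1)<1$ always (with the borderline $\alpha_1=1$ excluded here), the relevant series behaves like $\sum_k k^{2\nu(\alpha_1)-1}$ against a decaying kernel, which converges after using the $1/(\lambda_k+\lambda_m)$ factor from the time integration. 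The uniformity of the constant $C$ in $T$ (for $T$ bounded, as needed) is immediate since increasing $T$ only makes the exponential weights smaller.
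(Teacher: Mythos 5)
Your proposal is correct, and your ``cleaner alternative'' --- simply invoking \cite[Proposition 2.9]{galo2023boundary} and \cite[Proposition 3.10]{galo2024boundary} and noting that the vector case with the bounded perturbation $A^*$ follows componentwise --- is in fact exactly what the paper does: \Cref{lm adm} is stated without proof, justified only by those two citations. Your direct spectral argument is therefore a genuine addition rather than a reproduction; it expands the data in the basis $\{\phi_{\alpha_1,k}\}$, reduces the observation to a Dirichlet series with coefficients $\theta_k\sim\lambda_{\alpha_1,k}^{\nu(\alpha_1)/2+1/4}$, and closes via a Schur test on the kernel $\theta_k\theta_m/((\lambda_k+\lambda_m)\sqrt{\lambda_k\lambda_m})$. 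What this buys is a self-contained proof and transparent exponent bookkeeping; what the citation buys is brevity and the delicate weighted-space technicalities already settled in the references. Two small corrections to your sketch: (i) at $\alpha_1=0$ one has $\nu(\alpha_1)=1/2$ exactly, so your stated condition $\nu(\alpha_1)<1/2$ fails at that endpoint --- but the Schur row sums behave like $k^{4\beta-1}$ with $\beta=\nu/2-1/4$, so boundedness only requires $\nu(\alpha_1)\le 1$, which holds for all $\alpha_1\in[0,2)$; the estimate still closes, just not for the reason you give. (ii) Your expansion of $\sigma^j$ omits the extra damping $e^{-\lambda_{\alpha_2,j}(T-t)}$ present in \eqref{adj2}; since this factor is bounded by $1$, the omission only strengthens the inequality and, importantly, makes the constant $C$ uniform in $j$, which is what the two-dimensional argument in \Cref{ad} actually needs.
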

\begin{proposition}\label{ad}
	For any $T>0$, the following boundary admissibility conditions hold
	\begin{align}
	\label{ad weak}	\int_{0}^{T}  \left\|\mathbf{1}_{\gamma} B^*( \mathbf{P} \sigma(t))\right\|_{L^2(\pa \Omega)^m}^2 \rd t \leq  C \norm{\sigma_T}_{{H^{1}_{\alpha,0}}(\Omega)^n}^2 \quad \text{ if } 0\leq \alpha_1<1,\\
	\label{ad strng}	\int_{0}^{T}\| \mathbf{1}_{\gamma}B^*  \sigma(t)\|_{L^2(\pa \Omega)^m}^2 \rd t \leq  C \norm{\sigma_T}_{{H^{1}_{\alpha,0}}(\Omega)^n}^2 \quad \text{ if } 1<\alpha_1<2,
	\end{align}
where $\sigma$ is the solution of the adjoint system \eqref{adjintro}--\eqref{bd2} with $\sigma(T)=\sigma_T \in {H^{1}_{\alpha,0}}(\Omega)^n$.
\end{proposition}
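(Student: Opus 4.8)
The plan is to diagonalize the 2-$d$ adjoint system \eqref{adjintro}--\eqref{bd2} in the $y$-variable and reduce \eqref{ad weak}--\eqref{ad strng} to the 1-$d$ trace estimates of \Cref{lm adm}. By \Cref{exp of u} together with a density argument, it suffices to prove the bounds for data of the form $\sigma_T=\sum_{j=1}^{J}\sigma_T^j(x)\,\phi_{\alpha_2,j}(y)$ with $\sigma_T^j\in H^1_{\alpha_1,0}(0,1)^n$, and then pass to the limit $J\to\infty$. For such data the solution reads $\sigma(t,x,y)=\sum_{j=1}^{J}\sigma^j(t,x)\,\phi_{\alpha_2,j}(y)$, where each $\sigma^j$ solves \eqref{adj2}. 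Since the outward normal on $\Sigma_1$ is $\nu=(-1,0)$, the trace of $\mathbf P\sigma$ on $\{0\}\times(0,1)$ is, componentwise, $-(x^{\alpha_1}\partial_x\sigma)(t,0,\cdot)=-\sum_{j=1}^{J}(x^{\alpha_1}\partial_x\sigma^j)(t,0)\,\phi_{\alpha_2,j}(\cdot)$ in the weakly degenerate case, while $\sigma(t,0,\cdot)=\sum_{j=1}^{J}\sigma^j(t,0)\,\phi_{\alpha_2,j}(\cdot)$ in the strongly degenerate one.

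Next I would estimate, in the weakly degenerate case, for a.e.\ $t\in(0,T)$,
\[
\norm{\mathbf 1_\gamma B^*(\mathbf P\sigma(t))}_{L^2(\partial\Omega)^m}^2
=\int_\omega\Big|B^*\sum_{j=1}^{J}(x^{\alpha_1}\partial_x\sigma^j)(t,0)\,\phi_{\alpha_2,j}(y)\Big|_{\cplx^m}^2\rd y
\le C\sum_{j=1}^{J}\big|(x^{\alpha_1}\partial_x\sigma^j)(t,0)\big|_{\cplx^n}^2,
\]
using $|B^*z|_{\cplx^m}\le C\,|z|_{\cplx^n}$, the inclusion $\omega\subset(0,1)$, and the $L^2(0,1)$-orthonormality of $\{\phi_{\alpha_2,j}\}$. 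Integrating over $(0,T)$ and applying the 1-$d$ admissibility estimate \eqref{ad weak1d} to each mode $\sigma^j$ gives
\[
\int_0^T\norm{\mathbf 1_\gamma B^*(\mathbf P\sigma(t))}_{L^2(\partial\Omega)^m}^2\rd t
\le C\sum_{j=1}^{J}\norm{\sigma_T^j}_{H^1_{\alpha_1,0}(0,1)^n}^2
\le C\norm{\sigma_T}_{H^1_{\alpha,0}(\Omega)^n}^2,
\]
where the last inequality is immediate from the norm identity \eqref{norm} in \Cref{sec:norm_comp} (it expresses $\norm{\sigma_T}_{H^1_{\alpha,0}(\Omega)^n}^2$ as $\sum_j\norm{\sigma_T^j}_{H^1_{\alpha_1,0}}^2+\sum_j\lambda_{\alpha_2,j}\norm{\sigma_T^j}_{L^2}^2$, and $\lambda_{\alpha_2,j}>0$). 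Letting $J\to\infty$ then yields \eqref{ad weak}, and the strongly degenerate bound \eqref{ad strng} follows by the same computation with $\sigma^j(t,0)$ in place of $(x^{\alpha_1}\partial_x\sigma^j)(t,0)$ and \eqref{ad strng1d} in place of \eqref{ad weak1d}.

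The hard part will be the limiting procedure $J\to\infty$: one must verify that the map sending $\sigma_T$ to the $L^2(0,T)$-trace $(x^{\alpha_1}\partial_x\sigma)(\cdot,0,\cdot)$ (respectively the Dirichlet trace at $x=0$) is well defined and continuous on $H^1_{\alpha,0}(\Omega)^n$, and that the truncated expansions converge in a topology strong enough to commute with these trace operators. This is precisely where the uniformity of the constant in \Cref{lm adm} in the mode index $j$ (equivalently, its independence of $\lambda_{\alpha_2,j}$) is essential; granting it, the finite-sum estimate extends by density and the remainder is routine bookkeeping with the orthonormal expansion. A minor point to check along the way is that the well-posedness theory of \Cref{sub_wp} indeed provides enough regularity for the termwise differentiation and trace-taking above to be legitimate at each finite $J$.
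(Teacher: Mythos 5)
Your argument is correct and follows essentially the same route as the paper: decompose $\sigma$ in the $\{\phi_{\alpha_2,j}\}$ basis, bound $B^*$ by its operator norm, enlarge $\omega$ to $(0,1)$, use orthonormality to reduce to the modal traces, apply the uniform-in-$j$ one-dimensional admissibility of \Cref{lm adm}, and conclude via the norm identity \eqref{norm}. The only difference is that you truncate at level $J$ and pass to the limit, whereas the paper works directly with the infinite expansion; your extra care about the limiting step (and the harmless sign of $\nu$ on $\Gamma_1$) is a refinement, not a different proof.
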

\begin{proof}
		Let us first assume that $\sigma_T=\sum\limits_{j=1}^{\infty}\sigma^{j}_T(x)\phi_{\alpha_2,j}(y)\in {H^{1}_{\alpha,0}}(\Omega)^n$ with $\sigma^j_T\in H^{1 }_{ \alpha_1,0}(0,1)^n.$ 
	Thus if we write 
	\begin{equation*}
		\sigma(t,x,y)=\sum\limits_{j=1}^{\infty}\sigma^j(t,x)\phi_{\alpha_2,j}(y),
	\end{equation*}
	then $\sigma^j$ satisfy the following
	\begin{equation*}
		\begin{cases}
			\partial_t \sigma^j+\pa_{x}(x^{\alpha_1}\pa_{x}\sigma^j)-\lambda_{\alpha_2,j}\sigma^j+A^*\sigma^j=0 &  t\in (0,T), \; x\in (0,1),\\
			\begin{cases}\sigma^j(t,0)=0 & \text{ if } 0\leq \alpha<1\\
				(x^{\alpha_1}\pa_x \sigma^j)(t,0)=0 & \text{ if } 1 < \alpha<2
			\end{cases} &\text{ in }   (0,T),\\
			\sigma^j(t,1)=0 & t\in (0,T),\\
			\sigma^j(T,x)=\sigma^j_T(x) & x\in (0,1).
		\end{cases}
	\end{equation*}
 With the boundary admissibility \eqref{ad weak1d} for the above 1-$d$ degenerate system,
let us calculate the following norm for the square domain $\Omega$
\begin{align*}
\int_{0}^{T}	\left\|\mathbf{1}_{\gamma} B^*( \mathbf{P} \sigma(t))\right\|_{L^2(\pa \Omega)^m}^2 \rd t& \leq C \int_{0}^{T}\left\|\mathbf{1}_{\gamma} ( \mathbf{P} \sigma(t))\right\|_{L^2(\pa \Omega)^n}^2 \rd t,\, \text{ as } B^*\in\mathcal{L}(\cplx^n;\cplx^m)\\
	&=\int_{0}^{T}\left\|\mathbf{1}_{\gamma} ( x^{\alpha_1} \pa_x\sigma(t))\right\|_{L^2(\pa \Omega)^n}^2 \rd t\\
	&\leq C \int_{0}^{T}\left\| ( x^{\alpha_1} \pa_x\sigma(t))(0,\cdot)\right\|_{L^2(0,1)^n}^2 \rd t, \, \text{ using } \omega\subset (0,1)\\
	&= C \int_{0}^{T}\sum_{j=1}^{\infty}\left| ( x^{\alpha_1} \pa_x\sigma^j(t))(0)\right|_{\cplx^n}^2 \rd t, \text{ using orthonormality of } \{\phi_{\alpha_2,j}\}\\
	&\leq C \sum_{j=1}^{\infty} \int_{0}^{T}\left| ( x^{\alpha_1} \pa_x\sigma^j(t))(0)\right|_{\cplx^n}^2 \rd t.
	 \end{align*}
Next, using the 1-$d$ admissibility condition \eqref{ad weak1d}, we further obtain
 \begin{align*}
 	\int_{0}^{T}	\left\|\mathbf{1}_{\gamma} B^*( \mathbf{P} \sigma(t))\right\|_{L^2(\pa \Omega)^m}^2 \rd t& \leq C \sum_{j=1}^{\infty} \left\| \sigma^j_T\right\|^2_{{H^{1 }_{ \alpha_1,0}(0,1)^n}} \\
 	& \leq C \left(\sum_{j=1}^{\infty} \left\| \sigma^j_T\right\|^2_{H^{1 }_{ \alpha_1,0}(0,1)^n}+\sum_{j=1}^{\infty} \lambda_{\alpha_2,j} \left\| \sigma^j_T\right\|^2_{L^2(0,1)^n}\right)\\
 	&=C \|\sigma_T\|^2_{H^1_{\alpha,0}(\Omega)^n}.
 \end{align*}
Similarly, using \eqref{ad strng1d} in \Cref{lm adm}, one can establish \eqref{ad strng}.
	\end{proof}

\bibliographystyle{alpha}
\bibliography{biblio}

\medskip
\medskip
\medskip
\medskip

\begin{flushleft}

\textbf{Víctor Hernández-Santamaría, Subrata Majumdar and Luz de Teresa}\\
Instituto de Matemáticas\\
Universidad Nacional Autónoma de México \\
Circuito Exterior, Ciudad Universitaria\\
04510 Coyoacán, Ciudad de México, México\\
\texttt{victor.santamaria@im.unam.mx}\\
\texttt{subrata.majumdar@im.unam.mx}\\
\texttt{ldeteresa@im.unam.mx}
\end{flushleft}

\end{document}